\numberwithin{equation}{section}
\def\Re{ \mathsf{Re}}
\def\Im{ \mathsf{Im}}
\def\i{ \mathsf{i}}
\def \rr{\mathsf{r} }
\def \rl{\mathsf{c}}
\def \arg{\mathsf{arg}}
\renewcommand{\email}[2][]{%
  \ifx\emails\@empty\relax\else{\g@addto@macro\emails{,\space}}\fi%
  \@ifnotempty{#1}{\g@addto@macro\emails{\textrm{(#1)}\space}}%
  \g@addto@macro\emails{#2}%
}
\newtheorem{theorem}{Theorem}[section]
\newtheorem{lemma}[theorem]{Lemma}
\newtheorem{proposition}[theorem]{Proposition}
\theoremstyle{definition}
\newtheorem{definition}[theorem]{Definition}}
\theoremstyle{remark}
\newtheorem{remark}[theorem]{Remark}}
\def\ZP{{Z^{M,N}}}
\newcommand{\R}{\mathbb{R}}
\title[Fluctuations of the log-gamma polymer free energy]{Fluctuations of the log-gamma polymer free energy with general parameters and slopes}
\author[G. Barraquand]{Guillaume Barraquand}
\address{G. Barraquand,
Laboratoire de physique de l'{\'e}cole normale sup\'erieure, ENS, Universit{\'e} PSL, CNRS, Sorbonne Universit{\'e}, Universit{\'e} de Paris, Paris, France}
\email{guillaume.barraquand@ens.fr}
\author[I. Corwin]{Ivan Corwin}
\address{I. Corwin, Department of Mathematics, Columbia University, New York, NY 10027, USA} \email{ivan.corwin@gmail.com}
\author[E. Dimitrov]{Evgeni Dimitrov}
\address{E. Dimitrov, Department of Mathematics, Columbia University, New York, NY 10027, USA} \email{esd2138@columbia.edu}
\begin{document}

\maketitle

\begin{abstract}
We prove that the free energy of the log-gamma polymer between lattice points $(1,1)$ and $(M,N)$ converges to the GUE Tracy-Widom distribution in the $M^{1/3}$ scaling, provided that $N/M$ remains bounded away from zero and infinity. We prove this result for the model with inverse gamma weights of any shape parameter $\theta>0$ and furthermore establish a moderate deviation estimate for the upper tail of the free energy in this case. Finally, we consider a non i.i.d. setting where the weights on finitely many rows and columns have different parameters, and we show that when these parameters are critically scaled the limiting free energy fluctuations are governed by a generalization of the Baik-Ben Arous-P\'ech\'e distribution from spiked random matrices with two sets of parameters.  
\end{abstract}

\tableofcontents

%
\section{Introduction and main results}
\label{Section1}

\subsection{The log-gamma polymer}\label{Section1.2}

The log-gamma polymer was introduced by Sepp\"al\"ainen \cite{Sep12} and is the only vertex-disorder 1+1 dimensional directed polymer model that is known to be exactly solvable, in the sense that the distribution of its free energy can be computed explicitly. The contribution of our present work is to establish asymptotics of this model's free energy fluctuations over a very wide range of parameters which control the dimensions of the polymer as well as the nature of its disorder. Proving these general asymptotic results require us to considerably rework the fundamental starting formula for this model, i.e. the Fredholm determinant Laplace transform formula. Our asymptotic results have applications that are being pursued in a number of contexts including showing tightness of the log-gamma line ensemble \cite{BCDpaperII}, showing a phase transition for the maximum of the log-gamma polymer free energy landscape \cite{BCDtoappear,kotowski2019tracy} and showing that the log-gamma polymer converges to the KPZ fixed point \cite{virag2020heat}.

%

\begin{definition} A continuous random variable $X$ is said to have the inverse-gamma distribution with parameter $\theta > 0 $ if its density is given by
\begin{equation}\label{S1invGammaDens}
f_\theta(x) = \frac{{\bf 1} \{ x > 0 \} }{\Gamma(\theta)} \cdot x^{-\theta - 1} \cdot \exp( - x^{-1}).
\end{equation}
The log-gamma polymer model that we consider in this paper depends on two integers $M, N \geq 1$ and two sets of parameters $\vec{\alpha} = (\alpha_1, \dots, \alpha_M) \in \mathbb{R}_{>0}^M$ and $\vec{a} = (a_1, \dots, a_N) \in \mathbb{R}^N$ such that $\theta_{i,j} = \alpha_i - a_j > 0$
for all $i = 1, \dots, M$ and $j = 1, \dots, N$.

We let $w = \left( w_{i,j} :  1 \leq i \leq M, 1 \leq j \leq N\right)$ denote the random matrix such that $w_{i,j}$ are independent random variables with density $f_{\theta_{i,j}}$ as in (\ref{S1invGammaDens}). We denote by $(\Omega, \mathcal{F}, \mathbb{P})$ the probability space on which $w$ is defined. A {\em directed lattice path} is a sequence of vertices $(x_1, y_1), \dots, (x_k, y_k) \in \mathbb{Z}^2$ such that $x_1 \leq x_2 \leq \cdots \leq x_k$, $y_1 \leq y_2 \leq \cdots \leq y_k$ and $(x_i - x_{i-1}) + (y_i - y_{i-1}) = 1$ for $i = 2, \dots, k$. In words, a directed lattice path is an up-right path on $\mathbb{Z}^2$, which makes unit steps in the coordinate directions (see Figure \ref{halfspaceloggamma}). We let $\Pi_{M,N}$ denote the set of directed paths $\pi$ in $\mathbb{Z}^2$ from $(1,1)$ to $(M,N)$. Given a directed path $\pi$ we define its {\em weight}
\begin{equation}\label{PathWeight}
w(\pi) = \prod\nolimits_{(i,j) \in \pi} w_{i,j}.
\end{equation}
We also define
\begin{equation}\label{PartitionFunct}
\ZP = \sum\nolimits_{ \pi \in \Pi_{M,N}} w(\pi),
\end{equation}
to be the {\em partition function} of the model.
\end{definition}

\begin{figure}
	\begin{center}
		\begin{tikzpicture}[scale=0.7]
		\begin{scope}
		\draw (0,0) node[anchor = north east]{{\footnotesize $(1,1)$}};
		\draw (9,6) node[anchor=south west]{{\footnotesize $(M,N)$}};
		\draw[->, >=stealth'] (9,2.5) node[anchor=west]{{ $w_{i,j}$ with density $f_{\theta_{i,j}}$}} to[bend right] (7,2);
		\draw (12,1.8) node{$\theta_{i,j}=\alpha_i-a_j$};
		\draw[dotted, gray] (0,0) grid (9,6);
		\draw[ultra thick] (0,0) -- (1,0) -- (2,0) -- (2,1) -- (2,2) -- (3,2) -- (4,2) -- (4,3) -- (5,3) -- (6,3) -- (6,4) -- (7,4) -- (8,4) -- (8,6) -- (9,6);
		\draw (7, -0.4) node{$i$};
		\draw (-0.4,2) node{$j$};
		\foreach \x in {1,2,...,10}
		\draw (\x-1,-1) node{$\alpha_{\x}$};
		\foreach \x in {1,2,...,7}
		\draw (-1,\x-1) node{$a_{\x}$};
		\end{scope}
		\end{tikzpicture}
	\end{center}
	\caption{A directed lattice path $\pi\in \Pi_{M, N}$ in the log-gamma polymer model. }
	\label{halfspaceloggamma}
\end{figure}
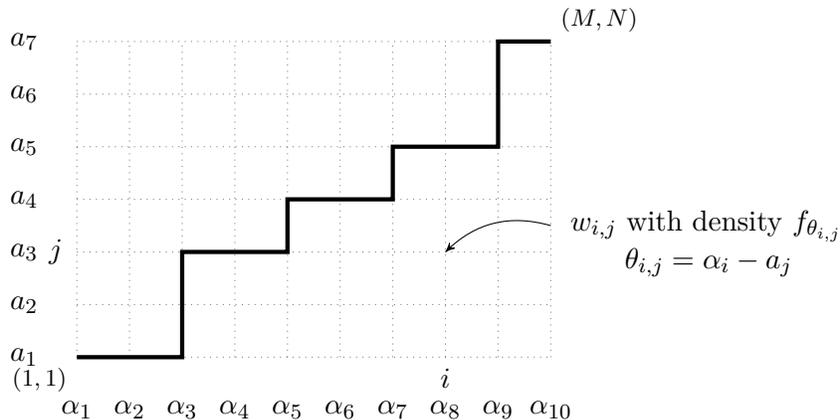

\bigskip
The main goal of this paper is to understand the asymptotic behavior of $\log \ZP$ as $N,M \rightarrow \infty$ while $N/M$ remains bounded away from $0$ and $\infty$. We describe these asymptotic results now.

%
\subsection{GUE Tracy-Widom asymptotics} \label{Section1.3}

In this section we consider the log-gamma polymer with i.i.d. weights. Fix $\theta>0$ and assume that $a_j = 0$ for all $j = 1,\dots, N$ and $\alpha_i=\theta$ for all $i =1, \dots, M$. Before we state the result we introduce some necessary notation. Define the function
\begin{equation}\label{DefLittleg}
g(z) = \frac{\sum_{n =0}^\infty \frac{1}{(n+\theta - z)^2}}{ \sum_{n = 0}^\infty \frac{1}{(n+z)^2}},
\end{equation}
and observe that $g$ is a smooth, increasing bijection from $(0, \theta)$ to $(0, \infty)$. Clearly, the inverse function $g^{-1}: (0, \infty) \rightarrow (0,\theta)$ is also a strictly increasing smooth bijection. We now define for $x \in (0,\infty)$ the function
\begin{equation}\label{HDefLLN}
\begin{split}
h_\theta(x) = x \cdot  \Psi(g^{-1}(x)) + \Psi( \theta - g^{-1}(x)),
\end{split}
\end{equation}
where  $\Psi(x)$ is the digamma function, i.e.
\begin{equation}\label{digammaS1}
\Psi(z) = \frac{\Gamma'(z)}{\Gamma(z)} = - \gamma_{E} + \sum_{n = 0}^\infty \left[\frac{1}{n + 1} - \frac{1}{n+z} \right],
\end{equation}
and $\gamma_{E}$ is the Euler constant. One readily observes that $h_\theta$ is a smooth function on $(0, \infty)$. Finally, for any $x > 0$ we define
\begin{equation}\label{DefSigma}
\sigma_\theta(x) := \left[ \sum_{n = 0}^\infty \frac{x}{(n+g^{-1}(x))^3} +  \sum_{n = 0}^\infty \frac{1}{(n+\theta - g^{-1}(x))^3} \right]^{1/3}.
\end{equation}
For all $N,M\geq 1$, we define the rescaled free energy
\begin{equation}
\label{eq:rescaledpartitionfunction}
\mathcal F(M,N) = \frac{\log \ZP+ M h_{\theta}(p_M)}{M^{1/3}\sigma_{\theta}(p_M)}, \;\;\;p_M=N/M.
\end{equation}

We are now able to state the convergence to the Tracy-Widom distribution.
\begin{theorem}\label{LGPCT} Let $\theta > 0$ and $\delta \in (0,1)$. Let $\mathcal F(M,N)$ be as in  \eqref{eq:rescaledpartitionfunction} and assume that we let $M,N$ go to infinity in such a way that the sequence $p_M=N/M \in [\delta, \delta^{-1}]$ for all sufficiently large $M$. Then for all $y\in \R$ we have
\begin{equation}\label{HLConv}
\lim_{M \rightarrow \infty} \mathbb{P} \left( \mathcal F(M,N)  \leq y\right) = F_{\rm GUE}(y),
\end{equation}
where $F_{\rm GUE}$ is the GUE Tracy-Widom distribution \cite{TWPaper}. 
\end{theorem}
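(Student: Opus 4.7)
My strategy follows the well-trodden paradigm for Tracy--Widom limits of solvable polymer models: (i) begin with an exact Fredholm determinant formula for the Laplace transform $\mathbb{E}[\exp(-u\ZP)]$; (ii) perform a steepest descent analysis on the contour integral representation of the kernel, centered at the double critical point $z_* = g^{-1}(p_M)$; (iii) deduce convergence of the Fredholm determinant to $\det(I - K_{\mathrm{Ai}}) = F_{\mathrm{GUE}}(y)$; and (iv) transfer this Laplace transform convergence to distributional convergence of $\mathcal{F}(M,N)$.

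For step (iv), choose
\begin{equation*}
u = u_{M,y} := \exp\bigl(-M h_\theta(p_M) - M^{1/3}\sigma_\theta(p_M) y\bigr),
\end{equation*}
so that $u\ZP = \exp\bigl(M^{1/3}\sigma_\theta(p_M)(\mathcal{F}(M,N) - y)\bigr)$. The map $x \mapsto \exp(-e^{cx})$ converges to $\mathbf{1}\{x \leq 0\}$ as $c \to \infty$ away from $x = 0$, so a standard Tauberian-type argument (e.g.\ as in Borodin--Corwin) yields
\begin{equation*}
\lim_{M \to \infty} \mathbb{E}\bigl[\exp(-u_{M,y}\ZP)\bigr] = \lim_{M \to \infty} \mathbb{P}\bigl(\mathcal{F}(M,N) \leq y\bigr)
\end{equation*}
at each continuity point $y$, provided $\mathcal{F}(M,N)$ is tight. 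The upper tail needed for tightness is precisely the moderate deviation estimate obtained elsewhere in the paper; the lower tail can be absorbed by splitting the Laplace transform at a large negative value and bounding each piece separately.

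Step (ii) is the analytic engine. The kernel should take the schematic form $K(v,v') = \iint \frac{dz\,dw}{z - w + v - v'}\,\frac{G_M(z)}{G_M(w)}$ with $G_M(z) = \exp(M F(z; p_M))$ for an $F$ built from $\log\Gamma$ factors. The formulas \eqref{DefLittleg}, \eqref{HDefLLN}, \eqref{DefSigma} are engineered so that $F'(z_*) = F''(z_*) = 0$ and $F'''(z_*)$ is a positive multiple of $\sigma_\theta(p_M)^3$, while the leading-order deterministic term $-Mh_\theta(p_M)$ supplies the law of large numbers. After deforming the $z$- and $w$-contours to steepest descent paths through $z_*$ (without crossing any poles of the gamma functions in the integrand) and rescaling $z - z_* = M^{-1/3}\tilde z/\sigma_\theta(p_M)$, the rescaled kernel converges pointwise to the Airy kernel, and the contributions from the tails of the contours are exponentially negligible.

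The main obstacle, and the reason the introduction advertises a ``considerable reworking of the fundamental starting formula,'' is step (i) together with the uniformity demands of step (ii). The Fredholm determinant formulas available off the shelf are tailored to parameter ranges that do not permit the required contour deformations uniformly in $p_M \in [\delta, \delta^{-1}]$ and $\theta > 0$: as $p_M$ varies, the critical point $z_*(p_M)$ sweeps across $(0, \theta)$, and one must ensure the contours avoid the poles of the gamma-function integrand throughout the sweep. The technical core will therefore be producing a Fredholm determinant representation whose contours admit the saddle-point deformation and whose integrand satisfies Gaussian-type tail estimates valid uniformly in the parameter window. Once such a formula is available, pointwise convergence to the Airy kernel combined with a trace-norm dominated convergence argument for Fredholm determinants closes step (iii) and hence the proof.
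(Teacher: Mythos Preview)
Your proposal is correct and matches the paper's approach: the Laplace transform Fredholm determinant (Theorem~\ref{LGPT1}), steepest descent on the kernel centered at $z_c=g^{-1}(p_M)$ (Theorem~\ref{mainThmGUE}), identification of the limiting determinant with $F_{\rm GUE}$ via \cite[Lemma~C.1]{BCF}, and the Borodin--Corwin transfer lemma (Lemma~\ref{prob}). One small correction to step~(iv): you flag tightness of $\mathcal{F}(M,N)$ as a separate ingredient requiring the moderate deviation bound, but Lemma~\ref{prob} does not take tightness as a hypothesis---once the pointwise limit of $\mathbb{E}[f_M(\mathcal{F}(M,N)-y)]$ is shown to equal a continuous \emph{cumulative distribution function} (here $F_{\rm GUE}$), convergence in distribution is automatic, so Theorem~\ref{LGPCT} does not logically depend on Theorem~\ref{S1LDE1}.
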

\begin{remark}\label{S1RemTW}
Note that $Z^{M,N}$ and $Z^{N,M}$ are equal in distribution. Using the fact that $g^{-1}(1/x)=\theta-g^{-1}(x)$, this implies that $\mathcal F (M,N)$ and $\mathcal F(N,M)$ are also equal in distribution. Owing to this symmetry, it suffices to prove Theorem \ref{LGPCT} for $p_M\in [\delta,1]$.
\label{rem:symmetry}
\end{remark}
\begin{remark}\label{rem:thm13}
Some special cases of Theorem \ref{LGPCT} had been established in previous works. First, it was proved in \cite[Theorem 1]{BCR} that the free energy $\log \ZP$ has Tracy-Widom GUE fluctuations with a restrictive assumption on the shape parameters of the gamma distribution ($\theta$ was assumed to be sufficiently small), and only for the free energy along the main diagonal $M=N$, i.e. $p_M\equiv 1$. The result was then extended in  \cite[Theorem 2.1]{KQ} to cover all values $\theta > 0$, still restricting to the diagonal direction. Theorem \ref{LGPCT} extends the  result of \cite{BCR, KQ} to any sequence of slopes $p_M$ that is uniformly bounded away from $0$ and $\infty$. We mention that the asymptotic analysis becomes notably more involved once $p_M \neq 1$ -- see Remark \ref{RemBBP2}.
\end{remark}
\begin{remark}
	Theorem \ref{LGPCT} constitutes an important input of a work in preparation \cite{BCDpaperII} proving the tightness of the top curve of a discrete version of the Airy line ensemble \cite{CorHamA} naturally associated to the log-gamma polymer \cite{Wu19,JonstonOConnell}. This top curve describes the free energy of polymer paths of fixed length, seen as a function of the endpoint, and is expected to converge to the Airy$_2$ process as the length of polymer paths goes to infinity.
\end{remark}
\begin{remark}\label{rem:thm13v2}
We mention that if $N/M \rightarrow 0$, there are two cases that are understood: if $N$ remains bounded the fluctuations are of Gaussian type and described by the O'Connell-Yor semi-discrete directed polymer \cite{oconnell2001brownian}; if $N=M^{1/\alpha}$ with $0<\alpha<3/14$ the fluctuations follow the GUE Tracy-Widom distribution as follows from the notably more general setup of \cite{auffinger2012universality}.
\end{remark}

Theorem \ref{LGPCT} is proved in Section \ref{Section4.1}. The starting point of our analysis is a formula, Theorem \ref{LGPT1}, for the Laplace transform of $\ZP$. This formula expresses the Laplace transform as a Fredholm determinant of a convolution kernel that depends on the parameters $\vec{\alpha}$ and $\vec{a}$ of the model.
The proof of Theorem \ref{LGPCT} is then based on the asymptotic analysis of this Fredholm determinant formula via the steepest descent method. The main challenges that we overcome in this pursuit are finding suitable contours that allow us to perform a steepest descent argument and obtaining the necessary estimates for the kernel in our Fredholm determinant along these contours. The behavior of the integrand in the definition of the kernel along various contours is studied in Section \ref{Section9} and estimates on the kernel itself are given in Section \ref{Section10.3}. 

Our starting formula in Theorem \ref{LGPT1} leverages an earlier result of \cite{BCFV} (which in turn built upon earlier work of \cite{COSZ,BorCor,BCR}) which contain a similar Laplace transform formula. Namely, the functions in \cite{BCFV} are the same, though the contours on which they are integrated and on which the Fredholm determinant is defined are rather different and unsuitable for the broad range of asymptotics that we investigate here. Reworking that formula into one which is appropriate for asymptotics requires significant effort and is the content of Section \ref{Section2}. The preface to Section \ref{Section2} provide  discussion on this point and the relation of our work to previous literature.

%
%


%

\subsection{Moderate deviation estimate}\label{Section1.4}

The careful asymptotic analysis of the Fredholm determinant carried out for the proof of Theorem \ref{LGPCT} also yields, as a byproduct, moderate deviation bounds for the one point fluctuations of the free energy. These bounds hold uniformly in the slope. Such tail bounds are of independent interest, and we will provide one application in a paper in preparation \cite{BCDtoappear}. That work considers the maximum of the free energy landscape for the log-gamma polymer. In other words, we allow the starting and ending points for the log-gamma polymer to vary within an $N$ by $M$ box and study maximum for the logarithm of the associated partition functions. In \cite{BCDtoappear}, we prove a phase transition for how this maximum is achieved as controlled by $\theta$. This is partially motivated by recent results of \cite{kotowski2019tracy} which relates this question to the study of a singular values for a random Schr\"odinger operator on the honeycomb lattice.

We continue with the same notation as in the previous section. In particular, $\theta>0$ is fixed and $\alpha_i=\theta$ for $i =1, \dots, M$ and $a_j = 0$ for $j = 1,\dots, N$. The below result is proved in Section \ref{Section4.2}.
\begin{theorem}\label{S1LDE1}Let $\theta > 0$ and $\delta \in (0,1)$ be given. Then there exist constants $C_1, C_2,c_1, c_2 > 0$ all depending on $\theta, \delta$ such that the following holds for all $M, N \in \mathbb{N}$ with $N/M \in [\delta, \delta^{-1}]$ and $x \geq 0$
\begin{equation}\label{S1LDEeq}
\mathbb{P} \left( \mathcal F(M,N)  \geq x\right) \leq C_1 e^{-c_1 M} + C_2 e^{-c_2 x^{3/2}}.
\end{equation}
\end{theorem}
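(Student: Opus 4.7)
\textbf{Proof plan for Theorem \ref{S1LDE1}.}
The plan is to extract the upper tail estimate from the same Fredholm determinant formula (Theorem \ref{LGPT1}) used for Theorem \ref{LGPCT}, by sharpening the steepest descent analysis so that all estimates are tracked quantitatively in the deviation parameter~$x$. The starting observation is a Markov-type inequality tailored to Laplace transforms: for any $u>0$ and $Z>0$,
\begin{equation*}
\mathbf{1}_{Z\geq u^{-1}} \;\leq\; \frac{1-e^{-uZ}}{1-e^{-1}}.
\end{equation*}
Setting $u_x := \exp\bigl(-M\,h_\theta(p_M)-xM^{1/3}\sigma_\theta(p_M)\bigr)$, the event $\{\mathcal F(M,N)\geq x\}$ coincides with $\{Z^{M,N}\geq u_x^{-1}\}$, so
\begin{equation*}
\mathbb{P}\bigl(\mathcal F(M,N)\geq x\bigr)\;\leq\;\frac{1-\mathbb{E}\bigl[e^{-u_x Z^{M,N}}\bigr]}{1-e^{-1}}\;=\;\frac{1-\det(I+K_{u_x})}{1-e^{-1}},
\end{equation*}
where the final equality uses Theorem \ref{LGPT1}. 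Thus it suffices to bound $1-\det(I+K_{u_x})$, and by the standard inequality $|\det(I+K)-1|\leq \|K\|_{1}\,e^{\|K\|_{1}}$ for trace class $K$, it is enough to control $\|K_{u_x}\|_1$.

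The next step is to deform the contours defining $K_{u_x}$ to a pair adapted to the shifted saddle. Concretely, the scaling chosen for $u_x$ causes the exponential in the integrand of the kernel to pick up an additional factor of the form $\exp\bigl(x M^{1/3}\sigma_\theta(p_M)(v-w)\bigr)$ in the standard variables $(v,w)$. Along the contours constructed in Sections~\ref{Section9}--\ref{Section10.3} for the proof of Theorem~\ref{LGPCT}, this factor behaves, in the rescaled coordinates $v = v_c + M^{-1/3}\tilde v$, $w = v_c + M^{-1/3}\tilde w$ (where $v_c = g^{-1}(p_M)$ is the critical point), like $\exp(x(\tilde v-\tilde w))$. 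Combined with the cubic term $\exp\bigl(\tfrac{1}{3}(\tilde w^3-\tilde v^3)\bigr)$ coming from the saddle point expansion, one gets a shifted Airy-type integrand whose trace norm in the rescaled variables is bounded by $C e^{-c x^{3/2}}$ for $x$ in a range of the form $0\leq x \leq \eta M^{2/3}$, for some small $\eta=\eta(\theta,\delta)>0$. The $x^{3/2}$ decay is exactly the standard Airy-kernel upper-tail estimate and follows from the cubic saddle point balance $\tilde v\sim x^{1/2}$.

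For $x\geq \eta M^{2/3}$, the rescaling breaks down and we instead choose contours on which the shifted exponential $e^{x M^{1/3}\sigma_\theta(p_M)(v-w)}$ is dominated by the non-rescaled (order~$M$) decrease of $\Re(MG(v)-MG(w))$ along the deformed steep descent/ascent contours of the proof of Theorem~\ref{LGPCT}. On these contours one obtains a crude but $x$-independent bound $\|K_{u_x}\|_1\leq C e^{-c M}$, which suffices in this regime because the right-hand side of \eqref{S1LDEeq} already contains the $e^{-c_1 M}$ term. Combining the two regimes yields $\|K_{u_x}\|_1 \leq C_1 e^{-c_1 M}+C_2 e^{-c_2 x^{3/2}}$ uniformly in $x\geq 0$ and $p_M\in[\delta,\delta^{-1}]$, and plugging back into the Markov bound gives \eqref{S1LDEeq}.

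The principal obstacle is the uniform-in-$x$ trace class estimate in the moderate regime $0\leq x\leq \eta M^{2/3}$. One must verify that the contours constructed for the fixed-$x$ asymptotics in Sections~\ref{Section9}--\ref{Section10.3} can be used without modification (or with only $x$-independent modifications) to absorb the extra $e^{x M^{1/3}\sigma_\theta(p_M)(v-w)}$ factor, and that the resulting exponential bounds on the kernel decay like $e^{-c x^{3/2}}$ rather than merely being uniformly bounded. This amounts to keeping the leading $x$-dependence explicit in every step of the saddle point analysis (both on the local scale of size $M^{-1/3}$ near $v_c$ and on the global scale where tails of the contours live), and checking that the Mellin--Barnes sine factor and the ratio of gamma functions do not produce cancellations that spoil the $x^{3/2}$ rate. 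Once this careful bookkeeping is done, the remainder of the argument is a routine application of the trace-class determinant inequality.
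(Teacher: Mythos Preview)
Your overall strategy---reducing to a bound on $|1-\det(I+K_{u_x})|$ via a Markov-type inequality and then estimating the Fredholm determinant by steepest descent---matches the paper's. The paper uses the function $f_M(z)=\exp(-e^{\sigma_\alpha M^{1/3}z})$ rather than your $1-e^{-uZ}$ to pass from probabilities to the Laplace transform, and it bounds each term of the Fredholm series directly rather than invoking the trace-class inequality $|\det(I+K)-1|\le\|K\|_1 e^{\|K\|_1}$; these differences are cosmetic.

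There is, however, a real gap in your treatment of the moderate regime. You assert that the contours from the fixed-$x$ asymptotics can be used ``without modification (or with only $x$-independent modifications)'' and still yield a bound of order $e^{-cx^{3/2}}$. This is not so: along the fixed rescaled contours $C_{\mu,3\pi/4}$ and $\tilde D$ (which pass through the fixed points $\mu$ and $\mu+\rho$), the factor $e^{x(\tilde v-\tilde w)}$ contributes at best $e^{-x\rho}$, since $\Re(\tilde v-\tilde w)\le -\rho$ is the sharpest pointwise inequality available on those contours; combined with the cubic terms this gives only $e^{-cx}$, not $e^{-cx^{3/2}}$. Your own remark that the saddle balance occurs at $\tilde v\sim x^{1/2}$ already signals the remedy: the contours (equivalently, the rescaling) must move with $x$. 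The paper implements this by changing variables $v=z_c+\rho_0\tilde v$, $w=z_c+\rho_0\tilde w$ with the $x$-\emph{dependent} scale $\rho_0=\sqrt{\epsilon_0}\,M^{-1/3}x^{1/2}$. Under this scaling the linear term becomes $\sqrt{\epsilon_0}\,\sigma_\alpha\,x^{3/2}(\tilde v-\tilde w)$ while the cubic term is of order $\epsilon_0^{3/2}x^{3/2}|\tilde v|^3$, so both carry an explicit $x^{3/2}$ and the estimates of Lemma~\ref{analGS3} produce the desired $e^{-cx^{3/2}}$ decay for each summand of the Fredholm series. The constraint $\rho_0\le\epsilon$ (needed so that $z_c+\rho_0\tilde w$ stays in the disc where Lemma~\ref{analGS3} applies and so that no poles are crossed when deforming $D_M^{\epsilon,0}$) is precisely what forces the restriction $x\le\epsilon_0 M^{2/3}$ and hence the presence of the companion term $C_1e^{-c_1 M}$ in \eqref{S1LDEeq}. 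Without this $x$-dependent deformation the ``routine bookkeeping'' you describe cannot deliver the correct exponent.
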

\begin{remark}A different upper-tail estimate has been previously obtained in \cite{georgiou2013large}, which proves a large deviation principle for $\log \ZP$ with an explicit rate function. In the present setting, this corresponds to  scaling $x$ as $x=M^{2/3}  y$. More precisely, for $ y>0$ fixed, \cite[Theorem 2.2]{georgiou2013large} shows
$$\mathbb{P} \left( \mathcal F(M,N)  \geq  y M^{2/3}\right) \leq  C e^{-M I( y)},$$
 where $I(\cdot)$ is an explicit rate function. It was noted in \cite[Remark 2.9]{georgiou2013large} that $I(\epsilon) = C \epsilon^{3/2}+ o(\epsilon^{3/2})$ as $\epsilon$ goes to $0$, which is consistent with Theorem \ref{S1LDE1}. Here $C>0$ is a constant depending on $\theta$ and the ratio of $N$ and $M$. The inequality in Theorem \ref{S1LDE1} becomes relevant (i.e. not implied by the results from\cite{georgiou2013large}) in the moderate deviation regime, that is when we consider deviations of $\mathcal F(M,N)$ by $x$, whose size is smaller than $M^{2/3}$ and much larger than $0$.
\end{remark}
\begin{remark}
For very large $M$, the distribution of $\mathcal F(M,N)$ is close to the Tracy-Widom distribution by Theorem \ref{LGPCT}, and the right-hand side of \eqref{S1LDEeq} is dominated by $C_2 e^{-c_2 y^{3/2}}$ which, up to the values of $C_2$ and $c_2$ corresponds to the upper-tail behavior of the Tracy-Widom distribution.
\end{remark}
\begin{remark}
Some upper-tail estimates of one-point fluctuations have been obtained for a variety of models in the KPZ universality class, such as the KPZ equation itself \cite{corwin2013crossover, das2019fractional}, the asymmetric simple exclusion process \cite{damron2018coarsening}, the semi-discrete O'Connell-Yor polymer model \cite{seppalainen2010bounds},  longest increasing subsequences of a random permutation \cite{seppalainen1998large, lowe2001moderate}, or the Laguerre Unitary ensemble \cite{ledoux2010small} (which relates directly to exponential last passage percolation, i.e. the zero temperature limit of the log-gamma polymer). In particular, \cite{lowe2001moderate} proves a moderate deviations principle with the optimal constant $c_2=4/3$.
\end{remark}

%
\subsection{Baik-Ben Arous-P\'ech\'e asymptotics} \label{Section1.5} In the following section we describe a different scaling limit for the free energy of the log-gamma polymer model from Section \ref{Section1.2}, which is obtained when a finite number of the parameters $a_i$ and $\alpha_j$ are suitably scaled. As will be explained, one obtains another limit, which is a finite rational (in terms of the determinantal kernel) perturbation of the GUE Tracy-Widom distribution. We call the resulting distribution a {\em Baik-Ben Arous-P\'ech\'e (BBP) distribution with two sets of parameters}. We define this distribution in the following section.

BBP asymptotics with one set of parameters were first defined in the study of exponential last passage percolation in the original BBP paper \cite{baik2005phase} and their two parameter deformation was then introduced by Borodin-P\'ech\'e in \cite{BorPech}. This class of distributions have been shown to arise in some interacting particle systems \cite{baik2005phase, aggarwal2016phase, barraquand2014phase} and in the O'Connell-Yor semi-discrete directed polymer \cite{borodin2012free, TV20} when initial data or boundary conditions are suitable perturbed (as presently). They should similarly arise in all Kardar-Parisi-Zhang universality class models.

For the log-gamma polymer model, our results are the first instance of BBP asymptotics that have been obtained. In fact, one of our motivations for filling this gap now is that there is a significant interest recently in BBP type asymptotics due to the key role that they play in the general convergence theorem announced in \cite{virag2020heat}. In a sense, the results of \cite{virag2020heat} suggest that once the BBP asymptotics are established for the log-gamma polymer, it should be possible to prove convergence of the model to the KPZ fixed point.  We state exactly those needed asymptotic result as Theorem \ref{BBP} below.

\subsubsection{Borodin-P\'ech\'e's extension of the BBP distribution with two sets of parameters} We introduce the following useful notation. For $a \in \mathbb{C}$ and $\phi \in (0, \pi)$ we define the contour $C_{a,\phi}$ to be the union of $\{a + ye^{-\i\phi} \}_{y \in \mathbb{R}^+}$ and $\{a + ye^{\i \phi} \}_{y \in \mathbb{R}^+}$ oriented to have increasing imaginary part. Throughout the paper we write $\i = \sqrt{-1}$ and reserve the usual letter $i$ for indexing purposes.

\begin{definition}\label{DefBBP} Let $\rr, \rl \in \mathbb{Z}_{\geq 0}$ and fix $\vec{x} = (x_1, \dots, x_{\rr}) \in \mathbb{R}^{\rr}$ and $\vec{y} = (y_1, \dots, y_{\rl}) \in \mathbb{R}^{\rl}$ such that $\min(\vec{y}) > \max(\vec{x})$.  In addition, suppose that $a, b \in \mathbb{R}$ are such that $\min(\vec{y}) > b > a > \max(\vec{x})$ (by convention $\max(\vec{x}) = -\infty$ if $\rr = 0$ and $\min(\vec{y}) = \infty$ if $\rl = 0$) . With this data set for $r \in \mathbb{C}$
\begin{equation}\label{S1FDet}
F_{{ \rm BBP}; \vec{x}, \vec{y}}(r) = \det \left( I + K_r^{\rm BBP} \right)_{L^2(C_{a ,3\pi/4})},
\end{equation}
where $K_r^{\rm BBP}$ is defined in terms of its integral kernel
\begin{equation}\label{kernelMainS1}
\begin{split}
 &K_r^{\rm BBP}(v,v') = \frac{1}{2\pi \i} \int_{C_{b, \pi/4}} \prod_{n = 1}^{\rr} \frac{( {w} - x_n )}{({v}  - x_n  )} \prod_{m = 1}^{\rl} \frac{( y_m -  {v} )}{(y_m  -{w} )}\cdot  \frac{\exp(-v^3/3 + w^3/3 - r w + r v)}{(v-w)(w - v')} dw.
\end{split}
\end{equation}
\end{definition}

In equation (\ref{S1FDet}) the expression on the right-hand side is a {\em Fredholm determinant}; we refer the reader to Section \ref{Section2.1} for some basic definitions and facts about the latter. Explicitly, we have
\begin{equation}\label{S1FDet2}
\det \left( I + K_r^{\rm BBP} \right)_{L^2(C_{a ,3\pi/4})} = 1 + \sum_{n = 1}^\infty \frac{1}{n!} \int_{C_{a ,3\pi/4}} \cdots \int_{C_{a ,3\pi/4}} \det \left[ K_r^{\rm BBP}(v_i, v_j)\right]_{i,j = 1}^n \prod_{i = 1}^n \frac{dv_i}{2\pi \i}.
\end{equation}

The following result summarizes the basic properties we will require for $F_{{ \rm BBP}; \vec{x}, \vec{y}}$ -- its proof can be found in Section \ref{Section4.3}.
\begin{lemma}\label{S1Lemma} The integral in (\ref{kernelMainS1}) is absolutely convergent, each summand in (\ref{S1FDet2}) is an absolutely convergent integral and the resulting series is absolutely convergent for each $r \in \mathbb{C}$; in particular, the function $F_{{ \rm BBP}; \vec{x}, \vec{y}}$ from Definition \ref{DefBBP} is well-defined. Moreover, $F_{{ \rm BBP}; \vec{x}, \vec{y}}(r)$ does not depend on the choice of $a, b$ as long as $\min(\vec{y}) > b > a > \max(\vec{x})$, is entire in $r$ and defines a cumulative distribution function, when restricted to $\mathbb{R}$.
\end{lemma}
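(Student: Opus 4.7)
The plan is to verify in order: absolute convergence of the $w$-integral defining $K_r^{\rm BBP}$, of each summand in the Fredholm expansion (\ref{S1FDet2}), and of the series itself; independence of the free parameters $a,b$; entirety of $r\mapsto F_{{\rm BBP};\vec{x},\vec{y}}(r)$; and finally the cumulative distribution property.

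I would begin with pointwise bounds on the integrands. Parametrizing $C_{b,\pi/4}$ by $w=b+ye^{\pm\i\pi/4}$ gives $\Re(w^3)\sim -y^3/\sqrt{2}$ as $y\to\infty$, so $|\exp(w^3/3)|$ decays cubically; parametrizing $C_{a,3\pi/4}$ by $v=a+ye^{\pm\i 3\pi/4}$ gives $\Re(v^3)\sim y^3/\sqrt{2}$, so $|\exp(-v^3/3)|$ decays cubically as well. The rational prefactor $\prod_n(w-x_n)/(v-x_n)\prod_m(y_m-v)/(y_m-w)$ grows at most polynomially in $|v|,|w|$, and the ordering $\max(\vec{x})<a<b<\min(\vec{y})$ together with $\Re(v),\Re(v')\leq a<b\leq \Re(w)$ implies that $|v-w|,\,|w-v'|\geq b-a>0$ and that the factors $v-x_n$ and $y_m-w$ are uniformly bounded below. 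Combining these observations yields an integrable majorant for the $w$-integrand on $C_{b,\pi/4}$, hence absolute convergence of the $w$-integral, together with a bound of the form $|K_r^{\rm BBP}(v,v')|\leq A(v)$ with $A\in L^1(C_{a,3\pi/4})$, uniformly in $v'$.

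Absolute convergence of the Fredholm expansion now follows from Hadamard's inequality, which gives $|\det[K_r^{\rm BBP}(v_i,v_j)]_{i,j=1}^n|\leq n^{n/2}\prod_{i=1}^n A(v_i)$, so the $n$-th summand of (\ref{S1FDet2}) is bounded by $(n^{n/2}/n!)\|A\|_{L^1}^n$; by Stirling, $\sum_n (n^{n/2}/n!)\,M^n<\infty$ for every $M>0$. All of these bounds are locally uniform in $r\in\mathbb{C}$, since $|\exp(r(v-w))|\leq \exp(|r|(|v|+|w|))$ is absorbed by the cubic decay; each Fredholm summand is entire in $r$ by differentiation under the integral sign, so uniform convergence on compact subsets yields entirety of $F_{{\rm BBP};\vec{x},\vec{y}}$. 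For independence of $a$ and $b$, I would apply Cauchy's theorem: within the allowed ranges the $w$-integrand is holomorphic in the vertical strip $\Re(w)\in(a,\min(\vec{y}))$ (its poles at $x_n,y_m,v,v'$ all lie off this strip), and the $v$-integrand in each iterated integral is holomorphic in the strip $\Re(v)\in(\max(\vec{x}),b)$; closing with arcs at infinity, whose contribution vanishes thanks to the cubic decay, shows that the contour deformations leave $F_{{\rm BBP};\vec{x},\vec{y}}(r)$ unchanged.

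For the CDF property, I would invoke the probabilistic realization provided by Theorem \ref{BBP} in Section \ref{Section1.5}: it identifies $F_{{\rm BBP};\vec{x},\vec{y}}(r)$ as the pointwise limit in $r\in\mathbb{R}$ of CDFs of rescaled free energies of critically perturbed log-gamma polymer models, so monotonicity transfers to the limit. Continuity of $F_{{\rm BBP};\vec{x},\vec{y}}$ was already obtained from entirety; it remains to check the boundary values. The limit at $+\infty$ is immediate: as $r\to+\infty$ the factor $|\exp(r(v-w))|\leq \exp(r(a-b))\to 0$ drives $\|K_r^{\rm BBP}\|\to 0$, hence $F_{{\rm BBP};\vec{x},\vec{y}}(r)\to 1$. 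The limit at $-\infty$ is the main obstacle, since the direct Fredholm bounds worsen in that direction; I would instead deduce it by combining the pointwise convergence in Theorem \ref{BBP} with a lower-tail moderate deviation estimate for the perturbed free energy---obtained by an argument parallel to Theorem \ref{S1LDE1}---to rule out any escape of mass to $-\infty$. The remainder of the argument is a routine assembly of contour estimates and Fredholm determinant manipulations.
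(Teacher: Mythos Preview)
Your treatment of absolute convergence, independence of $a,b$, entirety, and the limit $1$ at $+\infty$ is correct and matches the paper (which packages the same estimates through Lemmas \ref{FDCT}, \ref{FDKCT}, \ref{AT}). The gap is in the rest of the CDF property. Invoking Theorem \ref{BBP} is circular: its proof applies Lemma \ref{prob}, which requires knowing in advance that $F_{{\rm BBP};\vec{x},\vec{y}}$ is a continuous probability distribution function---exactly what you are trying to show. The paper avoids this by using instead the convergence of \emph{Laplace transforms}: combining Theorems \ref{LGPT1} and \ref{mainThmGUE} (neither of which depends on Lemma \ref{S1Lemma}) yields $F_{{\rm BBP};\vec{x},\vec{y}}(r)=\lim_M \mathbb{E}\bigl[e^{-u(r,M,N)\,Z^{M+\rl,N+\rr}}\bigr]$, from which monotonicity, nonnegativity and real-valuedness are immediate since each prelimit term is a Laplace transform of a positive random variable with $u$ decreasing in $r$.

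For the limit at $-\infty$, a lower-tail moderate deviation bound would be substantial new work (Theorem \ref{S1LDE1} is upper-tail only, and lower tails for polymer free energies are typically harder). The paper instead proves the pointwise inequality $F_{{\rm BBP};\vec{x},\vec{y}}(r)\leq F_{\rm GUE}(r)$ by a direct partition-function comparison: on an event of probability $1-O(e^{-M/2})$ one has $Z(1,1;M+\rl,N+\rr)\geq M^{-(\rr+\rl+1)}\,Z(\rl+1,\rr+1;M+\rl,N+\rr)$, the right-hand side being the homogeneous partition function. Feeding this into the Laplace-transform identity and absorbing the polynomial factor $M^{\rr+\rl+1}$ into an $\epsilon$-shift of $r$ gives the inequality, and $F_{\rm GUE}(r)\to 0$ as $r\to -\infty$ finishes.
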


We call $F_{{ \rm BBP}; \vec{x}, \vec{y}}$ the {\em Baik-Ben Arous-P\'ech\'e distribution with two sets of parameters}. The name comes from the fact that when $\rl = 0$ we have that $F_{{ \rm BBP}; \vec{x}, \vec{y}}$ becomes the usual Baik-Ben Arous-P\'ech\'e distribution \cite{baik2005phase} as can be deduced from \cite[Lemma C.2]{BCF}. When $\rr = \rl = 0$ then $F_{{ \rm BBP}; \vec{x}, \vec{y}}$ becomes the Tracy-Widom distribution. Note that in its full generality, $F_{{ \rm BBP}; \vec{x}, \vec{y}}$ was first introduced in \cite{BorPech}, hence we could call it the Borodin-P\'ech\'e distribution, though for shorthand we will refer to all one and two parameter distributions as BBP.

\begin{remark} In \cite{BorPech} the authors considered a generalization of the extended Airy kernel, which they called the {\em extended Airy kernel with two sets of parameters}. If one sets the time parameters $t_1 = t_2$ in \cite[equation (4)]{BorPech} one precisely arrives at $F_{{ \rm BBP}; \vec{x}, \vec{y}}$. In particular, $F_{{ \rm BBP}; \vec{x}, \vec{y}}$ arises from the extended Airy kernel with two sets of parameters the same way the GUE Tracy-Widom distribution arises from the usual extended Airy kernel.
\end{remark}

\begin{remark}
There are other asymptotic problems which one could pursue for the log-gamma polymer based on our preasymptotic formula. This includes studying the model where inhomogeneity parameters are decaying or growing according to a certain profile (see related work in \cite{johansson2008some, barraquand2020stochastic}), or taken random under some distribution (see related work in \cite{emrah2016limit, krajenbrink2020tilted}).
\end{remark}
\subsubsection{Asymptotic result} \label{sec:BBPloggamma}
Going back to the log-gamma polymer from Section \ref{Section1.2} we consider a different scaling regime of the model. Let us first state the scalings and asymptotic result, and we will provide some explanations for this specific choice of scalings in Section \ref{sec:heuristic} below.

We fix $\rr, \rl \in \mathbb{Z}_{\geq 0}$ and assume $M, N \geq 1$. We fix $\vec{x} = (x_1, \dots, x_{\rr}) \in \mathbb{R}^{\rr}$ and $\vec{y} = (y_1, \dots, y_{\rl}) \in \mathbb{R}^{\rl}$ such that $\min(\vec{y}) > \max(\vec{x})$ (if $\rr = 0$ or $\rl = 0$ this condition is void). We  also fix $\theta > 0$, $p_{M} = N/M$ and $g, h_\theta, \sigma_\theta$ as in Section \ref{Section1.3}. With this choice of parameters we consider the following scaling
\begin{equation}\label{scaleParS1}
\begin{split}
&a_j(M,N) = g^{-1}(p_M) + x_j \sigma_\theta^{-1}(p_M) M^{-1/3} + o(M^{-1/3}) \mbox{ for $j = 1, \dots, \rr$}, \\
&\alpha_i(M,N) = g^{-1}(p_M) + y_i \sigma_\theta^{-1}(p_M)  M^{-1/3} + o(M^{-1/3}) \mbox{ for $i = 1, \dots, \rl$},\\
&a_j(M,N) = 0 \mbox{ for $j = \rr+1, \dots, N + \rr$} \hspace{2mm} \alpha_i(M,N) = \theta \mbox{ for $i = \rl+1, \dots,  M + \rl$},
\end{split}
\end{equation}
where we always assume that $M$ is sufficiently large so that $\alpha_i - a_j > 0$ for $j = 1, \dots, N + \rr$ and $i = 1, \dots, M + \rl$ (this is possible by our assumption that $\min(\vec{y}) > \max(\vec{x})$ and $p_M$ is bounded away from $0$ and $\infty$).

In this section we consider the one-point asymptotics of the rescaled free energy
\begin{equation}
\label{eq:rescaledpartitionfunctionV2}
\mathcal F(M + \rl, N + \rr) = \frac{\log Z^{M + \rl, N + \rr}+ M h_{\theta}(p_M)}{M^{1/3}\sigma_{\theta}(p_M)}.
\end{equation}
when $N,M \rightarrow \infty$ and $p_M = N/M$ remains bounded away from $0$ and $\infty$. We mention here that the distribution of $Z^{M + \rl, N + \rr}$ is invariant under separate permutations of the parameters $( \alpha_1, \dots,\alpha_{M + \rl}) $ and $(a_1, \dots, a_{N + \rr})$. This can be deduced from \cite{COSZ}, which relates the distribution of the partition function $Z^{M + \rl, N + \rr}$ to class-one $\mathfrak{gl}_n(\R)$-Whittaker functions or alternatively from Theorem \ref{LGPT1}. Thus in (\ref{scaleParS1}) we could have rescaled any $\rr$ (resp.  $\rl$) of the $a_j$ (resp. $\alpha_i$) parameters, and not necessarily those of lowest index,  without affecting the distribution of $Z^{M + \rl, N + \rr}$ and correspondingly $\mathcal F(M + \rl, N + \rr)$.

The below result about this scaling regime is proved in Section \ref{Section4.3}.
\begin{theorem}\label{BBP}
Let $\theta > 0$ and $\delta \in (0,1)$ and assume the same notation as in the beginning of the section. Let $\mathcal F(M + \rl, N + \rr)$ be as in  \eqref{eq:rescaledpartitionfunctionV2} and assume that we let $M,N$ go to infinity in such a way that the sequence $p_M=N/M \in [\delta, \delta^{-1}]$ for all sufficiently large $M$. Then for all $r\in \R$
\begin{equation}\label{BBPConv}
\lim_{M \rightarrow \infty} \mathbb{P} \left( \mathcal F(M + \rl, N + \rr)  \leq r\right) = F_{{ \rm BBP}; \vec{x}, \vec{y}}(r) ,
\end{equation}
where $F_{{ \rm BBP}; \vec{x}, \vec{y}}$ is as in Definition \ref{DefBBP}.
\end{theorem}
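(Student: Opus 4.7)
The plan is to mirror the proof of Theorem \ref{LGPCT}. Starting from the Fredholm determinant formula of Theorem \ref{LGPT1}, one uses Laplace inversion to convert it into a statement about $\P(\mathcal{F}(M+\rl, N+\rr) \leq r)$, and then carries out essentially the same steepest descent analysis. The only new ingredient is that the finitely many perturbed parameters introduce extra factors in the Fredholm kernel which, after rescaling, produce the rational prefactor in \eqref{kernelMainS1}.

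More precisely, Theorem \ref{LGPT1} expresses $\E\bigl[\exp(-u Z^{M+\rl, N+\rr})\bigr]$ as a Fredholm determinant whose kernel carries, relative to the homogeneous case of Theorem \ref{LGPCT}, an additional multiplicative factor of the form
$$\prod_{j=1}^{\rr} \frac{\Gamma(w - a_j(M,N))}{\Gamma(v - a_j(M,N))} \cdot \prod_{i=1}^{\rl} \frac{\Gamma(\alpha_i(M,N) - v)}{\Gamma(\alpha_i(M,N) - w)}.$$
Setting $u = u_M(r) := \exp\bigl(M h_\theta(p_M) - r M^{1/3} \sigma_\theta(p_M)\bigr)$, one has $\exp(-u_M(r) Z^{M+\rl, N+\rr}) \to \mathbf{1}\{\mathcal{F}(M+\rl, N+\rr) \leq r\}$ in a sense that, combined with an upper-tail estimate for $\mathcal{F}(M+\rl, N+\rr)$ analogous to Theorem \ref{S1LDE1}, reduces the theorem to showing the convergence $\det(I + K_{u_M(r)}) \to F_{{\rm BBP}; \vec x, \vec y}(r)$. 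Rescaling $v = g^{-1}(p_M) + \tilde v \, \sigma_\theta^{-1}(p_M) M^{-1/3}$ and similarly for $w$, the exponential portion of the kernel is exactly the object analyzed in the proof of Theorem \ref{LGPCT} (it depends only on the bulk parameters, not on the finite perturbation) and after rescaling converges to $\exp(-\tilde v^3/3 + \tilde w^3/3 + r\tilde v - r\tilde w)$. For the new Gamma-ratio factor, the arguments all scale like $O(M^{-1/3})$, so using $\Gamma(\varepsilon) \sim 1/\varepsilon$ as $\varepsilon \to 0$ each ratio converges pointwise to a factor $(\tilde v - x_j)/(\tilde w - x_j)$ or $(y_m - \tilde w)/(y_m - \tilde v)$, whose product reproduces the rational prefactor in \eqref{kernelMainS1}.

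To pass to the limit under the Fredholm expansion, one needs pointwise convergence of the kernel, a uniform-in-$M$ Gaussian/cubic decay estimate on the tails of the rescaled contours, and a uniform integrable bound near the critical point. The exponential piece is controlled by the estimates of Sections \ref{Section9} and \ref{Section10.3}; the new Gamma-ratio factor is bounded on the tails (where the arguments have large modulus) provided the contours are chosen so that the $v$-contour passes to the right of every $a_j(M,N)$ and the $w$-contour passes to the left of every $\alpha_i(M,N)$. This pole-separation requirement is the pre-asymptotic counterpart of the ordering $\min(\vec y) > b > a > \max(\vec x)$ imposed in Definition \ref{DefBBP}.

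The main obstacle is the contour choice. In the proof of Theorem \ref{LGPCT} the contours cross at the critical point $g^{-1}(p_M)$, but here this point sits at distance $O(M^{-1/3})$ from each of the perturbed parameters $a_j(M,N)$ and $\alpha_i(M,N)$. One must locally perturb the contours of Theorem \ref{LGPCT} inside an $O(M^{-1/3})$ neighborhood of $g^{-1}(p_M)$ so as to separate the two families of poles correctly, while preserving the global descent property used for the tail bounds. The perturbation vanishes in the original variable but becomes an $O(1)$ deformation after rescaling by $M^{1/3}$, and must be arranged so that the rescaled contours tend in the $M \to \infty$ limit to the BBP contours $C_{a, 3\pi/4}$ and $C_{b, \pi/4}$; verifying compatibility of this local perturbation with the global steepest descent picture, and establishing boundedness of the Gamma-ratio factor on the perturbed contours, is the most technical part of the argument.
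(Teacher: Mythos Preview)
Your outline matches the paper's approach: the proof repeats the argument of Theorem~\ref{LGPCT}, with the finitely many perturbed parameters contributing a Gamma-ratio factor that after rescaling produces the rational prefactor in the BBP kernel, and the local $O(M^{-1/3})$ contour perturbation you describe is exactly how the paper handles the pole separation (via the shift by $\mu\sigma_\alpha^{-1}M^{-1/3}$ and the contour $\tilde D$ in Definition~\ref{DefScaleS3}). Two points deserve correction.

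First, your Gamma-ratio factor is inverted. The kernel in Theorem~\ref{LGPT1} carries $\prod_{n=1}^{\rr}\frac{\Gamma(v-a_n)}{\Gamma(w-a_n)}\prod_{m=1}^{\rl}\frac{\Gamma(\alpha_m-w)}{\Gamma(\alpha_m-v)}$, not its reciprocal; using $\Gamma(\varepsilon)\sim 1/\varepsilon$ this gives limiting factors $(\tilde w-x_j)/(\tilde v-x_j)$ and $(y_m-\tilde v)/(y_m-\tilde w)$, which is what matches \eqref{kernelMainS1}. Second, for the Laplace inversion step the paper does not invoke an upper-tail estimate for the perturbed model (Theorem~\ref{S1LDE1} is proved only for $\rr=\rl=0$, and extending it would be extra work). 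Instead it applies the elementary probability Lemma~\ref{prob}: once $\mathbb{E}[f_M(\mathcal F-r)]\to F_{{\rm BBP};\vec x,\vec y}(r)$ is established and one knows $F_{{\rm BBP};\vec x,\vec y}$ is a continuous CDF (Lemma~\ref{S1Lemma}), distributional convergence follows immediately with no tail bound needed.
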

\begin{remark}\label{S1RemBBP}
The distribution of $\mathcal F(M,N)$ and $\mathcal F(N,M)$ are equal, up to changing the parameters $(\vec x, \vec y)$ into $(-\vec y, -\vec x)$. Furthermore, the BBP distribution is left invariant by such an exchange of parameters, as can be seen by performing the change of variables  $(v,w)\to(-w,-v)$ in \eqref{S1FDet2}. Hence,
as in Remark \ref{rem:symmetry}, it suffices to prove Theorem \ref{BBP} for $p_M\in [\delta, 1]$.
\end{remark}
\begin{remark}\label{RemBBP} Convergence to the one parameter BBP distribution (i.e. $F_{{ \rm BBP}; \vec{x}, \vec{y}}$ with $\rl = 0$) has been previously established for the O'Connell-Yor semi-discrete polymer in \cite{BCF}. In addition, the O'Connell-Yor semi-discrete polymer with vertical and horizontal boundaries is considered in \break \cite{BCFV, TV20}, and \cite{TV20} established convergence to the two parameter BBP distribution $F_{{ \rm BBP}; \vec{x}, \vec{y}}$. The starting Laplace transform Fredholm determinant formulas that are used in  \cite{BCFV,TV20}  are of a similar form as those that we use here. For the O'Connell-Yor  polymer, the core of the asymptotic analysis is based on understanding the behavior of the real part of $F_{\kappa}(z) = \log \Gamma(z) - \kappa z^2/2 + f_\kappa z$ along various contours, where $\kappa>0$ is a slope parameter and $f_{\kappa}$ is some appropriate constant.  In contrast, the core of our paper's asymptotic analysis is based on understanding the behavior of the real part of  $G_{\kappa, \theta} (z) =\kappa \Psi (z) -  \Psi (\theta - z) - g_{\theta, \kappa} z$, where $\Psi$ is the digamma function of (\ref{digammaS1}),  $\kappa>0$ is a slope parameter, $\theta>0$ is the parameter of the inverse gamma variables, and  $g_{\theta, \kappa}$ is some appropriately chosen constant. Unlike $F_{\kappa}(z)$, the function $G_{\kappa, \theta}(z)$ involves two parameters $(\kappa, \theta)$ and two digamma functions, which makes the study of its real part along different contours in Section \ref{Section9} quite involved.	
\end{remark}

\begin{remark}\label{RemBBP2}
The result in \cite{KQ} corresponds to setting $M = N$ and $\rr = \rl = 0$ in Theorem \ref{BBP}. In \cite{KQ} the authors dealt with the case $\kappa = 1$ of our formulas, which also significantly simplifies the analysis and they manage to use the same contours as in \cite{BCFV} since one gets nice cancellations coming from the fact that the coefficients in front of $\Psi(z)$ and $-\Psi(\theta - z)$ are the same in $G_{\kappa, \theta} (z)$. When $\kappa \neq 1$ a completely different choice of contours and analysis becomes necessary to obtain useful estimates on the real part of $G_{\kappa, \theta}(z)$.
\end{remark}

\subsection{Heuristic explanation for the scalings}
\label{sec:heuristic}
Let us now explain why we have scaled the parameters $a_j$ and $\alpha_i$ as in \eqref{scaleParS1} and provide some heuristic interpretations.  For simplicity of the discussion, assume that only one column has a different shape parameter, that is, we assume that $\rr=0, \rl=1$ and $\alpha_1 \in (0, \theta)$, such that the shape parameters of weights  are $\theta_{1,j}=\alpha_1<\theta$ along the fist column, and for all $i\geq 2, j\geq 1$, $\theta_{i,j}=\theta$. This means that the polymer weights $w_{i,j}$ will be typically larger on the first column, and polymer paths may spend more time there.

Using the law of large numbers for the free energy of the i.i.d. model (see \cite{Sep12} or Theorem \ref{HLConv} in our present work), and the fact that $\mathbb E\left[ \log w_{1,j} \right]=-\Psi(\alpha_1)$, one expects that for $N=\lfloor pM\rfloor$ the contribution to the partition function of paths which exit the first column near the location $(1, xM)$ to be approximately
$$\exp\Big( -M x\Psi(\alpha_1) - M h_{\theta}(p-x)\Big).$$
Thus, we expect the law of large numbers for the free energy will be given by
\begin{equation}
\lim_{M\to\infty} \frac{\log Z^{M+1, N} }{M}  = \max_{x\in (0,p)} \Big\lbrace - x\Psi(\alpha_1) - h_{\theta}(p-x)\Big\rbrace.
\label{eq:LLNGaussiancase}
\end{equation}
When $\alpha_1$ is large enough, the maximum in \eqref{eq:LLNGaussiancase} is achieved at $x=0$, but when  $\alpha_1<\theta_c$ for a certain critical value $\theta_c$, the maximum is attained at some $x>0$. The critical value  $\theta_c$ is such that
$$ -\Psi(\theta_c)  = -h'_{\theta}(p), \quad \textrm{or equivalently}\quad \theta_c=g^{-1}(p). $$
This argument can be extended to several rows and columns being perturbed, and leads to the scalings considered in \eqref{scaleParS1}.

%
%
%

When $\alpha_1<\theta_c$, we expect that polymer paths yielding the main contribution to the free energy will stay on the first column for a length $xM$, where $x$ is the maximizer in \eqref{eq:LLNGaussiancase}. The fluctuations of weights on the first column would dominate and this would result in Gaussian asymptotics. This could be proved through an asymptotic analysis of the Fredholm determinant as in the proof of Theorem \ref{BBP}, or via more probabilistic methods, but we leave this out of the scope of the paper.

When, however, $\alpha_1>\theta_c$, the first column being different will have no effect at large scale in the scalings and statistics (this can be rigorously deduced from our Theorems \ref{LGPCT} and \ref{BBP}).

Consider now the critical case and let $\alpha_1=g^{-1}(p)+ y_1 \sigma_\theta^{-1}(p_M) M^{-1/3}$ as in \ref{scaleParS1}. It is expected that paths yielding the main contribution to the partition function will exit the first column at a random vertical coordinate of order $M^{2/3}$ (this heuristic claim is partly justified by the results of \cite{Sep12} on the transversal exponent of paths under the quenched polymer measure). This $M^{2/3}$ is precisely the scale which balances the energy/entropy competition: the contribution to the free energy of the weights on the first column will have roughly the same variance as the contribution of bulk weights, the relative proportion being controlled by the parameter $y_1$. This results in BBP asymptotics interpolating between Gaussian and GUE Tracy-Widom statistics.

\begin{remark}
Using results from \cite{georgiou2013large} on the i.i.d model, there is another way to interpret the value of the critical point. Denote by $Z(m,n; M,N)$ the partition function between points $(m,n)$ and $(M,N)$ of the i.i.d. log-gamma polymer model as in Section \ref{Section1.3}. It is proved in \cite[Theorem 4.1]{georgiou2013large} that for any $n,m \geqslant 1$, the vertical increments ratios  $Z(m,n; M,N)/Z(m,n+1;M,N)$ almost surely converge, as $M$ and $N=\lfloor pM\rfloor$ go to infinity, to an inverse gamma random variable of parameter $g^{-1}(p)$ (while the horizontal increments ratios $Z(m,n; M,N)/Z(m+1, n;M,N)$ converge to inverse gamma random variables of parameter $\theta-g^{-1}(p)$). Hence, the critical value of $\alpha_1$ is precisely such that the weights on the first column have the same size as the vertical increments ratios in the i.i.d model.
\end{remark}

\subsection{Outline}

Section \ref{Section2} establishes the Fredholm determinant Laplace transform formula stated as Theorem \ref{LGPT1}. Section \ref{Section3} contains the asymptotic analysis of this Fredholm determinant formula. Section \ref{Section4} concludes with proofs of Theorems \ref{LGPCT}, \ref{S1LDE1}, \ref{BBP} and Lemma \ref{S1Lemma}.
Appendix \ref{Section9} records properties of the digamma function $\Psi$ and proves some technical lemmas about the function $G_{M,N}$ arising in Section \ref{Section3}. Finally, Appendix \ref{Section10} contains classic estimates for the gamma function and uses them to prove a number of estimates used in Section \ref{Section2} and Section \ref{Section3}.

\subsection*{Acknowledgments}
The authors wish to thank B\'alint Vir\'ag for discussions related to \cite{virag2020heat} and for pointing out the need to prove BBP asymptotics for the log-gamma polymer. E.D. was partially supported by a Minerva Foundation Fellowship.  I. C. was partially supported by the NSF grants DMS:1811143 and DMS:1664650 and the Packard  Foundation  through  a  Packard  Fellowship  for  Science  and Engineering.

\section{Laplace transform formula}\label{Section2}
The goal of this section is to obtain a formula for the Laplace transform $\mathbb{E}\big[\exp(-u\ZP)\big]$ of the partition function for the log-gamma polymer which is {\em suitable} for asymptotic analysis. This result is given as Theorem \ref{LGPT1} and  expresses $\mathbb{E}\big[\exp(-u\ZP)\big]$ in terms of a Fredholm determinant (see Section \ref{Section2.1} for a definition and basic properties of the latter).
To be suitable for asymptotic analysis means that the contour for the $L^2$ space of the Fredholm determinant, as well as the contour used in defining the kernel (via a contour integral) should be steep descent contours for the integrand functions in the scaling $N, M \rightarrow \infty$ that we eventually perform.

There are two instances prior to this paper of Fredholm determinant formulas for $\mathbb{E}\big[\exp(-u\ZP)\big]$, which have yielded asymptotics. The first is in \cite{BCR} where a {\em small} contour integral formula was derived under the assumption that $a_i$ are sufficiently small and positive. The asymptotic analysis in \cite{BCR} was performed when $\alpha_i$ are all equal to a fixed $\theta > 0$ and $a_i = 0$, under the assumption that $\theta$ is sufficiently close to zero. The second is in \cite{KQ}, where starting from the formulas in \cite{BCFV} the authors were able to derive a {\em large} contour integral formula that allows one to perform the asymptotic analysis when $\alpha_i = \theta$ and $a_i = 0$ without any restriction on $\theta$. We mention that the asymptotic analysis in both \cite{BCR} and \cite{KQ} has only been performed when $M = N$.

It turns out that neither of the formulas in \cite{BCR} and \cite{KQ} are very good for asymptotics when $M \neq N$ and $\theta > 0$ is arbitrary. The small contour formulas of \cite{BCR} are immediately inapplicable as the contours only exist under the assumption that $\theta$ is small enough, which leaves the large contour formulas from \cite{KQ, BCFV}. When $M = N$ the integrand functions in these formulas enjoy a certain symmetry, that dramatically simplifies their analysis and allows one to establish that they behave nicely along certain vertical contours, which are suitable for asymptotics. The approach in \cite{KQ} is to work with these vertical contours.  When $M \neq N$ the behavior of the integrand functions along vertical contours becomes unclear -- we believe (although we haven't proved this) that the vertical lines are no longer descent contours. On this account, for the general $M \neq N$ case we needed to develop a formula with different contour choices.

We discovered that the kernel in the Fredholm determinant formula for $\mathbb{E}\big[\exp(-u\ZP)\big]$ behaves nicely along lines of slope $\pm 1$ and this works for any $M, N \geq 1$. The necessary estimates of the kernel along these contours will be introduced later in Section \ref{Section3} as Lemmas \ref{analGS3}, \ref{ReleL1S3}, \ref{horGS3} and Proposition \ref{PKBS3}, whose proofs take up the majority of Section \ref{Section9} and Section \ref{Section10.3}.\\

Once we have discovered a suitable choice of contours for our asymptotic analysis we face the immediate obstacle of ensuring that $\mathbb{E}\big[\exp(-u\ZP)\big]$ can be written as a Fredholm determinant involving precisely these contours -- there are several problems in doing this and the purpose of this section is to overcome them all. We elaborate on these problems below.

Firstly, there is no generic formula for $\mathbb{E}\big[\exp(-u\ZP)\big]$ and instead one is forced to start with the formulas in \cite{BCFV} that are for a certain polymer model that mixes Sepp{\" a}l{\"a}inen's log-gamma polymer \cite{Sep12} and the O'Connell-Yor semi-discrete polymer \cite{OCY}. This mixed polymer is a one-parameter generalization of the log-gamma polymer, which in addition to the $\alpha_i$'s and $a_j$'s depends on a positive parameter $\tau$. This model is recalled in Section \ref{Section2.2}, where it is also established that its partition function $\ZP(\tau)$ converges weakly in the $\tau \rightarrow 0+$ limit to the partition function $\ZP$ of the log-gamma polymer. The benefit of working with $\ZP(\tau)$ is that in \cite{BCFV} the authors established a general formula for its Laplace transform $\mathbb{E}\big[\exp(-u\ZP(\tau))\big]$. The idea then is to start from the formula for $\mathbb{E}\big[\exp(-u\ZP(\tau))\big]$, deform the contours in that formula to the suitable contours we want in our case and send $\tau$ to $0+$. Since the functions that are being integrated over these contours are meromorphic, one has a certain level of flexibility in deforming one set of contours to another that {\em is} suitable; however, as we are dealing with infinite contours, this necessitates that we justify the deformation near infinity.  In addition, sending $\tau$ to $0+$ requires certain estimates near infinity that would allow the appropriate application of the dominated convergence theorem. Obtaining estimates near infinity is a second problem we face, which is resolved with the technical Propositions \ref{PE1v2}, \ref{PE2v2} and \ref{PE3v2}.

A third problem we face is that the formula for $\mathbb{E}\big[\exp(-u\ZP(\tau))\big]$ in \cite{BCFV} is only correct under the additional assumption that $\min(\vec{\alpha}) - \max(\vec{a}) > 1$ while we are interested in the fully general case of $\min(\vec{\alpha}) - \max(\vec{a}) > 0$. We recall the formula from \cite{BCFV} for the correct parameter range in Proposition \ref{BCT1} and explain the issues with this formula for general parameters in Remark \ref{RemCont}. In order to fix the third problem we derive a new formula for $\mathbb{E}\big[\exp(-u\ZP(\tau))\big]$ in Theorem \ref{BCT1v2}, which holds in the general case $\min(\vec{\alpha}) - \max(\vec{a}) > 0$. The way Theorem \ref{BCT1v2} is proved is fairly involved and takes up the majority of Section \ref{Section2.4}. The basic idea of the proof, which will be elaborated significantly later, is to start from Proposition \ref{BCT1}, which works when $\min(\vec{\alpha}) - \max(\vec{a}) > 1$ and do the deformation of the contours for that range. Then one performs an analytic continuation argument to extend the validity of the result to the full range of $\min(\vec{\alpha}) - \max(\vec{a}) > 0$. As part of the proof, we required a statement detailing when a Fredholm determinant depending on a complex parameter $z$ is analytic in that parameter -- see Lemma \ref{AT}.

The above few paragraphs give a sketch of the approach we will be taking in this section. The argument is fairly involved, but the guiding principle is that we want to start from the formulas for $\mathbb{E}\big[\exp(-u\ZP(\tau))\big]$ in Proposition \ref{BCT1}, deform the contours in that formula, perform an analytic continuation in a suitable parameter and then send $\tau \rightarrow 0+$. Although simple in appearance, this sequence of steps requires one to obtain fairly detailed estimates of the integrand functions in our formulas and constantly be careful in avoiding the large number of poles coming from gamma and sine functions that are part of these functions.\\

%
\subsection{Background on Fredholm determinants}\label{Section2.1}

We present a brief background on Fredholm determinants and state and prove a few key technical lemmas that we will use throughout this section. A reader may skip this background material and refer back to it when we begin to manipulate Fredholm determinants in the later parts of this section.
 For a general overview of the theory of Fredholm determinants, the reader is referred to \cite{Simon} and \cite{Lax}. For our purposes the definition below is sufficient and we will not require additional properties.

\begin{definition}
Fix a Hilbert space $L^2(X, \mu)$, where $X$ is a measure space and $\mu$ is a measure on $X$. When $X = \Gamma$, a simple (oriented) piecewise smooth contour in $\mathbb{C}$, we write $L^2(\Gamma)$ where for $z \in \Gamma$, $d\mu(z)$ is understood to be $\frac{dz}{2\pi \i }$. Here and throughout the paper, we denote the imaginary unit $\i = \sqrt{-1}$, to distinguish it from $i$ which will often be used as an index parameter.
\end{definition}
Let $K$ be an {\em integral operator} acting on $f\in L^2(X,\mu)$ by $Kf(x) = \int_XK(x,y)f(y)d\mu(y)$.  $K(x,y)$ is called the {\em kernel} of $K$ and we will usually assume that $K(x,y)$ is continuous in both $x$ and $y$. If $K$ is a {\em trace-class} operator then one defines the Fredholm determinant of $I + K$, where $I$ is the identity operator, via
\begin{equation}\label{fredholmDefS2}
\det(I + K)_{L^2(X)} := 1 + \sum_{n = 1}^\infty \frac{1}{n!} \int_X \cdots \int_X \det \left[ K(x_i, x_j)\right]_{i,j = 1}^n \prod_{i = 1}^nd\mu(x_i),
\end{equation}
where the latter sum can be shown to be absolutely convergent (see \cite{Simon}).

The expression appearing on the RHS of (\ref{fredholmDefS2}) can be absolutely convergent even if $K$ is not trace-class. Whenever $X$ and $K$ are such that the RHS in (\ref{fredholmDefS2}) is absolutely convergent, we will still call it $\det (I + K)_{L^2(X)}$. The latter is no longer a Fredholm determinant, but some numeric quantity we attach to the kernel $K$. Of course, if $K$ is the kernel of a trace-class operator on $L^2(X)$ this numeric quantity agrees with the Fredholm determinant. Doing this allows us to work on the level of numbers throughout most of the paper, and avoid constantly checking if the kernels we use represent a trace-class operator.

In some of our applications we will have that the kernel $K(x,y)$ depends in addition on a complex parameter $z$, and then we write $K_z(x,y)$ to reflect this dependence. A basic question which will be important in our analysis is when can one deduce that $\det(I + K_z)_{L^2(X)}$ is not just well-defined (meaning that each summand in (\ref{fredholmDefS2}) is well-defined and the sum is absolutely convergent) but also analytic in $z$. We provide sufficient conditions for this to hold in Lemma \ref{AT} below. Our proof of that lemma uses the following result.
\begin{proposition}\cite[Theorem 1]{Mattner}\label{Mattner}
Let $(\Omega, \mathcal{A}, \mu)$ be a measure space, $G \subset \mathbb{C}$ an open set and $\mathcal{B}(G)$ the $\sigma$-algebra generated by all open subsets of $G$. Suppose that $f: G \times \Omega \rightarrow \mathbb{C}$ is a function that satisfies the following assumptions:
\begin{enumerate}
\item $f(z, \cdot)$ is $\mathcal{A}$-measurable for every $z \in G$;
\item $f(\cdot, \omega)$ is holomorphic in $G$ for every $\omega \in \Omega$;
\item $\int_{\Omega} |f(\cdot, w)| d\mu(\omega)$ is locally bounded, that is, for all $z_0\in G$ there exists $\delta>0$ such that
$$ \sup_{z \in G, |z-z_0| \leq \delta} \int_\Omega |f(z,w)|d\mu(\omega) < \infty.$$
\end{enumerate}
Then $\int_{\Omega}f(z, \omega) d\mu(\omega)$ is holomorphic in $G$ and may be differentiated under the integral. More precisely, we have for any $n \geq 0$ the following conclusions:
\begin{enumerate}
\item $ \partial_z^n f$ is $\mathcal{B}(G) \otimes \mathcal{A}$-measurable and, for every $A \subset G$, $\sup_{z \in A}|\partial_z^n f(z, \cdot)|$ is $\mathcal{A}$-measurable;
\item for all compact $K\subset G$
$$\int_{\Omega}\, \sup_{z \in K}  \left| \partial_z^n f(z,\omega) \right|d\mu(\omega) < \infty;$$
\item $\int_{\Omega} f(z, \omega) d\mu(\omega)$ is holomorphic in $G$ and for each $z\in G$,
$$\partial_z^n \int_{\Omega} f(z,\omega) d\mu(\omega) = \int_{\Omega} \partial_z^n f(z,\omega)d\mu(\omega).$$
\end{enumerate}
\end{proposition}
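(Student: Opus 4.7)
The plan is to use Cauchy's integral formula to identify $F(z) := \int_\Omega f(z, \omega)\, d\mu(\omega)$ with a Cauchy-type integral of a bounded function, thereby transferring the holomorphy of the integrand directly to $F$. The crucial move throughout is that the local $L^1$-boundedness in condition (3), combined with Cauchy's integral representations on small circles, delivers all the integrability needed to justify each Fubini swap and each pointwise estimate.

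For holomorphy of $F$, fix $z_0 \in G$, choose $\delta > 0$ as in condition (3), and set $M := \sup_{|z - z_0| \leq \delta} \int_\Omega |f(z, \omega)|\, d\mu(\omega) < \infty$; in particular, $|F| \leq M$ on the closed $\delta$-ball around $z_0$. For $z$ with $|z - z_0| < \delta/2$, Cauchy's formula applied to the holomorphic $f(\cdot, \omega)$ on the circle $|w - z_0| = \delta$ reads $f(z, \omega) = \frac{1}{2\pi \i} \oint \frac{f(w, \omega)}{w - z}\, dw$. Since $|w - z| \geq \delta/2$ on this circle, the double integrability
\begin{equation*}
\int_{|w - z_0| = \delta} \int_\Omega \frac{|f(w, \omega)|}{|w - z|}\, d\mu(\omega)\, |dw| \leq \frac{2}{\delta} \cdot 2\pi \delta \cdot M < \infty
\end{equation*}
permits Fubini's theorem, yielding
\begin{equation*}
F(z) = \frac{1}{2\pi \i} \oint_{|w - z_0| = \delta} \frac{F(w)}{w - z}\, dw.
\end{equation*}
The right-hand side is a Cauchy-type integral of a bounded measurable function, hence holomorphic in $z$ on $|z - z_0| < \delta$ by standard theory. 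Since $z_0 \in G$ was arbitrary, $F$ is holomorphic on $G$.

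For conclusions (1)-(3), joint $\mathcal{B}(G) \otimes \mathcal{A}$-measurability of $\partial_z^n f$ follows by writing it as a pointwise limit of difference quotients, each jointly measurable; and measurability of $\sup_{z \in A}|\partial_z^n f(z, \cdot)|$ reduces, by continuity of $\partial_z^n f(\cdot, \omega)$, to a countable supremum over a dense subset of $A$. For the integrability over compact $K \subset G$, cover $K$ by finitely many half-balls $B(z_j, r_j/2)$ with $\overline{B(z_j, r_j)} \subset G$ chosen so that condition (3) applies at $z_j$ with $\delta_j \geq r_j$. For $z \in B(z_j, r_j/2)$, the bound $|w - z| \geq r_j/2$ on the circle $|w - z_j| = r_j$ combined with Cauchy's formula for derivatives gives
\begin{equation*}
|\partial_z^n f(z, \omega)| \leq \frac{n!}{2\pi} \oint_{|w - z_j| = r_j} \frac{|f(w, \omega)|}{|w - z|^{n+1}}\, |dw| \leq C_{n,j} \oint_{|w - z_j| = r_j} |f(w, \omega)|\, |dw|,
\end{equation*}
whence Tonelli's theorem together with condition (3) yield $\int_\Omega \sup_{z \in B(z_j, r_j/2)} |\partial_z^n f(z, \omega)|\, d\mu(\omega) < \infty$; summing over the finite cover produces the required bound. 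The derivative formula at $z_0$ then follows by applying Cauchy's integral formula for the $n$-th derivative to the (now known holomorphic) $F$ and swapping integrals via Fubini, whose hypotheses are met by the very same Cauchy bounds.

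The main obstacle throughout is a bookkeeping one: the scalar bound $\sup_z \int_\Omega |f(z, \cdot)|\, d\mu < \infty$ from condition (3) has to be systematically upgraded, using Cauchy's integral representations on suitably chosen circles, into the uniform integrability statements strong enough to justify each Fubini swap and each pointwise estimate on derivatives.
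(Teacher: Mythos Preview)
The paper does not prove this proposition; it is quoted verbatim from \cite[Theorem 1]{Mattner} and used as a black box in the proof of Lemma~\ref{AT}. So there is no in-paper argument to compare against.

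Your proof is correct and follows the standard route: represent $f(z,\omega)$ by Cauchy's integral formula on a small circle, use the local $L^1$-bound from hypothesis (3) to justify Fubini, and conclude that $F$ is a Cauchy-type integral of a bounded function, hence holomorphic. The same circle estimates then yield the uniform-in-$z$ $L^1$ bounds on derivatives over compacta, and the interchange formula. One small point you glossed over but which holds: the Fubini step requires joint $\mathcal{B}(G)\otimes\mathcal{A}$-measurability of $f$ itself, not just of $\partial_z^n f$; this follows because $f(\cdot,\omega)$ is continuous and $f(z,\cdot)$ is measurable, making $f$ a Carath\'eodory function. Otherwise the argument is complete.
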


\begin{lemma}\label{AT}
Suppose that $\Gamma$ is a piecewise smooth contour in $\mathbb{C}$ and $G \subset \mathbb{C}$ is open. Suppose that $K_z(x,y)$ is a function such that
\begin{enumerate}
\item $K_z(\cdot, \cdot)$ is a measurable function on  $\Gamma \times \Gamma$ for each $z \in G$;
\item for each $x,y \in \Gamma$ the function $K_z(x, y)$ is analytic in $z$ on $G$.
\end{enumerate}
In addition, suppose that for every $z_0 \in G$ there exist a $\delta > 0$ and a measurable function $F(x)$ on $\Gamma$ (depending on $z_0$ and $\delta$) such that
\begin{equation}\label{supBoundK}
\sup_{z \in G, |z-z_0| < \delta}\,\,\sup_{y\in \Gamma}|K_z(x,y)| \leq F(x)\,\,\, \mbox{ and } \,\,\,\int_{\Gamma}F(x) |d\mu(x)| = M < \infty.
\end{equation}
Then for each $z \in G$ one has that $\det \left[ K_z(x_i, x_j)\right]_{i,j = 1}^n$ is integrable on $\Gamma^n$, so that in particular $\int_{\Gamma} \cdots \int_{\Gamma} \det \left[ K_z(x_i, x_j)\right]_{i,j = 1}^n \prod_{i = 1}^nd\mu(x_i)$ is well defined. Moreover, for $z \in G$ the series $\det(I + K_z)_{L^2(\Gamma)}$ in (\ref{fredholmDefS2})
is absolutely convergent and defines a holomorphic function of $z$ in $G$.
\end{lemma}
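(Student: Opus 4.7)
The plan is to prove the three conclusions (integrability, absolute convergence, holomorphy) together via Hadamard's inequality combined with Proposition \ref{Mattner}. The single quantitative estimate driving everything is that the integrand $\det[K_z(x_i,x_j)]_{i,j=1}^n$ is dominated, uniformly for $z$ in a neighborhood of any $z_0 \in G$, by $n^{n/2}\prod_{i=1}^n F(x_i)$, which after integrating over $\Gamma^n$ and dividing by $n!$ gives a summable majorant by Stirling.

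First I would fix $z_0 \in G$, choose $\delta > 0$ and $F$ as given by the hypothesis, and write $U = \{z \in G : |z - z_0| < \delta\}$. For each $n \geq 1$, the function $(x_1, \dots, x_n) \mapsto \det[K_z(x_i, x_j)]_{i,j=1}^n$ is measurable on $\Gamma^n$ as a polynomial in the measurable entries $K_z(x_i, x_j)$. Applying Hadamard's inequality row-by-row and using $|K_z(x_i, x_j)| \leq F(x_i)$ for $z \in U$ yields
\begin{equation*}
\bigl| \det[K_z(x_i, x_j)]_{i,j=1}^n \bigr| \;\leq\; \prod_{i=1}^n \Bigl( \sum_{j=1}^n |K_z(x_i,x_j)|^2 \Bigr)^{1/2} \;\leq\; n^{n/2} \prod_{i=1}^n F(x_i).
\end{equation*}
Integrating against $\prod_i |d\mu(x_i)|$ over $\Gamma^n$ gives the bound $n^{n/2} M^n$, which establishes integrability of each determinant and, after dividing by $n!$ and summing, yields $\sum_{n \geq 1} \frac{n^{n/2} M^n}{n!} < \infty$ by Stirling. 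This simultaneously proves that each term of \eqref{fredholmDefS2} is a well-defined integral and that the series is absolutely convergent, uniformly for $z \in U$.

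Next I would obtain holomorphy of each summand by invoking Proposition \ref{Mattner} on the measure space $(\Gamma^n, \mu^{\otimes n})$ with the function $f_n(z, x_1, \dots, x_n) = \det[K_z(x_i, x_j)]_{i,j=1}^n$. Hypothesis (1) of the Proposition holds by measurability of the determinant; hypothesis (2) holds because the determinant is a polynomial in the entries $K_z(x_i, x_j)$, which are analytic in $z$ by assumption, so $f_n(\cdot, x_1, \dots, x_n)$ is analytic on $G$; hypothesis (3) is exactly the local bound $\int_{\Gamma^n} |f_n(z, \cdot)| \leq n^{n/2} M^n$ valid for $z \in U$, derived above. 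Hence each term $\frac{1}{n!}\int_{\Gamma^n} f_n(z, \cdot)\, d\mu^{\otimes n}$ is holomorphic on $G$.

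Finally, I would pass from termwise to series holomorphy using the Weierstrass theorem on uniform limits of holomorphic functions. Since $\sum_{n \geq 1} \frac{n^{n/2} M^n}{n!}$ is a convergent majorant of the series $\det(I + K_z)_{L^2(\Gamma)}$ uniformly for $z \in U$, the partial sums converge uniformly on $U$ to the full Fredholm series. Each partial sum is holomorphic on $G$ by the previous step, so the limit is holomorphic on $U$. Since $z_0 \in G$ was arbitrary, the Fredholm determinant is holomorphic on all of $G$, completing the proof. The argument is essentially a bookkeeping exercise; the only real step is recognizing that Hadamard's inequality lets one reduce everything to the single scalar majorant $F(x)$, and the main thing to watch is the measurability of the determinant in $(z, x_1, \dots, x_n)$, which holds automatically since it is a polynomial in the individually measurable (and, in $z$, analytic) entries.
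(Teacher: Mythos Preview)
Your proof is correct and follows essentially the same approach as the paper: Hadamard's inequality gives the uniform bound $n^{n/2}\prod_i F(x_i)$, Proposition~\ref{Mattner} yields holomorphy of each term $B_n(z)$, and uniform convergence of the series (via the majorant $\sum_n n^{n/2}M^n/n!$) upgrades this to holomorphy of the full Fredholm determinant. Your write-up is slightly more explicit about verifying the hypotheses of Proposition~\ref{Mattner} and about measurability, but the structure is identical.
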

\begin{proof}
Let $z_0 \in G$ and $\delta > 0$ be sufficiently small so that $N_\delta(z_0) := \{ z\in \mathbb{C}: |z - z_0| < \delta\}$ is contained in $G$ and the condition in (\ref{supBoundK}) is satisfied for some function $F$. In particular, we have by Hadamard's inequality
that if $x_1, \dots, x_n \in \Gamma$, then
$$ \sup_{z \in G, |z-z_0| < \delta} \left| \det \left[ K_z(x_i, x_j)\right]_{i,j = 1}^n\right| \leq n^{n/2} \prod_{i = 1}^nF(x_i),$$
and the latter is integrable by assumption. It follows from Theorem \ref{Mattner} that
$$B_n(z): = \int_{\Gamma} \cdots \int_{\Gamma} \det \left[ K_z(x_i, x_j)\right]_{i,j = 1}^n \prod_{i = 1}^nd\mu(x_i)$$
is holomrphic in $N_\delta(z_0)$ and by (\ref{supBoundK}) $|B_n(z)| \leq n^{n/2} \cdot M^n$ for $z \in N_\delta(z_0)$. We conclude that
$$\det(I + K_z)_{L^2(\Gamma)} = 1 + \sum_{n = 1}^\infty \frac{B_n(z)}{n!}$$
is absolutely convergent, and hence also holomorphic in $N_\delta(z_0)$ as the uniform limit of holomorphic functions, cf. \cite[Chapter 2, Theorem 5.2]{Stein}. Since $z_0 \in G$ was arbitrary we conclude that $\det(I + K_z)_{L^2(\Gamma)}$ is holomorphic in $G$.
\end{proof}

The following lemmas provide a framework for proving convergence of Fredholm determinants, based on pointwise convergence of their defining kernels and estimates on those kernels. The proofs can be found in \cite[Section 2.5]{ED}.
\begin{lemma}\label{FDCT}
Suppose that $\Gamma$ is a piecewise smooth contour in $\mathbb{C}$ and $K^N(x,y)$, for $N \in \mathbb{N}\cup \{\infty\}$, are measurable kernels on $\Gamma \times \Gamma$ such that  $\lim_{N \rightarrow \infty} K^N(x,y) = K^\infty(x,y)$ for all $x,y\in \Gamma$. In addition, suppose that there exists a non-negative, measurable function $F(x)$ on $\Gamma$ such that
$$\sup_{N \in \mathbb{N}}\sup_{y\in \Gamma}|K^N(x,y)| \leq F(x) \mbox{ and } \int_{\Gamma}F(x) |d\mu(x)| = M < \infty.$$
Then, for each $n \geq 1$ and $N \in \mathbb{N}\cup \{\infty\}$, one has that $\det \left[ K^N(x_i, x_j)\right]_{i,j = 1}^n$ is integrable on $\Gamma^n$, so that in particular $\int_{\Gamma} \cdots \int_{\Gamma} \det \left[ K^N(x_i, x_j)\right]_{i,j = 1}^n \prod_{i = 1}^nd\mu(x_i)$ is well defined. Moreover, for each $N \in \mathbb{N}\cup \{\infty\}$ the series
$\det(I + K^N)_{L^2(\Gamma)}$ in (\ref{fredholmDefS2}) is absolutely convergent and $\lim_{N \rightarrow \infty} \det(I + K^N)_{L^2(\Gamma)} = \det(I + K^\infty)_{L^2(\Gamma)}.$
\end{lemma}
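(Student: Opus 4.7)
The plan is to combine Hadamard's inequality with two applications of the dominated convergence theorem --- one at the level of the $n$-fold integral and one at the level of the Fredholm series. Concretely, for fixed $n \geq 1$ and $(x_1, \ldots, x_n) \in \Gamma^n$, Hadamard's inequality yields
\begin{equation*}
\bigl| \det [K^N(x_i, x_j)]_{i,j = 1}^n \bigr| \leq n^{n/2} \prod_{i=1}^n \sup_{y \in \Gamma} |K^N(x_i, y)| \leq n^{n/2} \prod_{i=1}^n F(x_i).
\end{equation*}
By Tonelli and the hypothesis $\int_\Gamma F(x)|d\mu(x)| = M < \infty$, the right-hand side is integrable on $\Gamma^n$, which immediately gives integrability of $\det[K^N(x_i,x_j)]_{i,j=1}^n$ for every $N \in \mathbb{N} \cup \{\infty\}$ and the uniform bound $|B_n^N| \leq n^{n/2} M^n$, where $B_n^N$ denotes the $n$-th multiple integral of the determinant kernel. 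Since $n^{n/2}M^n/n!$ is summable by Stirling's formula, the Fredholm series in \eqref{fredholmDefS2} converges absolutely for every $N$, establishing the first two conclusions.

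For the limit conclusion, I first fix $n$ and show $B_n^N \to B_n^\infty$ as $N \to \infty$. Because the determinant is a polynomial, hence continuous, in its entries, pointwise convergence $K^N(x,y) \to K^\infty(x,y)$ implies the pointwise convergence $\det[K^N(x_i,x_j)]_{i,j=1}^n \to \det[K^\infty(x_i,x_j)]_{i,j=1}^n$ for every $(x_1,\ldots,x_n) \in \Gamma^n$. Since $n^{n/2}\prod_i F(x_i)$ is an $N$-independent integrable majorant on $\Gamma^n$, the dominated convergence theorem yields $\lim_{N\to\infty} B_n^N = B_n^\infty$.

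Finally, to pass the limit through the summation over $n$, I apply dominated convergence on the counting measure on $\mathbb{N}$. The sequence $B_n^N/n!$ is dominated, uniformly in $N$, by the summable sequence $n^{n/2}M^n/n!$, so
\begin{equation*}
\lim_{N\to\infty} \det(I + K^N)_{L^2(\Gamma)} = 1 + \sum_{n=1}^\infty \lim_{N\to\infty} \frac{B_n^N}{n!} = 1 + \sum_{n=1}^\infty \frac{B_n^\infty}{n!} = \det(I + K^\infty)_{L^2(\Gamma)}.
\end{equation*}
This is essentially a bookkeeping argument. The only non-trivial point is invoking Hadamard's inequality in the correct form (bounding each row of the matrix by the sup-norm of its entries), since this is precisely what converts the pointwise majorant on $K^N(x,y)$ in $y$ into an integrable majorant on the determinant in $(x_1,\ldots,x_n)$. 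No genuine analytic obstacle arises beyond checking these envelopes, which is why I expect the proof to be short.
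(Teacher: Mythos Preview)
Your proof is correct and follows exactly the standard approach: Hadamard's inequality to get the majorant $n^{n/2}\prod_i F(x_i)$, dominated convergence on $\Gamma^n$ for each fixed $n$, and then dominated convergence over the counting measure on $n$. This is precisely the route the paper takes for the closely related Lemma~\ref{AT} (and the one implicit in the reference \cite[Section 2.5]{ED} to which the paper defers for this lemma). One tiny point worth making explicit: the hypothesis only states the bound $\sup_y|K^N(x,y)|\le F(x)$ for $N\in\mathbb{N}$, so to cover $N=\infty$ you should note that $|K^\infty(x,y)|=\lim_N|K^N(x,y)|\le F(x)$ by pointwise convergence.
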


\begin{lemma}\label{FDKCT}
Suppose that $\Gamma_1, \Gamma_2$ are piecewise smooth contours and the function $(x,y,z)\mapsto g^N_{x,y}(z)$ is measurable for  $(x,y,z)\in \Gamma_1\times \Gamma_1 \times \Gamma_2$ for all $N \in \mathbb{N}\cup \{\infty\}$ and satisfies $\lim_{N \rightarrow \infty} g^N_{x,y}(z) = g_{x,y}^\infty(z)$ for all $(x,y,z)\in \Gamma_1\times \Gamma_1 \times \Gamma_2$. In addition, suppose that there exist bounded non-negative measurable functions $F_1$ and $F_2$ on $\Gamma_1$ and $\Gamma_2$ respectively such that
$$\sup_{N \in \mathbb{N}}\,\sup_{y\in \Gamma_1}|g^N_{x,y}(z)| \leq F_1(x)F_2(z), \,\,\,\mbox{ and }\,\,\, \int_{\Gamma_i}F_i(u)|d\mu(u)| = M_i < \infty \,\,\,\mbox{for }i=1,2.$$
Then for each $N \in \mathbb{N}\cup \{\infty\}$ one has $\int_{\Gamma_2}|g^N_{x,y}(z)| |d\mu(z)|< \infty$ and in particular $K^N(x,y):= \int_{\Gamma_2}g^N_{x,y}(z) d\mu(z)$ are well-defined. Moreover, $K^N(x,y)$ satisfy the conditions of Lemma \ref{FDCT} with $\Gamma = \Gamma_1$ and $F =  F_1 M_2$.
\end{lemma}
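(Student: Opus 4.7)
The plan is to verify directly the two hypotheses of Lemma \ref{FDCT} for the kernels $K^N$ with $\Gamma = \Gamma_1$ and $F = F_1 M_2$, namely pointwise convergence $K^N \to K^\infty$ and a dominating bound $\sup_N \sup_y |K^N(x,y)| \leq F(x)$ that is integrable on $\Gamma_1$. Both will follow from a dominated convergence argument applied to the $z$-integration, with the hypothesis $|g^N_{x,y}(z)| \leq F_1(x) F_2(z)$ serving as the single driving estimate.

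First I would establish well-definedness of $K^N(x,y)$ together with the uniform bound. For each $N \in \mathbb{N} \cup \{\infty\}$ and $(x,y) \in \Gamma_1 \times \Gamma_1$, the hypothesized bound yields
\begin{equation*}
\int_{\Gamma_2} |g^N_{x,y}(z)|\, |d\mu(z)| \leq F_1(x) \int_{\Gamma_2} F_2(z)\, |d\mu(z)| = F_1(x) M_2 < \infty,
\end{equation*}
so the integral defining $K^N(x,y)$ converges absolutely and satisfies $|K^N(x,y)| \leq F_1(x) M_2$ uniformly in $N$ and in $y \in \Gamma_1$. Joint measurability of $K^N$ on $\Gamma_1 \times \Gamma_1$ then follows from Fubini's theorem applied to the jointly measurable function $(x,y,z) \mapsto g^N_{x,y}(z)$ in view of the absolute integrability in $z$ that we just derived.

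Next I would prove pointwise convergence $K^N(x,y) \to K^\infty(x,y)$. For each fixed $(x,y) \in \Gamma_1 \times \Gamma_1$, we have $g^N_{x,y}(z) \to g^\infty_{x,y}(z)$ for every $z \in \Gamma_2$ by hypothesis, and $z \mapsto F_1(x) F_2(z)$ is a $z$-integrable dominating function independent of $N$. The dominated convergence theorem therefore gives $\lim_{N \to \infty} K^N(x,y) = K^\infty(x,y)$.

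Setting $F(x) := F_1(x) M_2$, the preceding paragraphs show $\sup_N \sup_y |K^N(x,y)| \leq F(x)$ and $\int_{\Gamma_1} F(x)\, |d\mu(x)| = M_1 M_2 < \infty$, which are exactly the hypotheses of Lemma \ref{FDCT}. Invoking that lemma then yields the stated conclusion. I do not anticipate any serious obstacle: the argument is essentially a mechanical chaining of dominated convergence on the $z$-integral with the already-established framework of Lemma \ref{FDCT} on the $x,y$-integration. The only mild technicality is the joint measurability of $K^N$, which is resolved immediately by Fubini once absolute integrability in $z$ is in hand.
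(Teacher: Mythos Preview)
Your argument is correct and is exactly the standard proof one would expect; the paper itself does not give a proof but refers the reader to \cite[Section 2.5]{ED}. One small point: the hypothesized bound is stated only for $N \in \mathbb{N}$, so when you write ``for each $N \in \mathbb{N} \cup \{\infty\}$ the hypothesized bound yields\ldots'' you should note that the case $N = \infty$ follows by taking the pointwise limit in $|g^N_{x,y}(z)| \leq F_1(x)F_2(z)$.
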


%
\subsection{Mixed polymer model}\label{Section2.2}

In \cite{BCFV} the authors considered a  polymer model that mixes Sepp{\" a}l{\"a}inen's log-gamma polymer \cite{Sep12} and the O'Connell-Yor semi-discrete polymer \cite{OCY}. In the language of Whittaker processes \cite{BorCor}, which we will not use or rely directly upon, this model arises as a mixture of the Plancherel and pure alpha specializations. The model depends on a parameter $\tau > 0$ as well as two sets of real parameters $\{a_i\}_{i \geq 1}$ and $\{\alpha_i\}_{i \geq 1}$, which satisfy $\alpha_m > 0$ and $\alpha_m - a_n > 0$ for all $m,n \in \mathbb{N}$.

Before we go into the definition let us briefly explain our interest in this model. As mentioned above, the model depends on parameters $\{a_i\}_{i \geq 1}, \{\alpha_i\}_{i \geq 1}$ and $\tau > 0$ and for each $M, N \in \mathbb{N}$ gives rise to a random quantity $\ZP(\tau)$, called the {\em partition function}, which depends only on $\{a_i\}_{i = 1}^N$, $\{\alpha_i\}_{i= 1}^M$ and $\tau$. As we explain around (\ref{PPAS}), this partition function converges weakly as $\tau \rightarrow 0+$ to the partition function $\ZP$ for the log-gamma polymer with parameters $\{a_i\}_{i = 1}^N, \{\alpha_i\}_{i = 1}^M$ (recall from Section \ref{Section1.2} or see below). In \cite{BCFV} the authors were able to find a formula for the Laplace transform for $\ZP(\tau)$, recalled below as Theorem \ref{BCT1}. We are ultimately interested in obtaining a formula for the Laplace transform for $\ZP$ which is suitable for asymptotic analysis as $N$ and $M$ grow. The way we go about this is by rewriting the known formula the Laplace transform of $\ZP(\tau)$ in a manner which is then amenable to the limit $\tau \rightarrow 0+$. This rewriting and $\tau \rightarrow 0+$ limit transition are performed in Sections \ref{Section2.3} and \ref{Section2.4} below. \\

\begin{definition}\label{Dmixed}
The mixed polymer model is a probability distribution on up-right paths connecting points $(-M, 1)$ and $(\tau, N)$. These paths consist of two parts: a discrete portion $\phi^d$ and an adjoined semi-discrete portion $\phi^{sd}$. A {\em discrete up-right path} $\phi^d$ from $(i_1, j_1)$ to $(i_\ell, j_\ell)$ (written as $\phi^d: (i_1, j_1) \nearrow (i_{\ell}, j_{\ell})$) is an ordered set of points $((i_1, j_1), (i_2, j_2), \dots, (i_{\ell}, j_{\ell}))$ with each $(i_k, j_k) \in \mathbb{Z}^2$ and each increment $(i_k, j_k) - (i_{k-1}, j_{k-1}) \in \{ (1,0), (0,1)\}$. A {\em semi-discrete up-right path} $\phi^{sd}$ from $(0,n)$ to $(\tau, N)$ (written as $\phi^{sd}: (0,n) \nearrow (\tau,N)$) is a union of horizontal line segments $((0,n) \rightarrow (s_n, n)) \cup ((s_n, n+1) \rightarrow (s_{n+1}, n+1)) \cup \cdots \cup ((s_{N-1}, N) \rightarrow (\tau,N))$ where $0 \leq s_n < s_{n+1} < \cdots < s_{N-1} \leq \tau$.

\begin{figure}[h]
\centering
\scalebox{1.1}{\includegraphics{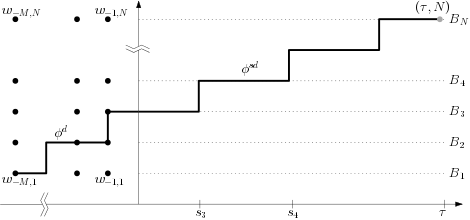}}
 \captionsetup{width=.9\linewidth}
\caption{An example of a mixed polymer path.}
\label{Fig:mixedpath}
\end{figure}

The state space of the model consists of pairs $\phi = (\phi^{d}, \phi^{sd})$ such that $\phi^{d}$ is a discrete up-right path and $\phi^{sd}$ is a semi-discrete up-right path, which are adjoined to each other in such a way that for some $1 \leq n \leq N$ we have $\phi^d : (-M, 1) \nearrow (-1,n)$ and $\phi^{sd} : (0,n) \nearrow (\tau, N)$.

We turn to defining the polymer probability measure on this space, along with its partition function. Assume that $w_{i,j}$ for $-M \leq i \leq -1$ and $1 \leq j \leq N$ are independent random variables such that $w_{i,j}$ have the same distribution as $\log X_{i,j}$ with $X_{i,j}$ inverse-gamma distributed as in (\ref{S1invGammaDens}) with parameter $\theta = \alpha_{M + i + 1} - a_j > 0$. As before we have $\vec{\alpha} = (\alpha_1, \dots, \alpha_M) \in \mathbb{R}_{> 0}^M$ and $\vec{a} = (a_1, \dots, a_N) \in \mathbb{R}^N$. In addition, assume that $B_n(t) = a_nt + W_n(t)$ for $n = 1, \dots, N$, where $W_n(t)$ are i.i.d. Brownian motions, which are independent of the $w_{i,j}$. To each path $\phi = (\phi^{d}, \phi^{sd})$ we associate the energy
\begin{equation}\label{PathEnergy}
E(\phi) := \sum_{(i,j) \in \phi^d} w_{i,j} + B_n(s_n) + \sum_{k = n+1}^N \big(B_{k}(s_{k}) - B_{k}(s_{k-1})\big).
\end{equation}
This energy is random, as it is a function of $w_{i,j}$ and the $B_k$'s. We associate a Boltzmann weight $e^{E(\phi)}$ to each path $\phi$. The polymer measure on $\phi$ is then proportional to this weight. The normalizing constant, or polymer partition function, is written as $\ZP(\tau)$ and is equal to
\begin{equation}\label{PathEnergy2}
\ZP(\tau) = \sum_{n = 1}^N\, \sum_{\phi^d:(-M,1) \nearrow (-1,n)} \int_{\phi^{sd}:(0,n) \nearrow (\tau, N)} e^{E(\phi)}d \phi^{sd},
\end{equation}
where $d\phi^{sd}$ is Lebesgue measure on the simplex $0 \leq s_n < \cdots < s_{N-1} \leq \tau$, identified with $\phi^{sd}$. \\
We remark that in \cite{BCFV} the partition function was denoted by $Z^{N,M}(\tau)$, and we have opted to swap the roles of $M$ and $N$ to make the notation closer to the one from Section \ref{Section1.2} -- apart from this change we follow the same notation as in \cite{BCFV}. 
\end{definition}

We next recall \cite[Theorem 2.1]{BCFV}, which is the main ingredient we need in deriving our Laplace transform formula for $\ZP$. In order to state the theorem we need a few definitions. Below we write $\max(\vec{x}) = \max(x_1, \dots, x_n)$ and $\min(\vec{x}) = \min(x_1, \dots, x_n)$ for $\vec{x} = (x_1, \dots, x_n) \in \mathbb{R}^n$.

\begin{definition}\label{DCont1} Let $\vec{a} = (a_1, \dots, a_N) \in \mathbb{R}^N$, $\vec{\alpha}= (\alpha_1, \dots, \alpha_M) \in \mathbb{R}_{>0}^M$ and suppose that $\min(\vec{\alpha}) - \max(\vec{a}) > 0$. Set $\mu = \frac{1}{2} \max(\vec{a}) + \frac{1}{2} \min(\vec{\alpha})$ and $\eta = \frac{1}{4} \max(\vec{a}) + \frac{3}{4} \min(\vec{\alpha})$ and fix $\phi \in [0, \pi/4)$. With the latter data we let we let $\mathcal{C}_{\vec{a}; \vec{\alpha}; \phi}$ denote the union of the contour $\{\mu + ye^{\i(\pi +\phi)} \}_{y \in \mathbb{R}^+}$ and $\{\mu + ye^{\i(\pi - \phi)} \}_{y \in \mathbb{R}^+}$ oriented so as to have increasing imaginary part. For any $v \in \mathcal{C}_{a; \alpha; \phi}$ we choose $R = - \Re(v) + \eta$, $d > 0$ and define a contour $\mathcal{D}_v$ as follows: $\mathcal{D}_v$ goes by straight lines from $R - \i \infty$ to $R - \i d$, to $1/2 - \i d$, to $1/2 + \i d$, to $R + \i d$, to $R + \i \infty$. The parameter $d$ is sufficiently small so that $v + \mathcal{D}_v$ does not intersect $\mathcal{C}_{\vec{a}; \vec{\alpha}; \phi}$. If $\phi \in [\pi/6, \pi/4]$ we may take any $d \in (0, 1/4]$. See Figure \ref{S2_2}.
\end{definition}

\begin{figure}[h]
\centering
\scalebox{0.6}{\includegraphics{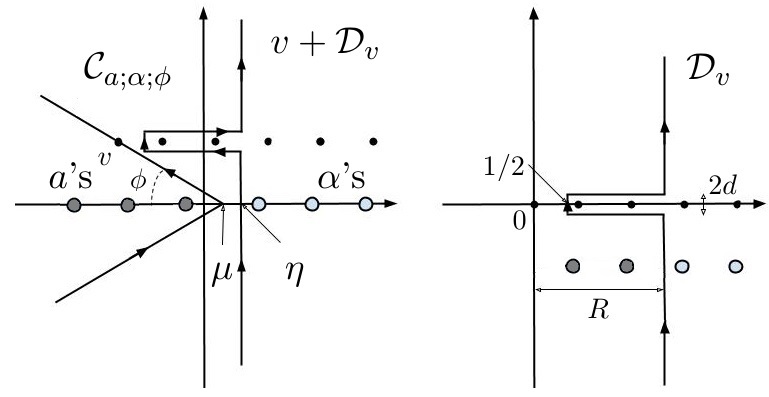}}
 \captionsetup{width=.9\linewidth}
\caption{The left part depicts the contours $\mathcal{C}_{\vec{a}; \vec{\alpha}; \phi}$ and $v+ \mathcal{D}_v$ for $v \in \mathcal{C}_{\vec{a}; \vec{\alpha}; \phi}$. The black dots denote the points $v, v+1, v+2, \dots$, the light gray ones denote $\{\alpha_i + m: i = 1, \dots, M, m \in \mathbb{Z}_{\geq 0}\}$ and the dark gray ones $\{a_i - m: i = 1, \dots, N, m \in \mathbb{Z}_{\geq 0}\}$. The right part depicts the contour $\mathcal{D}_v$. }
\label{S2_2}
\end{figure}

\begin{proposition}\label{BCT1}\cite[Theorem 2.1]{BCFV}
Fix integers $N \geq 9, M \geq 1$, a real number $\tau > 0$ and $\vec{a} = (a_1, \dots, a_N) \in \mathbb{R}^N$, $\vec{\alpha}= (\alpha_1, \dots, \alpha_M) \in \mathbb{R}_{>0}^M$. Assume that $\min(\vec{\alpha}) - \max(\vec{a}) > 1$. For all $u \in \mathbb{C}$ with $\Re(u)> 0$ we have
\begin{equation}\label{BCTe1}
\mathbb{E} \left[ e^{-u \ZP(\tau)} \right] = \det \left( I + \mathcal{K}_u \right)_{L^2(\mathcal{C}_{\vec{a};\vec{\alpha};\phi})},
\end{equation}
where the operator $\mathcal{K}_u$ is defined in terms of its integral kernel
\begin{equation}\label{kernel1}
\mathcal{K}_u(v,v') = \frac{1}{2\pi \i} \int_{\mathcal{D}_v}  \Gamma(-s) \Gamma(1+s)  \prod_{n = 1}^N \frac{\Gamma(v - a_n)}{\Gamma(s + v - a_n)} \prod_{m = 1}^M \frac{\Gamma(\alpha_m - v - s)}{\Gamma(\alpha_m - v)}\frac{u^s e^{v\tau s+ \tau s^2/2}ds}{v+ s - v'}.
\end{equation}
The contours $\mathcal{C}_{\vec{a};\vec{\alpha}; \phi}$ and $\mathcal{D}_v$ are as in Definition \ref{DCont1} with $\phi = \pi/6$ and $d = 1/4$.
\end{proposition}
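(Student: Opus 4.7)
The statement is quoted as \cite[Theorem 2.1]{BCFV}, so the most honest plan is to simply invoke that reference; nevertheless, let me sketch the strategy one would follow to derive such a formula, which is the approach of \cite{BCFV} (itself building on \cite{BorCor,COSZ,BCR}). The plan divides into three stages: (i) identify $\ZP(\tau)$ with a Macdonald/Whittaker-process observable, (ii) extract moment formulas via Macdonald difference operators, and (iii) convert the formal moment-generating series into a Fredholm determinant through a Mellin--Barnes manipulation.

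First I would use the geometric RSK correspondence of \cite{COSZ} to identify the distribution of $\ZP(\tau)$ (for the mixed polymer of Definition \ref{Dmixed}) with a marginal of a Whittaker measure whose parameters encode $\vec a$, $\vec\alpha$ and $\tau$. Concretely, the inverse-gamma weights contribute a pure-alpha specialization in the parameters $\{\alpha_m\}$, $\{-a_n\}$, and the Brownian portion of the polymer contributes a Plancherel specialization with parameter $\tau$. Under this identification, $\ZP(\tau)$ is the exponential of the top-row coordinate of the Whittaker measure.

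Next I would apply the Macdonald-type difference operators (specialized to Whittaker functions, as in \cite{BorCor}) to obtain, for every integer $k\geq 1$, an explicit nested contour integral representation of $\mathbb E[(\ZP(\tau))^k]$. The integrand assembles precisely the gamma-function ratios $\prod_n \Gamma(v-a_n)/\Gamma(v-a_n+s)$, $\prod_m \Gamma(\alpha_m-v-s)/\Gamma(\alpha_m-v)$ and the Gaussian--exponential factor $e^{v\tau s+\tau s^2/2}$ that appear in \eqref{kernel1}. These integrals are taken along nested contours with a fixed separation (the role of $\mathcal D_v$); the hypothesis $\min(\vec\alpha)-\max(\vec a)>1$ guarantees that the poles of the various gamma factors lie strictly on the prescribed sides of the chosen contours throughout the subsequent manipulations.

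Finally I would convert the formal series $\mathbb E[e^{-u\ZP(\tau)}]=\sum_k \frac{(-u)^k}{k!}\mathbb E[(\ZP(\tau))^k]$ into the Fredholm determinant \eqref{BCTe1} using the Mellin--Barnes identity
\[
\sum_{k\geq 0} \frac{(-u)^k}{k!}\, g(k) \;=\; \frac{1}{2\pi \i}\int_{\mathcal D} \Gamma(-s)\Gamma(1+s)\, u^s\, \tilde g(s)\, ds,
\]
valid whenever $\tilde g$ is a suitable meromorphic interpolation of $g$. Symmetrizing the $k$-fold nested moment integral, re-indexing as a sum over partitions, and collapsing the resulting combinatorial structure via the Cauchy determinant identity yields exactly the series expansion of $\det(I+\mathcal K_u)$ on the $L^2$-contour $\mathcal C_{\vec a;\vec\alpha;\phi}$. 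The main obstacle throughout is convergence: because the Brownian contribution makes the $k$-th moment grow roughly like $e^{ck^2}$, the exponential series is only formal, and each step (interchange of sum and integral, deformation of contours, residue collection when passing gamma poles) has to be justified by hand. This is precisely what forces the restrictive assumption $\min(\vec\alpha)-\max(\vec a)>1$; relaxing it is a separate task, and I expect the paper to address it later via an analytic-continuation argument, as the Outline around Theorem \ref{BCT1v2} indicates.
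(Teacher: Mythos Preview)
Your proposal is correct. The paper does not supply its own proof of this proposition: it simply quotes \cite[Theorem 2.1]{BCFV} and, in Remark \ref{RemCont}, explains why the stated contours force the strengthened hypothesis $\min(\vec{\alpha})-\max(\vec{a})>1$ (the original reference had only $>0$). Your sketch of the BCFV strategy---geometric RSK / Whittaker-measure identification, moment formulas via Macdonald difference operators, and the Mellin--Barnes repackaging into a Fredholm determinant---is an accurate summary of that argument and actually goes beyond what the present paper records; your explanation of the role of the $>1$ condition also matches the paper's Remark \ref{RemCont}.
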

\begin{remark}\label{RemCont} Proposition \ref{BCT1} is formulated precisely as \cite[Theorem 2.1]{BCFV}, except that in \cite[Theorem 2.1]{BCFV} the authors assumed that $\min(\vec{\alpha}) - \max(\vec{a}) > 0$, whereas we have assumed $\min(\vec{\alpha}) - \max(\vec{a}) > 1$. This strengthened assumption is necessary in order for the result to be valid with the prescribed choice of contours. Specifically, in order for the result to hold one needs to pick contours in such a way that the poles of the functions $\Gamma(\alpha_i - v -s)$ lie strictly to the right of the contour $\mathcal{D}_v$. If $v = \mu$ the latter function has a pole at $\min(\vec{\alpha}) - \mu$ and by the definition of $\mathcal{D}_v$ we must have that $\min(\vec{\alpha}) - \mu > \frac{1}{2}$, which upon replacing $\mu = \frac{1}{2} \max(\vec{a}) + \frac{1}{2} \min (\vec{\alpha})$ implies we must have $\min(\vec{\alpha}) - \max(\vec{a}) > 1$. We believe that the authors of \cite{BCFV} overlooked this point when making their choice of contours and the issue remained unnoticed in \cite{TV20}; however the results of the two papers hold if one assumes $\min(\vec{\alpha}) - \max(\vec{a}) > 1$. We further remark that this problem was previously observed in \cite[Theorem A.1]{KQ}, where the authors proposed a different choice of contours in order to fix the issue. The authors of \cite{KQ} worked exclusively in the homogeneous case when $\alpha_i$ are all equal to the same number $\theta > 0$ and $a_i = 0$. In this case the restriction becomes $\theta > 1$. It is worth mentioning that the choice of contours in \cite[Theorem A.1]{KQ} fixes the problem for $\theta \in (0,1]$ but creates a new problem when $\theta \geq 8$. In particular, the choice of contours in \cite[Theorem A.1]{KQ} only works when $\theta \in (0, 8)$ since unfortunately the authours failed to take into account the poles coming from $ \Gamma(-s) \Gamma(1+s) $ at $s = 1, 2, \dots,$ which all need to be to the right of $\mathcal{D}_v$.
\end{remark}

We end this section by showing that the partition function $\ZP(\tau)$ converges weakly to $\ZP$ from Section \ref{Section1.2} as $\tau \rightarrow 0+$. Let us define $Z^{M,n}(0)$ by
\begin{equation}\label{PP1}
Z^{M,n}(0)=  \sum\limits_{\phi^d:(-M,1) \nearrow (-1,n)}  e^{E(\phi^d)},
\end{equation}
and observe that this random variable has the same distribution as the log-gamma partition function $Z^{M,n}$ with parameters $\{a_i\}_{i = 1}^n$, $\{\alpha_i\}_{i= 1}^M$ from (\ref{PartitionFunct}). We claim that
\begin{equation}\label{PPAS}
\lim_{\tau \rightarrow 0+} \ZP(\tau)=  \ZP(0) \mbox{ almost surely}.
\end{equation}
To see this let $E_K$ denote the event
$$E_K = \Big\{ |B_k(s)| \leq K \mbox{ for $s \in [0,1], k \in \{ 1, \dots, N \}$ and } \lim_{s \rightarrow 0+} B_N(s) = 0 \Big\}.$$
Then on $E_K$, as the volume of the simplex $0 \leq s_n < s_{n+1} < \cdots < s_{N-1} \leq \tau$ is at most $\tau^{N-n}$,
\begin{align*}
\limsup_{\tau \rightarrow 0+} \ZP(\tau) &\leq  \limsup_{\tau \rightarrow 0+} \sum_{n = 1}^{N-1}  Z^{M,n}(0) \cdot \tau^{N-n} e^{(2N + 1)K} + \ZP(0) = \ZP(0),\\
\liminf_{\tau \rightarrow 0+} \ZP(\tau) &\geq  \liminf_{\tau \rightarrow 0+} \ZP(0) e^{B_N(\tau)} =  \ZP(0).
\end{align*}
Since $\cup_{K \in \mathbb{N}} E_K$ is a set of full probability we conclude (\ref{PPAS}).

%

\subsection{Laplace transform for the log-gamma polymer}\label{Section2.3}
In this section we derive as Theorem \ref{LGPT1} a formula for the Laplace transform of $\ZP$ from (\ref{PartitionFunct}), which is suitable for asymptotic analysis in the sense we described earlier. We continue with the same notation as in Section \ref{Section2.2} and also introduce the following notation, see also Figure \ref{S2_1}.
\begin{figure}[h]
\scalebox{0.6}{\includegraphics{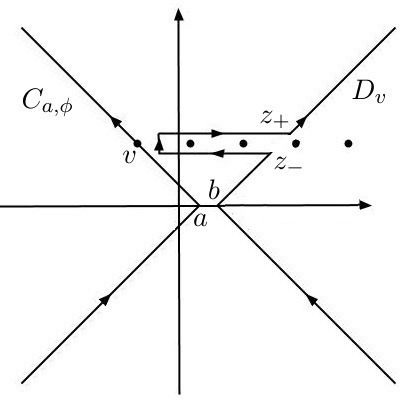}}
\captionsetup{width=\linewidth}
 \caption{The figure depicts the contour $C_{a, \phi}$ for $\phi = 3 \pi/ 4$ and $D_v(b,\pi/4, d)$ with $a, b \in \mathbb{R}$ and $b > a$. The black dots denote the points $v, v+1, v+2, \dots$. }
\label{S2_1}
\end{figure}

\begin{definition}\label{ContV}
For $a \in \mathbb{C}$ and $\phi \in (0, \pi)$ we define the contour $C_{a,\phi}$ to be the union of $\{a + ye^{-\i\phi)} \}_{y \in \mathbb{R}^+}$ and $\{a + ye^{\i \phi} \}_{y \in \mathbb{R}^+}$ oriented to have increasing imaginary part.
\end{definition}

\begin{definition}\label{ContE}
Suppose that $a \in \mathbb{C}$, $\phi \in (0, \pi)$, $d > 0$ and $v \in \mathbb{C}$. From this data we construct a contour that consists of two parts. The first, called $D_v^1$, is $C_{a,\phi} \setminus \{z: \Im(z) \in [\Im(v) - d, \Im (v) + d] \}.$ Let $z_-, z_+$ be the points on $C_{a,\phi} $ that have imaginary parts $\Im(v) - d$ and $\Im(v) + d$ respectively. The second part of the contour, called $D_v^2$, consists of straight oriented segments that connect $z_-$ to $v + 2d- \i d$ to $v + 2d + \i d$ to $z_+$. We will denote the resulting contour by $D_{v}(a,\phi, d)$ or just $D_v$ when the other parameters are clear from context.
\end{definition}

The goal of this section is to establish the following result.
\begin{theorem}\label{LGPT1}
Fix integers $N \geq 9, M \geq 1$ and $\vec{a} = (a_1, \dots, a_N) \in \mathbb{R}^N$, $\vec{\alpha}= (\alpha_1, \dots, \alpha_M) \in \mathbb{R}_{>0}^M$ such that $\min(\vec{\alpha}) - \max(\vec{a}) > 0$. Let $\min(\vec{\alpha}) > b > a > \max(\vec{a}) $, $d \in \left(0 , \min \left( 1/4, (b-a)/4\right) \right)$ and $u \in \mathbb{C}$ with $\Re(u)> 0$. Then
\begin{equation}\label{LGPTe1}
\mathbb{E} \left[ e^{-u \ZP} \right] = \det \left( I + K_u \right)_{L^2(C_{a,3\pi/4})},
\end{equation}
where the operator $K_u$ is defined in terms of its integral kernel
\begin{equation}\label{kernelMain}
K_u(v,v') = \frac{1}{2\pi \i} \int_{D_v}  \frac{\pi }{\sin(\pi (v-w))}   \prod_{n = 1}^N \frac{\Gamma(v - a_n)}{\Gamma(w - a_n)} \prod_{m = 1}^M \frac{\Gamma(\alpha_m - w)}{\Gamma(\alpha_m - v)} \frac{u^{w-v} dw}{w - v'},
\end{equation}
where $C_{a,3\pi/4}$ and $D_v = D_v(b, \pi/4, d)$ are as in Definitions \ref{ContV} and \ref{ContE} .
\end{theorem}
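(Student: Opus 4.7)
The plan is to reduce Theorem \ref{LGPT1} to a corresponding Laplace transform formula for the mixed polymer partition function $\ZP(\tau)$ from Definition \ref{Dmixed}, and then pass to the $\tau\to 0+$ limit using the almost sure convergence $\ZP(\tau)\to \ZP$ established in (\ref{PPAS}). The intermediate statement to be proved first asserts that, under the same hypothesis $\min(\vec\alpha)-\max(\vec a)>0$, for every $\tau>0$ one has $\mathbb{E}\big[e^{-u\ZP(\tau)}\big]=\det\big(I+K_u^\tau\big)_{L^2(C_{a,3\pi/4})}$, where $K_u^\tau$ is defined by (\ref{kernelMain}) with the extra factor $\exp\bigl(v\tau(w-v)+\tau(w-v)^2/2\bigr)$ inserted in the $w$-integrand; this factor records the Brownian ingredients of the mixed polymer and disappears in the $\tau\to 0+$ limit.

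The intermediate formula is established in two stages. First, under the strengthened hypothesis $\min(\vec\alpha)-\max(\vec a)>1$, begin with (\ref{BCTe1}) of Proposition \ref{BCT1} and change variables $w=v+s$ inside the $s$-integral defining $\mathcal{K}_u$. Euler's reflection formula yields $\Gamma(-s)\Gamma(1+s)=\pi/\sin(\pi(v-w))$, converting the integrand to the desired shape while turning the contour $\mathcal{D}_v$ into $v+\mathcal{D}_v$; the outer contour is still $\mathcal{C}_{\vec a;\vec\alpha;\pi/6}$. Now deform the inner contour $v+\mathcal{D}_v$ to $D_v=D_v(b,\pi/4,d)$ and the outer contour $\mathcal{C}_{\vec a;\vec\alpha;\pi/6}$ to $C_{a,3\pi/4}$. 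The poles of the $w$-integrand lie at $\{v+k:k\in\mathbb{Z}\}$ (from the sine), $\{\alpha_m+k:k\in\mathbb{Z}_{\geq 0}\}$ (from the $\Gamma(\alpha_m-w)$ factors) and at $w=v'$ (from the Cauchy denominator); the interval inequality $\max(\vec a)<a<b<\min(\vec\alpha)$, together with the smallness of $d$, guarantees that neither deformation crosses any pole. Justifying the deformations at infinity, however, requires uniform decay estimates on the integrand along the slanted contours and along the homotopies, and this is precisely what the technical Propositions \ref{PE1v2}, \ref{PE2v2} and \ref{PE3v2} will supply.

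The second stage extends the resulting formula from $\min(\vec\alpha)-\max(\vec a)>1$ to $\min(\vec\alpha)-\max(\vec a)>0$ by analytic continuation. One singles out a parameter, for instance $\alpha_1$, and views both sides as functions of complex $\alpha_1$ on an open region $G\subset\mathbb{C}$ where the partition function and all pole configurations vary holomorphically. The left-hand side $\mathbb{E}[e^{-u\ZP(\tau)}]$ is holomorphic in $\alpha_1\in G$ because the inverse-gamma densities depend holomorphically on $\alpha_1$ and one can differentiate under the expectation using a dominating function built from (\ref{S1invGammaDens}). The right-hand side is holomorphic in $\alpha_1\in G$ by Lemma \ref{AT}, provided one has uniform-in-$\alpha_1$ integrable majorants for the kernel $K_u^\tau$; these are again produced by the same family of technical estimates. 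Uniqueness of analytic continuation then forces equality on all of $G$, in particular when $\alpha_1>\max(\vec a)$ is real.

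Finally, to deduce Theorem \ref{LGPT1} from the intermediate formula with general parameters, send $\tau\to 0+$. The left-hand side converges to $\mathbb{E}[e^{-u\ZP}]$ by (\ref{PPAS}) and bounded convergence, since $|e^{-u\ZP(\tau)}|\leq 1$ when $\Re(u)>0$. For the right-hand side, $K_u^\tau(v,v')\to K_u(v,v')$ pointwise as $\tau\to 0+$, and the uniform-in-$\tau$ bounds supplied by the technical propositions feed into Lemmas \ref{FDCT} and \ref{FDKCT} to give convergence of the Fredholm determinant. The main obstacle throughout the whole argument is producing the uniform decay estimates for the $w$-integrand along slanted infinite contours — controlling simultaneously the gamma ratios, the sine denominator, and the Gaussian factor $\exp(\tau(w-v)^2/2)$ as $|w|\to\infty$; once those estimates are in hand, the contour manipulations, analytic continuation and limit passage reduce to routine applications of Cauchy's theorem, Lemma \ref{AT}, and dominated convergence respectively.
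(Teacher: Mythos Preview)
Your proposal matches the paper's strategy essentially step for step: prove the $\tau>0$ formula (the paper's Theorem~\ref{BCT1v2}) by starting from Proposition~\ref{BCT1}, substituting $w=v+s$, deforming contours with decay supplied by Propositions~\ref{PE1v2}--\ref{PE3v2}, then analytically continuing to remove the restriction $\min(\vec\alpha)-\max(\vec a)>1$, and finally sending $\tau\to0+$ via (\ref{PPAS}), bounded convergence, and Lemmas~\ref{FDCT}/\ref{FDKCT}.

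There is one point to correct in your analytic continuation step. Continuing in a single coordinate such as $\alpha_1$ does not suffice: if several of the $\alpha_j$ satisfy $\alpha_j-\max(\vec a)\le 1$, moving $\alpha_1$ alone never lands you in the region $\min(\vec\alpha)-\max(\vec a)>1$ where the formula is already known, so the identity theorem gives nothing. The paper handles this by introducing a single scalar $\theta$ that shifts \emph{all} the column parameters simultaneously (working with $\alpha_m+\theta$), so that $\theta>1-\theta_0$ puts you in the known range and one only needs analyticity in the single variable $\theta$; Lemma~\ref{AT} and a direct differentiation-under-the-integral argument for the Laplace transform then finish the job. Your scheme is easily repaired either by adopting this global shift or by iterating the one-variable continuation over $\alpha_1,\dots,\alpha_M$ in turn, but as written it has a gap.
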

To prove Theorem \ref{LGPT1} we need the following result, whose proof is postponed to Section \ref{Section2.4}.

\begin{theorem}\label{BCT1v2}
Fix integers $N \geq 9, M \geq 1$, a real number $\tau > 0$ and $\vec{a} = (a_1, \dots, a_N) \in \mathbb{R}^N$, $\vec{\alpha}= (\alpha_1, \dots, \alpha_M) \in \mathbb{R}_{>0}^M$ such that $\min(\vec{\alpha}) - \max(\vec{a}) > 0$. Let $\min(\vec{\alpha}) > b > a > \max(\vec{a}) $, $d \in \left(0 , \min \left( 1/4, (b-a)/4\right) \right)$ and $u \in \mathbb{C}$ with $\Re(u)> 0$. Then
\begin{equation}\label{BCe1}
\mathbb{E} \left[ e^{-u \ZP(\tau)} \right] = \det \left( I + K^{\tau}_u \right)_{L^2(C_{a,3\pi/4})},
\end{equation}
where the operator $K^{\tau}_u$ is defined in terms of its integral kernel
\begin{equation}\label{kernel2}
K^{\tau}_u(v,v') = \frac{1}{2\pi \i} \int_{D_v} \frac{\pi }{\sin(\pi (v-w))}  \prod_{n = 1}^N \frac{\Gamma(v - a_n)}{\Gamma(w - a_n)} \prod_{m = 1}^M \frac{\Gamma(\alpha_m - w)}{\Gamma(\alpha_m - v)} \frac{u^{w-v} e^{\tau(w^2 - v^2)/2}dw}{w - v'},
\end{equation}
with $C_{a,3\pi/4}$ and $D_v = D_v(b, \pi/4, d)$ as in Definitions \ref{ContV} and \ref{ContE} .
\end{theorem}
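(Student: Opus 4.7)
The plan is to start from Proposition \ref{BCT1} in the restrictive range $\min(\vec{\alpha}) - \max(\vec{a}) > 1$, convert its kernel into the form (\ref{kernel2}) by a change of variables and a contour deformation, and then remove the restriction by analytic continuation in a complex shift parameter applied to the $\alpha_i$'s.

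The change of variable $w = v + s$ in (\ref{kernel1}), combined with the reflection identity $\Gamma(-s)\Gamma(1+s) = \pi/\sin(\pi(v-w))$ and the algebraic simplification $v\tau s + \tau s^2/2 = \tau(w^2 - v^2)/2$, turns the integrand of (\ref{kernel1}) into the integrand of (\ref{kernel2}), with the $s$-contour $\mathcal{D}_v$ becoming the translated $w$-contour $v + \mathcal{D}_v$. It remains to deform the outer $v$-contour $\mathcal{C}_{\vec{a};\vec{\alpha};\pi/6} \to C_{a,3\pi/4}$ and the inner $w$-contour $v + \mathcal{D}_v \to D_v(b,\pi/4,d)$. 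Both the old and new $w$-contours pass to the right of $w = v$, and they can be shown to be homotopic in the complement of the poles of the integrand in $w$: the poles of $\Gamma(\alpha_m - w)$ lie in $\{\alpha_m, \alpha_m + 1, \ldots\} \subset [\min(\vec{\alpha}), \infty)$ and stay strictly to the right of both contours since $b < \min(\vec{\alpha})$, while the rightward bump of $D_v$ is confined to $\Re(w) \leq \Re(v) + 2d < \Re(v) + 1$ by the constraint $d < \min(1/4, (b-a)/4)$, so that the integer translates $w = v + k$ with $k \geq 1$ are also never crossed. The deformation of the outer contour sweeps no $v$-poles of the (already deformed) inner kernel, a statement that must be checked carefully in view of the $v$-dependence of $D_v$. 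Decay at infinity along both deformations is delivered by Propositions \ref{PE1v2}, \ref{PE2v2}, \ref{PE3v2}, and Lemma \ref{FDKCT} then yields absolute convergence of the resulting Fredholm series. This establishes (\ref{BCe1}) whenever $\min(\vec{\alpha}) - \max(\vec{a}) > 1$.

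To extend the identity to the full range $\min(\vec{\alpha}) - \max(\vec{a}) > 0$, introduce a complex parameter $z$, replace $\vec{\alpha}$ by $\vec{\alpha} + z = (\alpha_1 + z, \ldots, \alpha_M + z)$, and keep $\vec{a}$, $\tau$, $u$ and the contours $C_{a,3\pi/4}$, $D_v(b,\pi/4,d)$ fixed. On the connected open set $\Omega = \{z \in \mathbb{C} : \Re(z) > b - \min(\vec{\alpha})\}$ the contour constraints remain satisfied. Let $R(z)$ denote the right-hand side of (\ref{BCe1}) with the shifted parameters, and set
\[
L(z) = \mathbb{E}_B\!\left[ \int_{(0,\infty)^{MN}} e^{-u F(w,B,\tau)} \prod_{i,j} f_{\alpha_i + z - a_j}(w_{i,j})\, \prod_{i,j} dw_{i,j} \right],
\]
where $F$ denotes the partition function of the mixed polymer as a deterministic function of the weight matrix $w$ and the Brownian sample path $B$. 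For real $z$ with all shifted shape parameters positive, $L(z) = \mathbb{E}[e^{-u \ZP(\tau)}]$ for the shifted parameters. Since $1/\Gamma$ is entire, $f_\theta(x)$ is entire in $\theta$ for every fixed $x > 0$, and the pointwise bound $|f_{\alpha_i + z - a_j}(w)| = |1/\Gamma(\alpha_i + z - a_j)|\, w^{a_j - \Re(\alpha_i + z) - 1} e^{-1/w}$ is locally bounded in $z \in \Omega$, so Proposition \ref{Mattner} shows $L$ is holomorphic on $\Omega$. Lemma \ref{AT}, applied with the tail estimates of Propositions \ref{PE1v2}--\ref{PE3v2}, gives holomorphicity of $R$ on $\Omega$. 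By the first part of the argument, $L(z) = R(z)$ for all real $z > 1 + \max(\vec{a}) - \min(\vec{\alpha})$, a set with accumulation points in $\Omega$, so the identity principle forces $L \equiv R$ on $\Omega$. Specializing to $z = 0$ yields (\ref{BCe1}) throughout $\min(\vec{\alpha}) - \max(\vec{a}) > 0$.

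The main technical obstacle is the contour-deformation step: because the inner contour $D_v$ depends on $v$, one must produce decay estimates for the products of gamma functions that are uniform in $v$ along intermediate outer contours and simultaneously in $w$ at infinity along the inner contours, and the bookkeeping of poles during the deformation of the $v$-dependent inner contour is delicate. This is where the work of Section \ref{Section9} and of Propositions \ref{PE1v2}--\ref{PE3v2} is concentrated. The analytic continuation step is comparatively soft, amounting to verifying the hypotheses of Proposition \ref{Mattner} and Lemma \ref{AT} from those same tail bounds.
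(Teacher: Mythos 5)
Your overall architecture coincides with the paper's: Part I starts from Proposition \ref{BCT1}, substitutes $w=v+s$, uses the reflection formula, and deforms contours; Part II continues analytically in a common shift of the $\alpha_m$'s, with holomorphy of the left side via Proposition \ref{Mattner} and of the right side via Lemma \ref{AT}. However, in Part I the step you flag as ``must be checked carefully'' is precisely the content of the proof, and your proposal contains no argument for it. The difficulty is not just pole bookkeeping: because the inner contour $D_v$ moves with $v$, the $n$-fold integrand $\det\left[\tilde{K}(v_i,v_j)\right]_{i,j=1}^n$ is not the integral of a fixed meromorphic function over a fixed product of contours, so ``homotopy in the complement of the poles'' plus Cauchy's theorem cannot be invoked directly for the outer deformation $\mathcal{C}_{\vec{a};\vec{\alpha};\pi/6}\to C_{a,3\pi/4}$; in addition, no single fixed $w$-contour works simultaneously for all $v$ on both outer contours, and the outer contours are unbounded, so the deformation must be controlled uniformly along the entire intermediate family, not only at the two endpoints. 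The paper handles this by (i) truncating both outer contours at radius $L$ and bounding the truncation error via the kernel estimate $|\tilde{K}(v,v')|\leq C/(1+|v|^2)$ from Proposition \ref{PE2v2} together with Hadamard's inequality, and (ii) passing between the truncated contours through finitely many interpolating contours $\mathcal{C}_k$ with horizontal separation less than $\tilde{d}/40$, deforming for each $k$ all inner contours $\tilde{D}_{v_i}$ to one common fixed contour $D_k$ lying between $\tilde{d}/40$ and $\tilde{d}/20$ to the right of both $\mathcal{C}_k$ and $\mathcal{C}_{k+1}$, and only then applying Fubini and a $w$-pointwise Cauchy deformation in the $v$-variables. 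Some mechanism of this kind (a common inner contour valid locally in $v$, plus truncation or uniform bounds along the interpolating family) is required; without it the central step of Part I is asserted rather than proved.

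In Part II your domain $\Omega=\{z:\Re(z)>b-\min(\vec{\alpha})\}$ is too large for the right-hand side. With the contours held fixed, the poles of $\Gamma(\alpha_m+z-w)$ sit at $w=\alpha_m+z+k$, at height $\Im(z)$, while $D_v(b,\pi/4,d)$ at that height has real part roughly $b+|\Im(z)|$; once $|\Im(z)|>\min(\vec{\alpha})+\Re(z)-b$ a pole meets or crosses the contour, so the fixed-contour Fredholm determinant is no longer obviously well-defined or analytic there, and Propositions \ref{PE1v2} and \ref{PE2v2} are in any case only stated for the shift in a compact set, respectively within $\delta_0$ of $[0,1]$. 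This is a fixable slip: replace $\Omega$ by a thin complex neighborhood of a real interval (the paper uses $\{z: d(z,[0,1])<\delta_0\}$), which still contains $z=0$ and a real segment where Part I applies, and the identity-theorem argument goes through as in the paper. As written, though, the continuation is performed on a region where the analyticity of the right-hand side has not been established.
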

\begin{remark}
The Fredholm determinants in Theorems \ref{LGPT1} and \ref{BCT1v2} are as in Section \ref{Section2.1}. In particular, part of the statement of these theorems is that the kernels are well-defined, each term in the series (\ref{fredholmDefS2}) is finite and the series converges for the given choice of parameters.
\end{remark}

In addition, we require the following two results, whose proofs are given in Section \ref{Section10}.
\begin{proposition}\label{PE1v2}
Fix $M, N \geq 1$, $\vec{a} = (a_1, \dots, a_N) \in \mathbb{R}^N$, $\vec{\alpha}= (\alpha_1, \dots, \alpha_M) \in \mathbb{R}^M$, $T \geq 0$, a compact set $K \subset \mathbb{C}$ and $v,u\in \mathbb{C}$ with $\Re(u) > 0$. Then there exist positive constants $L_0, C_0, c_0$ depending on all previous constants and $K$ such that if $\tau \in[0,T]$, $x,a \in K$, $w = a + z$, $\arg(z) \in [\pi/4, \pi/3] \cup [-\pi/3, -\pi/4]$, $|w| \geq L_0$
\begin{equation}\label{PE1v2e1}
\left|\frac{\pi }{\sin(\pi (v-w))}  \frac{\prod_{m = 1}^M  \Gamma(x+ \alpha_m - w )}{\prod_{n = 1}^N\Gamma(w - a_n) }  u^{w-v} e^{\tau (w^2 - v^2)/2} \right| \leq C_0 e^{- c_0 |w| \log |w|}.
\end{equation}
\end{proposition}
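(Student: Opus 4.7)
The plan is to apply Stirling's asymptotic formula to the gamma factors and show that the quotient of products of gamma functions contributes a super-exponential decay of order $|w|\log|w|$, which dominates all other factors (each of which grows at most like $e^{C|w|}$).

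Parametrize $z = re^{i\theta}$ with $|\theta| \in [\pi/4, \pi/3]$ by hypothesis. The angular constraint yields three crucial geometric observations: (i) $\cos\theta \in [1/2, \sqrt{2}/2]$, so $\Re(z) \geq |z|/2$; (ii) $|\sin\theta| \geq \sqrt{2}/2$, so $|\Im(z)| \geq |z|/\sqrt{2}$; and (iii) $\Re(z^2) = |z|^2 \cos(2\theta) \leq 0$, since $2|\theta| \in [\pi/2, 2\pi/3]$. The first will make the gamma factors in the denominator large, the second will tame the sine factor, and the third will tame the Gaussian factor $e^{\tau w^2/2}$.

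Next I would invoke the uniform Stirling estimate $\log\Gamma(\zeta) = (\zeta - 1/2)\log\zeta - \zeta + \tfrac12 \log(2\pi) + O(|\zeta|^{-1})$, valid for $|\arg\zeta| \leq \pi - \delta$, to obtain: for each $n$, since $w - a_n = z + O_K(1)$ has $\arg \to \theta$ as $|w|\to\infty$ uniformly in $a \in K$,
\[
\log|\Gamma(w - a_n)| \geq \tfrac{1}{3}|w|\log|w| - C|w|;
\]
and for each $m$, since $x + \alpha_m - w = -z + O_K(1)$ has $\arg$ in a compact subset of $[2\pi/3, 3\pi/4] \cup [-3\pi/4, -2\pi/3]$ (where cosine is $\leq -1/2$),
\[
\log|\Gamma(x + \alpha_m - w)| \leq -\tfrac{1}{3}|w|\log|w| + C|w|.
\]
Taking the ratio and summing over $m, n$ produces
\[
\log\left|\frac{\prod_{m=1}^M\Gamma(x + \alpha_m - w)}{\prod_{n=1}^N\Gamma(w - a_n)}\right| \leq -\tfrac{M+N}{3}|w|\log|w| + C'|w|.
\]

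The remaining factors are controlled as follows. Using $|\sin(x + iy)| \geq |\sinh y|$ and observation (ii), $|\sin(\pi(v-w))| \geq c\,e^{\pi|z|/\sqrt{2}}$ for $|z|$ large, so $\pi/|\sin(\pi(v-w))|$ is bounded (and in fact decays). Trivially $|u^{w-v}| = |u|^{\Re(w-v)} e^{-\arg(u)\Im(w-v)} \leq e^{C|w|}$ with $C$ depending on $u, v$. By observation (iii), $\Re(w^2) = \Re((a+z)^2) = a^2 + 2a\Re(z) + \Re(z^2) \leq a^2 + 2|a||z|$, hence $|e^{\tau(w^2 - v^2)/2}| \leq e^{C|w|}$ with $C$ uniform in $\tau \in [0,T]$ and $a \in K$. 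Assembling everything, the LHS of \eqref{PE1v2e1} is bounded by
\[
\exp\bigl(-\tfrac{M+N}{3}|w|\log|w| + C''|w|\bigr),
\]
and for $|w| \geq L_0$ sufficiently large (so that $\log|w| \geq 6C''/(M+N)$) this is at most $C_0 e^{-c_0 |w|\log|w|}$ with $c_0 = (M+N)/6$.

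The main obstacle is the bookkeeping of uniformity: one must verify that all implicit constants in the Stirling estimate, the sine lower bound, and the auxiliary bounds can be taken uniform over $\tau \in [0,T]$, $x, a \in K$, and $\arg z$ in the specified compact angular sectors. This is straightforward since $K$ is compact, the angular sectors are bounded away from $\pm\pi$ (so Stirling applies uniformly), and $|\Im(w - v)|$ grows linearly in $|z|$ (keeping us bounded away from the poles of the sine and of the gamma factors in the numerator). No cancellations between factors are needed; the Gamma quotient alone supplies the required decay.
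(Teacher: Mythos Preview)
Your argument is correct. The three geometric observations about the angular sector are exactly what is needed, the Stirling application is valid (the relevant arguments stay in compact subsets of $(-\pi,\pi)$ bounded away from $\pm\pi$), and the bookkeeping of uniformity over $K$, over $\tau\in[0,T]$, and over the angular range goes through as you say.

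The route, however, differs from the paper's. The paper does not apply Stirling directly to each factor. Instead it fixes an auxiliary $\theta>0$, writes
\[
\frac{\prod_{m=1}^M \Gamma(x+\alpha_m-w)}{\prod_{n=1}^N \Gamma(w-a_n)} \;=\; \frac{\Gamma(\theta-w)^M}{\Gamma(w)^N}\cdot \frac{\prod_{m=1}^M \Gamma(x+\alpha_m-w)}{\Gamma(\theta-w)^M}\cdot \frac{\Gamma(w)^N}{\prod_{n=1}^N \Gamma(w-a_n)},
\]
recognizes the first factor as $\exp(-G_{M,N}(w)+\text{linear})$, and bounds it via Lemma~\ref{HL1}, a derivative/monotonicity argument for $\Re G_{M,N}$ along rays built on the digamma series rather than Stirling. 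The remaining two ratios are then only a polynomial correction $|w|^{c}$ (again by Stirling), and the sine, $u^{w-v}$, and Gaussian factors are handled exactly as you do. Your approach is more self-contained for this isolated estimate, avoiding the detour through $G_{M,N}$; the paper's approach has the advantage that the machinery of Lemma~\ref{HL1} and the analysis of $G_{M,N}$ are needed anyway for the steepest-descent arguments in Section~\ref{Section3}, so the proposition becomes a cheap corollary of results already in hand.
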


\begin{proposition}\label{PE2v2}
Fix  $M, N \geq 1$,  $T \geq 0$, $\tau \in [0,T]$, $\vec{a} = (a_1, \dots, a_N) \in \mathbb{R}^N$, $\vec{\alpha}= (\alpha_1, \dots, \alpha_M) \in \mathbb{R}_{>0}^M$ and $u \in \mathbb{C}$ with $\Re(u) > 0$. Put $\theta_0 = \min(\vec{\alpha}) - \max(\vec{a})$ and assume that $\theta_0 > 0$ and $\delta_0 \in (0, \min(1/4, \theta_0 /16))$. Suppose that $v, v' \in C_{a, \phi}$ as in Definition \ref{ContV} with $a \in [ \max(\vec{a}) + \delta_0,  \min(\vec{\alpha}) - 5\delta_0]$ and $\phi \in [3\pi/4, 5\pi/6]$. Finally, fix $b \in [a+ 2\delta_0, \min(\vec{\alpha}) - 3\delta_0]$ and denote by $D_v$ the contour $D_v(b, \pi/4, \delta_0)$ as in Definition \ref{ContE}. Then there exists a positive constant $ C_0$ depending on $\vec{a}, \vec{\alpha}, \delta_0, u, N, M, T$ (and not $\tau$) such that if $x \in \mathbb{C}$ with $d(x, [ 0, 1]) \leq \delta_0$ then we have
\begin{equation}
  \left| \int_{D_v} \frac{\pi d\mu(w)}{\sin(\pi (v-w))}   \prod_{n = 1}^N \frac{\Gamma(v - a_n)}{\Gamma(w - a_n)} \prod_{m = 1}^M \frac{\Gamma(\alpha_m + x - w)}{\Gamma(\alpha_m + x- v)} \frac{u^{w-v} e^{\tau (w^2 - v^2)/2}}{w - v'} \right| \leq \frac{C_0}{1 + |v|^2}.
\end{equation}
\end{proposition}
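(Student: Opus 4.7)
The plan is to establish the bound by partitioning the contour $D_v = D_v^1 \cup D_v^2$ according to Definition \ref{ContE} and estimating each piece separately. The core observation is that for $|v|$ large, the prefactor $\prod_n \Gamma(v-a_n)/\prod_m \Gamma(\alpha_m+x-v)$ exhibits super-exponential decay on contours of the form $C_{a,\phi}$ with $\phi \in [3\pi/4,5\pi/6]$, which yields the $1/(1+|v|^2)$ bound on the $D_v^1$ piece; for bounded $|v|$, continuity gives a uniform bound.

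First I would handle the $D_v^1$ piece. Apply Proposition \ref{PE1v2} with base point equal to $b$ and compact set $K$ containing $b$ and $\{x \in \mathbb{C} : d(x,[0,1])\le \delta_0\}$, which gives the pointwise estimate
\[
\left|\frac{\pi}{\sin(\pi(v-w))} \cdot \frac{\prod_m \Gamma(\alpha_m+x-w)}{\prod_n \Gamma(w-a_n)} \cdot u^{w-v} e^{\tau(w^2-v^2)/2}\right| \leq C e^{-c|w|\log|w|}
\]
uniformly in $\tau \in [0,T]$ for $|w| \geq L_0$. For the bounded portion $|w|<L_0$ on $D_v^1$, the same combination is controlled by a constant via continuity and compactness. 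Multiplying by the remaining factor $\prod_n \Gamma(v-a_n)/\prod_m\Gamma(\alpha_m+x-v)$ (super-exponentially small for large $|v|$, as follows from the reflection formula $\Gamma(z)\Gamma(1-z)=\pi/\sin(\pi z)$ combined with Stirling, given $\arg(v-a_n) \to \phi \in [3\pi/4, 5\pi/6]$) and by $1/|w-v'|\leq 1/(b-a)$ (using $\Re(w)\geq b$ on $D_v^1$ while $\Re(v')\leq a$), the $w$-integral converges to a constant and the total bound on the $D_v^1$ contribution is super-exponential in $|v|$, hence comfortably $\leq C'/(1+|v|^2)$.

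Next I would handle the detour $D_v^2$, consisting of three segments joining $z_-,\,v+2\delta_0-\i\delta_0,\,v+2\delta_0+\i\delta_0,\,z_+$. On the horizontal segment $w=v+2\delta_0+\i t$ with $|t|\leq\delta_0$, the factor $|\sin(\pi(v-w))|$ is bounded below by $\sin(2\pi\delta_0)>0$ since $\delta_0<1/4$, and applying Stirling's asymptotic $\log\Gamma(z+\alpha)-\log\Gamma(z)=\alpha\log z+O(1/|z|)$ to each gamma ratio yields
\[
\prod_n \frac{\Gamma(v-a_n)}{\Gamma(w-a_n)} \cdot \prod_m \frac{\Gamma(\alpha_m+x-w)}{\Gamma(\alpha_m+x-v)} \sim \prod_n (v-a_n)^{-(2\delta_0+\i t)} \prod_m (\alpha_m+x-v)^{-(2\delta_0+\i t)},
\]
with polynomial decay in $|v|$. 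The remaining factors $u^{w-v}$, $e^{\tau(w^2-v^2)/2}$, and $1/(w-v')$ are uniformly bounded on this segment -- the last via a geometric argument showing the distance from any point $w \in D_v^2$ to the contour $C_{a,\phi}$ is bounded below by a positive constant depending only on $\delta_0$ and $\phi$. On the two slanted segments, $\Im(v-w)=\mp\delta_0$ is constant, so $|\sin(\pi(v-w))|\geq\sinh(\pi\delta_0)>0$; one further splits these segments into the portion near $z_\pm$ (where Proposition \ref{PE1v2}-type estimates apply and give super-exponential decay in $|v|$) and the portion near $v+2\delta_0\mp\i\delta_0$ (where the horizontal-segment analysis applies).

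The hard part will be the last step: combining the estimates over $D_v^2$ -- whose length grows linearly in $|v|$ -- with the super-exponential prefactor from $D_v^1$ to obtain a uniform $1/(1+|v|^2)$ bound, carefully tracking how the $v$-dependent gamma decay pairs against the $w$-dependent growth and ensuring the constants in the bounds depend only on $\vec{a},\vec{\alpha},\delta_0,u,N,M,T$ and not on $\tau$. Uniformity in $\tau\in[0,T]$ is handled by bounding $|e^{\tau(w^2-v^2)/2}|\leq e^{T|w^2-v^2|/2}$ against the super-exponential gamma decay on $D_v^1$ and by absorbing the bounded $w$-displacement on $D_v^2$; uniformity in $v'$ by the geometric separation argument; uniformity in $x$ by the continuity of all integrand pieces on the compact parameter set $\{d(x,[0,1])\leq \delta_0\}$.
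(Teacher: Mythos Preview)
Your treatment of $D_v^1$ is essentially the same as the paper's and is fine. The gap is in your treatment of $D_v^2$.

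On the vertical segment $w=v+2\delta_0+\i t$, $|t|\le\delta_0$, your Stirling computation is correct but gives the wrong order of decay. Writing $w-v=2\delta_0+\i t$, one has
\[
\left|\prod_{n=1}^N\frac{\Gamma(v-a_n)}{\Gamma(w-a_n)}\prod_{m=1}^M\frac{\Gamma(\alpha_m+x-w)}{\Gamma(\alpha_m+x-v)}\right|
\;\asymp\; |v|^{-2\delta_0(N+M)},
\]
and the remaining factors $\pi/\sin(\pi(v-w))$, $u^{w-v}$, $1/(w-v')$ are merely bounded. The factor $e^{\tau(w^2-v^2)/2}$ gives no uniform help because the proposition must cover $\tau=0$. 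So the contribution of this segment is of order $|v|^{-2\delta_0(N+M)}$, and since $\delta_0<1/4$ there is no reason for $2\delta_0(N+M)\ge 2$; for $N=M=1$ the exponent is below~$1$. The claimed bound $C_0/(1+|v|^2)$ therefore cannot be reached by a direct estimate on $D_v^2$. (The same issue bites on the nearby portions of the two horizontal segments; only the parts close to $z_\pm$ enjoy the super-exponential decay from Proposition~\ref{PE1v2}.)

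The paper resolves this by \emph{not} estimating $D_v^2$ directly. Instead it deforms $D_v^2$ to a contour $C_v$ that passes just to the right of $v+k_v$, where $k_v$ is the largest integer with $\Re(v+k_v)$ still to the left of (roughly) $\Im(v)$. The deformation crosses the simple poles of $\pi/\sin(\pi(v-w))$ at $w=v+1,\dots,v+k_v$, whose residues are
\[
R_j(v)=-(-u)^j\,e^{\tau(vj+j^2/2)}\prod_{n=1}^N\frac{\Gamma(v-a_n)}{\Gamma(v+j-a_n)}\prod_{m=1}^M\frac{\Gamma(\alpha_m+x-v-j)}{\Gamma(\alpha_m+x-v)}\cdot\frac{1}{v-v'+j}.
\]
Using $\Gamma(z)/\Gamma(z+j)=\prod_{k=0}^{j-1}(z+k)^{-1}$, each ratio contributes $|\Im v|^{-j}$, so $|R_j(v)|\le C\,|\Im v|^{-j(N+M)}$ and the whole sum is dominated by its first term $\sim |v|^{-(N+M)}\le |v|^{-2}$ (this is where $M+N\ge 2$ enters). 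The key geometric point is that $C_v$ has \emph{bounded} length (of order $1+|a|+|b|$, independent of $|v|$), so on $C_v$ the same super-exponential estimates used on $D_v^1$ apply and give a contribution $\ll |v|^{-2}$. Your plan to split the long horizontal legs of $D_v^2$ into two ``portions'' misses that these legs have length of order $|v|$; the residue trick is precisely what converts this unbounded detour into a bounded one.
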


\begin{proof}[Proof of Theorem \ref{LGPT1}] From (\ref{PPAS}) we know that $\ZP(\tau)$ converges almost surely to $\ZP(0)$ as $\tau \rightarrow 0+$. By the bounded convergence theorem (here we use $\Re(u) > 0$) and the distributional equality of $\ZP(0)$ and $\ZP$ we conclude
\begin{equation}\label{LGE1}
\mathbb{E} \left[ e^{-u \ZP} \right]  = \lim_{\tau \rightarrow 0+} \mathbb{E} \left[ e^{-u \ZP(\tau)} \right].
\end{equation}
In view of Theorem \ref{BCT1v2} it suffices to show that
\begin{equation}\label{LGPe1}
 \lim_{\tau \rightarrow 0+}\det \left( I + K^{\tau}_u \right)_{L^2(C_{a,3\pi/4})} = \det \left( I + K_u \right)_{L^2(C_{a,3\pi/4})}.
\end{equation}

From Proposition \ref{PE1v2} we have that $K^{\tau}_u(v,v')$ and $K_u(v,v')$ are well-defined. Let us denote
$$F_{\tau}(w,v',v) =\frac{\pi e^{\tau (w^2 - v^2)/2} u^{w-v}}{\sin(\pi(w-v)) (w- v')} \prod_{n = 1}^N \frac{\Gamma(v - a_n)}{\Gamma(w - a_n)} \prod_{m = 1}^M \frac{\Gamma(\alpha_m - w)}{\Gamma(\alpha_m - v)} .$$
It is clear that pointwise for fixed $v,v',w$ we have $\lim_{\tau \rightarrow 0} F_{\tau}(w,v',v)  = F_{0}(w,v',v)$. By the Dominated convergence theorem with dominating function $C_0 \exp(-c_0 |w|\log|w|)$ (here we use Proposition \ref{PE1v2} with $K = [0, \theta]$, $T = 1$, and $M,N, \vec{a}, \vec{\alpha}, u, v$ as above) we conclude for $v,v' \in C_{a,3\pi/4}$
\begin{equation}\label{LGPe2}
\lim_{\tau \rightarrow 0+} K^{\tau}_u(v,v')  = \lim_{\tau \rightarrow 0+} \frac{1}{2\pi \i} \int_{D_v}  F_{\tau}(w,v',v)dw  =  \frac{1}{2\pi \i} \int_{D_v}  F_{0}(w,v',v) dw =  K_u(v,v').
\end{equation}
Let $\delta_0 = \min (d/4, a - \max(\vec{a}), (\min(\vec{\alpha}) - b)/4)$. By Cauchy's theorem we can deform $D_v(b, \pi/4, d)$ in the definition of $K^{\tau}_u(v,v')$ and $K_u(v,v')$ to $D_v(b, \pi/4, \delta_0)$ without affecting the value of the integrals. Here the poles we need to avoid during the deformation are the points $v + n$ for $n \in \mathbb{Z}$ coming from $\sin(\pi(w-v))$ in $F_\tau(w,v',v)$ and the points $\alpha_m + n$ for $n \in \mathbb{Z},$ $n \geq 0$ coming from the functions $\Gamma(\alpha_m - w)$ in $F_\tau(w,v',v)$ and by the definition of $\delta_0$ and $d$ we know that none of these poles are crossed in the process of the deformation. We continue to write $D_v$ for this new contour. From Proposition \ref{PE2v2} (applied to $T = 1$, $a,b,M,N,u,\vec{a},\vec{\alpha}$  as in the statement of this theorem, $\delta_0 =\min (d/4, a - \max(\vec{a}), (\min(\vec{\alpha}) - b)/4)$, $x = 0$, $\phi = 3\pi/4$) we know that if $\tau \in [0,1]$ there is a $\tau$-independent constant $C_0$ such that for $v,v' \in C_{a,3\pi/4}$ we have
\begin{equation}\label{LGPe3}
|K^{\tau}_u(v,v')|  \leq \frac{C_0}{1 + |v|^2}.
\end{equation}
Using (\ref{LGPe2}) and (\ref{LGPe3}) we apply Lemma \ref{FDCT} with $\Gamma = C_{a,3\pi/4}$, $F(v) = \frac{C_0}{1 + |v|^2}$ to conclude (\ref{LGPe1}). In particular, observe that Lemma \ref{FDCT} implies that the series defining the determinants on both sides of (\ref{LGPe1}) are absolutely convergent.
\end{proof}

%
\subsection{Proof of Theorem \ref{BCT1v2}}\label{Section2.4} In this section we give the proof of Theorem \ref{BCT1v2}, which we needed to prove Theorem \ref{LGPT1}. We require the following proposition, whose proof is deferred to Section \ref{Section10}.
\begin{proposition}\label{PE3v2}
Fix $M, N \geq 1$, $\vec{a} = (a_1, \dots, a_N) \in \mathbb{R}^N$, $\vec{\alpha}= (\alpha_1, \dots, \alpha_M) \in \mathbb{R}^M$, $\tau > 0$, a compact set $K \subset \mathbb{C}$ and $v,u\in \mathbb{C}$ with $\Re(u) > 0$. Then there exist positive constants $L_0, C_0, c_0$ depending on all of the previous constants such that if $w = a + z$ with $a \in K$, $\arg(z) \in [\pi/4, \pi/2] \cup [-\pi/2, -\pi/4]$, $|w| \geq L_0$ we have
\begin{equation}\label{PE3v2e1}
\left|\frac{\pi }{\sin(\pi (v-w))} \frac{\prod_{m = 1}^M  \Gamma( \alpha_m - w )}{\prod_{n = 1}^N\Gamma(w - a_n) } u^{w-v} e^{\tau (w^2 - v^2)/2}\right| \leq C_0 e^{- c_0 |w| \log |w|}.
\end{equation}
\end{proposition}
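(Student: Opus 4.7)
The plan is to bound the left-hand side of \eqref{PE3v2e1} factor by factor using Stirling's formula, then combine the bounds by splitting $\arg z$ into two regions in which different mechanisms dominate the decay. The key geometric input is that for $z$ with $\arg z \in [\pi/4,\pi/2] \cup [-\pi/2,-\pi/4]$ we have $\cos(2\arg z) \le 0$, so $\Re(z^2) = |z|^2 \cos(2\arg z) \le 0$. Consequently the factor $e^{\tau w^2/2}$ gives genuine Gaussian decay when $|\arg z|$ is close to $\pi/2$ but is essentially neutral (in modulus) at $|\arg z| = \pi/4$; in the latter regime the decay must come from the gamma factors, which is where the requirement $M,N \ge 1$ enters.

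Write $w = a + z$ and $\theta = \arg z$. Stirling's formula applied to $\Gamma(w-a_n)$ yields
$$-\log|\Gamma(w-a_n)| = -\Re\bigl((w-a_n)\log(w-a_n)\bigr) + O(|w|) = -|z|\log|z| \cos\theta + O(|w|)$$
uniformly in $a \in K$ for $|w|$ large. For the numerator factors $\Gamma(\alpha_m - w)$ I use the reflection formula $\Gamma(\alpha_m - w) = \pi / [\sin(\pi(\alpha_m - w))\, \Gamma(1-\alpha_m+w)]$, apply Stirling to $\Gamma(1-\alpha_m+w)$, and use $|\sin(\pi(\alpha_m - w))| \ge c\, e^{\pi|\Im w|}$ (valid for $|\Im w|$ bounded below, which holds on our arc since $|\Im z| \ge |z|/\sqrt 2$). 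The outcome is $\log|\Gamma(\alpha_m - w)| = -|z|\log|z|\cos\theta - \pi|\Im w| + O(|w|)$. The prefactor satisfies $|\pi/\sin(\pi(v-w))| \le C e^{-\pi|\Im w|}$, the power satisfies $|u^{w-v}| \le e^{C|w|}$, and the Gaussian contributes $\tau\Re(w^2)/2 = (\tau/2)|z|^2 \cos(2\theta) + O(|w|)$.

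Collecting these pieces, the logarithm of the left-hand side of \eqref{PE3v2e1} is bounded above by
$$-(M+N)|z|\log|z|\cos\theta + (\tau/2)|z|^2 \cos(2\theta) + O(|w|).$$
I split into two cases. On the region $\{\theta : \cos\theta \ge 1/(2\sqrt 2)\}$ the first term alone gives a bound of $\le -c|w|\log|w|$, which absorbs the $O(|w|)$ error. On the complementary region $\cos\theta < 1/(2\sqrt 2)$ one necessarily has $\cos(2\theta) \le -\epsilon$ for some universal $\epsilon > 0$, so the Gaussian contribution gives $\le -c\tau|w|^2$, which beats $-|w|\log|w|$ for $|w|$ large. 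Choosing $c_0$ smaller than the minimum decay exponent obtained and $L_0$ large enough to absorb $O(|w|)$ corrections yields the claimed bound; the case $\theta \in [-\pi/2,-\pi/4]$ is handled identically by symmetry.

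The main obstacle is the uniform handling of this case split: the source of decay transitions between the gamma factors (dominant near $\theta = \pm\pi/4$) and the Gaussian factor (dominant near $\theta = \pm\pi/2$), and neither individually suffices across the full arc. The remaining technical points, namely ensuring that Stirling's expansion is valid uniformly in $a$ varying over the compact set $K$ and that $w$ stays far enough from the poles of $\sin(\pi(\alpha_m - w))$ and $\sin(\pi(v-w))$ to justify the exponential lower bound on $|\sin|$, both reduce to the observation that $|\Im w| \ge |z|/\sqrt 2 - \sup_K|\Im a|$ grows without bound on the arc once $|w| \ge L_0$.
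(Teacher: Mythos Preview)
Your proof is correct and follows the same essential strategy as the paper: split the arc $\arg z \in [\pi/4,\pi/2]$ into two regimes, with the gamma-function decay dominating near $\pi/4$ and the Gaussian factor $e^{\tau w^2/2}$ dominating near $\pi/2$. The paper makes the cut at $|\arg z| = \pi/3$, while you cut at $\cos\theta = 1/(2\sqrt 2)$; either works.

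The one substantive difference is in how the gamma-factor decay is extracted in the first regime. The paper introduces an auxiliary $\theta>0$ and routes everything through the function $G_{M,N}(w) = N\log\Gamma(w) - M\log\Gamma(\theta-w) - (\text{linear})$, then invokes its Lemma~\ref{HL1}, which was established separately via a digamma-series analysis along rays. You instead apply Stirling and the reflection formula directly to each $\Gamma(w-a_n)$ and $\Gamma(\alpha_m-w)$, obtaining the leading term $-(M+N)|z|\log|z|\cos\theta$ without any auxiliary machinery. Your route is more self-contained for this proposition taken in isolation; the paper's route has the advantage that Lemma~\ref{HL1} (and the $G_{M,N}$ apparatus generally) is reused throughout Sections~\ref{Section9}--\ref{Section10}, so packaging the estimate once pays off across several proofs.
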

\begin{remark}
Let us briefly explain the difference between Proposition \ref{PE1v2} from the previous section and Proposition \ref{PE3v2}. Proposition \ref{PE1v2} is formulated slightly more generally in that (\ref{PE1v2e1}) holds for all $x$ in a compact set $K$, while in (\ref{PE3v2e1})we have set $x = 0$. In the second part of the proof of Theorem \ref{BCT1v2} one of the arguments will require an analytic continuation in $x$, which demands we get estimates not just at $x = 0$ but in a neighborhood of it. In the first part of the proof of Theorem \ref{BCT1v2} we will not need the same type of generality for the estimate and Proposition \ref{PE3v2} will suffice for our purposes. A more important difference is that
in Proposition \ref{PE1v2} we assumed $\arg(z) \in [\pi/4, \pi/3] \cup [-\pi/3, -\pi/4]$, while in Proposition \ref{PE3v2} we assume $\arg(z) \in [\pi/4, \pi/2] \cup [-\pi/2, -\pi/4]$. The reason behind this difference is that in Theorem \ref{BCT1} the contour $\mathcal{D}_v$ extends vertically up near infinity, and we need estimates when $|\arg(z)| = \pi/2$. Yet another difference is that the constants $C_0, c_0, L_0$ in Proposition \ref{PE1v2} only depended on $T$ as long as $\tau \in [0, T]$ -- here it is important that we get uniform estimates near $0$ since in the proof of Theorem \ref{LGPT1} we performed the limit $\tau \rightarrow 0+$. In the proof of Theorem \ref{BCT1v2} $\tau > 0$ is fixed and the constants $C_0, c_0, L_0$  in Proposition \ref{PE1v2} are only proved to exist for the range of $\arg(z) $, provided $\tau > 0$. In plain words, there is a difficulty in showing that $C_0, c_0$ in the two propositions exist either if $\tau$ is close to $0$ or when $|\arg(z)|$ is close to $\pi/2$. If one of these quantities is bounded away from its problematic value, the constants exist. Then Proposition \ref{PE1v2} assumes $|\arg(z)|$ is bounded away from $\pi/2$, while Proposition \ref{PE3v2} assumes $\tau$ is bounded away from $0$.
\end{remark}

\begin{proof}[Proof of Theorem \ref{BCT1v2}] The proof is split into two parts. In the first part we prove the theorem under the assumption that $\min(\vec{\alpha}) - \max(\vec{a}) > 1$ and in the second part we extend the result to the case when $\min(\vec{\alpha}) - \max(\vec{a}) > 0$. Each part of the proof is split into several steps, and begins with a brief description of the argument for the reader's convenience. Throughout we work with the same notation as in Theorem \ref{BCT1v2}.

{\bf \raggedleft Part I.} In this part we prove the theorem under the assumption that $\min(\vec{\alpha}) - \max(\vec{a}) > 1$. Before we go into the details let us sketch our approach. In Step 1 we apply Proposition \ref{BCT1} and reduce the proof of the theorem to the establishment of a certain integral identity. This identity essentially states that the $n$-fold contour integral of $ \det \left[ K^\tau_u(v_i, v_j)\right]_{i,j = 1}^n $ over $\mathcal{C}^n_{\vec{a}; \vec{\alpha}; \pi/6}$ equals the one over $C^n_{a, 3\pi/4}$. One would like to argue that the two integrals are equal by Cauchy's theorem. In order to implement this approach one needs to fix the $w$-contour somehow and deform the $v$-contours from $\mathcal{C}_{\vec{a}; \vec{\alpha}; \pi/6}$ to $C_{a, 3\pi/4}$, but the problem is that no fixed $w$ contour works. Presumably, one can simultaneously deform the $v$ and $w$ contours without crossing any poles of the integrand and then argue that by Cauchy's theorem the value of the integral does not change. This type of deformation is somewhat delicate because we are dealing with infinite contours and also because the $w$-contours depends on $v$'s. The way around this issue is to first truncate the $\mathcal{C}_{\vec{a}; \vec{\alpha}; \pi/6}$ and $C_{a, 3\pi/4}$ contours. For the truncated contours, one {\em can} fix the $w$-contour and perform the deformation. The challenge is in showing that the truncation and deformation steps produce a small error. The exact nature of the truncation is explained in Step 2. Step 3 shows that the truncation and deformation of the contours produces only a small error, and in Step 4 the application of Cauchy's theorem is justified for the truncated contours.
\begin{remark}
We point out that in \cite{KQ} a similar strategy was carried out with two crucial differences. Firstly, \cite[Theorem 2.1]{BCFV} is only formulated for $\phi \in (0, \pi/4)$, but when the theorem is recalled in \cite[Theorem A.1]{KQ} it is formulated for $\phi \in (0, \pi/4]$. We point out that the authors in \cite{KQ} provide the necessary decay estimates that would enable them to deform the contours to $\phi = \pi/4$ and all the arguments in that paper go through. Part of our proof of Theorem \ref{BCT1v2} is actually justifying this deformation carefully. Secondly, their contour is $D_v$ with $\phi = \pi/2$, and one reason it works for them is that they have nice estimates for the integral that come from elegant cancellations coming from the fact that they work with $M = N$. If $M \neq N$ the estimates they get no longer hold and one is better off working with $\phi = \pi/4$.
\end{remark}

We now turn to the proof. For clarity we split the proof into several steps.\\

{\bf \raggedleft Step 1.} In this step we reduce the proof of the theorem to establishing the following equality for every $n \geq 1$
\begin{equation}\label{R1}
 \int_{\mathcal{C}_{\vec{a}; \vec{\alpha}; \pi/6}} \hspace{-4mm}\cdots \int_{\mathcal{C}_{\vec{a}; \vec{\alpha}; \pi/6}} \hspace{-4mm} \det \left[ \tilde{K}(v_i, v_j)\right]_{i,j = 1}^n  \prod_{i = 1}^n dv_i  - \int_{C_{a,3\pi/4}} \hspace{-4mm}\cdots \int_{C_{a,3\pi/4}} \hspace{-4mm} \det \left[ \tilde{K}(v_i, v_j)\right]_{i,j = 1}^n  \prod_{i = 1}^n dv_i = 0,
\end{equation}
where $\tilde{K}(v,v')$ is defined in terms of its integral kernel
\begin{equation}\label{kernelAux}
\tilde{K}(v,v') = \frac{1}{2\pi \iota} \int_{\tilde{D}_v} \frac{\pi }{\sin(\pi (v-w))}   \prod_{n = 1}^N \frac{\Gamma(v - a_n)}{\Gamma(w - a_n)} \prod_{m = 1}^M \frac{\Gamma(\alpha_m - w)}{\Gamma(\alpha_m - v)}\frac{u^s e^{v\tau (w-v)+ \tau (w-v)^2/2}dw}{w - v'},
\end{equation}
where $\tilde{D}_v = D_v(\tilde{b}, \pi/4, \tilde{d})$ as in Definition \ref{ContE} with $\min(\vec{\alpha}) > \tilde{b} > \max( \mu, a)$ and $\tilde{d} \in (0, \min(1/4, a - \max(\vec{a}), (\tilde{b} -  \max( \mu, a))/8, (\min(\vec{\alpha}) - \tilde{b})/8)$. Recall $\mu = (\max(\vec{a}) + \min(\vec{\alpha}))/2$ from Definition \ref{DCont1}.

As explained in the proof of Theorem \ref{LGPT1} we have that $K_u^{\tau}(v,v')$ in Theorem \ref{BCT1v2} is a well-defined measurable function on $\Gamma \times \Gamma$ where $\Gamma = C_{a, 3\pi/4}$. In addition, $\det \left( I + K^{\tau}_u \right)_{L^2(C_{a,3\pi/4})}$ is a well-defined absolutely convergent series as in (\ref{fredholmDefS2}) for any $\tau \geq 0$.

As shown in the proof of Proposition \ref{BCT1} (see \cite[Theorem 2.1]{BCFV}) the right side of (\ref{BCTe1}) is given by an absolutely convergent series as in (\ref{fredholmDefS2}) for any $\tau > 0$ -- here we use the fact that $\min(\vec{\alpha}) - \max(\vec{a}) > 1$, which is the case under which Proposition \ref{BCT1} holds. Consequently, to prove the theorem it suffices to show that these sums are termwise equal, i.e. for each $n \in \mathbb{N}$ we have
\begin{equation}\label{R11}
 \int_{\mathcal{C}_{\vec{a}; \vec{\alpha}; \pi/6}} \hspace{-1mm}\cdots  \int_{\mathcal{C}_{\vec{a}; \vec{\alpha}; \pi/6}}\det \left[ \mathcal{K}_u(v_i, v_j)\right]_{i,j = 1}^n  \prod_{i = 1}^n dv_i  =  \int_{C_{a,3\pi/4}} \hspace{-4mm}\cdots \int_{C_{a,3\pi/4}}\det \left[ K^\tau_u(v_i, v_j)\right]_{i,j = 1}^n  \prod_{i = 1}^n dv_i
\end{equation}

Notice that we can apply a change of variables $w = s+ v$ in (\ref{kernel1}) and by Cauchy's theorem deform the resulting contour $v+\mathcal{D}_v$ to $\tilde{D}_v$, without affecting the value of the kernel. The decay estimates necessary to deform the contour near infinity come from Proposition \ref{PE1v2} applied to $K = \{0 \} \cup [\max(\vec{a}), \min(\vec{\alpha})]$, $T =\tau$, and $ M,N, \vec{a}, \vec{\alpha}, u$ as in the statement of the theorem. Here we implicitly used the functional equation
\begin{equation}\label{gammaFE}
\Gamma(s)\Gamma(1-s) = \frac{\pi}{\sin(\pi s)},
\end{equation}
which can be found in \cite[Chapter 6, Theorem 1.4]{Stein}. This proves that the left side of (\ref{R11}) is equal to the first term in (\ref{R1}). Analogously, by Cauchy's theorem we can deform the contour $D_v$ in the definition of $K^\tau_u(v, v')$ to the contour $\tilde{D}_v$ without affecting the value of the kernel. The decay estimates necessary to deform the contour near infinity come from Proposition \ref{PE1v2} applied to $K = \{0 \} \cup [\max(\vec{a}), \min(\vec{\alpha})]$, $T = \tau$ and $M,N, \vec{a}, \vec{\alpha}, u$ as in the statement of the theorem. The conclusion is that the right side of (\ref{R11}) is equal to the second term in (\ref{R1}). Thus we have reduced the proof of the theorem to establishing (\ref{R1}).\\

{\bf \raggedleft Step 2.} For $L > \theta$ we let $v_+$ and $v_-$ be the points on $\mathcal{C}_{\vec{a}; \vec{\alpha}; \pi/6}$ such that $|v_+| = |v_-| = L$ and $v_{\pm}$ lie in the upper and lower complex half-planes respectively. By taking $L$ sufficiently large we can ensure that $v_{\pm}$ are well defined and have negative real part. We similarly denote by $w_{\pm}$ the points on $C_{a, 3\pi/4}$ such that $|w_{\pm}| = L$ and for large enough $L$ we have $w_{\pm}$ are well-defined with negative real part and that the angle the line joining $w_+$ with $v_+$ makes with the $x$-axis is at least $\pi/4$, see the left side of Figure \ref{S2_3}.

We also introduce the following contours. We let $\mathcal{C}_0^L$ denote the part of $\mathcal{C}_{\vec{a}; \vec{\alpha}; \pi/6}$ contained in the set $B_L = \{z \in \mathbb{C}: |z| \leq L\}$, $C^L$ denotes the part of $C_{a, 3\pi/4}$ contained in $B_L$, $\mathcal{C}_1^L$ denotes the two straight segments connecting $w_-$ to $v_-$ and $w_+$ to $v_+$, $\mathcal{C}^L = \mathcal{C}_0^L \cup \mathcal{C}_1^L$ and $C^L$ denotes the part of $C_{a, 3\pi/4}$ contained in $B_L$. All of these contours are oriented to have increasing imaginary part. See the right side of Figure \ref{S2_3}.

\begin{figure}[h]
\centering
\scalebox{0.6}{\includegraphics{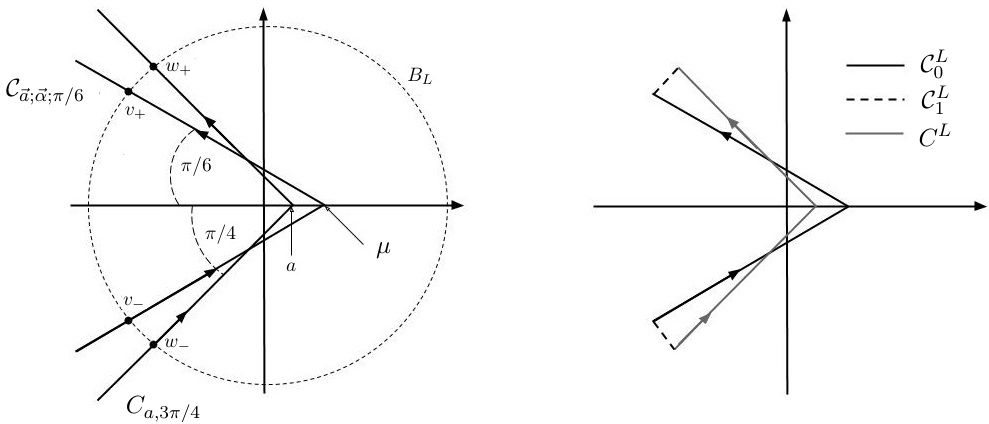}}
 \captionsetup{width=.9\linewidth}
\caption{The left part depicts the contours $\mathcal{C}_{\vec{a}; \vec{\alpha}; \pi/6}$ and $C_{a, 3\pi/4}$ as well as the points $w_{\pm}$ and $v_{\pm}$ given by the intersection of these contours with the circle of radius $L$ centered at the origin. The right part depicts the contours $\mathcal{C}^L_0$, $\mathcal{C}^L_1$ and $C^L$. }
\label{S2_3}
\end{figure}

Let $\epsilon > 0$ be given. We claim that we can find $L_1$ sufficiently large so that if $L \geq L_1$ we have the following statements
\begin{equation}\label{R2}
\left| \int_{\mathcal{C}_{\vec{a}; \vec{\alpha}; \pi/6}} \hspace{-2mm}\cdots \int_{\mathcal{C}_{\vec{a}; \vec{\alpha}; \pi/6}} \hspace{-2mm} \det \left[ \tilde{K}(v_i, v_j)\right]_{i,j = 1}^n  \prod_{i = 1}^n dv_i  -  \int_{\mathcal{C}^L_{0}} \hspace{-2mm}\cdots \int_{\mathcal{C}^L_{0}} \det \left[ \tilde{K}(v_i, v_j)\right]_{i,j = 1}^n  \prod_{i = 1}^n dv_i  \right| < \epsilon;
\end{equation}
\begin{equation}\label{R3}
\left| \int_{C_{a,3\pi/4}} \hspace{-4mm}\cdots \int_{C_{a,3\pi/4}} \hspace{-4mm} \det \left[ \tilde{K}(v_i, v_j)\right]_{i,j = 1}^n  \prod_{i = 1}^n dv_i - \int_{C^L} \cdots \int_{C^L}  \det \left[ \tilde{K}(v_i, v_j)\right]_{i,j = 1}^n  \prod_{i = 1}^n dv_i \right| < \epsilon;
\end{equation}
\begin{equation}\label{R4}
\left|\int_{\mathcal{C}^L_{0}} \hspace{-2mm}\cdots \int_{\mathcal{C}^L_{0}} \det \left[ \tilde{K}(v_i, v_j)\right]_{i,j = 1}^n  \prod_{i = 1}^n dv_i   - \int_{\mathcal{C}^L} \cdots \int_{\mathcal{C}^L} \det \left[ \tilde{K}(v_i, v_j)\right]_{i,j = 1}^n  \prod_{i = 1}^n dv_i  \right| < \epsilon;
\end{equation}
\begin{equation}\label{R5}
\int_{\mathcal{C}^L} \cdots \int_{\mathcal{C}^L} \hspace{-2mm} \det \left[ \tilde{K}(v_i, v_j)\right]_{i,j = 1}^n  \prod_{i = 1}^n dv_i = \int_{C^L} \cdots \int_{C^L} \hspace{-2mm} \det \left[\tilde{K}(v_i, v_j)\right]_{i,j = 1}^n  \prod_{i = 1}^n dv_i .
\end{equation}
If the above are true, we conclude that the left side of (\ref{R1}) is bounded in absolute value by $3\epsilon$, which would imply (\ref{R1}) as $\epsilon > 0$ is arbitrary. We establish the above statements in the steps below.\\

{\bf \raggedleft Step 3.} In this step we prove (\ref{R2}), (\ref{R3}) and (\ref{R4}). We prove (\ref{R5}) in the next step.

If we apply Proposition \ref{PE2v2} to $T= \tau$, $\delta_0 = \tilde{d}$, $b = \tilde{b}$, $x = 0$, $\phi = 3\pi/4$ and $a,u, \vec{a}, \vec{\alpha}, M,N$ as in the statement of the theorem we see that we can find a constant $C_0 > 0$ such that
\begin{equation}\label{EstKtilde}
\left| \tilde{K}(v,v') \right| \leq \frac{C_0}{1 + |v|^2},
\end{equation}
as long as $v,v' \in C_{a, 3\pi/4}$. Analogously, we can apply  Proposition \ref{PE2v2} to $T= \tau$, $\delta_0 = \tilde{d}$, $b = \tilde{b}$, $x = 0$, $\phi = 5\pi/6$, $a = \mu$ and $M, N, \vec{a}, \vec{\alpha}, u$ as in the statement of the theorem to show that (\ref{EstKtilde}) also holds (with a possibly bigger constant $C_0$) for $v,v' \in \mathcal{C}_{\vec{a}; \vec{\alpha}; \pi/6}$. In particular, by Lemma \ref{FDCT} we know that $\det \left[ \tilde{K}(v_i, v_j)\right]_{i,j = 1}^n $ is integrable on $\Gamma^n$ if $\Gamma = C_{a,3\pi/4}$ or $\Gamma = \mathcal{C}_{\vec{a}; \vec{\alpha}; \pi/6}$, which implies that we can make $L$ sufficiently large so that (\ref{R2}) and (\ref{R3}) both hold.

Notice that we can rewrite the expression in the absolute value of (\ref{R4}) as
\begin{equation}\label{FMT6}
 \sum_{ \epsilon_1, \dots, \epsilon_n \in \{0, 1\}: \sum_i \epsilon_i > 0}  \int_{\mathcal{C}_{\epsilon_1}^L} \cdots \int_{\mathcal{C}_{\epsilon_1}^L} \det \left[ \tilde{K}(v_i, v_j) \right]_{i,j = 1}^n  \prod_{i = 1}^n dv_i.
\end{equation}
By Hadamard's inequality and (\ref{EstKtilde}) we have
\begin{equation}\label{FMT7}
\left|  \det \left[ \tilde{K}(v_i, v_j) \right]_{i,j = 1}^n \right| \leq n^{n/2} \prod_{i =1}^n \frac{C_0}{1 + |v_i|^2}.
\end{equation}
Also we observe that there is a constant $A$ that depends on $C_0$ such that for all $L$ sufficiently large
\begin{equation}\label{FMT8}
\int_{\mathcal{C}_{0}^L }\frac{C_0 |d\mu(v)|}{1 + |v|^2} \leq A \mbox{ and } \int_{\mathcal{C}^L_{1} }\frac{C_0|d\mu(v)|}{1 + |v|^2} \leq \frac{A}{1 + L}.
\end{equation}
The first inequality in (\ref{FMT8}) follows from the integrability of $\frac{1}{1 + |v|^2}$ and the second from the fact that for large $L$ the contour $\mathcal{C}_1^L$ is typical distance $L$ away from the origin, and has length less than $2L$.
Combining (\ref{FMT6}), (\ref{FMT7}) and (\ref{FMT8}) we conclude that the right side of (\ref{R4}) is bounded by
\begin{equation*}
  \sum_{ \epsilon_1, \dots, \epsilon_n \in \{0, 1\}: \sum_i \epsilon_i > 0 } n^{n/2} \prod_{i = 1}^n \frac{A}{(1+L)^{\epsilon_i}} \leq \frac{n^{n/2} (2A)^n}{L+1} ,
\end{equation*}
where we used that the sum is over $\epsilon_i$ not all equal to $0$. The above implies that (\ref{R4}) holds as long as $L$ is taken large enough.\\

{\bf \raggedleft Step 4.} In this step we prove (\ref{R5}). In principle, one would like to argue that the integrands are analytic in the $v$ variables and that in deforming $C^L$ to $\mathcal{C}^L$ we do not pass any poles. Formalizing this idea is intricate, since the $\tilde{D}_v$ contours depend on the $v$'s. This is why we introduce a sequence of contours $\mathcal{C}_k$, which interpolates between the contours $C^L$ and $\mathcal{C}^L$, and for each $\mathcal{C}_k$ we define a corresponding contour $D_k$ for the $w$ variables. For these new contours we show that  the deformation of $\mathcal{C}_k$ to $\mathcal{C}_{k+1}$ is possible without affecting the value of the integral, which ultimately yields (\ref{R5}).

For each $ s\in [\Im(w_-), \Im(w_+)]$ we let $(x_1(s), s)$ and $(x_2(s), s)$ denote the unique points on $\mathcal{C}^L$ and $C^L$ that have $y$-coordinate equal to $s$. Let $|\Delta| = \max |x_1(s) - x_2(s)|$ and suppose that $K$ is a sufficiently large positive integer such that $\Delta/K < \tilde{d}/40$, where we call that $\tilde{d}$ was defined in Step 1. For each $k = 0,1, \dots, K$ we let $\mathcal{C}^L_k$ denote the contour
$$\mathcal{C}_k := \left\{ (x(s), s): x(s) = \frac{K-k}{K} x_1(s) +   \frac{k}{K} x_2(s) , s\in [\Im(w_-), \Im(w_+)]\right\},$$
 whose orientation is of increasing imaginary part. See the left part of Figure \ref{S2_4}.
\begin{figure}[h]
\centering
\scalebox{0.5}{\includegraphics{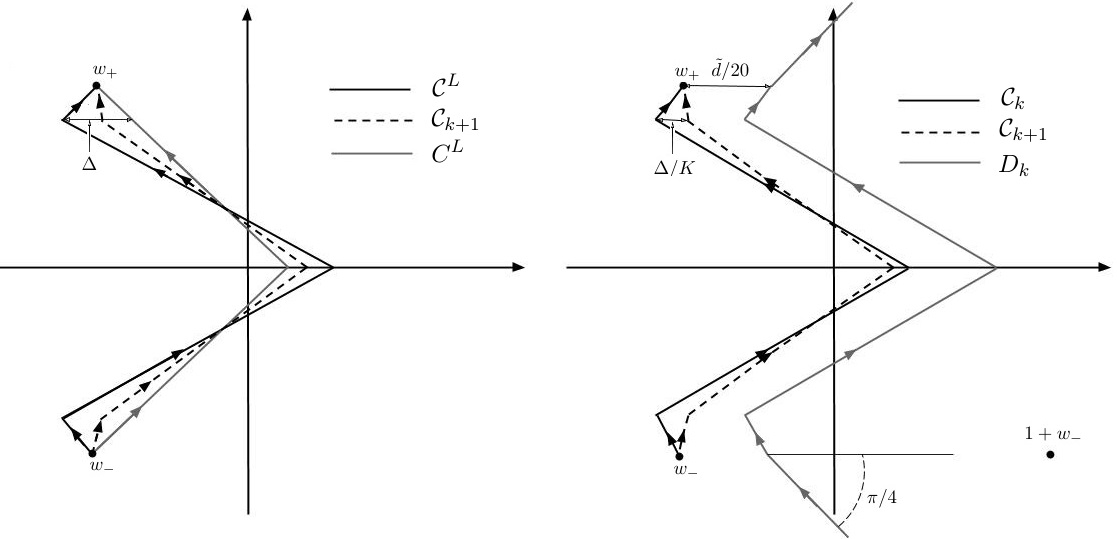}}
 \captionsetup{width=.9\linewidth}
\caption{The left part illustrates the contours $\mathcal{C}^L$ and $C^L$ as well as one of the interpolating contours $\mathcal{C}_{k+1}$. As $k$ varies from $0$ to $K$ the contours $\mathcal{C}_k$ traverse a collection of contours so that $\mathcal{C}_0 = \mathcal{C}^L$ and $\mathcal{C}_K = C^L$ and the contours $\mathcal{C}_k$ and $\mathcal{C}_{k+1}$ have maximal horizontal separation $\Delta/K$. The right part depicts the contours $\mathcal{C}_k$ and $\mathcal{C}_{k+1}$ schematically (some distances have been exaggerated to make the picture comprehensible) as well as the contour $D_k$. The contour $D_k$ is a translate by $\tilde{d}/20$ of $\mathcal{C}_k$ in the strip $\Im(z) \in [ \Im(w_-), \Im(w_+)]$ and departs at angles $\pm\pi/4$ at $w_{\pm} + \tilde{d}/20$ to infinity. The point $1 + w_-$ is drawn to indicate that $1 + \mathcal{C}_k$ and $1 + \mathcal{C}_{k+1}$ are both to the right of $D_k$. }
\label{S2_4}
\end{figure}

We claim that for each $k = 0, \dots, K-1$ we have
\begin{equation}\label{FMT12}
 \int_{\mathcal{C}_k}\cdots \int_{\mathcal{C}_k} \det \left[ \tilde{K}(v_i, v_j)\right]_{i,j = 1}^n  \prod_{i = 1}^n dv_i =  \int_{\mathcal{C}_{k+1}} \cdots \int_{\mathcal{C}_{k+1}} \det \left[ \tilde{K}(v_i, v_j)\right]_{i,j = 1}^n  \prod_{i = 1}^n dv_i.
\end{equation}
When $k = 0$ the left side of (\ref{FMT12}) equals the left side of (\ref{R5}), while at $k = K-1$ the right side of (\ref{FMT12}) equals the right side of (\ref{R5}). Consequently, (\ref{FMT12}) implies (\ref{R5}) and we focus on establishing (\ref{FMT12}).

By expanding the determinants and substituting the definition of $\tilde{K}(v, v')$ we see that it suffices to show that for each $k \in \{1, \dots, K-1\}$ and $\sigma \in S_n$ we have
 \begin{equation}\label{FMT13}
\begin{split}
 \int_{\mathcal{C}_k} \cdots \int_{\mathcal{C}_k} \int_{\tilde{D}_{v_1}}\hspace{-3mm}\cdots \int_{\tilde{D}_{v_n}}  \prod_{j = 1}^n  \frac{F(v_j, w_j)dw_j }{w_j - v_{\sigma(j)} }  \prod_{i = 1}^n  dv_i =  \\
 \int_{\mathcal{C}_{k+1}} \hspace{-3mm}\cdots \int_{\mathcal{C}_{k+1}} \int_{\tilde{D}_{v_1}}\hspace{-3mm}\cdots \int_{\tilde{D}_{v_n}}  \prod_{j = 1}^n  \frac{F(v_j, w_j)dw_j }{w_j - v_{\sigma(j)} }  \prod_{i = 1}^n  dv_i  , \mbox{ where }
\end{split}
\end{equation}
$$F(v, w) =\frac{\pi}{\sin(\pi (v-w))}   \prod_{n = 1}^N \frac{\Gamma(v - a_n)}{\Gamma(w - a_n)} \prod_{m = 1}^M \frac{\Gamma(\alpha_m - w)}{\Gamma(\alpha_m - v)} u^{w-v} e^{\tau v(w-v)+ \tau (w-v)^2/2}. $$
We deform all $\tilde{D}_{v_i}$ contours to the contour $D_k$, which consists of the following three pieces:
\begin{enumerate}
\item  it equals $\mathcal{C}_k + \tilde{d}/20$ for $z: \Im(z) \in [\Im(w_-), \Im(w_+)]$;
\item  it is given by $z(s) = w_+ + \tilde{d}/20+ (s-\Im(w_+)) + \i (s - \Im(w_+))$ for $s > \Im(w_+)$;
\item  it is given by $z(s) = w_- +\tilde{d}/20 + |s-\Im(w_-)| + \i (s - \Im(w_-))$ for $s < \Im(w_-)$.
\end{enumerate}
 In words, $D_k$ closely follows $\mathcal{C}_k$ a small distance away to the right and departs at angle $\pm \pi/4$ once it reaches the end of $\mathcal{C}_k + \tilde{d}/20$, see the right part of Figure \ref{S2_4}.

 Notice that by construction $\mathcal{C}_{k+1}$ is at most a distance $\tilde{d}/40$ away from $\mathcal{C}_k$ (horizontally) and so when deforming $\tilde{D}_{v_i}$ to $D_k$ for $v_i \in \mathcal{C}_k$ or $\mathcal{C}_{k+1}$ we do not cross any poles of the integrand. Indeed, the poles are located at $\alpha_m, 1+ \alpha_m, 2+ \alpha_m, \dots$, as well as $v_i + \mathbb{Z}$. We do not cross any of the poles $\alpha_m, 1+ \alpha_m, 2+ \alpha_m, \dots$ since by construction $\tilde{b}, \mu, a \in (\max(a),\min(\alpha))$. On the other hand, we do not cross any of the poles $v_i + \mathbb{Z}$ since $D_k$ is to the right by at least $\tilde{d}/40$ and at most $\tilde{d}/20$ from both $\mathcal{C}_k$ and $\mathcal{C}_{k+1}$ -- this ensures that $v_i$ are to the left of $D_k$ and $v_i + 1$ to the right.  By Cauchy's theorem the value of the integrals does not change and the estimates necessary to deform the contours near infinity come from Proposition \ref{PE1v2} applied to $K = \{0 \} \cup [\max(\vec{a}), \min(\vec{\alpha})]$, $T = \tau$ and $M,N, \vec{a}, \vec{\alpha}, u$ as in the statement of the theorem. We have thus reduced (\ref{FMT13}) to
 \begin{equation*}
\begin{split}
  \int_{\mathcal{C}_k} \cdots \int_{\mathcal{C}_k} \int_{D_k}\hspace{-3mm}\cdots \int_{D_k}  \prod_{j = 1}^n  \frac{F(v_j, w_j)dw_j }{w_j - v_{\sigma(j)} }  \prod_{i = 1}^n  dv_i =  \int_{\mathcal{C}_{k+1}} \hspace{-3mm}\cdots \int_{\mathcal{C}_{k+1}} \int_{D_k}\hspace{-3mm}\cdots \int_{D_k}  \prod_{j = 1}^n  \frac{F(v_j, w_j)dw_j }{w_j - v_{\sigma(j)} }  \prod_{i = 1}^n  dv_i,
\end{split}
\end{equation*}
and in particular,  our use of Proposition \ref{PE1v2} also implies that the integrals above are absolutely convergent. The latter, by Fubini's theorem, allows us to exchange the order of the integrals, reducing the proof of (\ref{FMT13}) to showing that
 \begin{equation*}
\begin{split}
\int_{D_{k}}\hspace{-3mm}\cdots \int_{D_{k}} \Bigg[  \int_{\mathcal{C}_k} \hspace{-3mm}\cdots \int_{\mathcal{C}_k}  \prod_{j = 1}^n \frac{F(v_j, w_j) }{w_j - v_{\sigma(j)} }  \prod_{i = 1}^n  d\mu(v_i)  -  \int_{\mathcal{C}_{k+1}} \hspace{-3mm}\cdots \int_{\mathcal{C}_{k+1}}  \prod_{j = 1}^n \frac{F(v_j, w_j) }{w_j - v_{\sigma(j)} }  \prod_{i = 1}^n  d\mu(v_i) \Bigg]  \prod_{j = 1}^n dw_j = 0.
\end{split}
\end{equation*}
The above is now immediate since by Cauchy's theorem we may deform the contour $\mathcal{C}_k$ to $\mathcal{C}_{k+1}$ without crossing any poles. Thus, we have established (\ref{R2}), (\ref{R3}), (\ref{R4}) and (\ref{R5}), which concludes the proof of the first part of the theorem.\\

{\bf \raggedleft Part II.} We let as before $\theta_0 = \min(\vec{\alpha}) - \max(\vec{a}) > 0$, and note that in Part I we proved the statement of the theorem when $\theta_0 > 1$. In this part we prove the theorem in the case when $\theta_0 \in (0, 1]$. For $\theta > - \theta_0$ we let $\mathbb{P}_\theta$ denote the probability measure of the mixed polymer model where the $X_{i,j}$ (as in Definition \ref{Dmixed}) have inverse-gamma distribution with parameters $\alpha_i - a_j + \theta$. We also write $\mathbb{E}_\theta$ for the expectation with respect to this measure. With the notation as in the statement of the theorem we have from the first part of the proof the following statement for $\theta > 1- \theta_0$
\begin{equation}\label{S2P2}
 \mathbb{E}_\theta \left[ e^{-u \ZP(\tau)} \right] = \det \left( I + K^{\tau}_{u, \theta} \right)_{L^2(C_{a,3\pi/4})},
\end{equation}
where
\begin{equation}\label{P2kernel2}
K^{\tau}_{u, \theta} (v,v') = \frac{1}{2\pi \i} \int_{D_v} \frac{\pi }{\sin(\pi (v-w))}  \prod_{n = 1}^N \frac{\Gamma(v - a_n)}{\Gamma(w - a_n)} \prod_{m = 1}^M \frac{\Gamma(\alpha_m + \theta - w)}{\Gamma(\alpha_m + \theta - v)} \frac{u^{w-v} e^{\tau(w^2 - v^2)/2}dw}{w - v'}.
\end{equation}
The goal is then to show that (\ref{S2P2}) holds when $\theta = 0$ and we accomplish this in three steps. In the first step we show that the left side of (\ref{S2P2}) has an analytic continuation in $\theta$ in the right half-plane $\{ z \in \mathbb{C}: \Re(z) > -\theta_0\}$. In the second step we construct an auxiliary function $\tilde{G}_u(z)$ and show that the latter is analytic in an open neighborhood of $[0, 1]$. Finally, in Step 3 we prove that both the left and right sides of (\ref{S2P2}) are equal to $\tilde{G}_u(\theta)$, which concludes the proof of the theorem.\\

{\bf \raggedleft Step 1.} We define
$$F_u(\theta) = \mathbb{E}_\theta \left[ e^{-u \ZP} \right],$$
and note that the function $F_u(\theta)$ makes sense for any $\theta > -\theta_0$. In this step we show that $F_u(\theta)$ has an analytic continuation to $V = \{ z \in \mathbb{C}: \Re(z) > -\theta_0\}$. To see this, notice that from (\ref{PathEnergy}) and (\ref{PathEnergy2}) we have
\begin{equation}\label{PartFunTheta}
\begin{split}
&F_u(\theta) = \frac{\int_{\mathbb{R}_+^{NM}} H_u( \vec{x}, \vec{B} ) \prod_{i = 1}^N \prod_{j = 1}^M x^{-\alpha_j + a_i - \theta - 1}_{ij}  \exp \left(- x_{ij}^{-1}\right)  dx_{ij}  \prod_{i = 1}^N \mathbb{P}(dB_i)  }{ \prod_{i = 1}^N \prod_{j = 1}^M \Gamma(\alpha_j - a_i + \theta )}, \\
&\mbox{ where } H_u(\vec{x}, \vec{B})  =  \exp \left( - u \sum_{n = 1}^N\, \sum_{\phi^d:(-M,1) \nearrow (-1,n)} \int_{\phi^{sd}:(0,n) \nearrow (\tau, N)} e^{E(\phi)}d \phi^{sd} \right), \mbox{ and }\\
&E(\phi) := \sum_{(i,j) \in \phi^d} x_{i,j} + B_n(s_n) + \sum_{k = n+1}^N \big(B_{k}(s_{k}) - B_{k}(s_{k-1})\big).
\end{split}
\end{equation}
In the top line (\ref{PartFunTheta}) we recall that $B_1, \dots, B_N$ are independent Brownian motions with drifts $\{a_n\}_{n = 1}^N$. Next if $z_0 \in V$ and $Re(z_0) > -\theta_0 + 2\delta$ for some $\delta > 0$ we have
\begin{equation}
\begin{split}
&\sup_{z: |z - z_0| \leq \delta} \int_{\mathbb{R}_+^{NM}}   \left| H_u( \vec{x}, \vec{B}) \right| \prod_{i = 1}^N \prod_{j = 1}^M\left| x^{-\alpha_j + a_i - z - 1}_{ij} \right| \exp \left(- x_{ij}^{-1}\right) dx_{ij}  \prod_{i = 1}^N \mathbb{P}(dB_i) \leq \hspace{-3mm} \sup_{z:|z - z_0| \leq \delta} \int_{\mathbb{R}_+^{NM}}  \\
&  \prod_{i = 1}^N \prod_{j = 1}^M  x^{-Re(z) -\alpha_j + a_i  - 1}_{ij}  \exp \left(- x_{ij}^{-1}\right) dx_{ij} =\prod_{i = 1}^N \prod_{j = 1}^M \sup_{z: |z - z_0| \leq \delta} \Gamma (Re(z) + \alpha_j - a_i)^{NM} < \infty.
\end{split}
\end{equation}
The last statement and Theorem \ref{Mattner} imply that the numerator in the first line of (\ref{PartFunTheta}) is analytic in $V$ and so is $\prod_{i = 1}^N \prod_{j = 1}^M  \Gamma^{-1}(\alpha_j - a_i + z)$. This proves the claim.\\

{\bf \raggedleft Step 2.} In this step we construct an auxiliary function $\tilde{G}_u(z)$ for $z$ in a neighborhood of $[0, 1]$ as follows. Let $\delta_0 = \min (d/4, a - \max(\vec{a}), (\min(\vec{\alpha}) - b)/4)$, where $a,b$ are as in the statement of the theorem. For $z \in U := \{z \in \mathbb{C}: d(z, [0, 1]) < \delta_0 \}$ we define
\begin{equation}\label{GtildeDef}
\tilde{G}_u(z) = \det \left( I + \tilde{K}_{u,z}\right)_{L^2(C_{a, 3\pi/4})},
\end{equation}
where the operator $\tilde{K}_{u,z}$ is defined in terms of its integral kernel
\begin{equation}
\tilde{K}_{u,z}(v,v') =  \frac{1}{2\pi i} \int_{\tilde{D}_v} \frac{\pi }{\sin(\pi (v-w))}  \prod_{n = 1}^N \frac{\Gamma(v - a_n)}{\Gamma(w - a_n)} \prod_{m = 1}^M \frac{\Gamma(\alpha_m + z - w)}{\Gamma(\alpha_m + z - v)} \frac{u^{w-v} e^{\tau(w^2 - v^2)/2}dw}{w - v'},
\end{equation}
where $\tilde{D}_v = D_v(b, \pi/4, \delta_0)$ is as in Definition \ref{ContE}. To show that $\tilde{G}_u(z)$ is well-defined we start by applying Proposition \ref{PE1v2} with  $T = \tau$, $K = \overline{U}$, and $a, u, M,N, \vec{a}, \vec{\alpha}$ as above to conclude that for $v, v' \in C_{a, 3\pi/4}$ we have
\begin{equation}\label{UBA1}
\sup_{z \in U}  \int_{\tilde{D}_v} \left|  \frac{\pi }{\sin(\pi (v-w))}  \prod_{n = 1}^N \frac{\Gamma(v - a_n)}{\Gamma(w - a_n)} \prod_{m = 1}^M \frac{\Gamma(\alpha_m + z - w)}{\Gamma(\alpha_m + z - v)} \frac{u^{w-v} e^{\tau(w^2 - v^2)/2}dw}{w - v'} \right| |dw|  < \infty,
\end{equation}
where $|dw|$ denotes integration with respect to arc-length and we also used that $|\Gamma(z)|$ is uniformly bounded over compact sets that are disjoint from $\mathbb{Z}$. Equation (\ref{UBA1}) implies that $\tilde{K}_{u,z}(v,v')$ is well-defined for each $v,v' \in C_{a, 3\pi/4}$ and moreover, by Theorem \ref{Mattner}, we know that $\tilde{K}_{u,z}(v,v')$ is analytic in $U$ for each fixed $v,v' \in C_{a, 3\pi/4}$.

We next apply Proposition \ref{PE2v2} for $T= \tau$, $a,b,\delta_0$ as above, $\phi = 3\pi/4$, $M, N, \vec{a}, \vec{\alpha}, u$ as in the statement of the theorem. The conclusion is that we can find a constant $C_0 > 0$ such that for all $v, v' \in  C_{a, 3\pi/4}$ we have
\begin{equation}\label{UBA2}
\sup_{z \in U} |\tilde{K}_{u,z}(v,v')|  \leq \frac{C_0}{1 + |v|^2}.
\end{equation}
Equation (\ref{UBA2}) and Lemma \ref{AT} imply that $\tilde{G}_u(z) $ is well-defined and analytic in $z \in U$.\\

{\bf \raggedleft Step 3.} In this step we show that both the left and right sides of (\ref{S2P2}) are equal to $\tilde{G}_u(\theta)$ for all $\theta \in [0,1]$. In particular, setting $\theta = 0$ concludes the second part of the proof of the theorem.

We observe that if $x \in (1 - \theta_0, 1]$ we know that $\tilde{G}_u(x)$ equals the right side of (\ref{S2P2}) with $\theta$ replaced with $x$ and $D_v = D_v(a,b,\delta_0)$. In view of (\ref{S2P2}) we conclude that $\tilde{G}_u(x) = F_u(x)$ for $x \in (1 - \theta_0, 1]$ and since both functions are analytic in a neighborhood of $[0, 1]$ we conclude that the left side of (\ref{S2P2}) equals $\tilde{G}_u(\theta)$ for $\theta \in [0,1]$, cf. \cite[Chapter 2, Corollary 4.9]{Stein}.

By Cauchy's theorem we can deform the $D_v(b, \pi/4,d)$ contour in (\ref{P2kernel2}) to $D_v(b, \pi/4, \delta_0)$ (here $\delta_0$ was defined in Step 2) without affecting the value of the integral, at which point we recognize that $ \det \left( I + K^{\tau}_{u,\theta} \right)_{L^2(C_{a,3\pi/4})} = \tilde{G}_u(\theta)$. In other words, the right side of (\ref{S2P2}) equals $\tilde{G}_u(\theta)$ for all $\theta \in [0,1]$. This suffices for the proof.

%
\section{Asymptotic analysis of Fredholm determinants}\label{Section3} In this section we investigate the limit of the Fredholm determinant $ \det (I +K_u)_{L^2(C_a, 3\pi/4)}$ from Theorem \ref{LGPT1} when the parameters are appropriately scaled (the precise scaling is given in Definition \ref{DefScaleS3}). The exact convergence statement, which is the main technical result of the paper, is given in Theorem \ref{mainThmGUE} and is proved in Section \ref{Section3.3} after some preliminary results are presented in Section \ref{Section3.2}. Theorem \ref{mainThmGUE} is used later in Section \ref{Section4} to establish Theorems \ref{LGPCT} and \ref{BBP}.

%
\subsection{Limit of $\det (I +K_u)_{L^2(C_a, 3\pi/4)}$ }\label{Section3.1} In this section we show that when the parameters in the definition of $\det (I +K_u)_{L^2(C_a, 3\pi/4)}$ from Theorem \ref{LGPT1} are scaled appropriately the Fredholm determinant converges to a limiting one. We first turn to specifying how we scale parameters.

\begin{definition}\label{DefScaleS3}
Let $\Psi(x)$ denote the digamma function, i.e.
\begin{equation}\label{digammaS3}
\Psi(x) = \frac{\Gamma'(x)}{\Gamma(x)} = - \gamma_{E} + \sum_{n = 0}^\infty \left(\frac{1}{n + 1} - \frac{1}{n+z} \right),
\end{equation}
where $\gamma_{E}$ is the Euler constant.\\

Suppose that $M, N \geq 1$ and $\theta > 0$ are given. We let $z_{c}(M,N)$ denote the maximizer of 
$$W_{M,N}(x) := N \Psi(x) + M \Psi(\theta - x)$$
on the interval $(0, \theta)$. Notice that the above expression converges to $- \infty$ as $x \rightarrow 0+$ or $x \rightarrow \theta-$ and also the function $W_{M,N}(x)$ is strictly concave, hence the maximum exists and is unique. We let $W_{M,N}$ denote $W_{M,N}(z_c)$.

For any $\alpha > 0$ we define
$$\sigma_\alpha := \left( \sum_{n = 0}^\infty \frac{\alpha}{(n+z_c)^3} +  \sum_{n = 0}^\infty \frac{1}{(n+\theta - z_c)^3} \right)^{1/3}.$$

The way we scale the parameters is as follows. We will let $M,N \rightarrow \infty$ while $M\geq N$ and $\alpha = N/M \geq \delta$, where $\delta > 0$ is fixed.
We set $\alpha = \alpha(M,N) = N/M$ and for a given $x \in \mathbb{R}$ we set 
$$u = u(x,M,N):= e^{W_{M,N} - M^{1/3} \sigma_{\alpha} x}.$$
Notice that if $\alpha \in [\delta, 1]$ and $\theta > 0$ is fixed then $\sigma_\alpha$ is positive and bounded away from $0$ and $\infty$, see e.g. (\ref{ULBalpha}).   Finally, we fix $\rr, \rl \in \mathbb{Z}_{\geq 0}$ and vectors $\vec{x} = ( x_1, \dots, x_{\rr}) \in \mathbb{R}^{\rr}$, $\vec{y} = (y_1, \dots, y_{\rl}) \in \mathbb{R}^{\rl}$ such that $\min(\vec{y}) > \max(\vec{x})$. If $\rr = 0$ we set $  \max(\vec{x}) = -\infty$ and if $\rl = 0$ we set $\min(\vec{y}) = \infty$. We also let $\vec{a} \in \mathbb{R}^{N + \rr}$ and $\vec{\alpha} \in \mathbb{R}^{M + \rl}$ be such that 
$$a_i(M,N)  \hspace{-0.5mm} = \hspace{-0.5mm} z_c + x_i \sigma_\alpha^{-1} M^{-1/3} + o(M^{-1/3}) \mbox{ for $i = 1, \dots, \rr$}, \hspace{0.5mm} a_i(M,N) = 0 \mbox{ for $i = \rr+1, \dots, \rr+ N$}, $$
$$\alpha_i(M,N) \hspace{-0.5mm}=\hspace{-0.5mm} z_c + y_i \sigma_\alpha^{-1}  M^{-1/3} + o(M^{-1/3}) \mbox{ for $i = 1, \dots, \rl$}, \hspace{0.5mm} \alpha_i(M,N) = \theta \mbox{ for $i = \rl+1, \dots, \rl+ M$}. $$
We also let $\mu = (\max(\vec{x}) + \min(\vec{y}))/2$ with the convention that $\mu = 0$ if $\rr = \rl = 0$, $\mu = \max(\vec{x}) + 1$ if $\rl = 0$ and $\rr > 0$, $\mu = \min(\vec{y}) - 1$ if $\rr = 0$ and $\rl > 0$. We always assume that $M$ is sufficiently large so that so that $\min(\vec{\alpha}) > z_c +  \mu \sigma_\alpha^{-1}  M^{-1/3} > \max( \vec{a})$ and $\min(\vec{\alpha})  > 0$.

We further let $K_u$ be as in Theorem \ref{LGPT1} for the vectors $\vec{\alpha}$, $\vec{a}$ and parameter $u$ as above, $a = z_c + \mu \sigma_\alpha^{-1}  M^{-1/3}$ and $b,d$ arbitrarily chosen so as to satisfy the conditions of that theorem. We also define the contour $\tilde{D}$ to be the contour that consists of the vertical segment connecting $\mu + \rho- \i \rho$ and $\mu + \rho + \i \rho$ and for $|\Im(z)| \geq \rho$ it agrees with $C_{\mu, \pi/4}$  from Definition \ref{ContV}, where $\rho > 0$ is chosen sufficiently small so that $\min(\vec{y}) > \mu + \rho$, see Figure \ref{S3_1}. The contour $\tilde{D}$ is oriented to have increasing imaginary part.  If $\rl = 0$ we let $\rho = 1$.
\end{definition}

The main result of the section is as follows.

\begin{theorem} \label{mainThmGUE}
Fix $x \in \mathbb{R}$ and assume the same notation and scaling as in Definition \ref{DefScaleS3}. Then
\begin{equation}\label{mainLimitGUEeq}
\lim_{M \rightarrow \infty} \det \left( I + K_u \right)_{L^2(C_{a ,3\pi/4})} =  \det \left( I + K_x^\infty \right)_{L^2(C_{\mu ,3\pi/4})},
\end{equation}
where the operator $ K_x^\infty$ is defined in terms of its integral kernel
\begin{equation}\label{kernelMain}
\begin{split}
 &K_x^\infty(v,v') = \frac{1}{2\pi \i} \int_{\tilde{D}}g^{\infty,x}_{v,v'}(w) dw, \mbox{ with }  \\
&g^{\infty,x}_{v,v'}(w)  = \prod_{n = 1}^{\rr} \frac{( {w} - x_n )}{({v}  - x_n  )} \prod_{m = 1}^{\rl} \frac{( y_m -  {v} )}{(y_m  -{w} )}\cdot  \frac{\exp(-v^3/3 + w^3/3 - x w + x v)}{(v-w)(w - v')},
\end{split}
\end{equation}
where $C_{\mu ,3\pi/4}$ is as in Definition \ref{ContV} and $\tilde{D}$ is as in Definition \ref{DefScaleS3}.
\end{theorem}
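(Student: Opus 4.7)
The plan is to perform a steepest-descent analysis on the Fredholm determinant of Theorem \ref{LGPT1}, centered at the (double) critical point $z_c$. Introduce rescaled variables $v = z_c + \tilde v \sigma_\alpha^{-1} M^{-1/3}$ and $w = z_c + \tilde w \sigma_\alpha^{-1} M^{-1/3}$, and define the rescaled kernel
$$K_u^{(M)}(\tilde v, \tilde v') := \sigma_\alpha^{-1} M^{-1/3}\, K_u\bigl(z_c + \tilde v \sigma_\alpha^{-1} M^{-1/3},\, z_c + \tilde v' \sigma_\alpha^{-1} M^{-1/3}\bigr).$$
Since $a = z_c + \mu \sigma_\alpha^{-1} M^{-1/3}$, the image of $C_{a, 3\pi/4}$ under $v \mapsto \sigma_\alpha M^{1/3}(v - z_c)$ is exactly $C_{\mu, 3\pi/4}$, and the change of variables formula for Fredholm determinants gives $\det(I + K_u)_{L^2(C_{a, 3\pi/4})} = \det(I + K_u^{(M)})_{L^2(C_{\mu, 3\pi/4})}$.

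The bulk factors organize as
$$u^{w-v} \prod_{n=\rr+1}^{\rr+N}\frac{\Gamma(v)}{\Gamma(w)} \prod_{m=\rl+1}^{\rl+M}\frac{\Gamma(\theta-w)}{\Gamma(\theta-v)} = e^{S(w) - S(v)},$$
$$S(z) := z\log u - N\log\Gamma(z) + M\log\Gamma(\theta-z).$$
The choices $\log u = W_{M,N} - M^{1/3}\sigma_\alpha x$ and $W_{M,N}'(z_c) = 0$ yield $S'(z_c) = -M^{1/3}\sigma_\alpha x$, $S''(z_c) = 0$ and $S'''(z_c) = 2M\sigma_\alpha^3$, so a Taylor expansion gives, uniformly on compact $\tilde z$-sets,
$$S\bigl(z_c + \tilde z \sigma_\alpha^{-1} M^{-1/3}\bigr) - S(z_c) \;\longrightarrow\; -x\tilde z + \tilde z^3/3.$$
The perturbation factors collapse via $\Gamma(\epsilon) \sim 1/\epsilon$: the $\rr$ factors $\Gamma(v - a_j)/\Gamma(w - a_j)$ tend to $\prod_j(\tilde w - x_j)/(\tilde v - x_j)$, and the $\rl$ factors $\Gamma(\alpha_m - w)/\Gamma(\alpha_m - v)$ tend to $\prod_m (y_m - \tilde v)/(y_m - \tilde w)$. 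A careful bookkeeping of the $\pi/\sin(\pi(v - w))$, $1/(w - v')$ and $dw$ factors together with the external $\sigma_\alpha^{-1} M^{-1/3}$ in $K_u^{(M)}$ shows that all extraneous powers of $M^{1/3}\sigma_\alpha$ cancel, producing pointwise convergence $K_u^{(M)}(\tilde v, \tilde v') \to K_x^\infty(\tilde v, \tilde v')$, after first Cauchy-deforming the inner $w$-contour onto the pre-image of $\tilde D$ (which is valid because $\mu + \rho < \min(\vec y)$ keeps all relevant poles in the correct half-planes).

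The crux of the proof, and its principal difficulty, is upgrading this pointwise convergence to convergence of Fredholm determinants. I expect to combine two ingredients. First, the steep-descent geometry at $z_c$: since the leading Taylor term $S'''(z_c)\epsilon^3/6$ has real part proportional to $\cos(3\phi)$ when $\epsilon = re^{\i\phi}$, the angles $\pm 3\pi/4$ on the $v$-contour and $\pm \pi/4$ on the $w$-contour $\tilde D$ lie in the correct sectors so that $|e^{-S(v)}|$ and $|e^{S(w)}|$ are decreasing away from $z_c$ along the respective contours. Making this quantitative and uniform in the slope $\alpha = N/M \in [\delta, 1]$ -- where the asymmetry between the two digamma terms in $S$ prevents the cancellations available in the diagonal case \cite{KQ} -- is the content of the steep-descent lemmas \ref{analGS3}, \ref{ReleL1S3}, \ref{horGS3} and the kernel bound of Proposition \ref{PKBS3} established in Section \ref{Section9} and Section \ref{Section10.3}. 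Second, these estimates will yield integrable dominating functions of the form $F_1(\tilde v) = C(1 + |\tilde v|^2)^{-1}$ on $C_{\mu, 3\pi/4}$ and an analogous $F_2$ on $\tilde D$, valid uniformly in all large $M$. Two applications of the dominated-convergence packages of Lemma \ref{FDKCT} and Lemma \ref{FDCT} from Section \ref{Section2.1} then justify exchanging the $M \to \infty$ limit first with the $w$-integral and then with the Fredholm series, producing $\det(I + K_u^{(M)})_{L^2(C_{\mu, 3\pi/4})} \to \det(I + K_x^\infty)_{L^2(C_{\mu, 3\pi/4})}$, which is \eqref{mainLimitGUEeq}.
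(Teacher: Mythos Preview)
Your overall plan is sound and matches the paper's strategy in outline, but there is a specific technical gap in the contour deformation step. You write that Cauchy-deforming the inner $w$-contour onto the pre-image of $\tilde D$ ``is valid because $\mu + \rho < \min(\vec y)$ keeps all relevant poles in the correct half-planes.'' This accounts for the poles of $\Gamma(\alpha_m - w)$, but not for the poles of $\pi/\sin(\pi(v-w))$ at $w = v+1, v+2, \ldots$. The original contour $D_v$ in Theorem \ref{LGPT1} explicitly detours around $v$ via the piece $D_v^2$; when $|v - a|$ is large (say $|v-a| \geq 1/2$), the point $v+1$ lies between $D_v$ and the pre-image $D_M$ of $\tilde D$, so the deformation you propose would pick up residues. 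Consequently you cannot represent $K_u^{(M)}(\tilde v,\tilde v')$ as an integral over the single fixed contour $\tilde D$ for \emph{all} $\tilde v \in C_{\mu,3\pi/4}$, and Lemma \ref{FDKCT} (which requires a fixed $\Gamma_2$) is not directly applicable on the full infinite contour.

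The paper handles this by inserting an additional truncation step before any change of variables: it restricts to $C^\epsilon_{a,3\pi/4}$ (the portion with $|v-a|\leq \epsilon < 1/2$) and shows, via Proposition \ref{PKBS3}, Hadamard's inequality, and Lemma \ref{SumBound}, that the difference $\det(I+K_u)_{L^2(C_{a,3\pi/4})} - \det(I+K_u)_{L^2(C^\epsilon_{a,3\pi/4})}$ is $O(e^{-c'M})$. Only on the truncated contour is the deformation $D_v \to D_M$ pole-free, after which your rescaling and the package Lemma \ref{FDKCT} $+$ Lemma \ref{FDCT} go through essentially as you describe (with exponential rather than polynomial dominating functions; see the bounds \eqref{S3UpperBounds} in Step 3--4 of the paper's proof). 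So the missing idea is not the steepest-descent estimates themselves---you correctly identified those---but the two-stage architecture: first excise the far part of the $v$-contour using the global kernel bound, and only then deform, rescale, and dominate.
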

\begin{remark}
The Fredholm determinant on the right side of (\ref{mainLimitGUEeq}) is as in Section \ref{Section2.1}. In particular, part of the statement of the theorem is that the kernel $ K_x^\infty(v,v')$ is well-defined, each term in the series (\ref{fredholmDefS2}) is finite and the series converges for the given choice of parameters.
\end{remark}

%
\subsection{Relevant estimates}\label{Section3.2} In this section we introduce some notation and establish a few results that will be useful for the proof of Theorem \ref{mainThmGUE}, which is given in the next section. 
\begin{definition}\label{DefDM}
Define 
\begin{equation}\label{GfunS3}
G_{M,N}(z) := N \log \Gamma (z) - M \log \Gamma (\theta - z) - W_{M,N}   z - C_{M,N},
\end{equation}
where $C_{M,N} = N \log \Gamma (z_c) - M \log \Gamma (\theta - z_c) - W_{M,N}  z_c $, and $z_c, W_{M,N}$ are as in Definition \ref{DefScaleS3}.
We also define 
$$G_\alpha (z) = M^{-1} G_{M,N}(z) =  \alpha \log \Gamma (z) -  \log \Gamma (\theta - z) - M^{-1} W_{M,N}   z - M^{-1}C_{M,N},$$
where we recall that $\alpha = N/M \in [\delta, 1]$ as in Definition \ref{DefScaleS3}. Continuing with the notation from Definition \ref{DefScaleS3} we define the contours $D_M$ through $D_M = z_c + \sigma_\alpha^{-1} M^{-1/3} \cdot \tilde{D}$, see Figure \ref{S3_1}. The contours $D_M$ are oriented to have increasing imaginary part and we write $D_M^{\pm}$ for the slope $\pm 1$ pieces of $D_M$ and $D_M^0$ for the vertical part of the contour.
\end{definition}
\begin{figure}[h]
\scalebox{0.6}{\includegraphics{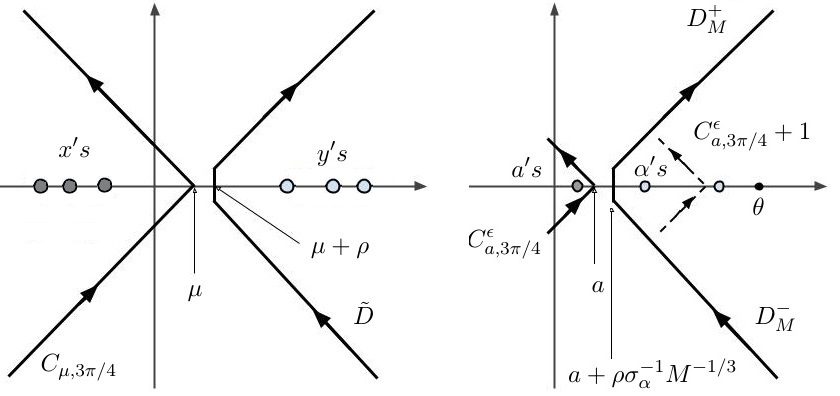}}
\captionsetup{width=\linewidth}
 \caption{The left part depicts the contour $\tilde{D}$ from Definition \ref{DefScaleS3} and the contour $C_{\mu, 3\pi/4}$. The dark gray dots are $x_1 \dots, x_{\rr}$ and the light gray ones are $y_1, \dots, y_{\rl}$. The right part depicts the contour $C^{\epsilon}_{a, 3\pi/4}$ as defined in Step 1 of the proof of Theorem \ref{mainThmGUE} in Section \ref{Section3.3} and the contour $D_M = D_M^- \cup D_M^0 \cup D_M^+$. The dark gray dot represents $a_i$ for $i = 1, \dots, \rr$ and the light gray ones the points $m + \alpha_i$ for $m \geq 0$ and $i = 1, \dots, \rl$. Some of the distances are exaggerated. }
\label{S3_1}
\end{figure}

We next state several results for future use. The first three are proved in Section \ref{Section9.4} and the third one is proved in Section \ref{Section10.3}
\begin{lemma}\label{analGS3}Fix $\theta > 0$, $\delta \in (0,1)$ and assume that $M \geq N \geq 1$, $N/M \in [\delta, 1]$. 
There exist constants $C > 0$ and $r > 0$ depending on $\theta$ and $\delta$ such that $G_{\alpha}$ is analytic in the disc $|z - z_c| < r$ and the following hold for $|z - z_c| \leq r$:
\begin{equation}\label{powerGS3}
\begin{split}
&\left|G_\alpha(z)+ (z-z_c)^3\sigma^3_\alpha/3 \right| \leq C |z-z_c|^4;\\
&\Re[G_\alpha(z) ] \geq (\sqrt{2}/2)^3 |z-z_c|^3\sigma^3_\alpha/6  \mbox{ when $z \in C_{z_c, \phi}$ with $\phi =\pi/4$; }\\
&\Re[G_\alpha(z) ] \leq -  (\sqrt{2}/2)^3 |z-z_c|^3 \sigma^3_\alpha/6  \mbox{ when $z \in C_{z_c, \phi}$ with $\phi = 3\pi/4 $ }.
\end{split}
\end{equation}
In the above equations $z_c, \sigma_\alpha$ are as in Definition \ref{DefScaleS3} and $C_{a, \phi}$ is as in Definition \ref{ContV}.
\end{lemma}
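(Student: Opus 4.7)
The plan is to establish the estimate by computing the Taylor expansion of $G_\alpha$ around $z_c$ up to order three, where the first three terms vanish and the third-order term gives exactly the claimed cubic. The subtle point throughout will be controlling everything uniformly in $\alpha \in [\delta,1]$.

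First I would control $z_c$ as a function of $\alpha$. Since $z_c = z_c(\alpha)$ is defined as the maximizer of $x\mapsto \alpha\Psi(x)+\Psi(\theta-x)$ on $(0,\theta)$, and since $\Psi'$ is smooth and strictly decreasing on $(0,\infty)$, the critical-point equation $\alpha\Psi'(z_c)=\Psi'(\theta-z_c)$ implicitly defines $z_c$ as a smooth, strictly increasing function of $\alpha\in(0,\infty)$. Consequently $\alpha\mapsto z_c(\alpha)$ maps the compact interval $[\delta,1]$ to a compact subinterval $[z_c(\delta),\theta/2]\subset(0,\theta)$, so $z_c$ stays uniformly bounded away from the branch points $0$ and $\theta$ of $\log\Gamma$. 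It follows that there exists $r_0>0$, depending only on $\theta$ and $\delta$, such that $G_\alpha(z)$ is analytic on the disc $\{|z-z_c|<r_0\}$ for every $\alpha\in[\delta,1]$.

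Next I would compute the Taylor expansion at $z_c$. By construction $G_\alpha(z_c)=0$. Differentiating $G_\alpha(z)=\alpha\log\Gamma(z)-\log\Gamma(\theta-z)-M^{-1}W_{M,N}z-M^{-1}C_{M,N}$ yields $G_\alpha'(z)=\alpha\Psi(z)+\Psi(\theta-z)-M^{-1}W_{M,N}$, which vanishes at $z_c$ because $M^{-1}W_{M,N}=\alpha\Psi(z_c)+\Psi(\theta-z_c)$ by the definition of $W_{M,N}$. Differentiating once more gives $G_\alpha''(z)=\alpha\Psi'(z)-\Psi'(\theta-z)$, which vanishes at $z_c$ precisely because $z_c$ is the critical point of $W_{M,N}/M$. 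Finally, using $\Psi''(w)=-2\sum_{n\geq 0}(n+w)^{-3}$, I would compute
\begin{equation*}
G_\alpha'''(z_c)=\alpha\Psi''(z_c)+\Psi''(\theta-z_c)=-2\Bigl[\alpha\sum_{n\geq 0}\frac{1}{(n+z_c)^3}+\sum_{n\geq 0}\frac{1}{(n+\theta-z_c)^3}\Bigr]=-2\sigma_\alpha^3.
\end{equation*}
Combining these, Taylor's theorem gives $G_\alpha(z)=-\tfrac{1}{3}\sigma_\alpha^3(z-z_c)^3+R_\alpha(z)$ with $R_\alpha$ analytic in $\{|z-z_c|<r_0\}$ and vanishing to order four at $z_c$. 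A Cauchy estimate on a circle of radius $r_0/2$ then produces a constant $C$ depending only on $\theta$ and $\delta$ with $|R_\alpha(z)|\leq C|z-z_c|^4$ for $|z-z_c|\leq r_0/4$; the required uniformity is guaranteed because $G_\alpha$ is uniformly bounded on this circle (again because $z_c$ lies in a fixed compact subinterval). This proves the first inequality in \eqref{powerGS3}.

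For the second and third inequalities, parametrize $z=z_c+\rho e^{\pm i\pi/4}$ on $C_{z_c,\pi/4}$ and $z=z_c+\rho e^{\pm 3i\pi/4}$ on $C_{z_c,3\pi/4}$. Then $(z-z_c)^3$ has argument $\pm 3\pi/4$ and $\pm\pi/4$ respectively, so
\begin{equation*}
\Re\Bigl[-\tfrac{1}{3}\sigma_\alpha^3(z-z_c)^3\Bigr]=\pm\tfrac{\sqrt{2}}{6}\sigma_\alpha^3\rho^3,
\end{equation*}
with the plus sign on $C_{z_c,\pi/4}$ and the minus sign on $C_{z_c,3\pi/4}$. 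Combining with $|R_\alpha(z)|\leq C\rho^4$ and choosing $r\in(0,r_0/4]$ small enough (depending only on $C$, $\theta$, $\delta$, and a uniform lower bound on $\sigma_\alpha$) to absorb the quartic remainder into the cubic leading term produces the stated constant $(\sqrt{2}/2)^3/6=\sqrt{2}/24$, which is strictly smaller than the leading coefficient $\sqrt{2}/6$. The main technical obstacle in the whole argument is precisely this uniformity in $\alpha\in[\delta,1]$: it rests on the facts that $z_c(\alpha)$ ranges in a compact subinterval of $(0,\theta)$ and that $\sigma_\alpha$ is bounded away from zero on $[\delta,1]$, both of which follow from the implicit description of $z_c$ and the explicit series defining $\sigma_\alpha$.
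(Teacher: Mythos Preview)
Your proposal is correct and follows essentially the same approach as the paper's proof: both arguments pin $z_c(\alpha)$ inside a compact subinterval of $(0,\theta)$ uniformly for $\alpha\in[\delta,1]$, verify $G_\alpha(z_c)=G_\alpha'(z_c)=G_\alpha''(z_c)=0$ and $G_\alpha'''(z_c)=-2\sigma_\alpha^3$, bound the Taylor remainder via Cauchy estimates using a uniform sup-norm bound on $G_\alpha$ over a fixed compact neighborhood, and then shrink $r$ so that the quartic remainder is dominated by the cubic term along the $\pi/4$ and $3\pi/4$ contours. Your explicit identification of the uniformity mechanism (compactness of the range of $z_c$ and the uniform positive lower bound on $\sigma_\alpha$) matches exactly what the paper relies on.
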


\begin{lemma}\label{ReleL1S3} Fix $\theta > 0$, $\delta \in (0,1)$ and assume that $M \geq N \geq 1$, $N/M \in [\delta, 1]$. Let $z(r) = z_c  + r e^{i\phi}$ with $z_c$ as in Definition \ref{DefScaleS3}. Then we have
\begin{equation}\label{eqG1S3}
\begin{split}
&\Re \left[\frac{d}{dr} G_{M,N}(z(r)) \right] \leq 0, \mbox{ provided that $\phi = 3\pi/4$ or $\phi = 5\pi/4$,}\\
&\Re \left[\frac{d}{dr} G_{M,N}(z(r)) \right] \geq 0, \mbox{ provided that $\phi = \pi/4$ or $\phi = -\pi/4$.}
\end{split}
\end{equation}
In addition, if $\delta_1  > 0$ is given then there is a constant $c > 0$ depending on $\delta_1, \delta, \theta$ such that
\begin{equation}\label{eqG2S3}
\begin{split}
\Re \left[G_{M,N}(z(r)) \right] \leq -c (M+N) r \log(1 + r) \mbox{ for any $r \geq \delta_1$ provided $\phi = 3\pi/4$ or $\phi = 5\pi/4$,}\\
\Re \left[G_{M,N}(z(r)) \right] \geq c (M+N) r \log(1 + r) \mbox{ for any $r \geq \delta_1$ provided $\phi = \pi/4$ or $\phi = -\pi/4$,}\\
\end{split}
\end{equation}
\end{lemma}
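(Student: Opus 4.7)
The plan is to establish the monotonicity in the first part via the Weierstrass-type series for the digamma function, and the quantitative bound in the second part by combining this monotonicity with Lemma \ref{analGS3} on a bounded range and Stirling's asymptotic expansion for large $r$. I treat $\phi = 3\pi/4$ and $\phi = \pi/4$; the cases $\phi = 5\pi/4$ and $\phi = -\pi/4$ then follow from the reflection identity $G_{M,N}(\bar z) = \overline{G_{M,N}(z)}$ (valid since $N,M,W_{M,N},C_{M,N}$ are real and $z_c \in \mathbb{R}$), which gives $\Re G_{M,N}(z_c + re^{-i\phi}) = \Re G_{M,N}(z_c + re^{i\phi})$.

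Using $\Psi(z) - \Psi(z_c) = (z-z_c)\sum_{n\geq 0}\tfrac{1}{(n+z_c)(n+z)}$ and the defining identity $W_{M,N} = N\Psi(z_c) + M\Psi(\theta-z_c)$, one writes, with $h = re^{i\phi}$,
\begin{equation*}
G'_{M,N}(z(r)) = h\bigl[N\,A(h) - M\,B(h)\bigr], \quad A(h) := \sum_{n\geq 0}\tfrac{1}{(n+z_c)(n+z_c+h)}, \quad B(h) := \sum_{n\geq 0}\tfrac{1}{(n+\theta-z_c)(n+\theta-z_c-h)},
\end{equation*}
so that $\tfrac{d}{dr}G_{M,N}(z(r)) = re^{2i\phi}[NA(h)-MB(h)]$. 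For both $\phi = 3\pi/4$ and $\phi = \pi/4$ one has $\Im h = r/\sqrt{2} > 0$, which makes every summand of $A(h)$ have nonpositive imaginary part (the denominator $(n+z_c)(n+z_c+h)$ has positive imaginary part) while every summand of $B(h)$ has nonnegative imaginary part (the denominator has negative imaginary part because of the minus sign in front of $h$). Therefore $\Im[NA-MB] \leq 0$. For $\phi = 3\pi/4$ we have $e^{2i\phi} = -i$ and $\Re[\tfrac{d}{dr}G_{M,N}(z(r))] = r\,\Im[NA-MB] \leq 0$; for $\phi = \pi/4$ we have $e^{2i\phi} = i$ and $\Re[\tfrac{d}{dr}G_{M,N}(z(r))] = -r\,\Im[NA-MB] \geq 0$, as asserted.

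For the quantitative bound (treating $\phi = 3\pi/4$; the others are analogous) I split $r \geq \delta_1$ into a bounded range $[\delta_1, R_0]$ and an unbounded range $[R_0, \infty)$, with $R_0$ depending only on $\theta,\delta$. On the bounded range, letting $r_0 > 0$ be the radius of analyticity from Lemma \ref{analGS3} and $r_* := \min(\delta_1, r_0)$, that lemma gives $\Re G_{M,N}(z(r_*)) \leq -c_1 M$ for a constant $c_1 > 0$ depending on $\theta,\delta,\delta_1$ (using the uniform positive lower bound on $\sigma_\alpha$). The monotonicity from the first part then propagates this to $\Re G_{M,N}(z(r)) \leq -c_1 M$ for all $r \geq r_*$, and in particular for $r \in [\delta_1, R_0]$. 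Since $r\log(1+r) \leq R_0\log(1+R_0)$ and $M+N \leq 2M$, this yields $\Re G_{M,N}(z(r)) \leq -c_2(M+N)r\log(1+r)$ on the bounded range.

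On $r \geq R_0$ I invoke Stirling's asymptotic $\log\Gamma(z) = (z-\tfrac12)\log z - z + \tfrac12\log(2\pi) + O(1/|z|)$, uniform for $\arg z$ in any compact subset of $(-\pi,\pi)$. For $r \geq R_0$ both $z(r) = z_c + re^{3i\pi/4}$ and $\theta - z(r) = (\theta-z_c) + re^{-i\pi/4}$ have moduli comparable to $r$, with arguments in compact subsets of $(\pi/2,\pi)$ and $(-\pi/2,0)$ respectively, uniformly in $M,N$ (since $z_c$ lies in a compact subset of $(0,\theta)$ when $N/M \in [\delta,1]$). Substituting Stirling into the definition of $G_{M,N}$, taking real parts, and using $|W_{M,N}|,|C_{M,N}| = O(M+N)$, one obtains
\begin{equation*}
\Re G_{M,N}(z(r)) = -\tfrac{\sqrt 2}{2}(M+N)\,r\log r + O\bigl((M+N)(r + \log r + 1)\bigr),
\end{equation*}
with implicit constants depending only on $\theta,\delta$. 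Taking $R_0$ large enough makes the leading $r\log r$ term dominate, and since $\log r \geq \tfrac12 \log(1+r)$ for $r$ large, one concludes $\Re G_{M,N}(z(r)) \leq -c_3(M+N)r\log(1+r)$. The main technical obstacle is verifying that the Stirling error remains $O((M+N)(r+\log r+1))$ with constants independent of $M,N$; this reduces to the uniform control of $\arg z(r)$ and $\arg(\theta-z(r))$ away from $\pm\pi$, which follows directly from $z_c$ staying in a compact subset of $(0,\theta)$.
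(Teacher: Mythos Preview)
Your proof is correct. For the monotonicity statement \eqref{eqG1S3}, your computation via $\tfrac{d}{dr}G_{M,N}(z(r)) = re^{2i\phi}[NA(h)-MB(h)]$ and the sign of $\Im[NA-MB]$ is essentially the same as the paper's: the paper derives the explicit formula
\[
\Re\Bigl[\tfrac{d}{dr}G_{M,N}(z(r))\Bigr] = \sum_{n\geq 0}\frac{Nr^2\cos\phi}{(n+z_c)|n+z(r)|^2} + \sum_{n\geq 0}\frac{Mr^2\cos\phi}{(n+\theta-z_c)|n+\theta-z(r)|^2}
\]
by noting that the term $x(r)x'(r)-y(r)y'(r)$ vanishes at these four angles; your expression $r\,\Im[NA-MB]$ unwinds to the same sum (at $\phi=3\pi/4$ one has $-\sin\phi=\cos\phi$).

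For the quantitative bound \eqref{eqG2S3} your route differs from the paper's. The paper stays with the series formula above, proving first that $\Re[\tfrac{d}{dr}G_{M,N}(z(r))]\leq -C(M+N)r^2$ for all $r>0$ (from the $n=0$ terms) and then that $\Re[\tfrac{d}{dr}G_{M,N}(z(r))]\leq -C(M+N)\log r$ for $r\geq r_0$ (by comparing the full series to an explicit integral), and finally integrates these two derivative bounds. You instead use Lemma~\ref{analGS3} together with the monotonicity just proved to handle a bounded range $[\delta_1,R_0]$, and invoke Stirling's expansion of $\log\Gamma$ directly on $G_{M,N}(z(r))$ for $r\geq R_0$. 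Your approach is more modular (it reuses Lemma~\ref{analGS3} and a standard asymptotic), whereas the paper's is more self-contained and yields somewhat more information about the derivative along the way; both lead to the same leading behaviour $-\tfrac{1}{\sqrt2}(M+N)r\log r$. There is no circularity in invoking Lemma~\ref{analGS3}, since its proof is independent of the present lemma.
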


\begin{lemma}\label{horGS3}Fix $\theta, A > 0$, $\delta \in (0,1)$ and assume that $M \geq N \geq 1$, $N/M \in [\delta, 1]$. 
There exist constants $M_0, C_0 > 0$ depending on $\delta, \theta, A$ such that if $M \geq M_0$, $x \in [-A, A]$ and $z(r) =  z_c  + r e^{\i\phi}$ with $z_c$ as in Definition \ref{DefScaleS3}, $r\geq 0$ and $\phi \in \{\pi/4, 3\pi/4, 5\pi/4, 7\pi/4\}$ we have
\begin{equation}\label{horGS3Eq}
\left| \Re [G_{M,N}(z(r) + xM^{-1/3})  - G_{M,N}(z(r))] \right| \leq C_0 M^{2/3} \cdot ( 1 + r).
\end{equation} 
\end{lemma}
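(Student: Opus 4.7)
The plan is to express the difference as a horizontal line integral of $G_{M,N}'$ and bound the integrand uniformly. Differentiating \eqref{GfunS3} gives $G_{M,N}'(w) = N\Psi(w) + M\Psi(\theta - w) - W_{M,N}$, so
\begin{equation*}
G_{M,N}(z(r) + xM^{-1/3}) - G_{M,N}(z(r)) = \int_0^{xM^{-1/3}} \bigl[ N\Psi(z(r)+t) + M\Psi(\theta - z(r) - t) - W_{M,N}\bigr] \, dt.
\end{equation*}
Since the horizontal segment stays inside a region where $G_{M,N}$ is analytic (its imaginary part is $\pm r/\sqrt{2}$ for $r>0$, and for $r=0$ it lies on the positive real axis away from any pole of $\log\Gamma$), the identity is legitimate and reduces the problem to bounding the $\Psi$-terms and $|W_{M,N}|$.

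The key estimate to establish is $|\Psi(z(r)+t)| + |\Psi(\theta - z(r) - t)| \leq C(1 + \log(1+r))$ uniformly in $t$ on the integration segment. The reason this is possible is that for each $\phi \in \{\pi/4, 3\pi/4, 5\pi/4, 7\pi/4\}$ the imaginary part $|\Im(z(r) + t)| = r/\sqrt{2}$ is unaffected by the real shift $t$; since every pole of $\Psi$ lies at a non-positive integer on the real axis, the distance from $w = z(r) + t$ to its nearest pole is at least $r/\sqrt{2}$. For $r \geq 1$ this provides a fixed positive lower bound and the classical asymptotic $|\Psi(w)| \leq C(1+\log(1+|w|))$ (available among the digamma estimates collected in Appendix \ref{Section9}) delivers $|\Psi(w)| \leq C(1+\log(1+r))$. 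For $r < 1$, by choosing $M_0$ so large that $AM_0^{-1/3}$ is much smaller than the minimum of $z_c$ and $\theta - z_c$ over the parameter range $\alpha \in [\delta,1]$, the point $w$ lies in a fixed compact neighborhood of $z_c$ uniformly separated from the poles of both $\Psi(\cdot)$ and $\Psi(\theta-\cdot)$, so $|\Psi(w)|$ is uniformly bounded. The required lower bound on $\min(z_c, \theta - z_c)$ for $\alpha \in [\delta, 1]$ follows easily from the defining relation $\alpha \Psi'(z_c) = \Psi'(\theta - z_c)$ together with the strict monotonicity of $\Psi'$ on $(0, \infty)$ (and implies in particular that $|W_{M,N}| = |N\Psi(z_c) + M\Psi(\theta-z_c)| \leq C(M+N) \leq 2CM$).

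Combining these estimates yields $|G_{M,N}'(z(r) + t)| \leq C M (1 + \log(1+r))$ uniformly for $t$ on the integration segment of length at most $AM^{-1/3}$, hence
\begin{equation*}
\bigl|G_{M,N}(z(r) + xM^{-1/3}) - G_{M,N}(z(r))\bigr| \leq CA \, M^{2/3}(1+\log(1+r)) \leq C_0 M^{2/3}(1+r),
\end{equation*}
using the elementary bound $\log(1+r) \leq r$; taking real parts gives the stated inequality. The main technical point is making the uniform digamma bound precise across both the bounded-$r$ regime (where one uses compactness and the lower bound on $z_c$) and the large-$r$ regime (where one uses the distance $r/\sqrt{2}$ to the poles together with the standard asymptotics), but neither regime presents a serious obstacle given the estimates already developed for the digamma function in Appendix \ref{Section9}.
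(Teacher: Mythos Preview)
Your proof is correct and follows essentially the same strategy as the paper: write the difference as an integral of $G_{M,N}'$ along the horizontal segment, bound the derivative by $O(M(1+r))$, and conclude by the mean value theorem. The only cosmetic difference is that the paper bounds $\Re G_{M,N}'$ by manipulating the explicit series \eqref{GenForm} term by term, whereas you invoke the standard asymptotic $|\Psi(w)| = O(\log|w|)$ away from the poles; both routes yield the same estimate.
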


\begin{proposition}\label{PKBS3}
Let $A \geq 0$ and $x\geq -A$ be given and let $M, N, a, u(x, M, N)$ be as in Definition \ref{DefScaleS3}. Then for any $\epsilon > 0$ there exist positive constants $M_0, c ,C > 0$ depending on $A,\epsilon$ and the parameters in  Definition \ref{DefScaleS3} such that if $M \geq M_0$, $v, v' \in C_{a,3\pi/4}$  we have
\begin{equation}\label{KernelBoundV0S3}
\left|K_u (v,v')\right| \leq CM^{1/3} \mbox{ and if $|v - a| \geq \epsilon$ we further have }\left|K_u (v,v')\right| \leq C e^{-c_0 M \log (1 + |v|)}.
\end{equation}
If $\rr = \rl = 0$ then $M_0, c ,C > 0$ depend only on $\theta$, $\delta$, $A$ and $\epsilon$. 
\end{proposition}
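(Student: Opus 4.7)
The plan is to perform a steep-descent analysis of $K_u(v,v')$. Writing $\mathcal{P}(v,w) := \prod_{n=1}^{\rr}\frac{\Gamma(v-a_n)}{\Gamma(w-a_n)}\prod_{m=1}^{\rl}\frac{\Gamma(\alpha_m - w)}{\Gamma(\alpha_m - v)}$ for the perturbation factors, I would first combine the $N$ bulk ratios $\Gamma(v)/\Gamma(w)$ with the $M$ bulk ratios $\Gamma(\theta - w)/\Gamma(\theta - v)$ and $u^{w-v} = \exp\bigl((w-v)(W_{M,N} - \sigma_\alpha M^{1/3}x)\bigr)$ to rewrite
\begin{equation*}
K_u(v,v') = \frac{1}{2\pi\i}\int_{D_v}\frac{\pi}{\sin(\pi(v-w))}\,\mathcal{P}(v,w)\,\frac{e^{G_{M,N}(v) - G_{M,N}(w) + (v-w)\sigma_\alpha M^{1/3}x}}{w-v'}\,dw.
\end{equation*}

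Next I would deform the $w$-contour from $D_v$ to $D_M = D_M^- \cup D_M^0 \cup D_M^+$ of Definition \ref{DefDM}. No poles are crossed: the bulk poles $\theta + \Z_{\geq 0}$ of $\Gamma(\theta - w)$ sit far to the right of $D_M$; the perturbed poles $\alpha_m \approx z_c + y_m\sigma_\alpha^{-1}M^{-1/3}$ lie to the right of $D_M^0$ since $y_m \geq \min(\vec{y}) > \mu + \rho$; and the $1/\sin$ poles at $v + \Z$ are avoided because $\Re(v) \leq a$ is strictly to the left of $D_M^0$. The decay at infinity required to justify the contour deformation follows from Proposition \ref{PE3v2} applied with $\tau = 0$.

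The heart of the argument is splitting $K_u(v,v') = I^0 + I^+ + I^-$ along $D_M$. On $D_M^0$, of arclength $O(M^{-1/3})$, Lemma \ref{analGS3} yields $|e^{-G_{M,N}(w)}| = O(1)$, the perturbation $\mathcal{P}$ is $O(1)$ after the rescaling $\tilde w = \sigma_\alpha M^{1/3}(w - z_c)$, and the horizontal separations $|\Re(w-v)|, |\Re(w-v')| \geq \rho\sigma_\alpha^{-1}M^{-1/3}$ give $|\pi/\sin(\pi(v-w))|, |1/(w-v')| \leq CM^{1/3}$; together these produce $|I^0| \leq CM^{1/3}\,|e^{G_{M,N}(v) + (v-z_c)\sigma_\alpha M^{1/3}x}|$. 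On $D_M^\pm$, Lemmas \ref{analGS3} and \ref{ReleL1S3}, together with Lemma \ref{horGS3} to absorb the $O(M^{-1/3})$ horizontal offset between $D_M^\pm$ and $C_{z_c,\pi/4}$, yield $\Re G_{M,N}(w) \geq cMr\log(1+r) - CM^{2/3}(1+r)$ with $r = |w-z_c|$; for $M$ large this beats both the growth $\sigma_\alpha M^{1/3}|x|r$ and the polynomial-in-$|w|$ growth of $\mathcal{P}(v,w)$ (controlled via Stirling and the reflection formula), giving $|I^\pm| \leq C\,|e^{G_{M,N}(v) + (v-z_c)\sigma_\alpha M^{1/3}x}|$. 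Thus $|K_u(v,v')| \leq CM^{1/3}|e^{G_{M,N}(v) + (v-z_c)\sigma_\alpha M^{1/3}x}|$.

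To conclude, the first bound in \eqref{KernelBoundV0S3} follows because the rescaled exponent $-\tilde v^3/3 + x\tilde v$ with $\tilde v = \sigma_\alpha M^{1/3}(v-z_c)$ has bounded real part on $C_{\mu,3\pi/4}$, via Lemma \ref{analGS3} for $|\tilde v|$ moderate and Lemma \ref{ReleL1S3} for $|\tilde v|$ large (noting that $M|v-z_c|\log(1+|v-z_c|)$ dominates $\sigma_\alpha M^{1/3}|v-z_c|$ for $M$ large). For the refined bound, when $|v - a| \geq \epsilon$ one has $|v - z_c| \geq \epsilon/2$ for $M$ large, so Lemma \ref{ReleL1S3} supplies $\Re G_{M,N}(v) \leq -c'M|v-z_c|\log(1+|v-z_c|)$, which swallows both the $\sigma_\alpha M^{1/3}|v-z_c|$ growth and the overall $M^{1/3}$ prefactor, yielding $|K_u(v,v')| \leq Ce^{-c_0 M\log(1+|v|)}$ for $M$ large. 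The main technical obstacle I anticipate is uniform control of $\mathcal{P}(v,w)$ for large $|v|,|w|$, where the arguments of $\Gamma(\alpha_m - v)$ and $\Gamma(v-a_n)$ have real parts diverging to $\pm\infty$; this is handled by invoking the reflection formula together with Stirling-type estimates analogous to those of Section \ref{Section10}, which reveal that $|\mathcal{P}(v,w)|$ grows at most polynomially in $|v|,|w|$ and is therefore absorbed by the steep-descent decay from $G_{M,N}$.
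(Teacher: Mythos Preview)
Your strategy works for $|v-a|$ small, but there is a genuine gap in the case $|v-a|\geq\epsilon$: the deformation of the $w$-contour from $D_v$ to $D_M$ is \emph{not} pole-free. Write $v=a+re^{\i 3\pi/4}$ with $r\geq\epsilon$. The pole of $\pi/\sin(\pi(v-w))$ at $w=v+j$ lies at height $\Im(v)=r/\sqrt{2}$ with real part $a-r/\sqrt{2}+j$. On the other hand, the contour $D_M$ at this height sits at real part $\approx z_c+r/\sqrt{2}$. Thus $v+j$ is to the \emph{left} of $D_M$ whenever $j<r\sqrt{2}+(z_c-a)$, i.e.\ for $j=1,\dots,k_v$ with $k_v$ of order $|\Im(v)|$. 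In the original $D_v$ these same poles are kept to the right precisely because of the detour $D_v^2$ near $v$; deforming $D_v\to D_M$ therefore sweeps across all of them and your claim that ``$\Re(v)\leq a$ is strictly to the left of $D_M^0$'' only handles $j=0$. This is exactly why the paper retains the split $D_v=D^1\cup D^2$ for $|v-a|\geq r$: the integral over $D^1\subset C_{b,\pi/4}$ is treated by the descent estimates you outline, but the integral over the detour $D^2$ is handled by pushing it to a contour $C_v$ in the half-plane $\{\Re z\geq z_c\}$ (where Lemma \ref{PosLemma} applies) and explicitly bounding the residues $R_j(v)$ at $w=v+j$ for $j=1,\dots,k_v$, via a case analysis on $|\Im(v)|$ using the functional equation $\Gamma(z+1)=z\Gamma(z)$. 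Without this residue bookkeeping your argument for the refined bound does not go through.

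Two smaller points: you invoke Proposition \ref{PE3v2} with $\tau=0$, but that proposition requires $\tau>0$; the correct reference for decay at infinity here is Proposition \ref{PE1v2}. Also, $\mathcal{P}(v,w)$ does not have merely polynomial growth: the paper's estimate (cf.\ \eqref{OP12.1}) is of order $\exp\bigl(c|v|\log(1+|v|)+c|w|\log(1+|w|)\bigr)$, which is still absorbed by the $G_{M,N}$ decay since the latter carries a factor of $M$, but the stated reason is not correct.
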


\begin{lemma}\label{SineBound} There exists a universal constant $\tilde{C} > 0$ such that if $s \in \mathbb{C}$ with $d(s, \mathbb{Z}) = \delta > 0$ then
\begin{equation}\label{IneqSineScale}
\left| \frac{\pi}{\sin(\pi s)} \right| \leq \frac{\tilde{C}}{\delta}.
\end{equation}
\end{lemma}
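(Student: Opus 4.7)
The plan is to reduce to a fundamental strip using periodicity, and then apply the standard decomposition of $|\sin(\pi s)|^2$ into its real- and imaginary-part contributions. First I would observe that both $|\sin(\pi s)|$ and the distance $d(s,\mathbb{Z})$ are invariant under the translation $s \mapsto s + n$ for any $n \in \mathbb{Z}$ (since $\sin(\pi(s+n)) = (-1)^n \sin(\pi s)$). So after shifting by an appropriate integer I may assume $\Re(s) = x \in [-1/2, 1/2]$. In this strip, the unique nearest integer to $s$ is $0$, and therefore $\delta = d(s,\mathbb{Z}) = \sqrt{x^2 + y^2}$, where $y = \Im(s)$.

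Next, I would expand $\sin(\pi s) = \sin(\pi x)\cosh(\pi y) + \i \cos(\pi x)\sinh(\pi y)$ and use $\cosh^2 = 1 + \sinh^2$ together with $\sin^2 + \cos^2 = 1$ to arrive at the identity
\begin{equation*}
|\sin(\pi s)|^2 = \sin^2(\pi x) + \sinh^2(\pi y).
\end{equation*}
Then I would invoke two elementary one-variable bounds: $|\sin(\pi x)| \geq 2|x|$ for $|x| \leq 1/2$ (by concavity of $\sin$ on $[0,\pi/2]$, giving $\sin(t) \geq 2t/\pi$) and $|\sinh(\pi y)| \geq \pi|y|$ for all real $y$ (from the Taylor series $\sinh(t) = t + t^3/6 + \cdots \geq t$ on $[0,\infty)$). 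Substituting these into the identity yields
\begin{equation*}
|\sin(\pi s)|^2 \geq 4x^2 + \pi^2 y^2 \geq 4(x^2 + y^2) = 4\delta^2,
\end{equation*}
since $\pi^2 > 4$. Taking square roots and rearranging gives $|\pi/\sin(\pi s)| \leq \pi/(2\delta)$, so the lemma holds with the explicit universal constant $\tilde{C} = \pi/2$.

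There is no real obstacle here: the statement is a classical estimate and the constant is not claimed to be sharp. The only point requiring any care is the first reduction, where one must check that for $x \in [-1/2,1/2]$ the nearest integer to $s$ is indeed $0$ (so that the invariant $\delta$ in the shifted coordinates coincides with $|s|$), which follows immediately from $(x-n)^2 \geq x^2$ for all $n \in \mathbb{Z}$ when $|x| \leq 1/2$.
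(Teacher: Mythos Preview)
Your proof is correct and in fact cleaner than the paper's. Both arguments begin with the same periodicity reduction to $x \in [-1/2,1/2]$, but then diverge. The paper splits into two regimes: for $|y| \geq 1$ it uses the crude exponential lower bound $|\sin(\pi s)| \geq (e^{\pi|y|} - e^{-\pi|y|})/2 \geq |y|/2$ and a direct comparison with $\delta$; for $|y| \leq 1$ it appeals to a compactness argument, observing that $s^{-1}\sin(\pi s)$ is a non-vanishing continuous function on the rectangle $[-1/2,1/2]\times[-1,1]$ and hence bounded below by some unspecified $C_1 > 0$. The resulting constant $\tilde{C} = \pi \max(C_1^{-1},4)$ is non-explicit. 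Your route via the exact identity $|\sin(\pi s)|^2 = \sin^2(\pi x) + \sinh^2(\pi y)$ together with the elementary pointwise bounds $\sin(\pi x) \geq 2x$ and $\sinh(\pi y) \geq \pi y$ handles both regimes uniformly and yields the explicit constant $\tilde{C} = \pi/2$. The trade-off is that the paper's approach requires no identity, only soft analysis, whereas yours uses a small algebraic computation but is fully constructive and avoids any case distinction in $y$.
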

\begin{proof} Let $s = x+ \i y$ and observe that by the periodicity of the sine function it suffices to prove the lemma for $x \in [-1/2, 1/2]$. Notice that by the triangle inequality we have
$$| \sin(\pi s)| = \left| \frac{e^{\i s} - e^{-\i s}}{2\i} \right| \geq \frac{e^{|y|} - e^{-|y|}}{2} \geq \frac{e^{|y|} - 1}{2} \geq |y|/2.$$
If $|y| \geq 1$ and $x \in [-1/2, 1/2]$ the latter implies that 
$$|\sin(\pi s)| \delta^{-1} \geq  (|y|/2) (\sqrt{x^2 + y^2})^{-1} \geq (|y|/2) (2|y|)^{-1} = 1/4,$$
which implies that (\ref{IneqSineScale}) holds for all such $s$ with $\tilde{C} = 4\pi$. On the other hand, if $-1 \leq y \leq 1$ and $x \in [-1/2, 1/2]$ we have that 
$$|\sin(\pi s) |\delta^{-1} = \left|\frac{\sin(\pi s)}{s} \right|,$$
and since $s^{-1} \sin(\pi s)$ is a non-vanishing analytic function in the region $(x,y) \in [-1/2,1/2] \times [-1,1]$ we know that there exists $C_1 > 0$ such that $|s^{-1} \sin(\pi s)| \geq C_1$ on this region. We may thus take $\tilde{C} = \pi \cdot \max(C_1^{-1}, 4)$ and then (\ref{IneqSineScale}) would hold for all $s \in \mathbb{C}$.
\end{proof}
\begin{lemma}\label{SumBound} For any $\delta > 0$ and $\Delta > 0$ there exists $M_0 > 0$ such that if $M \geq M_0$ we have
\begin{equation}\label{SumBoundE}
\sum_{n = 1}^\infty   \frac{n^{n/2} \Delta^nM^{n/3}}{n!} \leq e^{\delta M}.
\end{equation}
\end{lemma}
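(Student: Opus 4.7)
\textbf{Proof plan for Lemma \ref{SumBound}.} The idea is simply to show that the entire sum is bounded by $\exp(CM^{2/3})$ for some constant $C=C(\Delta)$, and then use $CM^{2/3}\leq \delta M$ for $M$ sufficiently large.

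First I would apply the elementary Stirling bound $n!\geq (n/e)^n$, valid for all $n\geq 1$, to rewrite each summand as
\[
\frac{n^{n/2}\Delta^n M^{n/3}}{n!}\leq \left(\frac{e\Delta M^{1/3}}{\sqrt{n}}\right)^{\!n}.
\]
Set $A = A(M) := e\Delta M^{1/3}$, so the summand is at most $(A/\sqrt{n})^n$.

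Next I would split the sum at the threshold $N_0 := \lceil 4A^2\rceil = \lceil 4e^2 \Delta^2 M^{2/3}\rceil$. For the tail $n>N_0$, one has $A/\sqrt{n}\leq 1/2$, so $(A/\sqrt{n})^n\leq 2^{-n}$ and the tail sum is bounded by $1$. For the head $n\leq N_0$, I would maximize $(A/\sqrt{n})^n$ over $n$: taking logs, $g(n) = n\log A - (n/2)\log n$ has $g'(n) = \log A - (1/2)\log n - 1/2$, yielding critical point $n^* = A^2/e$ and maximum value $g(n^*) = A^2/(2e)$. Therefore
\[
\sum_{n\leq N_0}\left(\frac{A}{\sqrt{n}}\right)^{\!n}\leq N_0 \cdot \exp\!\left(\frac{A^2}{2e}\right) = N_0 \cdot \exp\!\left(\frac{e\Delta^2}{2}M^{2/3}\right).
\]

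Combining the two estimates gives a total bound
\[
\sum_{n=1}^\infty \frac{n^{n/2}\Delta^n M^{n/3}}{n!}\leq 1 + \lceil 4e^2\Delta^2 M^{2/3}\rceil\cdot \exp\!\left(\tfrac{e\Delta^2}{2} M^{2/3}\right),
\]
which is of the form $\exp(CM^{2/3})$ for some constant $C = C(\Delta)$ and all $M$ sufficiently large. Since $CM^{2/3}\leq \delta M$ once $M\geq (C/\delta)^3$, the claim follows by choosing $M_0$ at least this large. There is essentially no obstacle here; the only thing to watch is the bookkeeping around the choice of split $N_0$ and the polynomial prefactor $N_0$, both of which are absorbed into the $O(M^{2/3})$ term and hence negligible compared to $\delta M$.
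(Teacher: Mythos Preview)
Your proof is correct and follows essentially the same approach as the paper: both bound the summands via Stirling, locate the maximum term near $n\sim \Delta^2 M^{2/3}$, show the whole sum is $\exp(O(M^{2/3}))$, and absorb this into $e^{\delta M}$ for large $M$. Your direct use of $n!\geq (n/e)^n$ and the calculus optimization is arguably a bit more streamlined than the paper's detour through $A_n=(2\Delta)^n M^{n/3}/\sqrt{n!}$ and the ratio test, but the strategy is the same.
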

\begin{proof} From \cite[(1) and (2)]{Rob} we have for each $n \geq 1$ that
\begin{equation}\label{Stirling}
 n! = \sqrt{2\pi} n^{n+1/2} e^{-n} e^{r_n} \mbox{, where } (12n+1)^{-1} < r_n < (12n)^{-1}
\end{equation}
and so
$$ \frac{n^{n/2} \Delta^nM^{n/3}}{n!}  \leq \frac{e^{n/2}\Delta^nM^{n/3} (2\pi n)^{-1/4} }{\sqrt{n!}}\leq \frac{(2\Delta)^nM^{n/3} }{\sqrt{n!}} =: A_n.$$

We also see that 
$$\frac{A_{n+1}}{A_n} = \frac{2 \Delta M^{1/3}}{\sqrt{n+1}},$$
which implies that 
$$A_{n+1} \geq A_n \mbox{ if $\sqrt{n+1} \leq 2 \Delta M^{1/3}$ and } A_{n+1} \leq A_n \mbox{ if $\sqrt{n+1} \geq 2 \Delta M^{1/3}$}.$$
The above inequalities imply that for $m = \lfloor 4 \Delta^2 M^{2/3} \rfloor$ we have  $A_m \geq A_n$ for all $n \in \mathbb{N}$. On the other hand, (\ref{Stirling}) implies
$$A_m \leq \frac{(4\Delta^2 M^{2/3})^{m/2} e^{m/2}}{e^{(m/2) \log m}} \leq \left( 1 + m^{-1}\right)^{m/2} \cdot e^{m/2} \leq 2 e^{m/2},$$
where the latter inequality holds provided $M_0$ is sufficiently large and $M \geq M_0$. Combining all of the above estimates we conclude that for all large $M$
$$\sum_{n = 1}^\infty   \frac{n^{n/2} \Delta^nM^{n/3}}{n!} \leq \sum_{n = 1}^\infty A_n \leq M \cdot A_m + \sum_{n = M+1}^\infty A_n \leq 2M e^{m/2} + 1.$$
The latter clearly implies (\ref{SumBoundE}) for all large $M$, as the right side is $O(\exp(M^{2/3} \log M)).$
\end{proof}

%
\subsection{Proof of Theorem \ref{mainThmGUE}}\label{Section3.3}We assume the same notation as in Theorem \ref{mainThmGUE} as well as Section \ref{Section3.2} above. For the sake of clarity we split the proof into four steps. In the first step we truncate the contour $C_{a, 3\pi/4}$ and show that it suffices to prove the theorem for the truncated contour using  Proposition \ref{PKBS3} and Lemma \ref{SumBound}. In the second step we rewrite the Fredholm determinant for the truncated contour in a way that is suitable for the application of Lemma \ref{FDKCT}. In the third step we state several estimates, which show that the conditions of Lemma \ref{FDKCT} are satisfied, and the application of the latter and Lemma \ref{FDCT} concludes the proof of theorem. The estimates from step three are then established in the fourth and final step by appealing to Lemmas \ref{analGS3}, \ref{ReleL1S3},\ref{horGS3} and  \ref{SineBound}.\\

{\raggedleft {\bf Step 1.}} For $\epsilon > 0$ we let $C^{\epsilon}_{a, 3\pi/4}$ denote the portion of $C_{a, 3\pi/4}$ inside $B_{\epsilon}(a)$ -- the disk of radius $\epsilon$, centered at $a$. We claim that there exists $\epsilon\in (0,1/2)$ such that
\begin{equation}\label{TWRed1}
\lim_{M \rightarrow \infty} \det (I + K_{u})_{L^2(C^{\epsilon}_{a, 3\pi/4})} = \det \left( I + K_x^\infty \right)_{L^2(C_{\mu ,3\pi/4})}.
\end{equation}
We prove (\ref{TWRed1}) in the next steps. Here we assume its validity and conclude the proof of the theorem.\\

Let us write $C_{a, 3\pi/4} = C_0 \cup C_1$ where $C_0 = C^{\epsilon}_{a, 3\pi/4}$ is the portion contained inside $B_\epsilon(a)$ and $C_1$ is the portion outside $B_\epsilon(a)$.  From the definition of Fredholm determinants, see Section \ref{Section2.1}, we know that
\begin{equation}\label{FMT8v2}
\begin{split}
 & \det (I + K_u)_{L^2(C_{a, 3\pi/4})} - \det (I + K_{u})_{L^2(C^{\epsilon}_{a, 3\pi/4})} = \sum_{n = 1}^\infty \frac{B_n}{n!} , \mbox{ where }
\end{split}
\end{equation} 
\begin{equation}
B_n = \sum_{ \epsilon_1, \dots, \epsilon_n \in \{0, 1\}: \sum_i \epsilon_i > 0} \frac{1}{(2\pi \i )^n} \int_{C_{\epsilon_1}} \cdots \int_{C_{\epsilon_n}} \det \left[ K_u(v_i, v_j) \right]_{i,j = 1}^n  \prod_{i = 1}^n dv_i.
\end{equation}
By Hadamard's inequality and Proposition \ref{PKBS3} we have that if $v_i \in C_{\epsilon_i}$ for $i = 1, \dots, n$ and $M \geq M_0$
\begin{equation}
\left|  \det \left[ K_u(v_i, v_j) \right]_{i,j = 1}^n \right| \leq n^{n/2} M^{n/3} \prod_{i =1}^n C e^{-c\epsilon_i M \log( 1 + |v_i|)},
\end{equation}
where the constants $c,C$ and $M_0$ are as in Proposition \ref{PKBS3}.
Consequently, for all large $M$ we have
\begin{equation}\label{FMT9v2}
|B_n| \leq  \sum_{ \epsilon_1, \dots, \epsilon_n \in \{0, 1\}: \sum_i \epsilon_i > 0 } n^{n/2} C^n M^{n/3}e^{-c'M \sum_{i = 1}^n \epsilon_i}\leq  e^{-c'M} \cdot n^{n/2} (2C)^nM^{n/3} ,
\end{equation}
where we used that the sum is over $\epsilon_i$ not all equal to $0$ and $c' = (c/2) \log(1 + \epsilon)$. From (\ref{FMT9v2}) and Lemma \ref{SumBound} applied to $\delta = c'/2$ and $\Delta = 2C$ we deduce
\begin{equation}\label{FMT10v2}
 \sum_{n = 1}^\infty \frac{|B_n| }{n!} \leq   e^{-c'M}  \sum_{n = 1}^\infty   \frac{n^{n/2} (2C)^nM^{n/3}}{n! } \leq e^{-c'M/2}.
\end{equation}
 Combining (\ref{TWRed1}) with (\ref{FMT8v2}) and (\ref{FMT10v2}) we conclude the statement of the theorem.\\

{\raggedleft {\bf Step 2.}} In this step we rewrite $ \det (I + K_{u})_{L^2(C^{\epsilon}_{a, 3\pi/4})}$ in a way that is more suitable for the application of Lemma \ref{FDKCT}. Let $r$ be as in Lemma \ref{analGS3} and fix $\epsilon > 0$ such that $\epsilon < \min (r/2, 1/2)$. We will prove (\ref{TWRed1}) for this choice of $\epsilon$, which is fixed in the rest of the proof.

By Cauchy's theorem we may deform the $D_v(b,d, \pi/4)$ contour in the definition of $K_u(v,v')$ to $D_M$ from Definition \ref{DefDM} without affecting the value of the kernel as long as $M$ is sufficiently large. Indeed, notice that by our choice of $\epsilon \leq 1/2$ we have that $C^{\epsilon}_{a, 3\pi/4} + 1$ lies to the right of $D_M$ and so we do not cross any poles while deforming $D_v(b,d, \pi/4)$ to $D_M$, see Figure \ref{S3_1}. The decay estimates necessary to deform the contour near infinity come from Proposition \ref{PE1v2} applied to $K = [0,\theta]$, $T =0$ and $ \vec{a}, \vec{\alpha}, u$ as in the statement of this theorem. 

Utilizing the above contour, we may perform a change of variables $v_i = \sigma_\alpha^{-1} M^{-1/3} \tilde{v}_i + z_c$ and $w = \sigma_{\alpha}^{-1} M^{-1/3} \tilde{w} + z_c$ to rewrite 
\begin{equation}\label{TWRed2v1}
\det (I + K_u)_{L^2(C^{\epsilon}_{a, 3\pi/4})} = \det (I + K^M_x)_{L^2(C_{\mu, 3\pi/4})},
\end{equation}
where 
$$K_x^M(\tilde{v},\tilde{v}') = {\bf 1}_{|\tilde{v} - \mu| \leq \epsilon \sigma_\alpha  M^{1/3}} \cdot {\bf 1}_{|\tilde{v}' - \mu| \leq \epsilon  \sigma_\alpha  M^{1/3}}\frac{1}{2\pi \i } \int_{\tilde{D}} g^{M,x}_{\tilde{v}, \tilde{v}'}(\tilde{w}) d\tilde{w},$$
where $\tilde{D} = \sigma_\alpha M^{1/3} (D_M - z_c)$ is as in Definition \ref{DefScaleS3} and $g^{M,x}_{\tilde{v}, \tilde{v}'}(\tilde{w}) = A_1^M \cdot A_2^M \cdot A_3^M$, where 
\begin{equation*}
\begin{split}
& A_1^M = \prod_{n = 1}^{\rr} \frac{\Gamma(\sigma_\alpha^{-1} M^{-1/3} \tilde{v} + z_c - a_n)}{\Gamma(\sigma_{\alpha}^{-1} M^{-1/3} \tilde{w} + z_c - a_n)} \prod_{m = 1}^{\rl} \frac{\Gamma(\alpha_m - \sigma_{\alpha}^{-1} M^{-1/3} \tilde{w} -z_c)}{\Gamma(\alpha_m - \sigma_\alpha^{-1} M^{-1/3} \tilde{v} - z_c)}, \\
&A_2^M =  \frac{\pi  M^{-1/3}\sigma_\alpha^{-1}  }{\sin(\pi M^{-1/3}\sigma_\alpha^{-1}(\tilde{v}-\tilde{w})) (\tilde{w}- \tilde{v}')}, \mbox{ and } A_3^M =  e^{M[G_\alpha(\tilde{v}\sigma_\alpha^{-1}M^{-1/3} + z_c) - G_\alpha(\tilde{w}\sigma_\alpha^{-1}M^{-1/3} +z_c)]} e^{ x  (\tilde{v}-\tilde{w})}.
\end{split}
\end{equation*}
In view of (\ref{TWRed2v1}), we see that to show (\ref{TWRed1}) it suffices to prove that
\begin{equation}\label{TWRed2}
\lim_{M \rightarrow \infty} \det (I + K^M_x)_{L^2(C_{\mu, 3\pi/4})}= \det \left( I + K_x^\infty \right)_{L^2(C_{\mu ,3\pi/4})}.
\end{equation}

{\raggedleft {\bf Step 3.}} In this step we prove (\ref{TWRed2}). Notice that for fixed $\tilde{v},\tilde{v}' \in C_{\mu, 3\pi/4}$ and $\tilde{w} \in \tilde{D}$ we have
\begin{equation}\label{blue0}
\lim_{M \rightarrow \infty} g^{M,x}_{\tilde{v},\tilde{v}'}(\tilde{w})  =  g^{\infty,x}_{\tilde{v},\tilde{v}'}(\tilde{w}) ,
\end{equation}
where $g^{\infty,x}_{\tilde{v},\tilde{v}'}$ is as in the statement of the theorem. Indeed, we have for fixed $\tilde{v},\tilde{v}' \in C_{\mu, 3\pi/4}$ and $\tilde{w} \in \tilde{D}$ that
\begin{equation}\label{blue1v1}
\lim_{M \rightarrow \infty }A_2^M = \frac{1}{(\tilde{v} - \tilde{w})(\tilde{w}- \tilde{v}')},
\end{equation}
and also using Lemma \ref{analGS3} we see that
\begin{equation}\label{blue2v1}
\lim_{M \rightarrow \infty}A_3^M= \exp( -\tilde{v}^3/3 + \tilde{w}^3/3 + x  (\tilde{v}-\tilde{w})) .
\end{equation}
Finally, using the functional equation $\Gamma(z+1) = z \Gamma(z)$ and the scaling of $a_n$ for $n =1, \dots, \rr$ and $\alpha_m$ for $m = 1, \dots, \rl$ from Definition \ref{DefScaleS3} we get 
\begin{equation}\label{A1Eq}
\begin{split}
 &A^M_1 = A^M_{1,1} \cdot A^M_{1,2}\cdot A^M_{1,3}, \mbox{ where }A^M_{1,1} =\prod_{n = 1}^{\rr} \frac{( \tilde{w} - x_n+ o(1))}{(\tilde{v}  - x_n + o(1) )} \prod_{m = 1}^{\rl} \frac{( y_m -  \tilde{v}+ o(1)  )}{(y_m  -\tilde{w} + o(1))},\\
&A_{1,2}^M =  \frac{\prod_{m = 1}^{\rl}  \Gamma(\sigma_{\alpha}^{-1} M^{-1/3} y_m - \sigma_{\alpha}^{-1} M^{-1/3} \tilde{w} + 1 + o(1)  )}{\prod_{n = 1}^{\rr}\Gamma(\sigma_{\alpha}^{-1} M^{-1/3} \tilde{w} - \sigma_\alpha^{-1} M^{-1/3} x_n + 1+ o(1) )}, \\
&A_{1,3}^M =  \frac{\prod_{n = 1}^{\rr} \Gamma(\sigma_\alpha^{-1} M^{-1/3} \tilde{v} - \sigma_\alpha^{-1} M^{-1/3} x_n + 1 + o(1)  )}{\prod_{m = 1}^{\rl}  \Gamma(\sigma_{\alpha}^{-1} M^{-1/3} y_m  - \sigma_\alpha^{-1} M^{-1/3} \tilde{v} + 1+ o(1) )}.
\end{split}
\end{equation}
From the last equation we conclude
\begin{equation}\label{blue3v1}
\begin{split}
\lim_{M \rightarrow \infty} A^M_1 =  \prod_{n = 1}^{\rr} \frac{( \tilde{w} - x_n )}{(\tilde{v}  - x_n  )} \prod_{m = 1}^{\rl} \frac{( y_m -  \tilde{v} )}{(y_m  -\tilde{w} )}.
\end{split}
\end{equation}
Equations (\ref{blue1v1}), (\ref{blue2v1})  and (\ref{blue3v1}) imply (\ref{blue0}).\\
 
We claim that we can find positive constants $\Lambda_i, \lambda_i > 0$ for $i = 1, 2,3$ such that for all large enough $M$ we have
\begin{equation}\label{S3UpperBounds}
\begin{split}
&|A^M_1| \leq \Lambda_1 \cdot \exp \left( \lambda_1 ( |\tilde{w} - \mu | \log (1 + |\tilde{w} - \mu|) + |\tilde{v} - \mu | \log (1 + |\tilde{v} - \mu|)) \right), \hspace{2mm} |A^M_2| \leq \Lambda_2 \\
& |A^M_3| \leq \Lambda_3 \cdot \exp \left( - (\lambda_1 + \lambda_3) ( |\tilde{w} - \mu | \log (1 + |\tilde{w} - \mu|) +  |\tilde{v} - \mu | \log (1 + |\tilde{v} - \mu|) ) \right),
\end{split}
\end{equation}
provided that $\tilde{v}, \tilde{v}' \in C_{\mu, 3\pi/4}$, $|\tilde{v} - \mu| \leq \epsilon \sigma_\alpha M^{1/3}$, $|\tilde{v}' - \mu| \leq \epsilon \sigma_\alpha M^{1/3}$ and $\tilde{w} \in \tilde{D}$. We will prove (\ref{S3UpperBounds}) in the next step. Here we assume its validity and conclude the proof of (\ref{TWRed2}).

We may now apply Lemma \ref{FDKCT} to the functions $g^{M,x}_{\tilde{v},\tilde{v}'}(\tilde{w})$ with $F_1(\tilde{v}) = \sqrt{\Lambda_1 \Lambda_2 \Lambda_3} \exp(- \lambda_3 |\tilde{v} - \mu| \log( 1 + |\tilde{v}- \mu|) + |x| |\tilde{w}|) =  F_2(\tilde{v})$ and $\Gamma_1 = C_{\mu, 3\pi/4}$, $\Gamma_2 = \tilde{D}$. Notice that the functions $F_i$ are integrable on $\Gamma_i$. As a consequence we see that $ K^M_x$ and $ K^\infty_x$ satisfy the conditions of Lemma \ref{FDCT}, from which we conclude (\ref{TWRed2}) and in particular that the right side is well-defined.\\

{\bf \raggedleft Step 4.} In this step we establish (\ref{S3UpperBounds}). We first deal with the bound on $A_1^M$. Since the denominators in $A_{1,1}^M$ are uniformly bounded away from $0$ we conclude that 
\begin{equation}\label{Pink1}
|A_{1,1}^M| = O \left( \exp ( \rr \log(1 + |\tilde{w}|)  + \rl \log (1 + |\tilde{v}|) ) \right).
\end{equation}
In addition, since by assumption $|\tilde{v} - \mu| \leq \epsilon \sigma_\alpha M^{1/3}$ and $\epsilon \leq 1/2$ we see that both the numeror and denominator of $A_{1,3}^M$ are uniformly bounded away from $0$ and $\infty$ so that 
\begin{equation}\label{Pink2}
|A_{1,3}^M| = O \left( 1\right).
\end{equation}
It follows from \cite[(2), pp. 32]{Luke} that if $z \in \mathbb{C}$ and $|arg(z)| \leq \pi - \epsilon$ for some $\epsilon \in (0, \pi)$ then 
\begin{equation}\label{S3AssGamma}
\Gamma(z) = e^{-z} z^{z - 1/2} (2\pi )^{1/2} \cdot \left( 1 + O(z^{-1}) \right),
\end{equation}
where the constant in the big $O$ notation depends on $\epsilon$ and we take the principal branch of the logarithm. Also by \cite[Theorem 1.6, Chapter 6]{Stein} there are positive constants $c_1, c_2$ such that 
\begin{equation}\label{S3growthGamma}
\left| \frac{1}{\Gamma(z)} \right|\leq c_1 e^{c_2 |z| \log |z|}.
\end{equation}
Equations (\ref{S3AssGamma}) and (\ref{S3growthGamma}) imply that we can find constants $c_3, c_4 > 0$ such that 
\begin{equation}\label{Pink3}
|A_{1,2}^M|  \leq c_3 \cdot \exp \left( c_4 \sigma_{\alpha}^{-1} M^{-1/3}  |\tilde{w}| \log (1 +\sigma_{\alpha}^{-1} M^{-1/3} |\tilde{w}|)   \right).
\end{equation}
Equations (\ref{Pink1}), (\ref{Pink2}) and (\ref{Pink3}) together imply the first inequality in (\ref{S3UpperBounds}).\\

We next deal with the bound on $A_2^M$. We observe that for $\tilde{v} \in C_{\mu, 3\pi/4}$, $|\tilde{v} - \mu | \leq \sigma_{\alpha} M^{1/3}$ and, $\tilde{w} \in \tilde{D}$ we have
\begin{equation}\label{Pink4}
\left| \frac{\pi M^{-1/3}\sigma_\alpha^{-1} }{\sin(\pi M^{-1/3} \sigma_\alpha^{-1}(\tilde{v} - \tilde{w})) } \right| = O(1).
\end{equation}
Indeed, if $|\pi M^{-1/3} \sigma_\alpha^{-1}(\tilde{v} - \tilde{w})) | \geq 1/2$ the above follows from Lemma \ref{SineBound} and if $ |\pi M^{-1/3} \sigma_\alpha^{-1}(\tilde{v} - \tilde{w})) | \leq  1/2$ we have 
$$\left| \frac{\pi M^{-1/3}\sigma_\alpha^{-1} }{\sin(\pi M^{-1/3} \sigma_\alpha^{-1}(\tilde{v} - \tilde{w})) } \right|  = \left| \frac{\pi M^{-1/3}\sigma_\alpha^{-1} (\tilde{v} - \tilde{w})  }{\sin(\pi M^{-1/3} \sigma_\alpha^{-1}(\tilde{v} - \tilde{w})) } \right|   \cdot \frac{1}{|\tilde{v} - \tilde{w}|},$$
which is also bounded since $\frac{z}{|\sin(z)|}$ is bounded in the disc of radius $1/2$ centered at the origin and $|\tilde{v} - \tilde{w}| \geq \rho$ (recall that $\rho$ was given in the definition of $\tilde{D}$ in Definition \ref{DefScaleS3} and is the smallest distance between $\tilde{D}$ and $C_{\mu, 3\pi/4}$). Since we also have $|\tilde{v}' - \tilde{w}| \geq \rho$ we see that (\ref{Pink4}) implies the second inequality in (\ref{S3UpperBounds}). \\

Finally, we establish the bound on $A_3^M$. The third inequality in (\ref{S3UpperBounds}) would follow if we can show that for $\tilde{v}\in C_{\mu, 3\pi/4}$, $|\tilde{v} - \mu| \leq \epsilon \sigma_\alpha M^{1/3}$ and $\tilde{w} \in \tilde{D}$ we have
\begin{equation}\label{cutoff1}
\begin{split}
&\left|\exp(-MG_\alpha(\tilde{w}\sigma_\alpha^{-1}M^{-1/3} + z_c) \right|  = O\left(\exp(-(\lambda_1 + 1) |\tilde{w} - \mu| \log( 1 + |\tilde{w} - \mu|) \right) \mbox{ and }\\
&\left|\exp(MG_\alpha(\tilde{v}\sigma_\alpha^{-1} M^{-1/3} + z_c) \right|  = O\left( \exp(-(\lambda_1 + 1) |\tilde{v} - \mu | \log( 1 + |\tilde{v} - \mu|) \right).
\end{split}
\end{equation}
We first consider the case when $|\tilde{w} - \mu| \leq M^{1/3 - 1/10}$. In this case we have from Lemma \ref{analGS3} that there exists $c_6 > 0$ such that
$$|M G_\alpha(\tilde{w}\sigma_\alpha^{-1}M^{-1/3} + z_c)  +   \tilde{w}^3/3| \leq c_6 M^{-1/3}|\tilde{w}|^4 \leq c_6 M^{-1/10}|\tilde{w}|^3 .$$
In addition, since $\tilde{w} \in \tilde{D}$ we know that there are constants $c_7, c_8 > 0$ such that 
$$|\exp ( \tilde{w}^3/3)| = \exp ( \Re [w^3/3]) \leq c_7 \exp( - c_8 |\tilde{w}|^3),$$
which in turn implies that 
$$\left|\exp(-MG_\alpha(\tilde{w}\sigma_\alpha^{-1}M^{-1/3} + z_c) \right|  = \exp(-M   \Re G_\alpha(\tilde{w}\sigma_\alpha^{-1}M^{-1/3} + z_c)) \leq c_7 \exp (  (c_6 M^{-1/10} - c_8) |\tilde{w}|^3 ).$$
The latter equation implies the first line of (\ref{cutoff1}) when $|\tilde{w} - \mu| \leq M^{1/3 - 1/10}$ and an analogous argument shows the second line when $|\tilde{v} - \mu| \leq M^{1/3 - 1/10}$. 

We next consider the case when $M^{1/3 - 1/10}\leq |\tilde{w} - \mu| \leq \epsilon \sigma_\alpha M^{1/3}$. Using that $\epsilon \leq r/2$ as in Lemma \ref{analGS3} by assumption we see that the lemma is applicable and so we get 
$$M \Re G_\alpha((\tilde{w} - \mu)\sigma_\alpha^{-1}M^{-1/3} + z_c) \geq (\sqrt{2}/2)^3 |\tilde{w} - \mu|^3/6.$$
Furthermore, by Lemma \ref{horGS3} we see that there is a constant $c_9 > 0$ such that 
$$M | \Re G_\alpha((\tilde{w} - \mu)\sigma_\alpha^{-1}M^{-1/3} + z_c)  - \Re G_\alpha(\tilde{w} \sigma_\alpha^{-1}M^{-1/3} + z_c) | \leq c_9 M^{2/3}.$$
Combining the last two statements we see that 
$$\left|\exp(-MG_\alpha(\tilde{w}\sigma_\alpha^{-1}M^{-1/3} + z_c) \right| \leq \exp( c_9 M^{2/3} - (\sqrt{2}/2)^3 |\tilde{w} - \mu|^3/6) \leq \exp(  - (\sqrt{2}/2)^3 |\tilde{w} - \mu|^3/12),$$
where the last inequality holds for all large enough $M$ and we used that $ |w - \mu|^3 \geq M^{7/10}$ by assumption. The last inequality clearly implies the first line in (\ref{cutoff1}) when $M^{1/3 - 1/10}\leq |\tilde{w} - \mu| \leq \epsilon \sigma_\alpha M^{1/3}$ and an analogous argument shows the second line when $M^{1/3 - 1/10}\leq |\tilde{v} - \mu| \leq \epsilon \sigma_\alpha M^{1/3}$.

Finally, we consider the case $|\tilde{w} - \mu|  \geq \epsilon \sigma_\alpha M^{1/3}$. In view of Lemma \ref{ReleL1S3} we know that there is a $c_{10} > 0$ such that 
$$M \Re G_\alpha((\tilde{w} - \mu)\sigma_\alpha^{-1}M^{-1/3} + z_c) \geq c_{10} M^{2/3} | \tilde{w} - \mu | \log (1 + \sigma_\alpha^{-1}M^{-1/3} | \tilde{w} - \mu |).$$
Furthermore, by Lemma \ref{horGS3} we see that there is a constant $c_{11} > 0$ such that 
$$M | \Re G_\alpha((\tilde{w} - \mu)\sigma_\alpha^{-1}M^{-1/3} + z_c)  - \Re G_\alpha(\tilde{w} \sigma_\alpha^{-1}M^{-1/3} + z_c) | \leq c_{11} M^{2/3} ( 1 + \sigma_\alpha^{-1}M^{-1/3}  | \tilde{w} - \mu|).$$
Combining the last two statements we see that  
$$\left|\exp(-MG_\alpha(\tilde{w}\sigma_\alpha^{-1}M^{-1/3} + z_c) \right| \leq \exp( - c_{10} | \tilde{w} - \mu |   M^{2/3}  \log (1 + \sigma_\alpha^{-1}M^{-1/3}  | \tilde{w} - \mu |) ) \cdot  $$
$$\exp(c_{11} M^{2/3} + c_{11} \sigma_\alpha^{-1}M^{1/3}  | \tilde{w} - \mu| )  .$$
which implies the first line of in (\ref{cutoff1}) when $|\tilde{w} - \mu|  \geq \epsilon \sigma_\alpha M^{1/3}$. This suffices for the proof.

\end{proof}

%
\section{Proof of Theorems \ref{LGPCT}, \ref{S1LDE1} and \ref{BBP}}\label{Section4}
In this section we prove the three main results of the paper from Section \ref{Section1}.

%
\subsection{Proof of Theorem \ref{LGPCT}}\label{Section4.1} In this section we prove Theorem \ref{LGPCT} by utilizing Theorem \ref{mainThmGUE}. We work under the same assumptions as in the statement of Theorem \ref{LGPCT}. As discussed in Remark \ref{S1RemTW} it suffices to prove the theorem when $N/M \in [\delta,1]$, which we assume in the sequel.

We will use the following elementary probability lemma.
\begin{lemma}\label{prob} \cite[Lemma 4.1.39]{BorCor}
Suppose that $f_n$ is a sequence of functions $f_n: \mathbb{R} \rightarrow [0,1]$, such that for each $n$, $f_n(y)$ is strictly decreasing in $y$ with a limit of $1$ at $y = -\infty$ and $0$ at $y = \infty$. Assume that for each $\delta > 0$ one has on $\mathbb{R}\backslash[-\delta,\delta]$, $f_n \rightarrow {\bf 1}_{\{y \leq 0\}}$ uniformly. Let $X_n$ be a sequence of random variables such that for each $x \in \mathbb{R}$ 
$$\mathbb{E}[f_n(X_n - x)] \rightarrow p(x),$$
and assume that $p(x)$ is a continuous probability distribution function. Then $X_n$ converges in distribution to a random variable $X$, such that $\mathbb{P}(X \leq x) = p(x)$.
\end{lemma}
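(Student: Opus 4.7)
The plan is to reduce the lemma to showing that $F_n(x) := \mathbb{P}(X_n \leq x) \to p(x)$ for every $x \in \mathbb{R}$, since $p$ is continuous and hence every point is a continuity point of the limit CDF; this is exactly convergence in distribution to a random variable $X$ with $\mathbb{P}(X \leq x) = p(x)$. The mechanism is a shift-and-sandwich argument: apply the hypothesis $\mathbb{E}[f_n(X_n - y)] \to p(y)$ at the shifted points $y = x + \delta$ and $y = x - \delta$, exploiting that $f_n$ closely approximates $\mathbf{1}_{\{\cdot \leq 0\}}$ away from the origin in order to sandwich $F_n(x)$ between these two transforms.

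In more detail, fix $x \in \mathbb{R}$ and arbitrary $\delta, \epsilon > 0$. The uniform convergence of $f_n$ to $\mathbf{1}_{\{y \leq 0\}}$ on $\mathbb{R} \setminus [-\delta, \delta]$, combined with $f_n \in [0,1]$, guarantees that for all sufficiently large $n$ one has $f_n(y) \geq 1 - \epsilon$ for every $y \leq -\delta$ and $f_n(y) \leq \epsilon$ for every $y \geq \delta$. For the upper bound I would observe that on $\{X_n \leq x\}$ the shifted argument $X_n - x - \delta$ lies in $(-\infty, -\delta]$, so $f_n(X_n - x - \delta) \geq 1 - \epsilon$; discarding the nonnegative contribution from $\{X_n > x\}$ yields
\begin{equation*}
\mathbb{E}[f_n(X_n - x - \delta)] \geq (1-\epsilon)\, F_n(x).
\end{equation*}
Passing $n \to \infty$ gives $p(x+\delta) \geq (1-\epsilon)\, \limsup_n F_n(x)$, and sending $\epsilon \to 0$ followed by $\delta \to 0$ (using continuity of $p$) produces $\limsup_n F_n(x) \leq p(x)$.

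For the matching lower bound I would split
\begin{equation*}
\mathbb{E}[f_n(X_n - x + \delta)] = \mathbb{E}[f_n(X_n - x + \delta);\, X_n \leq x] + \mathbb{E}[f_n(X_n - x + \delta);\, X_n > x];
\end{equation*}
on $\{X_n > x\}$ the shifted argument exceeds $\delta$, so $f_n \leq \epsilon$ there, while on $\{X_n \leq x\}$ we crudely bound $f_n \leq 1$. This yields $\mathbb{E}[f_n(X_n - x + \delta)] \leq F_n(x) + \epsilon$. Letting $n \to \infty$, then $\epsilon \to 0$, then $\delta \to 0$ gives $p(x) \leq \liminf_n F_n(x)$, which combined with the previous step completes the pointwise convergence $F_n(x) \to p(x)$.

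The argument is essentially mechanical and I do not anticipate a serious obstacle. The single subtle point is that the convergence $f_n \to \mathbf{1}_{\{y \leq 0\}}$ must be \emph{uniform} on $\mathbb{R} \setminus [-\delta, \delta]$ rather than merely pointwise, since the sandwich bounds are applied at the random arguments $X_n - x \pm \delta$ whose location is not controlled; uniformity is exactly what is needed to make the pointwise inequalities on $f_n$ hold simultaneously for all $\omega$. Strict monotonicity of $f_n$ and the limits $1,0$ at $\mp\infty$ are not directly used by the argument, but they are natural companions of the uniform convergence hypothesis.
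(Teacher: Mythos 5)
Your proof is correct: the paper does not prove this lemma itself but cites \cite[Lemma 4.1.39]{BorCor}, and your shift-and-sandwich argument --- bounding $F_n(x)=\mathbb{P}(X_n\le x)$ between $\mathbb{E}[f_n(X_n-x-\delta)]$ and $\mathbb{E}[f_n(X_n-x+\delta)]$ via the uniform approximation of $\mathbf{1}_{\{y\le 0\}}$, then sending $\epsilon\to 0$ and $\delta\to 0$ using continuity of $p$ --- is exactly the standard proof of that cited lemma. The only cosmetic point is that $\mathbb{R}\setminus[-\delta,\delta]$ excludes the endpoints $\pm\delta$, so the bounds you invoke at arguments exactly equal to $\pm\delta$ should be obtained from uniform convergence at level $\delta/2$ (or from monotonicity of $f_n$), a trivial fix.
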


Let $y \in \mathbb{R}$, $\alpha = N/M$ and $\sigma_\alpha, u(y,M,N)$ be as in Definition \ref{DefScaleS3}. Define 
$$f_M(z) = e^{-e^{\sigma_{\alpha}M^{1/3} z}},$$
and observe that since by assumption we have $M \geq N \geq 9$ for all $M$ sufficiently large we can apply Theorem \ref{LGPT1} to conclude
$$\mathbb{E} \left[f_M(\mathcal{F}(M,N) - y)\right] = \mathbb{E} \left[ e^{-u \ZP} \right] = \det \left( I + K_u \right)_{L^2(C_{z_c,3\pi/4})}.$$

On the other hand, by Theorem \ref{mainThmGUE} we know that as $M \rightarrow \infty$ the right-hand side converges to $\det (I + K^{\infty}_{y})_{L^2(C_{0, 3\pi/4})}$,
where $ K_y^\infty$ is defined in terms of its integral kernel
\begin{equation*}
\begin{split}
 &K_y^\infty(v,v') = \frac{1}{2\pi \i} \int_{\tilde{D}}   \frac{\exp(-v^3/3 + w^3/3 - y w + y v)}{(v-w)(w - v')} dw.
\end{split}
\end{equation*}
The proof of  \cite[Lemma C.1]{BCF} can now be repeated verbatim to show that 
$$\det (I + K^{\infty}_{y})_{L^2(C_{0, 3\pi/4})} =  F_{GUE}(y).$$

Combining the above statements we see that 
$$\lim_{M \rightarrow \infty} \mathbb{E} \left[f_M(\mathcal{F}(M,N) - y)\right] =F_{GUE}(y).$$
Since $f_M$ and $F_{GUE}$ satisfy the conditions of Lemma \ref{prob}, we see that the last equation and the lemma imply the statement of the theorem.

%
\subsection{Proof of Theorem \ref{S1LDE1}}\label{Section4.2}
 In this section we prove Theorem \ref{S1LDE1} and we assume the same notation as in the statement of the theorem. For clarity we split the proof into several steps. In the first step we reduce the statement of the theorem to establishing a certain bound on $\left| \det \left( I + K_u \right)_{L^2(C_{z_c,3\pi/4})} - 1 \right|$ for all large enough $M$. In Step 2 we truncate the contour $C_{z_c, 3\pi/4}$ and show that it suffices to establish the same bound for the truncated contour using Proposition \ref{PKBS3} and Lemma \ref{SumBound}. In the third step we truncate the contours in the definition of the kernel $K_u$ and show that it suffices to obtain certain estimates on each summand in the resulting Fredholm determinant series. In Steps 4 and 5 we show the required estimates in Step 3 are satisfied.\\

{\bf \raggedleft Step 1.} Assume the same notation as in Theorem \ref{LGPT1}, where $u$ is as in Definition \ref{DefScaleS3}, $a_ i = 0$ for $i = 1, \dots, N$, $\alpha_j = \theta$ for $j = 1, \dots, M$, $a = z_c$ and $b,d$ are arbitrarily chosen so as to satisfy the conditions of that theorem. Suppose further that $N,M$ are scaled as in  Definition \ref{DefScaleS3}. We claim that there exist $C_0, c_0, \epsilon_0, x_0 > 0$ and $M_0 \in \mathbb{N}$ all depending on $\theta$ and $\delta$ such that if $x \in [x_0, \epsilon_0 M^{2/3}]$ and $M \geq M_0$ we have
\begin{equation}\label{mainLimitLDEeq}
\left| \det \left( I + K_u \right)_{L^2(C_{z_c,3\pi/4})} - 1 \right| \leq  C_0 e^{-c_0 x^{3/2}}.
\end{equation}
We prove (\ref{mainLimitLDEeq}) in the steps below. Here we assume its validity and conclude the proof of the theorem. 

We want to show that there exist $C_1,C_2, c_1, c_2 > 0$ such that for any $x \geq 0$
\begin{equation}\label{LDEeq}
\mathbb{P} \left( \mathcal{F}(M,N) \geq x  \right) = \mathbb{P} \left( \frac{\log \ZP + W_{M,N}}{\sigma_\alpha M^{1/3}} \geq x  \right) \leq C_1 e^{-c_1 M} + C_2 e^{-c_2 x^{3/2}},
\end{equation}
where $\alpha = N/M$, $\sigma_\alpha$ and $W_{M,N}$ are as in Definition \ref{DefScaleS3}. Notice that if $M$ is bounded then (\ref{LDEeq}) can be made to trivially hold by making $C_1$ sufficiently large for any $x \geq 0$. Thus we only need to prove (\ref{LDEeq}) for large enough $M$. 

Let $M_0, C_0,c_0,\epsilon_0, x_0$ be such that (\ref{mainLimitLDEeq}) holds and let $u(x - 1,M,N)$ be as in Definition \ref{DefScaleS3}. If we set $f_M(z) = \exp(-e^{\sigma_{\alpha}  M^{1/3}z}),$ then by Theorem \ref{LGPT1} we know that for all large enough $M$ 
$$\mathbb{E} \left[f_M(\mathcal{F}(M,N) - x + 1)\right] = \mathbb{E} \left[ e^{-u \ZP} \right] = \det \left( I + K_u \right)_{L^2(C_{z_c,3\pi/4})}.$$
The latter and (\ref{mainLimitLDEeq}) imply that for all large enough $M$ and $x \in [x_0 + 1, \epsilon_0 M^{2/3}]$
$$\mathbb{E} \left[1 - f_M(\mathcal{F}(M,N) - x+1)\right]  \leq  C_0 e^{-c_0 (x-1)^{3/2}}.$$
Notice that $1 - f_M(\mathcal{F}(M,N)- x+1) \geq 0$ and on the event $\{\mathcal{F}(M,N) \geq x\}$ we have
$$1- f_M(\mathcal{F}(M,N) - x+1) = 1 - e^{-e^{\sigma_{\alpha}  M^{1/3}(\mathcal{F}(M,N) - x +1)}} \geq 1 - e^{-e^{\sigma_{\alpha}M^{1/3}}} \geq 1/2,$$
where the last inequality holds for all $M$ sufficiently large depending on $\delta$ and $\theta$ alone. The latter two inequalities imply that 
$$ 2C_0 e^{-c_0 (x-1)^{3/2}}  \geq \mathbb{P}(X_M \geq x),$$
which proves (\ref{LDEeq}) when $x \in [x_0 + 1, \epsilon_0 M^{2/3}]$ and $M$ is sufficiently large. If $x \geq \epsilon_ 0M^{2/3}$ then we have from our earlier work that
$$\mathbb{P} \left( \mathcal{F}(M,N) \geq x  \right) \leq \mathbb{P} \left( \mathcal{F}(M,N) \geq \epsilon_0M^{2/3}  \right) \leq 2C_0 e^{-c_0 (\epsilon_0 M^{2/3} -1)^{3/2}},$$
which proves (\ref{LDEeq}) in this case as well. This proves (\ref{LDEeq}) for all $x \geq x_0 +1$ and by possibly making $C_2$ larger we get the result for $x \in [0, x_0 + 1]$ as well, concluding the proof of the theorem. \\

{\bf \raggedleft Step 2.} In the remaining steps we prove (\ref{mainLimitLDEeq}). Let $\theta > 0$ and $\delta \in (0,1)$ be given. Let $r$ be as in Lemma \ref{powerGS3} and fix $\epsilon > 0$ such that $\epsilon < \min (r/2, 1/4)$. We let $C^{\epsilon}_{z_c, 3\pi/4}$ denote the portion of $C_{z_c, 3\pi/4}$ inside $B_{\epsilon}(z_c)$ -- the disc of radius $\epsilon$, centered at $z_c$. We claim that we can find $C_0, c_0, \epsilon_0,x_0 > 0$ and $M_0 \in \mathbb{N}$ such that if $M \geq M_0$ and $x \in [x_0, \epsilon_0 M^{2/3}]$ then
\begin{equation}\label{LDERed1}
\left| \det (I + K_{u})_{L^2(C^{\epsilon}_{z_c, 3\pi/4})} - 1  \right| \leq C_0 e^{-c_0 x^{3/2}}.
\end{equation}
We prove (\ref{LDERed1}) in the next steps. Here we assume its validity and conclude the proof of the theorem.\\

Repeating verbatim the argument in Step 1 of the proof of Theorem \ref{mainThmGUE} (when we apply Proposition \ref{PKBS3} we do it with $r_1 = r_2 = A = 0$) we get that for all large $M$
\begin{equation}\label{ST1}
 \left| \det (I + K_u)_{L^2(C_{z_c, 3\pi/4})} - \det (I + K_{u})_{L^2(C^{\epsilon}_{z_c, 3\pi/4})} \right|\leq e^{-c'M/2},
\end{equation} 
where $c' = (c/2) \log(1 + \epsilon)$. Combining (\ref{ST1}) with (\ref{LDERed1}) we conclude (\ref{mainLimitLDEeq}).\\

{\raggedleft {\bf Step 3.}} From the definition of Fredholm determinants, see Section \ref{Section2.1}, we know that
\begin{equation}\label{ST4}
\begin{split}
 & \det (I + K_{u})_{L^2(C^{\epsilon}_{z_c, 3\pi/4})} -1 = \sum_{n = 1}^\infty \frac{H_n}{n!} , \mbox{ where }
\end{split}
\end{equation} 
$$H_n =   \frac{1}{(2\pi \i )^n} \int_{C^{\epsilon}_{z_c, 3\pi/4}} \cdots \int_{C^{\epsilon}_{z_c, 3\pi/4}} \det \left[ K_u(v_i, v_j) \right]_{i,j = 1}^n  \prod_{i = 1}^n dv_i.$$
We thus can reduce the proof of (\ref{LDERed1}) to showing that there exist $C_0, c_0, \epsilon_0,x_0 > 0$ and $M_0 \in \mathbb{N}$ such that if $M \geq M_0$ and $x \in [x_0, \epsilon_0 M^{2/3}]$
\begin{equation}\label{LDERed2}
\begin{split}
 \sum_{n = 1}^\infty \frac{|H_n|}{n!} \leq  C_0 e^{-c_0 x^{3/2}}.
\end{split}
\end{equation} 

Let $D_M$ be as Definition \ref{DefDM}. We write $D_M = D_M^{\epsilon, 0} \cup D_M^{\epsilon,1}$, where $D_M^{\epsilon, 0}$ is the portion of $D_M$ inside $B_\epsilon(z_c)$  and $D_M^{\epsilon, 1}$ is the portion outside. Define for $\beta \in \{0,1\}$ and $v, v' \in C^{\epsilon}_{z_c, 3\pi/4} $ the kernel
$$K^{\beta}_{u}(v,v') = \frac{1}{2\pi \i}\int_{D_M^{\epsilon, \beta}}\frac{F(v, w)}{w - v' }dw  \mbox{, where } F(v,w) = \frac{\pi}{\sin(\pi (v-w))} \cdot e^{G_{M,N}(v) - G_{M,N}(w)} \cdot e^{M^{1/3} (w- v) \sigma_{\alpha} x}.$$

 We claim that we can find $C_2, c_2, \epsilon_0, x_0 > 0$ and $M_0 \in \mathbb{N}$ such that if $M \geq M_0$ and $x \in [x_0, \epsilon_0 M^{2/3}]$ 
\begin{equation}\label{LDERed3}
\begin{split}
\sum_{n = 1}^\infty \frac{1}{n!}  \left| \frac{1}{(2\pi \i)^n}  \int_{C^{\epsilon}_{z_c, 3\pi/4}} \cdots \int_{C^{\epsilon}_{z_c, 3\pi/4}} \det \left[ K^0_u(v_i, v_j) \right]_{i,j = 1}^n  \prod_{i = 1}^n dv_i \right| \leq  C_2 e^{-c_2 x^{3/2}}.
\end{split}
\end{equation}
We prove (\ref{LDERed3}) in the next steps. Here we assume its validity and conclude the proof of (\ref{LDERed2}).\\

Let us fix $n \in \mathbb{N}$, and note that by the linearity of the determinant function we have
\begin{equation}\label{STV1}
\frac{H_n}{n!}  = \frac{1}{n!} \sum_{ (\beta_1, \dots, \beta_n) \in \{0, 1\}^n} \frac{1}{(2\pi \i)^n} \int_{C^{\epsilon}_{z_c, 3\pi/4}}  \cdots  \int_{C^{\epsilon}_{z_c, 3\pi/4}}  \det \left[ K^{\beta_i}_u(v_i, v_j) \right]_{i,j = 1}^n  \prod_{i = 1}^n dv_i.
\end{equation}
We also note that by Lemma \ref{ReleL1S3}, with $\delta_1 = \epsilon$, we have that there exists a constant $c_\epsilon > 0$ such that for any $v, v' \in C^{\epsilon}_{z_c, 3\pi/4}$
\begin{equation}\label{STV2}
\left|K^{1}_u(v, v')\right| \leq 2M^{1/3}\int_{ \epsilon}^{\infty} \exp \left( -c (M+N)y \log (1 +y) \right) dy \leq e^{-c_\epsilon M}.
\end{equation}
In deriving the above inequality we also used that $\left| e^{M^{1/3} (w- v) \sigma_{\alpha} x} \right|  \leq 1$ as the real part of the exponent is negative and we used Lemma \ref{SineBound}  to bound the sine function. 

In addition, by Lemmas \ref{analGS3}, \ref{ReleL1S3} and  \ref{SineBound} we know that for any $v, v' \in C^{\epsilon}_{z_c, 3\pi/4}$
\begin{equation}\label{STV3}
\left|K^{0}_u(v, v')\right| \leq CM^{1/3}.
\end{equation}
Equations (\ref{STV2}), (\ref{STV3}) and Hadamard's inequality imply that 
\begin{equation}\label{STV4}
\left| \int_{C^{\epsilon}_{z_c, 3\pi/4}}  \cdots  \int_{C^{\epsilon}_{z_c, 3\pi/4}}  \det \left[ K^{\beta_i}_u(v_i, v_j) \right]_{i,j = 1}^n  \prod_{i = 1}^n dv_i\right| \leq C^{n} n^{n/2} M^{n/3} e^{-c_{\epsilon}M\sum_{i = 1}^n \beta_i}.
\end{equation}

Finally, combining (\ref{LDERed3}), (\ref{STV1}) and (\ref{STV4}) we conclude that 
$$\sum_{n = 1}^\infty \frac{|H_n|}{n!}  \leq e^{-c_\epsilon M} \cdot \sum_{n = 1}^\infty \frac{C^n n^{n/2}M^{n/3}}{n!}  + C_2 e^{-c_2 x^{3/2}},$$
which together with Lemma \ref{SumBound} implies (\ref{LDERed2}).\\

{\raggedleft {\bf Step 4.}} The first three steps above reduce the proof of the theorem to establishing the existence of $\epsilon_0, x_0 > 0$ such that if $x \in [x_0, \epsilon_0 M^{2/3}]$ and $M$ is sufficiently large we have that (\ref{LDERed3}) holds for some $ c_2, C_2 > 0$. We will establish this statement in the next step. In this step we specify our choice of $\epsilon_0$ and $x_0$ and introduce some useful notation for later. 

 Let $\epsilon > 0$ be as in Step 2. Then from Lemma \ref{analGS3}  we know that for some $C_4 > 0$ we have
$$|G_\alpha(z) + (z-z_c)^3 \sigma_{\alpha}^3/3| \leq C_4 |z-z_c|^4$$
as long as $|z-z_c| \leq \epsilon$. We fix this choice of $C_4.$ Let $\tilde{D} = \sigma_{\alpha} M^{1/3}(D_M - z_c)$ be as in Definition \ref{DefScaleS3}. We also write $\tilde{D}^0 = \sigma_{\alpha} M^{1/3}(D_M^0 - z_c)$ and $\tilde{D}^{\pm} = \sigma_\alpha M^{1/3} (D_M^{\pm} - z_c)$, where $D_M^{\pm}$ and $D_M^0$ are as in Definition \ref{DefDM}. We define the constants 
\begin{equation}\label{DefConst}
D_1 := \int_{C_{0, 3\pi/4}} e^{-|z|^3}|dz| < \infty \mbox{ and } D_2 := \int_{\tilde{D}} e^{-|z|^3}|dz| < \infty,
\end{equation}
where $|dz|$ denotes integration with respect to arc-length. 

We now let $\epsilon_0 > 0$ be sufficiently small so that $\epsilon_0 \leq \epsilon/2$ and also 
$$ 4C_4  \epsilon_0^{2} + \sigma_{\alpha}^3 \epsilon_0(2\sqrt{2})/3 - \sigma_{\alpha} \leq - 3\sigma_{\alpha}/4,$$ 
where we observe that this can be done uniformly in $\delta$ when $\alpha \in [\delta,1]$, see Definition \ref{DefScaleS3}. This fixes our choice of $\epsilon_0$. We next let $x_0$ be sufficiently large so that the following inequalities hold:
$$\sqrt{\epsilon_0} x_0^{3/2} \sigma_{\alpha}/4 \geq 2 \sqrt{2} \mbox{, } \hspace{5mm} \sigma_{\alpha}^3 (\sqrt{2}/2)^3 \epsilon_0^{3/2}x_0^{3/2} /6 \geq 1 $$
$$\mbox{ and }\sum_{n = 1}^\infty (\tilde{C}D_1D_2)^n \exp \left( - (n-1) (\sqrt{\epsilon_0}/2) x^{3/2}_0 \sigma_\alpha \right) \leq 2\tilde{C} D_1 D_2,$$
where $\tilde{C}$ is the universal constant afforded by Lemma \ref{SineBound}. This fixes our choices of $\epsilon_0$ and $x_0$. \\

{\raggedleft {\bf Step 5.}} In this step we prove (\ref{LDERed3}). Let $\epsilon_0$ and $x_0$ be as in Step 4 and let $x \in [x_0, \epsilon_0 M^{2/3}]$. We also let $\rho_0 = \sqrt{\epsilon_0}M^{-1/3} x^{1/2}$ and note that $\rho_0 \in (0, \epsilon_0]$ by definition. We let $C_{\rho_0}$ denote the portion of the contour $\rho_0 \cdot \tilde{D} +z_c$ that is contained in the disc of radius $\epsilon$ centered at $z_c$. Notice that since $\epsilon_0 < \epsilon/2$ and $\epsilon < 1/4$ by definition we can deform the contour $D_M^{\epsilon, 0}$ in the definition of $K^0_u$ to $C_{\rho_0}$ without affecting the value of the integral by Cauchy's theorem, since we do not cross any poles in the deformation.

Fix $n \in \mathbb{N}$ and notice that by expanding the determinant $\det \left[ K^0_u(v_i, v_j) \right]_{i,j = 1}^n$ and performing the change of variables $v = \rho_0 \tilde{v} + z_c$ and $w = \rho_0 \tilde{w} + z_c$ we get
\begin{equation}\label{LDERed4}
\begin{split}
&A_n:=   \frac{1}{(2\pi \i)^n} \int_{C^{\epsilon}_{z_c, 3\pi/4}} \cdots \int_{C^{\epsilon}_{z_c, 3\pi/4}} \det \left[ K^0_u(v_i, v_j) \right]_{i,j = 1}^n  \prod_{i = 1}^n dv_i = \frac{1}{(2\pi \i)^n} \sum_{\sigma \in S_n}  \\
&\int_{C_{0, 3\pi/4}} \cdots \int_{C_{0, 3\pi/4}} \int_{\tilde{D}} \cdots \int_{\tilde{D}} \prod_{i = 1}^n{\bf 1}_{|\tilde{v}_i| \leq \epsilon \rho_0^{-1}}{\bf 1}_{|\tilde{w}_i| \leq \epsilon \rho_0^{-1}} \prod_{i =1}^ng^{M,x}_{\tilde{v}_i, \tilde{v}_{\sigma(i)}}(\tilde{w}_i)\prod_{i = 1}^n d\tilde{w}_i \prod_{i = 1}^n d\tilde{v}_i  ,
\end{split}
\end{equation}
where
$$g^{M,x}_{\tilde{v}, \tilde{v}'}(\tilde{w}) = \frac{\pi  \rho_0  e^{M[G_\alpha(\rho_0 \tilde{v} + z_c) - G_\alpha(\rho_0 \tilde{w} +z_c)]}e^{  \sqrt{\epsilon_0} x^{3/2} \sigma_{\alpha}  (\tilde{v}-\tilde{w})} }{\sin(\pi \rho_0 (\tilde{v}-\tilde{w})) (\tilde{w}- \tilde{v}')}.$$ 

If $\tilde{C} > 0$ denotes the constant from Lemma \ref{SineBound} we see that if $\tilde{v}, \tilde{v}' \in C_{0, 3\pi/4}$ and $\tilde{w} \in \tilde{D}$ satisfy $|\tilde{v}| \leq \epsilon \rho_0^{-1} \leq 4^{-1} \rho_0^{-1}$ and $|\tilde{w}| \leq \epsilon \rho_0^{-1} \leq 4^{-1} \rho_0^{-1}$ then
\begin{equation}\label{TTV1}
\left|  \frac{\pi \rho_0 }{\sin(\pi \rho_0 (\tilde{v}-\tilde{w})) (\tilde{w}- \tilde{v}')} \right| \leq \tilde{C}.
\end{equation}
We next have from Lemma \ref{analGS3} that if $\tilde{w} \in \tilde{D}^0$ (this contour was defined in Step 4) and $\tilde{v} \in C_{0,3\pi/4}$ with $|\tilde{v}| \leq \epsilon \rho_0^{-1}$ then 
\begin{equation}\label{TTV2}\begin{split}
&\left| g^{M,x}_{\tilde{v}, \tilde{v}'}(\tilde{w}) \right| \leq \tilde{C} \exp( - M\sigma_{\alpha}^3 (\sqrt{2}/2)^3 \rho_0^3 |\tilde{v}|^3 /6+ M C_4 \rho_0^4 |\tilde{w}|^4 +M\sigma_{\alpha}^3  \rho_0^3 |\tilde{w}|^3/3 - \sqrt{\epsilon_0} x^{3/2} \sigma_{\alpha} ) \leq \\
&\tilde{C} \exp( - \sigma_{\alpha}^3 (\sqrt{2}/2)^3 \epsilon_0^{3/2}x^{3/2} |\tilde{v}|^3/6 + 4 C_4 \rho_0 \epsilon_0^{3/2}x^{3/2}  +\sigma_{\alpha}^3 \epsilon_0^{3/2}(2\sqrt{2}) x^{3/2}/3 -   \sqrt{\epsilon_0} x^{3/2} \sigma_{\alpha} ) \leq \\
& \tilde{C} \exp( -   |\tilde{v}|^3  - |\tilde{w}|^3 -  ( \sqrt{\epsilon_0}/2) x^{3/2} \sigma_{\alpha} ) ,
\end{split}
\end{equation}
where we used that $|\tilde{w}| \leq \sqrt{2}$ and in the last inequality we also used that $x \in [x_0, \epsilon_0 M^{2/3}]$ and our definition of $x_0$ and $\epsilon_0$ from Step 4. 

On the other hand, from Lemma \ref{analGS3} we have that if $\tilde{w}\in \tilde{D}^{+} \cup \tilde{D}^-$ and $\tilde{v} \in C_{0,3\pi/4}$ are chosen to satisfy $|\tilde{v}| \leq \epsilon \rho_0^{-1}$ and $|\tilde{w}| \leq \epsilon \rho_0^{-1}$ then
\begin{equation}\label{TTV3}\begin{split}
&\left| g^{M,x}_{\tilde{v}, \tilde{v}'}(\tilde{w}) \right| \leq\tilde{C} \exp( - M\sigma_{\alpha}^3 (\sqrt{2}/2)^3 \rho_0^3 |\tilde{w}|^3 /6- M\sigma_{\alpha}^3 (\sqrt{2}/2)^3 \rho_0^3 |\tilde{v}|^3 /6) \cdot \\
&\exp( - \sqrt{\epsilon_0} x^{3/2} \sigma_{\alpha}(\sqrt{2}/2) (|\tilde{v}| + |\tilde{w}|) ) \leq \tilde{C} \exp( -  \epsilon_0^{3/2}\sigma_{\alpha}^3 (\sqrt{2}/2)^3 x^{3/2} |\tilde{v}|^3/6 ) \cdot \\
&  \exp( - \epsilon_0^{3/2} \sigma_{\alpha}^3 (\sqrt{2}/2)^3 x^{3/2} |\tilde{w}|^3/6 - \sqrt{\epsilon_0} x^{3/2} \sigma_{\alpha}(\sqrt{2}/2) (|\tilde{v}| + |\tilde{w}|)  ) \leq \\
& \exp( -  |\tilde{v}|^3 -    |\tilde{w}|^3 - \sqrt{\epsilon_0} x^{3/2} \sigma_{\alpha} ).
\end{split}
\end{equation}
where we used that $|\tilde{w}| \geq \sqrt{2}$ and in the last inequality used that $x \geq x_0$ and our definition of $x_0$. 

Combining (\ref{TTV2}) and (\ref{TTV3}) with (\ref{LDERed4}) we see that
\begin{equation*}
\begin{split}
&|A_n| \leq n! \exp( -n (\sqrt{\epsilon_0}/2) x^{3/2} \sigma_{\alpha} ) \tilde{C}^n  \int_{C_{0, 3\pi/4}} \hspace{-5mm} \cdots \int_{C_{0, 3\pi/4}} \int_{\tilde{D}} \cdots \int_{\tilde{D}}  \prod_{i = 1}^n \exp( -  |\tilde{v}_i|^3 -    |\tilde{w}_i|^3)  |d\tilde{w}_i| |d\tilde{v}_i| = \\
&n! \exp( -n \sqrt{\epsilon_0} x^{3/2} \sigma_{\alpha} ) (\tilde{C} D_1 D_2)^n, 
\end{split}
\end{equation*}
where $D_i$ are as (\ref{DefConst}). We conclude that 
$$\sum_{n = 1}^\infty \frac{|A_n|}{n!} \leq \sum_{n = 1}^\infty  \exp( -n (\sqrt{\epsilon_0}/2) x^{3/2} \sigma_{\alpha} ) (\tilde{C} D_1 D_2)^n \leq 2 \tilde{C} D_1 D_2 \cdot  \exp(- (\sqrt{\epsilon_0}/2) x^{3/2} \sigma_{\alpha}), $$
where in the last inequality  we used our definition of $x_0$ and $\epsilon_0$ and the fact that $x \in [x_0, \epsilon_0 M^{2/3}]$. This proves (\ref{LDERed3}) with $C_2 = 2 \tilde{C}D_1D_2$ and $c_2 >0$ sufficiently small so that $c_2 < (\sqrt{\epsilon_0}/2) \sigma_{\alpha}$, which can be done uniformly in $\delta$ if $\alpha \in [\delta, 1]$, see Definition \ref{DefScaleS3}.

%
\subsection{Proof of Theorem \ref{BBP} }\label{Section4.3} We first turn to the proof of Lemma \ref{S1Lemma}. 

\begin{proof}[Proof of Lemma \ref{S1Lemma}] We continue with the same notation as in the statement of the lemma. For clarity we split the proof of the lemma into several steps. In the first step, we show that $F_{{ \rm BBP}; \vec{x}, \vec{y}}$ is well-defined and does not depend on the choice of $a,b \in \mathbb{R}$ as long as $\min(\vec{y}) > b > a > \max(\vec{x})$. In the second step we prove that $F_{{ \rm BBP}; \vec{x}, \vec{y}}$ is entire. In the third step we prove that the restriction of $F_{{ \rm BBP}; \vec{x}, \vec{y}}$ to $\mathbb{R}$ is real-valued, increasing and that the limit near $\infty$ is $1$. In the fourth step we prove that the limit of $F_{{ \rm BBP}; \vec{x}, \vec{y}}$ near $-\infty$ is $0$.\\

{\bf \raggedleft Step 1.} In this step we prove that $F_{{ \rm BBP}; \vec{x}, \vec{y}}$ is well-defined and does not depend on the choice of $a,b \in \mathbb{R}$ as long as $\min(\vec{y}) > b > a > \max(\vec{x})$. Let 
$$g_{v,v'}(w) =  \prod_{n = 1}^{\rr} \frac{( {w} - x_n )}{({v}  - x_n  )} \prod_{m = 1}^{\rl} \frac{( y_m -  {v} )}{(y_m  -{w} )}\cdot  \frac{\exp(-v^3/3 + w^3/3 - r w + r v)}{(v-w)(w - v')}.$$
Observe that 
$$\sup_{v' \in C_{a, 3\pi/4}} | g_{v,v'}(w) | \leq \frac{1}{(b-a)^2} \cdot \left| \prod_{n = 1}^{\rr} \frac{( {w} - x_n )}{({v}  - x_n  )} \prod_{m = 1}^{\rl} \frac{( y_m -  {v} )}{(y_m  -{w} )} \right| e^{|rw| + |rv|}\exp(-\Re[v^3]/3 + \Re[w^3]/3) .$$
It follows from Lemma \ref{FDKCT} applied to $\Gamma_1 = C_{a, 3\pi/4}$, $\Gamma_2 = C_{b, \pi/4}$, 
$$F_1(v) = \frac{1}{b-a} \cdot  \frac{\prod_{n = 1}^{\rl} | y_m -  {v} |}{\prod_{m = 1}^{\rr} |{v}  - x_n |}  e^{|rv| - \Re[v^3]/3}, \hspace{2mm}F_2(w) = \frac{1}{b-a} \cdot  \frac{\prod_{m = 1}^{\rr} |{w}  - x_n |}{\prod_{n = 1}^{\rl} | y_m -  {w} |}  e^{|rw| + \Re[w^3]/3},$$
that the integral
$$K_r^{\rm BBP}(v,v')= \frac{1}{2\pi \i}\int_{\Gamma_2}g_{v,v'}(z) dz.$$
is well-defined and moreover, that Lemma \ref{FDCT} is applicable from which we conclude that the series $\det \left( I + K_r^{\rm BBP} \right)_{L^2(C_{a ,3\pi/4})} $ in (\ref{S1FDet2}) is absolutely convergent. Here we used that $F_i$ is integrable on $\Gamma_i$ as follows from the cube in the exponential and the definition of the contours. This proves that $F_{{ \rm BBP}; \vec{x}, \vec{y}}(r) $ is well-defined. In addition, by expanding the determinants we have
\begin{equation}\label{S4ExpandDet}
\begin{split}
& F_{{ \rm BBP}; \vec{x}, \vec{y}}(r)  = 1 + \sum_{N = 1}^\infty \frac{1}{N!} \sum_{\sigma \in S_N} \int_{C^N_{a ,3\pi/4}}  \int_{C^N_{b ,\pi/4}} \prod_{i = 1}^N\prod_{n = 1}^{\rr} \frac{( {w}_i - x_n )}{({v}_i  - x_n  )} \prod_{m = 1}^{\rl} \frac{( y_m -  {v}_i )}{(y_m  -{w}_i)}\times\\
&\prod_{i = 1}^N  \frac{\exp(-v_i^3/3 + w_i^3/3 - r w_i + r v_i)}{(v_i-w_i)(w_i - v_{\sigma(i)})}\prod_{i = 1}^N \frac{dw_i}{2\pi \i} \frac{dv_i}{2\pi \i},
\end{split}
\end{equation}
from which we immediately conclude that $F_{{ \rm BBP}; \vec{x}, \vec{y}}(r) $ does not depend on $a,b$ as long as $\min(\vec{y}) > b > a > \max(\vec{x})$. Indeed, if $a', b'$ also satisfy $\min(\vec{y}) > b' > a' > \max(\vec{x})$ then we can deform $C_{a, 3\pi/4}$ to $C_{a', 3\pi/4}$ and $C_{b, \pi/4}$ to $C_{b', \pi/4}$ in each summand in (\ref{S4ExpandDet}) without crossing any poles and thus without affecting the value of the integral by Cauchy's theorem. The decay necessary to deform the contours near infinity come from the cubes in the exponential.\\

{\bf \raggedleft Step 2.} In this step we show that $ F_{{ \rm BBP}; \vec{x}, \vec{y}} $ is entire. Fix $v, v' \in C_{a, 3\pi/4}$. It follows from \cite[Chapter 2, Theorem 5.4]{Stein} that for each $N \in \mathbb{N}$ the function 
$$K_r^N(v,v') =  \int_{C_{b,\pi/4}} \prod_{n = 1}^{\rr} \frac{( {w} - x_n )}{({v}  - x_n  )} \prod_{m = 1}^{\rl} \frac{( y_m -  {v} )}{(y_m  -{w} )}\cdot  \frac{\exp(-v^3/3 + w^3/3 - r w + r v)}{(v-w)(w - v')} \cdot{\bf 1} \{ |w| \leq N\} dw $$
is entire in $r$. Consequently, as the uniform over compacts limit of entire functions we have that
$$\int_{C_{b,\pi/4}} \prod_{n = 1}^{\rr} \frac{( {w} - x_n )}{({v}  - x_n  )} \prod_{m = 1}^{\rl} \frac{( y_m -  {v} )}{(y_m  -{w} )}\cdot  \frac{\exp(-v^3/3 + w^3/3 - r w + r v)}{(v-w)(w - v')} dw$$
is also entire in $r$, cf. \cite[Chapter 2, Theorem 5.2]{Stein}. Let us fix $r_0 \in \mathbb{C}$ and observe that 
$$ \sup_{r: |r - r_0| \leq 1}  \int_{C_{b,\pi/4}}\left| \prod_{n = 1}^{\rr} \frac{( {w} - x_n )}{({v}  - x_n  )} \prod_{m = 1}^{\rl} \frac{( y_m -  {v} )}{(y_m  -{w} )}\cdot  \frac{\exp(-v^3/3 + w^3/3 - r w + r v)}{(v-w)(w - v')} \right| |dw| \leq $$
$$ C\cdot \frac{\prod_{n = 1}^{\rl} | y_m -  {v} |}{\prod_{m = 1}^{\rr} |{v}  - x_n |}   \exp(-\Re[v^3]/3 +(|r_0| + 1)|v|) $$
where $|dw|$ is integration with respect to arc-length and $C$ is a constant that depends on $r_0$. 

Since the the function on the second line is integrable over $C_{a ,3\pi/4}$ (by the cube in the exponential) we conclude by Lemma \ref{AT} that $F_{{ \rm BBP}; \vec{x}, \vec{y}}(r)$ is entire.\\

{\bf \raggedleft Step 3.} In this step we show that when restricted to $\mathbb{R}$ the function $F_{{ \rm BBP}; \vec{x}, \vec{y}} $ is real-valued, non-negative, increasing and converging to $1$ at $\infty$. Arguing as in Step 1, we have that we can deform $C_{a, 3\pi/4}$ to $C_{\mu, 3\pi/4}$ and $C_{b, \pi/4}$ to $\tilde{D}$ as in Definition \ref{DefScaleS3} in (\ref{S4ExpandDet}) without affecting the values of the integrals. Once we perform this deformation we see that for $r \in \mathbb{R}$ we have $F_{{ \rm BBP}; \vec{x}, \vec{y}}(r)   =  \det \left( I + K_r^\infty \right)_{L^2(C_{\mu ,3\pi/4})}$, where $K_r^\infty$ is as in Theorem \ref{mainThmGUE}. If we perform the same scaling as in Definition \ref{DefScaleS3} we see from Theorem \ref{mainThmGUE} and Theorem \ref{LGPT1} that 
\begin{equation}\label{S4RE1}
F_{{ \rm BBP}; \vec{x}, \vec{y}}(r)  = \lim_{M \rightarrow \infty} \mathbb{E} \left[ e^{-u Z^{M + \rl, N + \rr}} \right],
\end{equation}
where we recall from Definition \ref{DefScaleS3} that $u(r,M,N) = e^{W_{M,N} - M^{1/3} \sigma_{\alpha} r}$. In particular, since we have $Z^{M + \rl, N + \rr} > 0$ we see that $F_{{ \rm BBP}; \vec{x}, \vec{y}}(r)$  is the pointwise limit of a sequence of real-valued, non-negative, increasing functions and is thus itself real-valued, non-negative and increasing.

We next show that 
\begin{equation}\label{S4RE2}
\lim_{k \rightarrow \infty} F_{{ \rm BBP}; \vec{x}, \vec{y}}(k) = 1.
\end{equation}
Notice that if $g^k_{v,v'}(w) $ is as in Step 1 for $r = k$ we have
$$|g^k_{v,v'}(w)| \leq \frac{1}{(b-a)^2} \cdot \left| \prod_{n = 1}^{\rr} \frac{( {w} - x_n )}{({v}  - x_n  )} \prod_{m = 1}^{\rl} \frac{( y_m -  {v} )}{(y_m  -{w} )} \right| e^{- k(b -a)}\exp(-\Re[v^3]/3 + \Re[w^3]/3),$$
where we used the fact that $|e^{kz}| \leq 1$ if $\Re[z] \leq 0$. We see that the conditions of Lemma \ref{FDKCT} are satisfied with $\Gamma_1 = C_{a, 3\pi/4}$, $\Gamma_2 = C_{b, \pi/4}$, 
$$F_1(v) = \frac{1}{b-a} \cdot  \frac{\prod_{n = 1}^{\rl} | y_m -  {v} |}{\prod_{m = 1}^{\rr} |{v}  - x_n |}  e^{ - \Re[v^3]/3}, \hspace{2mm}F_2(w) = \frac{1}{b-a} \cdot  \frac{\prod_{m = 1}^{\rr} |{w}  - x_n |}{\prod_{n = 1}^{\rl} | y_m -  {w} |}  e^{ \Re[w^3]/3},$$
and $g^\infty_{v,v'}(w) = 0$. We thus conclude (\ref{S4RE2}) from Lemmas \ref{FDKCT} and \ref{FDCT}.\\

{\bf \raggedleft Step 4.} In this final step we prove that 
\begin{equation}\label{S4RE3}
\lim_{k \rightarrow \infty} F_{{ \rm BBP}; \vec{x}, \vec{y}}(-k) = 0.
\end{equation}
Let us first briefly explain the main idea of the proof. We start by assuming the same scaling as in Definition \ref{DefScaleS3}, and following the notation from Section \ref{Section1.2} define 
$$Z(p,q;P,Q) = \sum_{ \pi \in \Pi_{p,q}^{P,Q}} w(\pi), \mbox{ where } w(\pi) = \prod_{(i,j) \in \pi} w_{i,j},$$
and $\Pi_{p,q}^{P,Q}$ is the collection of all up-right paths from $(p,q)$ to $(P,Q)$. Here we assumed that $1 \leq p \leq P \leq M + \rl$ and $1 \leq q \leq Q \leq N + \rr$. 
In view of (\ref{S4RE1}) applied to $\rr = \rl = 0$ we know that for any $r \in \mathbb{R}$
\begin{equation}\label{S4RE4}
\lim_{M \rightarrow \infty} \mathbb{E} \left[ e^{-u  Z( \rl + 1,  \rr + 1 ; M + \rl, N + \rr) } \right] = F_{GUE}(r).
\end{equation}
Furthermore, by the same equation for general fixed $\rr, \rl$, we know that 
\begin{equation}\label{S4RE5}
\lim_{M \rightarrow \infty}  \mathbb{E} \left[ e^{-u  Z(  1,  1 ;  M + \rl, N + \rr) } \right]= F_{{ \rm BBP}; \vec{x}, \vec{y}}(r).
\end{equation}
The idea now is to show that with high probability 
$$\log Z( 1, 1 ; M + \rl,  N + \rr)  \geq \log Z(\rl+1, \rr + 1 ;  M + \rl, N + \rr) + R_M,$$ 
where $R_M < 0$ is a sequence such that $R_M = o(M^{1/3})$. If true the latter would imply 
\begin{equation}\label{S4RE6}
F_{{ \rm BBP}; \vec{x}, \vec{y}}(r) \leq F_{GUE}(r).
\end{equation}
Since $\lim_{ r \rightarrow -\infty} F_{GUE}(r) = 0$, we would conclude (\ref{S4RE3}) by domination. In the remainder we focus on establishing (\ref{S4RE6}) by performing the comparison of $\log Z( 1, 1 ; M + \rl,  N + \rr)$ and $\log  Z(\rl+1, \rr + 1 ;  M + \rl, N + \rr)  $ that we described above.\\

Let $E_M$ denote the event $\{ \min_{1 \leq i \leq \rl, 1 \leq j \leq \rr} w_{i,j} \leq M^{-1} \}$. Note that by Definition \ref{DefScaleS3} we have that the weights $w_{i,j}$ have distribution
$$f_{\theta_{i,j}}(x) = \frac{{\bf 1} \{ x > 0 \} }{\Gamma(\theta_{i,j})} \cdot x^{-\theta_{i,j} - 1} \cdot \exp( - x^{-1}),$$
where $cM^{-1/3} \leq \theta_{i,j} \leq C$ for some constants $C,c > 0$. In particular, we see that 
\begin{equation}\label{S4RE7}
\mathbb{P}( w_{i,j} \leq M^{-1}) = \frac{1}{\Gamma(\theta_{i,j})} \int_{M}^\infty x^{\theta_{i,j} - 1} e^{-x} dx \leq e^{-M/2},
\end{equation}
where in the last inequality we used that $\Gamma(\theta_{i,j})^{-1}x^{\theta_{i,j} - 1}e^{-x/2} \leq 1$ for all large enough $M$ as long as $ x \geq M$. In particular, we see from (\ref{S4RE7}) that
\begin{equation}\label{S4RE8}
\mathbb{P}( E_M) \leq 1 - [ 1- e^{-M/2}]^{\rr\rl} = O(e^{-M/2}).
\end{equation}
On the event $E^c_M$ we have that 
\begin{equation}\label{S4RE9}
\begin{split}
&Z( 1, 1 ;  M + \rl, N + \rr)  \geq \prod_{i = 1}^{\rr + 1} w_{i, 1} \cdot \prod_{j = 2}^{\rl + 1} w_{\rr + 1, j} \cdot Z( \rl+1, \rr + 1 ; M + \rl, N + \rr) \geq \\
&M^{-\rr - \rl - 1 } \cdot Z( \rl+1, \rr + 1 ; M + \rl, N + \rr) .
\end{split}
\end{equation}
We now observe that if $x \in \mathbb{R}$ and $\epsilon > 0$ are fixed and $u = u(x,M,N)$ we have
\begin{equation}\label{S4qwe}
\begin{split}
&F_{{ \rm BBP}; \vec{x}, \vec{y}}(x -\epsilon ) \leq \liminf_{M \rightarrow \infty} \mathbb{E} \left[ e^{-u  Z(  1,  1 ; M + \rl,  N + \rr) \cdot M^{\rr + \rl +1 } } \right]= \\
& \liminf_{M \rightarrow \infty} \mathbb{E} \left[ e^{-u  Z(  1,  1 ; M + \rl, N + \rr) \cdot M^{\rr + \rl +1 }  } \cdot {\bf 1}\{ E^c_M\} \right] \leq \\
&  \liminf_{M \rightarrow \infty} \mathbb{E} \left[e^{-u  Z( \rl + 1,  \rr + 1 ;  M + \rl, N + \rr) } \cdot {\bf 1}\{ E^c_M\} \right] = \\
&\liminf_{M \rightarrow \infty} \mathbb{E} \left[e^{-u  Z( \rl + 1,  \rr + 1 ; , M + \rl, N + \rr)   }  \right ]= F_{GUE}(x).
\end{split}
\end{equation}
In the first equality we used (\ref{S4RE5}) for $r = x -\epsilon$ and the fact that
$$u(x -\epsilon,M,N) = e^{W_{M,N} - M^{1/3} \sigma_{\alpha} (x-\epsilon)} \geq e^{W_{M,N} - M^{1/3} \sigma_{\alpha} x}M^{\rr + \rl +1 } = u(x,M,N)M^{\rr + \rl +1 } $$
for all large enough $M$. In the second and next to last equality we used (\ref{S4RE8}), and in the last equality we used (\ref{S4RE4}). In the middle inequality we used (\ref{S4RE9}). Letting $\epsilon \rightarrow 0+$ in (\ref{S4qwe}) and using the continuity of $F_{{ \rm BBP}; \vec{x}, \vec{y}}$ from Step 2 we conclude (\ref{S4RE6}). As explained earlier (\ref{S4RE6}) implies (\ref{S4RE3}) and this concludes the proof of the lemma.
\end{proof}

\begin{proof}[Proof of Theorem \ref{BBP}]  As discussed in Remark \ref{S1RemBBP} it suffices to prove the theorem when $N/M \in [\delta,1]$, which we assume in the sequel.

Let $r \in \mathbb{R}$, $\alpha = N/M$ and $\sigma_\alpha, u(r,M,N)$ be as in Definition \ref{DefScaleS3}. Define 
$$f_M(z) = e^{-e^{\sigma_{\alpha}M^{1/3} z}},$$
and observe that since by assumption we have $M \geq N \geq 9$ for all $M$ sufficiently large we can apply Theorem \ref{LGPT1} to conclude
$$\mathbb{E} \left[f_M(\mathcal{F}(M + \rl, N + \rr) - r)\right] = \mathbb{E} \left[ e^{-u \ZP} \right] = \det \left( I + K_u \right)_{L^2(C_{a,3\pi/4})}.$$

On the other hand, by Theorem \ref{mainThmGUE} we know that as $M \rightarrow \infty$ the right-hand side converges to $\det (I + K^{\infty}_{y})_{L^2(C_{\mu, 3\pi/4})}$,
which as shown in Step 3 of the proof of Lemma \ref{S1Lemma} equals $F_{{ \rm BBP}; \vec{x}, \vec{y}}(r) .$

Combining the above statements we see that 
$$\lim_{M \rightarrow \infty} \mathbb{E} \left[f_M(\mathcal{F}(M + \rl,N + \rr) - r) \right] =F_{{ \rm BBP}; \vec{x}, \vec{y}}(r).$$
Since $f_M$ and $F_{{ \rm BBP}; \vec{x}, \vec{y}}$ satisfy the conditions of Lemma \ref{prob} (here we used the continuity of $F_{{ \rm BBP}; \vec{x}, \vec{y}}$) we see that the last equation and Lemma \ref{S1Lemma} imply the statement of the theorem.
\end{proof}

%
\section{Appendix A}\label{Section9}
In Sections \ref{Section2} and \ref{Section3} we needed several results, whose proofs require a careful analysis of the properties of the function $G_{M,N}$, recalled in this section in equation (\ref{S5Gfun}). In this section we study some of these properties, focusing on the way the real part of $G_{M,N}$ varies along different contours.

%
\subsection{Definition and basic properties}\label{Section9.1}
Let $\Psi(x)$ denote the digamma function, i.e.
\begin{equation}\label{digamma}
\Psi(x) = \frac{\Gamma'(x)}{\Gamma(x)} = - \gamma_{E} + \sum_{n = 0}^\infty \left(\frac{1}{n + 1} - \frac{1}{n+z} \right),
\end{equation}
where $\gamma_{E}$ is the Euler constant. Suppose that $M, N \geq 1$ and $\theta > 0$ are given. We let $z_{c}(M,N,\theta)$ denote the maximizer of 
$$W_{M,N}(x) := N \Psi(x) + M \Psi(\theta - x)$$
on the interval $(0, \theta)$. As mentioned in Section \ref{Section3.1} the above expression converges to $- \infty$ as $x \rightarrow 0+$ or $x \rightarrow \theta-$ and also the function $W_{M,N}(x)$ is strictly concave, hence the maximum exists and is unique. Since $z_c$ is the maximizer we have
\begin{equation}\label{zcdef}
0 = W'_{M,N}(z_c) = N \sum_{n = 0}^{\infty} \frac{1}{(n+z_c)^2} - M \sum_{n = 0}^\infty \frac{1}{(n+\theta - z_c)^2}.
\end{equation}

The main object of interest in this section is the following function
\begin{equation}\label{S5Gfun}
G_{M,N}(z) = N \log \Gamma (z) - M \log \Gamma (\theta - z) - W_{M,N}(z_{c}) z - C_{M,N},
\end{equation}
where the constant $C_{M,N}$ is such that $G_{M,N}(z_{c}) = 0$. We also define $\alpha = N/M$ and $G_\alpha(z)$ through the equation
$$G_\alpha(z) = M^{-1} \cdot G_{M,N}(z).$$

In the remainder of this section we derive a few basic properties for the function $G_{M,N}$ or equivalently $G_\alpha$. From (\ref{zcdef}) we know that $z_c(\alpha)$ is the unique number in $(0, \theta)$ that satisfies
$$0 = \alpha \sum_{n = 0}^\infty \frac{1}{(n+z_c)^2} - \sum_{n =0}^\infty \frac{1}{(n+\theta - z_c)^2}.$$ 
In particular, for $z \in (0, \theta)$ the function
$$g(z) = \frac{\sum_{n =0}^\infty \frac{1}{(n+\theta - z)^2}}{ \sum_{n = 0}^\infty \frac{1}{(n+z)^2}} $$
satisfies $g(z_c) = \alpha$. The numerator of $g(z)$ is clearly increasing, while the denominator is decreasing, so that function $g(z)$ is a continuous strictly increasing bijection between $(0, \theta)$ and $(0, \infty)$. Consequently, $g^{-1}$ is also a strictly increasing continuous bijection between $(0,\infty)$ and $(0,\theta)$. We conclude that $g^{-1}([\delta, 1]) = [a,b]$ where $a = g^{-1}(\delta)$ and $b = g^{-1}(1)$. 

 One directly computes $G_\alpha(z_c) = \partial_zG_\alpha(z_c) = \partial_z^2 G_\alpha(z_c) = 0$, while $\partial^3_z G_\alpha(z_c) = 2 \sigma_\alpha^3$ with 
$$\sigma_\alpha := \left( \sum_{n = 0}^\infty \frac{\alpha}{(n+z_c)^3} +  \sum_{n = 0}^\infty \frac{1}{(n+\theta - z_c)^3} \right)^{1/3}.$$
From the discussion above we see that 
\begin{equation}\label{ULBalpha}
\left( \sum_{n = 0}^\infty \frac{\delta}{(n+b)^3} +  \sum_{n = 0}^\infty \frac{1}{(n+\theta - a)^3} \right)^{1/3} \leq \sigma_{\alpha} \leq  \left( \sum_{n = 0}^\infty \frac{1}{(n+a)^3} +  \sum_{n = 0}^\infty \frac{1}{(n+\theta - b)^3} \right)^{1/3}
\end{equation}

We end this section with a formula for the derivative of $G_{M,N}$ along a generic contour. Denote $z(r) =  z_c + x(r) + \i y(r)$. Then we have
\begin{equation*}\label{GenG}
\begin{split}
&\frac{d}{dr} G_{M,N}(z(r)) = (x'(r) + \i y'(r)) \cdot \left( N \Psi(z(r)) + M \Psi(\theta - z(r)) - W_{M,N}(z_{c})  \right) = \\
& (x'(r) + \i y'(r))\cdot \left( N \sum_{n = 0 }^\infty \left(\frac{1}{n + z_{c}} - \frac{1}{n+z(r)} \right) + M \sum_{n = 0 }^\infty \left(\frac{1}{n + \theta - z_{c} }- \frac{1}{n+\theta - z(r)} \right) \right)=\\
&  (x'(r) + \i y'(r))   N \sum_{n = 0 }^\infty\frac{|n + z(r)|^2 - (n+z_{c})(n + \overline{z}(r))}{(n + z_{c})|n+z(r)|^2}  + \\
&  (x'(r) + \i y'(r)) M \sum_{n = 0 }^\infty \frac{|n+\theta - z(r)|^2 - (n+ \theta - z_{c})(n+\theta - \overline{z}(r))}{(n + \theta -  z_{c})|n+\theta - z(r)|^2 }.
\end{split}
\end{equation*}
Taking the real part of the above equation we obtain
\begin{equation}\label{GenForm}
\begin{split}
&\Re \left[\frac{d}{dr} G_{M,N}(z(r)) \right] =  N \sum_{n = 0 }^\infty\frac{ x'(r)(x^2(r) + y^2(r))}{(n + z_{c})|n+z(r)|^2} + M\sum_{n = 0}^{\infty} \frac{ x'(r)(x^2(r) + y^2(r))}{(n + \theta -  z_{c})|n+\theta - z(r)|^2} \\
& +  \left( N \sum_{n = 0}^\infty \frac{x(r)x'(r) - y(r)y'(r)}{|n+z(r)|^2} - M \sum_{n = 0}^\infty \frac{x(r)x'(r) - y(r)y'(r)}{|n + \theta - z(r)|^2} \right).
\end{split}
\end{equation}

%
\subsection{Estimates along $C_{z_c, \phi}$}\label{Section9.2}
In this section we obtain estimates on $Re \left[ G_{M,N}(z)\right]$ when $\arg(z)$ is bounded away from $0, \pi/2, \pi, 3\pi/2$. The main result we prove is the following.
\begin{lemma}\label{HL1} Suppose that $M, N \geq 1$, $\theta > 0$ and $\delta_1 \in (0, \pi/4)$. Then we can find constants $C,R > 0$ that depend on $M,N$, $\theta$ and $\delta_1$ such that if $z = re^{\i\phi}$ with $r \geq R$ we have
\begin{equation}\label{GUBC}
\begin{split}
&\Re \left[G_{M,N}(re^{\i \phi}) \right]  \leq - C  r \log (r) \mbox{ if $\phi \in [\pi/2 + \delta_1, \pi-\delta_1] \cup [\pi + \delta_1, 3\pi/2 - \delta_1]$;} \\
&\Re \left[G_{M,N}( re^{\i \phi}) \right]  \geq C  r \log (r) \mbox{ if $\phi \in [\delta_1, \pi/2 - \delta_1] \cup [-\pi/2 + \delta_1, -\delta_1]$.} 
\end{split}
\end{equation}
\end{lemma}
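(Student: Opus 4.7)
The plan is to apply Stirling's asymptotic expansion for $\log \Gamma$ in the two relevant sectors and read off the dominant behavior of $\Re[G_{M,N}(re^{\i\phi})]$ as $r \to \infty$. For all four ranges of $\phi$ appearing in the hypothesis, $\phi$ is bounded away both from $0$ and from $\pm \pi$, hence the arguments of $z = re^{\i \phi}$ and of $\theta - z$ are bounded away from the branch cut of $\log$ at large $r$, and Stirling applies uniformly to both $\log \Gamma(z)$ and $\log \Gamma(\theta - z)$. The dominant term will be $(N+M) r \cos\phi \log r$, whose sign is controlled by the hypothesis that $\phi$ is bounded away from $\pm \pi/2$ by $\delta_1$.

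Concretely, for any fixed $\epsilon \in (0, \pi)$ Stirling gives
\begin{equation*}
\log \Gamma(w) = w \log w - w + O(\log |w|),
\end{equation*}
uniformly for $|\arg w| \leq \pi - \epsilon$ as $|w| \to \infty$, with the implied constant depending on $\epsilon$. For $z = re^{\i\phi}$ with $r$ large, $\log z = \log r + \i \phi$, and since $\theta - z = -z(1 - \theta/z)$ we have $\log(\theta - z) = \log r + \i \phi_\ast + O(r^{-1})$, where $\phi_\ast = \phi - \pi$ if $\Im(z) > 0$ and $\phi_\ast = \phi + \pi$ if $\Im(z) < 0$ (with principal branch in $(-\pi, \pi]$). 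A direct computation then gives
\begin{equation*}
N z \log z - M(\theta - z) \log(\theta - z) = (N + M)\, r e^{\i\phi} \log r + \i r e^{\i\phi}\bigl(N \phi + M \phi_\ast\bigr) + O(r),
\end{equation*}
where the $O(r)$ absorbs both the subleading Stirling terms and the $\theta/z$ correction in $\log(\theta - z)$. Taking real parts and adding in the contributions of the linear term $-W_{M,N}(z_c) z$ and the constant $-C_{M,N}$ from the definition of $G_{M,N}$ (each contributing at most $O(r)$) yields
\begin{equation*}
\Re\bigl[ G_{M,N}(re^{\i\phi}) \bigr] = (N+M)\, r \cos\phi \cdot \log r + O(r).
\end{equation*}

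Under the first part of the hypothesis, $\phi \in [\pi/2 + \delta_1, \pi - \delta_1] \cup [\pi + \delta_1, 3\pi/2 - \delta_1]$, one checks $\cos\phi \leq -\sin\delta_1 < 0$; under the second part, $\phi \in [\delta_1, \pi/2 - \delta_1] \cup [-\pi/2 + \delta_1, -\delta_1]$, one has $\cos\phi \geq \sin\delta_1 > 0$. Setting $C = \tfrac{1}{2}(N+M) \sin \delta_1$ and choosing $R$ large enough that the $O(r)$ error is dominated by $\tfrac{1}{2}(N+M)\sin\delta_1 \cdot r \log r$ for $r \geq R$ produces both inequalities in \eqref{GUBC}. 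The only technical care needed is the bookkeeping of the branch of $\log(\theta - z)$ on each side of the real axis, which is precisely why the hypothesis excludes neighborhoods of $\phi \in \{0, \pm \pi\}$; the further exclusion of neighborhoods of $\phi \in \{\pm \pi/2\}$ is unavoidable since $\cos\phi$ vanishes there and no bound of the form $\pm C r \log r$ can hold.
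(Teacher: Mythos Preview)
Your proof is correct and takes a genuinely different route from the paper's. The paper argues via the derivative: it uses the series representation of the digamma function to write down an explicit formula for $\Re\bigl[\tfrac{d}{dr}G_{M,N}(z_c + re^{\i\phi})\bigr]$ (equation~\eqref{GenForm}), bounds the individual sums to show this derivative is $\leq -c_0(M+N)\log r$ for large $r$ in the relevant sectors, and then integrates along the ray from $z_c$ (where $G_{M,N}$ vanishes). Your approach bypasses this entirely by reading off the leading behavior of $\Re[G_{M,N}]$ directly from Stirling's expansion (already recorded in the paper as~\eqref{AssGamma}); once one observes that both $\arg z$ and $\arg(\theta - z)$ stay uniformly away from $\pm\pi$ in the given sectors, the computation $\Re[G_{M,N}(re^{\i\phi})] = (N+M)r\cos\phi\,\log r + O(r)$ is immediate, and the sign of $\cos\phi$ finishes the job.

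Your argument is shorter and more transparent for this particular lemma. The paper's derivative approach, on the other hand, is reused verbatim in Lemmas~\ref{ReleL1S5} and~\ref{LemmaVert}, where one needs monotonicity of $\Re[G_{M,N}]$ along rays from $z_c$ (not merely its eventual sign), and Lemma~\ref{ReleL1S5} in particular requires the bound to kick in from any fixed $\delta_1 > 0$ rather than only for large $r$. So the series-and-integrate machinery earns its keep elsewhere in Section~\ref{Section9}, even if Stirling is the cleaner tool for Lemma~\ref{HL1} in isolation.
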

\begin{proof}
Our approach is to study $\Re \left[G_{M,N}( z_{c} + re^{\i \phi}) \right]$. Let us fix $\theta > 0$ and $\delta_1 > 0$ as in the statement of the lemma. Setting $z(r) = z_c + re^{\i\phi}$ and applying (\ref{GenForm}) we get
\begin{equation}\label{GenAngle}
\begin{split}
\Re \left[\frac{d}{dr} G_{M,N}(z(r)) \right] =  &N \sum_{n = 0 }^\infty \frac{r^2 \cos(\phi) }{(n + z_c)|n+z(r)|^2} -  N \sum_{n = 0 }^\infty\frac{r (\sin^2(\phi) - \cos^2(\phi))}{|n+z(r)|^2} + \\
&M\sum_{n = 0 }^\infty \frac{r^2 \cos(\phi) }{(n + \theta - z_c)|n+\theta - z(r)|^2 }  +  M \sum_{n = 0 }^\infty\frac{r( \sin^2(\phi) - \cos^2(\phi))}{|n + \theta - z(r)|^2}.
\end{split}
\end{equation} 
We study the large positive $r$ behavior of the right side above for different ranges of $\phi$.\\

Suppose first that $\phi \in  [\pi/2 + \delta_1/2, \pi-\delta_1/2] \cup [\pi + \delta_1/2, 3\pi/2 - \delta_1/2]$. Below $C$ will stand for a positive constant, which depends on $\theta$ and $\delta_1$, and whose value may change from line to line.
\begin{equation}\label{NAW1}
\begin{split}
& \sum_{n = 0 }^\infty\frac{r}{|n+ z(r)|^2} =   \sum_{n = 0 }^\infty\frac{ r }{(n+ z_c + r\cos(\phi))^2 + r^2 \sin^2(\phi)} =  \\
&  \sum_{n = 0 }^\infty\frac{r }{(n+ z_c)^2  +2r(n+ z_c) \cos(\phi)  + r^2 }\leq \sum_{n = 0 }^\infty\frac{r }{(n+ z_c)^2  + 2r(n+ z_c) \cos(\pi + \delta_1/2)  + r^2 }  \leq \\
&   \sum_{n = 0 }^\infty\frac{ C r }{(n+z_c)^2   + r^2 } \leq  \frac{ C r}{ z_c^2 + r^2} + \int_0^{\infty} \frac{ C rdx}{x^2 + r^2} = \frac{ C r}{ z_c^2 + r^2} + \frac{ C\pi}{2}.
\end{split}
\end{equation}
 Analogously, one shows that
\begin{equation}\label{NAW2}
\begin{split}
& \sum_{n = 0 }^\infty\frac{r}{|n+\theta - z(r)|^2} \leq \frac{C r}{(\theta - z_c)^2 + r^2} + \frac{C\pi}{2}.
\end{split}
\end{equation}
Furthermore we have
\begin{equation}\label{NAW3}
\begin{split}
& \sum_{n = 0 }^\infty \frac{r^2 \cos(\phi)  }{(n + z_c)|n+z(r)|^2} \leq \sum_{n = 0 }^\infty \frac{ \cos(\pi/2 + \delta_1/2) r^2 }{(n +z_c) ((n+z_c)^2 + 2(n+z_c)r\cos(\phi) + r^2)} \leq \\
&  \sum_{n = 0 }^\infty \frac{ \cos(\pi/2 + \delta_1/2) r^2 }{(n + z_c) ((n+z_c)^2  + r^2)} \leq -C r^2\int_{z_c}^{\infty} \frac{dx}{(x + z_{c})(x^2 + r^2)} = \\
& \frac{ -Cr^2}{r^2 + z_c^2} \left( \frac{z_c }{r} \left(\frac{\pi}{2} - \tan^{-1}(z_c/r) \right)+ \log \left(\frac{\sqrt{z_c^2 + r^2}}{2z_c}\right)   \right) \leq \\
&  \frac{ -Cr^2}{r^2 + z_c^2}\log \left(\frac{\sqrt{z_c^2 + r^2}}{2z_c}\right) \leq -C\log (r),
\end{split}
\end{equation}
where the last inequality holds for all $r \geq r_0$, where $r_0$ depends on $\theta$ alone. Analogous considerations show that
 \begin{equation}\label{NAW4}
\begin{split}
&\sum_{n = 0 }^\infty \frac{r^2 \cos(\phi) }{(n + \theta - z_{c})|n+\theta - z(r)|^2 }\leq -C \log (r),
\end{split}
\end{equation}
where the last inequality holds for all $r \geq r_0$, where $r_0$ depends on $\theta$ alone.
Combining (\ref{NAW1}), (\ref{NAW2}), (\ref{NAW3}) and (\ref{NAW4}) with (\ref{GenAngle}) we conclude that there exist $r_0$ and $c_0$ depending on $\theta$ and $\delta_1$ such if $\phi \in  [\pi/2 + \delta_1/2, \pi-\delta_1/2] \cup [\pi + \delta_1/2, 3\pi/2 - \delta_1/2]$ and $r \geq r_0$ we have
\begin{equation}\label{derGdec}
\Re \left[ \frac{d}{dr} G_{M,N}(z_c + re^{\i \phi}) \right]  \leq - c_0 (M+N)\log (r).
\end{equation}
The first equation in (\ref{GUBC}) is now an immediate consequence of (\ref{derGdec}) since $G_{M,N}(z_c) = 0$.

A similar argument for $\phi \in [ \delta_1/2,  \pi/2 - \delta_1/2] \cup [-\pi/2 + \delta_1/2, - \delta_1/2]$ reveals that there exists $r_0$ and $c_0$ depending on $\theta$ and $\delta_1$ such that if $r \geq r_0$ then
\begin{equation}\label{derGinc}
\Re \left[ \frac{d}{dr} G_{M,N}(z_c + re^{\i \phi}) \right]  \geq c_0 (M+N)\log (r).
\end{equation}
The second equation in (\ref{GUBC}) is now an immediate consequence of (\ref{derGinc}) since $G_{M,N}(z_c) = 0$.
\end{proof}

%
\subsection{Estimates along vertical contours}\label{Section9.3} In this section we study the properties of $\Re \left[ G_{M,N}(z(r))\right]$ along contours of the form $C_{z_c, \pi/2}$, i.e. when we move vertically up or down from the point $z_c$. The main result we prove is as follows.

\begin{lemma}\label{LemmaVert}
Suppose that $M \geq N \geq 1$. If we set $z(r) = z_c + \i r$ we claim
\begin{equation}\label{GVer}
\begin{split}
&\Re \left[\frac{d}{dr} G_{M,N}(z_c + \i r) \right]\geq  0 \mbox{ when $r \geq 0$ and } \Re \left[\frac{d}{dr} G_{M,N}(z_c + \i r) \right]\leq  0 \mbox{ when $r \leq 0$}.
\end{split}
\end{equation}
\end{lemma}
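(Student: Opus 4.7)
The plan is to apply formula \eqref{GenForm} along the vertical contour $z(r) = z_c + \i r$, for which $x(r) \equiv 0$, $y(r) = r$, $x'(r) = 0$, $y'(r) = 1$. The first two sums in \eqref{GenForm} vanish identically (each carries a factor of $x'(r)$), and the third collapses to
\begin{equation*}
H'(r) \;:=\; \Re\!\left[\tfrac{d}{dr} G_{M,N}(z_c + \i r)\right] \;=\; -r\bigl(N\phi_r(z_c) - M\phi_r(\theta-z_c)\bigr), \qquad \phi_r(s) := \sum_{n=0}^\infty \frac{1}{(n+s)^2 + r^2}.
\end{equation*}
Since $\phi_r$ depends only on $r^2$, the two halves of \eqref{GVer} are equivalent, and the task reduces to showing $N\phi_r(z_c) \leq M\phi_r(\theta-z_c)$ for every $r \geq 0$.

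Write $A := z_c$ and $B := \theta - z_c$. The defining equation \eqref{zcdef} for $z_c$ gives $N\phi_0(A) = M\phi_0(B)$, so the desired inequality is in fact an equality at $r=0$. The hypothesis $M \geq N$ together with the strict monotonicity of the trigamma function $\phi_0 = \psi'$ on $(0,\infty)$ forces $A \leq B$. It therefore suffices to prove the auxiliary claim that for every fixed $r > 0$ the ratio
\begin{equation*}
s \;\longmapsto\; \frac{\phi_r(s)}{\phi_0(s)}
\end{equation*}
is nondecreasing on $(0, \infty)$: applying this monotonicity at $A \leq B$ and cross-multiplying with $N\phi_0(A) = M\phi_0(B)$ yields $N\phi_r(A) \leq M\phi_r(B)$.

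To establish the auxiliary monotonicity, introduce the probability weights $w_n(s) := \phi_0(s)^{-1}(n+s)^{-2}$ on $\mathbb{Z}_{\geq 0}$, the quantities $Y_n := 1/(n+s)$, and the function $Q(y) := 1/(1+r^2 y^2)$. Then
\begin{equation*}
\frac{\phi_r(s)}{\phi_0(s)} \;=\; \sum_{n=0}^\infty w_n(s)\, Q(Y_n) \;=\; \E_{w(s)}\bigl[Q(Y)\bigr].
\end{equation*}
A short calculation (using $\phi_0'(s) = -2\sum_n (n+s)^{-3}$) yields $w_n'(s) = 2 w_n(s)\bigl(\E_{w(s)}[Y] - Y_n\bigr)$, while $\tfrac{d}{ds} Y_n = -Y_n^2$. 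Differentiating under the sum therefore gives
\begin{equation*}
\frac{d}{ds}\,\frac{\phi_r(s)}{\phi_0(s)} \;=\; -2\,\operatorname{Cov}_{w(s)}\!\bigl(Y, Q(Y)\bigr) \;-\; \E_{w(s)}\!\bigl[Y^2 Q'(Y)\bigr].
\end{equation*}
Both terms are manifestly nonnegative: from $Q'(y) = -2r^2 y/(1+r^2 y^2)^2 \leq 0$ on $y \geq 0$ we get $-\E[Y^2 Q'(Y)] \geq 0$; and since $Q$ is a decreasing function of $Y > 0$, the standard fact that a random variable has nonpositive covariance with any decreasing function of itself gives $\operatorname{Cov}(Y, Q(Y)) \leq 0$.

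The main conceptual step is identifying the correct quantity to monotonize — namely the ratio $\phi_r/\phi_0$ — and rewriting it as an expectation against the $s$-dependent probability measure $w(s)$ so that an FKG-type negative-covariance inequality applies. Everything else is bookkeeping with elementary calculus, and the case $M = N$ is automatic since then $z_c = \theta/2$ forces $H'(r) \equiv 0$.
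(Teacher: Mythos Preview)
Your proposal is correct and takes a genuinely different route from the paper. Both arguments reduce to the same inequality $N\phi_r(z_c)\leq M\phi_r(\theta-z_c)$ with $\phi_r(s)=\sum_{n\geq 0}\bigl((n+s)^2+r^2\bigr)^{-1}$, using the critical-point identity $N\phi_0(z_c)=M\phi_0(\theta-z_c)$. From there the paper proceeds by brute force: it rewrites the desired inequality as the nonpositivity of the double sum $\sum_{m,n}\bigl[(m+\theta-z_c)^{-2}((n+z_c)^2+r^2)^{-1}-(m+z_c)^{-2}((n+\theta-z_c)^2+r^2)^{-1}\bigr]$, handles the diagonal $m=n$ by an explicit factorization, and then pairs each off-diagonal $(m,n)$ with $(n,m)$ and shows the pair sum is nonpositive via a rather involved polynomial computation in $r^2$. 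Your approach instead shows the single-variable ratio $s\mapsto\phi_r(s)/\phi_0(s)$ is nondecreasing by interpreting it as an expectation $\E_{w(s)}[Q(Y)]$ and differentiating, which reduces everything to two transparent sign observations: $Q'\leq 0$ on $[0,\infty)$ and the Chebyshev covariance inequality $\mathrm{Cov}(Y,Q(Y))\leq 0$ for decreasing $Q$. Your argument is considerably shorter and more conceptual, and it makes clear \emph{why} the inequality holds rather than just verifying it algebraically; the paper's computation, on the other hand, is entirely elementary and self-contained, requiring no probabilistic language. The only point worth noting explicitly in your write-up is that termwise differentiation of the series is justified by the uniform absolute convergence of $\sum_n|w_n'(s)Q(Y_n)|$ and $\sum_n w_n(s)|Q'(Y_n)|Y_n^2$ on compact subsets of $(0,\infty)$, which follows immediately from the $(n+s)^{-2}$ and $(n+s)^{-3}$ decay.
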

\begin{proof}
Notice that since $M\geq N$ we have $\theta \geq 2z_c$. In view of (\ref{GenForm}) we can reduce (\ref{GVer}) to establishing 
\begin{equation}\label{GenFormVer}
 N \sum_{n = 0}^\infty \frac{1}{|n+z(r)|^2} - M \sum_{n = 0}^\infty \frac{1}{|n + \theta - z(r)|^2} \leq 0
\end{equation}
for $r \geq 0$. Recall from (\ref{zcdef}) that
$$N \sum_{n = 0}^\infty \frac{1}{(n+z_c)^2} = M \sum_{n = 0}^\infty \frac{1}{(n+\theta -z_c)^2},$$
and so it suffices to show that
\begin{equation}\label{RedQ1}
 \sum_{m \geq 0} \sum_{n \geq 0}  \frac{1}{(m+\theta - z_c)^2} \frac{1}{|n+z(r)|^2} -  \sum_{m \geq 0} \sum_{n \geq 0}\frac{1}{(m+z_c)^2} \frac{1}{|n + \theta - z(r)|^2} \leq 0.
\end{equation}

If $m = n$ the above reads
$$\frac{1}{(n + \theta - z_c)^2} \frac{1}{(n+z_c )^2 +r^2} - \frac{1}{(n+z_c)^2} \frac{1}{(n+\theta  - z_c)^2 + r^2} = $$
$$ \frac{-r^2(\theta-2z_c)(2n + \theta)}{(n+\theta -z_c)^2((n+z_c)^2 + r^2)(n+z_c)^2((n+\theta - z_c)^2 + r^2)},$$
which is non-positive as $\theta \geq 2z_c$. 

 We split the remaining sum as follows
$$ \sum_{m,n \geq 0, m\neq n}  \frac{1}{(m+\theta - z_c)^2} \frac{1}{|n+z(r)|^2} -  \sum_{m,n \geq 0, m\neq n}\frac{1}{(m+z_c)^2} \frac{1}{|n + \theta - z(r)|^2} = $$
$$\sum_{n = 0}^\infty \sum_{m = n+ 1}^\infty \frac{1}{(m+\theta -z_c)^2} \frac{1}{(n+z_c)^2 + r^2} + \frac{1}{(n+\theta -z_c)^2} \frac{1}{(m+z_c)^2 + r^2} $$
$$- \frac{1}{(m+z_c)^2}\frac{1}{(n + \theta - z_c)^2 + r^2} - \frac{1}{(n+z_c)^2}\frac{1}{(m + \theta - z_c)^2 + r^2} = $$
$$\sum_{n = 0}^\infty \sum_{m = n+ 1}^\infty \frac{-r^2(n+m+\theta)(m-n +\theta -2z_c)}{(m+\theta -z_c)^2(n+z_c)^2((n+z_c)^2+r^2)((m+\theta -z_c)^2 + r^2)} + $$
$$ \frac{r^2(n+m+\theta)(m -n + 2z_c  - \theta)}{(m+z_c)^2(n+\theta-z_c)^2((m+z_c)^2 + r^2)((n+\theta -z_c)^2 + r^2)}.$$
We want to show that each summand above is negative. We see that the first term is always negative and so if we increase its denominator we will make the summand larger. Notice that
\begin{equation}
\begin{split}
&(m+\theta -z_c)^2(n+z_c)^2((n+z_c)^2+r^2)((m+\theta -z_c)^2 + r^2) - \\
&(m+z_c)^2(n+\theta-z_c)^2((m+z_c)^2 + r^2)((n+\theta -z_c)^2 + r^2) = \\
& -(\theta - 2z_c)(m-n) (a r^4 + b r^2 +c),
\end{split}
\end{equation}
where 
$$a = 2mn + \theta (m+n) + 2\theta z_c - 2z_c^2 > 0, b= (2n +\theta)(m-n)^3 + 4(n+z_c)(n+\theta -z_c) (m-n)^2 + $$
$$ (2n + \theta)(2n^2 + 2n \theta + \theta^2 - 2\theta z_c + 2z_c^2)(m-n) + 2(n+z_c)(n + \theta - z_c)(n^2 + n\theta  +\theta^2 - 3\theta z_c + 2z_c^2) > 0,$$
$$c = (2mn + m\theta + n \theta + 2\theta z_c - 2z_c^2)( (2n^2 + 2n\theta + \theta^2 - 2\theta z_c + 2z_c^2) (m-n)^2 + $$
$$ + 2(n + z_c)(2n + \theta)(n+\theta - z_c) + 2 (n+z_c)^2 (n + \theta -z_c)^2 > 0.$$
We conclude that the second denominator is larger than the first and so by replacing it we make the summands larger. But then the summand becomes
$$ \frac{r^2(n+m+\theta)( 4z_c  - 2\theta)}{(m+z_c)^2(n+\theta-z_c)^2((m+z_c)^2 + r^2)((n+\theta -z_c)^2 + r^2)},$$
which is clearly non-positive. We conclude that the sum is negative (unless $M = N$ in which case it is $0$) and this proves (\ref{RedQ1}).
\end{proof}

As an immediate corollary we have the following result.
\begin{lemma}\label{PosLemma} Suppose that $M \geq N \geq 1$ and $\theta > 0$. Then for any $x > 0$ and $y \in \mathbb{R}$ with $|y| \geq x$
\begin{equation} \label{eqsum1}
\Re \left[ G_{M,N}(z_c + x +\i y) \right] \geq 0.
\end{equation}
\end{lemma}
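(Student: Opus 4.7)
The plan is to apply the Phragm\'en--Lindel\"of principle to the harmonic function $\Re G_{M,N}$ on the closed sector
\[
\Omega := \{z_c + x + \i y : x \geq 0,\ y \geq x\},
\]
which has angular opening $\pi/4$ at the vertex $z_c$. By the conjugation symmetry $G_{M,N}(\bar z) = \overline{G_{M,N}(z)}$ (all coefficients in \eqref{S5Gfun} being real), we have $\Re G_{M,N}(\bar z) = \Re G_{M,N}(z)$, so the case $-y \geq x > 0$ reduces to the case $y \geq x > 0$; it thus suffices to prove $\Re G_{M,N} \geq 0$ throughout $\Omega$.

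First I would verify that $G_{M,N}$ is holomorphic on an open neighborhood of $\Omega$: the only singularities of $G_{M,N}$ come from the poles of $\log \Gamma(z)$ at $\{0,-1,-2,\dots\}$ and of $\log \Gamma(\theta - z)$ at $\{\theta, \theta+1, \dots\}$, all of which lie on the real axis, while $\Omega \cap \mathbb{R} = \{z_c\} \subset (0,\theta)$ avoids both. Hence $\Re G_{M,N}$ is harmonic on $\Omega$. Next I would establish non-negativity on the two bounding rays of $\Omega$. On the vertical ray $\{z_c + \i r : r \geq 0\}$, Lemma \ref{LemmaVert} gives $\Re \partial_r G_{M,N}(z_c + \i r) \geq 0$ for $r \geq 0$; integrating from $r=0$ with $G_{M,N}(z_c) = 0$ yields $\Re G_{M,N}(z_c + \i r) \geq 0$. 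On the oblique ray $\{z_c + re^{\i \pi/4} : r \geq 0\}$, the slope-$(\pm 1)$ monotonicity statement \eqref{eqG1S3} of Lemma \ref{ReleL1S3} (with $\phi = \pi/4$) furnishes the same conclusion by the identical integration argument.

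To finish, I would apply the Phragm\'en--Lindel\"of principle to the subharmonic function $u := -\Re G_{M,N}$ on the sector $\Omega$. Since $\Omega$ has opening $\pi/4$, the principle permits any subharmonic function growing strictly slower than $\exp(c|z|^{\pi/(\pi/4)}) = \exp(c|z|^4)$ to inherit its boundary bound. The function $u$ satisfies $u \leq 0$ on $\partial \Omega$ by the previous paragraph, and by Stirling we have $|G_{M,N}(z)| = O(|z|\log|z|)$ uniformly as $|z| \to \infty$ in $\Omega$; Phragm\'en--Lindel\"of therefore forces $u \leq 0$ on all of $\Omega$, i.e.\ $\Re G_{M,N} \geq 0$, completing the proof.

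The main obstacle is the uniform growth estimate $|G_{M,N}(z)| = O(|z|\log|z|)$ on $\Omega$. The delicate region is large $x$, where $\theta - z$ has very negative real part and $\arg(\theta - z)$ approaches $-\pi$, so the standard Stirling asymptotic \eqref{S3AssGamma} cannot be applied to $\log \Gamma(\theta - z)$ directly. I would handle this via the reflection identity $\Gamma(w)\Gamma(1-w) = \pi/\sin(\pi w)$, writing
\[
\log \Gamma(\theta - z) = \log \pi - \log \sin(\pi(\theta - z)) - \log \Gamma(1 - \theta + z):
\]
the argument $1-\theta+z$ has large positive real part, where Stirling is clean and gives $O(|z|\log|z|)$; and $|\log \sin(\pi(\theta - z))| = O(|\Im z|) = O(|z|)$ since $|\Im(\theta - z)| = y > 0$ forces $|\sin(\pi(\theta - z))| \gtrsim \sinh(\pi y)/2$ via $|\sin(a + \i b)|^2 = \sin^2 a + \sinh^2 b$. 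Combined with the direct Stirling bound on $\log \Gamma(z)$, this yields the required uniform estimate, after which Phragm\'en--Lindel\"of closes the argument.
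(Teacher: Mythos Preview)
Your proof is correct and takes a genuinely different route from the paper's. The paper argues pointwise: given $(x,y)$ with $y\ge x>0$, it sets $C=y^2-x^2$ and follows the hyperbolic arc $z(r)=z_c+r+\i\sqrt{C+r^2}$ for $r\in[0,x]$, which connects $z_c+\i\sqrt{C}$ to $z_c+x+\i y$. Along this arc one has $x(r)x'(r)-y(r)y'(r)=0$, so the second line of \eqref{GenForm} vanishes and the first line is manifestly nonnegative; hence $\Re G_{M,N}$ is nondecreasing along the arc, and the conclusion follows from Lemma~\ref{LemmaVert} at the endpoint $z_c+\i\sqrt{C}$. This is entirely elementary---no Stirling, no Phragm\'en--Lindel\"of---and in fact yields the sharper comparison $\Re G_{M,N}(z_c+x+\i y)\ge \Re G_{M,N}(z_c+\i\sqrt{y^2-x^2})$. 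Your approach, by contrast, treats the whole sector at once via a maximum-principle argument; it is more conceptual and would generalize readily to other functions with the same boundary and growth behavior, at the cost of importing the Phragm\'en--Lindel\"of machinery and a growth estimate.

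Two minor remarks. First, the ``obstacle'' you flag is illusory: for $z=z_c+x+\i y\in\Omega$ one checks that $|\Im(\theta-z)|=y\ge x>x-(\theta-z_c)=|\Re(\theta-z)|$ whenever $\Re(\theta-z)<0$, which forces $\arg(\theta-z)\in(-3\pi/4,0]$, bounded away from $-\pi$; so Stirling \eqref{S3AssGamma} applies to $\log\Gamma(\theta-z)$ directly and the reflection detour is unnecessary (though not wrong). Second, Lemma~\ref{ReleL1S3} as stated carries the extra hypothesis $N/M\ge\delta$, absent from Lemma~\ref{PosLemma}; but the monotonicity \eqref{eqG1S3} you invoke comes straight from \eqref{GenAnglev2} with $\phi=\pi/4$ and does not use $\delta$, so you may either cite that formula directly or simply take $\delta=N/M$.
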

\begin{proof}
Let us briefly explain the idea of the proof. We will construct $y_0 \in \mathbb{R}$ and a contour connecting the points $z_c + \i y_0$ to $z_c + x + \i y$ such that $\Re[G_{M,N}(z)]$ increases along this contour. This would imply through (\ref{GVer}) that
\begin{equation}\label{EstHyp}
\Re \left[ G_{M,N}(z_c + x +\i y) \right] \geq \Re \left[ G_{M,N}(z_c  +\i y_0) \right]\geq  \Re \left[ G_{M,N}(z_c) \right] = 0.
\end{equation}

Let $C =y^2 - x^2 \geq 0$ and set $y_0 = \sqrt{C}$ if $y \geq 0$ and $y_0 = -\sqrt{C}$ if $y < 0$. Consider the curve $z(r) =z_c+  r \pm \i \sqrt{C + r^2}$, where the positive sign is taken when $y \geq 0$  and the negative is taken otherwise; here $r$ varies in $[0,x]$. The curve $z(r)$ connects $y_0$ to $(x,y)$ as $r$ ranges from $0$ to $x$. In addition, writing $z(r) = z_c + x(r) + \i y(r)$ we see that $x'(r) x(r) - y'(r)y(r) = 0$, which in view of (\ref{GenForm}) shows that $\Re \left[ G_{M,N}(z(r)) \right]$ increases on $[0,x]$. From this we conclude (\ref{EstHyp}) where the second inequality follows from (\ref{GVer}) and last equality is true by the definition of $z_c$.
\end{proof}

%
\subsection{Proof of Lemmas \ref{analGS3} and  \ref{ReleL1S3} }\label{Section9.4}

In this section we give the proof of  Lemmas \ref{analGS3} and \ref{ReleL1S3}, whose statements are recalled here for the reader's convenience. We follow the same notation as in Section \ref{Section3.2}.

\begin{lemma}\label{S6analGS3}
Fix $\theta > 0$, $\delta \in (0,1)$ and assume that $M \geq N \geq 1$, $N/M \in [\delta, 1]$. 
There exist constants $C > 0$ and $r > 0$ depending on $\theta$ and $\delta$ such that $G_{\alpha}$ is analytic in the disc $|z - z_c| < r$ and the following hold for $|z - z_c| \leq r$:
\begin{equation}\label{S6powerGS3}
\begin{split}
&\left|G_\alpha(z)+ (z-z_c)^3\sigma^3_\alpha/3 \right| \leq C |z-z_c|^4 \mbox{ whenever $|z - z_c| < r$};\\
& \Re[G_\alpha(z) ] \geq  (\sqrt{2}/2)^3|z-z_c|^3\sigma^3_\alpha/6  \mbox{ when $z \in C_{z_c, \phi}$ with $\phi =  \pi/4$; }\\
&\Re[G_\alpha(z) ] \leq - (\sqrt{2}/2)^3|z-z_c|^3 \sigma^3_\alpha/6  \mbox{ when $z \in C_{z_c, \phi}$ with $\phi =  3\pi/4$ }.
\end{split}
\end{equation}
In the above equations $z_c, \sigma_\alpha$ are as in Definition \ref{DefScaleS3} and $C_{a, \phi}$ is as in Definition \ref{ContV}.
\end{lemma}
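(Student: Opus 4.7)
The plan is to prove the lemma by establishing a uniform Taylor expansion of $G_\alpha$ around $z_c$ and reading off the three statements from this expansion. First I would fix the radius of analyticity: since the hypothesis $\alpha=N/M\in[\delta,1]$ forces $z_c\in g^{-1}([\delta,1])=[a,b]\subset(0,\theta)$ with $a,b$ depending only on $\theta,\delta$ (as noted after (\ref{ULBalpha})), choosing $r_0=\min(a,\theta-b)/4$ ensures that the closed disc $\{|z-z_c|\leq 2r_0\}$ stays inside the strip $\{0<\Re z<\theta\}$, where both $\log\Gamma(z)$ and $\log\Gamma(\theta-z)$ are holomorphic; hence $G_\alpha$ is analytic there, uniformly in the relevant parameters.

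Next I would compute the Taylor coefficients at $z_c$. Direct differentiation combined with the definitions gives $G_\alpha(z_c)=0$ (by choice of $C_{M,N}$), $G_\alpha'(z_c)=0$ (since $M^{-1}W_{M,N}(z_c)=\alpha\Psi(z_c)+\Psi(\theta-z_c)$), and $G_\alpha''(z_c)=\alpha\sum_{n\geq 0}(n+z_c)^{-2}-\sum_{n\geq 0}(n+\theta-z_c)^{-2}=0$, which is exactly (\ref{zcdef}) after dividing by $M$. Differentiating once more yields $G_\alpha'''(z_c)=-2\alpha\sum_{n\geq 0}(n+z_c)^{-3}-2\sum_{n\geq 0}(n+\theta-z_c)^{-3}=-2\sigma_\alpha^3$. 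The fourth derivative satisfies
\[
G_\alpha^{(4)}(z)=6\alpha\sum_{n\geq 0}\frac{1}{(n+z)^4}-6\sum_{n\geq 0}\frac{1}{(n+\theta-z)^4},
\]
which is bounded in absolute value by a constant $C_1=C_1(\theta,\delta)$ on $\{|z-z_c|\leq r_0\}$, since distances from such $z$ to $0$ and $\theta$ are uniformly bounded below. Taylor's theorem with integral remainder then gives
\[
G_\alpha(z)=-\frac{\sigma_\alpha^3}{3}(z-z_c)^3+R_4(z),\qquad |R_4(z)|\leq \frac{C_1}{24}|z-z_c|^4,
\]
which is the first assertion of (\ref{S6powerGS3}) with $C=C_1/24$ and $r=r_0$.

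For the second and third assertions, I would parametrize $z=z_c+re^{i\phi}$ on $C_{z_c,\phi}$ and compute
\[
\Re\!\left[-\tfrac{\sigma_\alpha^3}{3}(z-z_c)^3\right]=-\frac{\sigma_\alpha^3 r^3}{3}\cos(3\phi).
\]
For $\phi=\pm\pi/4$, $\cos(3\phi)=-\sqrt{2}/2$, so the leading cubic contribution is $+\sigma_\alpha^3 r^3\sqrt{2}/6$; for $\phi=\pm 3\pi/4$, $\cos(3\phi)=\sqrt{2}/2$, giving $-\sigma_\alpha^3 r^3\sqrt{2}/6$. Combined with the remainder bound,
\[
\Re[G_\alpha(z)]\geq \tfrac{\sigma_\alpha^3 r^3\sqrt{2}}{6}-Cr^4 \quad\text{on }C_{z_c,\pi/4},
\]
with the symmetric reversed inequality on $C_{z_c,3\pi/4}$. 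The target constant $(\sqrt{2}/2)^3/6=\sqrt{2}/24$ equals one quarter of $\sqrt{2}/6$, so it suffices to impose $Cr\leq \sigma_\alpha^3\sqrt{2}/8$. By the uniform lower bound on $\sigma_\alpha$ from (\ref{ULBalpha}), this condition gives a threshold on $r$ depending only on $\theta,\delta$; taking the minimum of this threshold and $r_0$ completes the proof.

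The main (minor) obstacle is simply ensuring that every constant appearing above is uniform in the parameters, which is handled by the facts recorded in Section \ref{Section9.1}: $z_c$ lies in a compact subset of $(0,\theta)$ and $\sigma_\alpha$ is sandwiched between positive constants, both uniformly in $\alpha\in[\delta,1]$. No delicate estimate or contour manipulation is required—the whole argument is a routine uniform Taylor expansion, with the slack factor $1/4$ between $\sqrt{2}/6$ and $\sqrt{2}/24$ providing the room needed to absorb the quartic remainder.
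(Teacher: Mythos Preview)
Your proof is correct and follows essentially the same approach as the paper: locate $z_c$ in a compact subinterval of $(0,\theta)$, verify $G_\alpha(z_c)=G_\alpha'(z_c)=G_\alpha''(z_c)=0$ and $G_\alpha'''(z_c)=-2\sigma_\alpha^3$, bound the quartic remainder uniformly, and then shrink $r$ so that the cubic term dominates on the $\pi/4$ and $3\pi/4$ rays. The only cosmetic difference is that you bound the remainder via an explicit uniform estimate on $G_\alpha^{(4)}$, whereas the paper bounds all Taylor coefficients at once using a sup-bound on $|G_\alpha|$ over a compact neighborhood together with the Cauchy inequalities; both are routine and yield the same conclusion.
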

\begin{proof}
As explained in the beginning of this section $z_c(\alpha)$ defines an increasing continuous between $(0,\infty)$ and $(0, \theta)$. We let $[a,b]$ denote the preimage of $[\delta, 1]$ under this bijection.

Let $r_0 > 0$ be sufficiently small, depending on $\theta$ and $\delta$, such that $\theta -b \geq 2r_0$ and $a \geq 2r_0$. The singularities of $G_{\alpha}$ come from the gamma functions in its definition, and are at $z = 0, -1, -2, \dots$ and $z = \theta , \theta + 1, \theta + 2, \dots$. In particular, by our choice of $r_0$ we know that the closure of $B_{r_0}(z_c) := \{z \in \mathbb{C}: |z - z_c| < r_0\}$ is disjoint from those sets. By Hadamard theorem, see \cite[Theorem 4.4, Chapter 2]{Stein}, we can expand $G_{\alpha}$ in absolutely convergent power series inside $B_{r_0}(z_c)$. From the definition of $G_\alpha$ we have that $G_\alpha(z_c) = \partial_zG_\alpha(z_c) = \partial_z^2 G_\alpha(z_c) = 0$, while $\partial^3_z G_\alpha(z_c) = 2 \sigma_\alpha^3.$ The latter implies that 
$$G_\alpha(z) = \frac{1}{3} \cdot \sigma_\alpha^3 \cdot (z- z_c)^3 + \sum_{ n = 4}^\infty a_n(\alpha) (z-z_c)^n.$$

Define $K = \{z \in \mathbb{C}: d(z, [a,b]) \leq r_0\}$. Since $G_\alpha(z)$ is jointly continuous on $(z,\alpha) \in K \times [\delta, 1]$, which is compact, we know that we can find a constant $A > 0$, depending on $\delta$ and $\theta$ such that 
$$\sup_{\alpha \in [\delta, 1], z \in K} |G_\alpha(z)| \leq A.$$
Using the Cauchy inequalities, see \cite[Corollary 4.3, Chapter 2]{Stein}, we know that 
$$|a_n(\alpha)| \leq Ar_0^{-n}.$$
The above inequalities show that if $|z - z_c| \leq r_0/2$, then 
$$\left|G_\alpha(z) -  \frac{1}{3} \cdot \sigma_\alpha^3 \cdot (z- z_c)^3\right| \leq \sum_{n = 4}^\infty |a_n(\alpha)| \cdot |z - z_c|^n \leq A |z-z_c|^4 r_0^{-4}  \cdot \sum_{n = 4}^\infty (1/2)^{n-4} = 2A |z-z_c|^4 r_0^{-4}  ,$$
which proves the first line in (\ref{S6powerGS3}) for any $r \in (0, r_0/2)$ and $C = 2A r_0^{-4}$.

Next we pick $r \leq r_0$ sufficiently small so that $Cr < W = (2\sqrt{2})\sigma^3_\alpha/48$ for all $\alpha \in [\delta, 1]$. Then 
$$\left| \Re[G_\alpha(z)   + (z-z_c)^3\sigma^3_\alpha/3 ] \right| \leq \left| G_\alpha(z)+ (z-z_c)^3\sigma^3_\alpha/3 \right| \leq  C |z-z_c|^4 \leq Cr |z-z_c|^3 \leq W |z-z_c|^3.$$

The above implies for $z \in C_{z_c, \phi}$ with $\phi = \pi/4$ that
$$\Re[G_\alpha(z) ]\geq -\Re[ (z-z_c)^3]\sigma^3_\alpha/3 - W |z-z_c|^3 =|z-z_c|^3 \cdot \left((2 \sqrt{2}) \sigma^3_\alpha/24 - W \right) = (\sqrt{2}/2)^3|z-z_c|^3\sigma^3_\alpha/6. $$
Similarly if $z \in C_{z_c, \phi}$ with $\phi = 3\pi/4$  we get
$$\Re[G_\alpha(z) ]\leq  -\Re[ (z-z_c)^3]\sigma^3_\alpha/3+ W |z-z_c|^3 = - (\sqrt{2}/2)^3|z-z_c|^3\sigma^3_\alpha/6. $$
The latter two inequalities imply the remainder of (\ref{S6powerGS3}).
\end{proof}

\begin{lemma}\label{ReleL1S5} Fix $\theta > 0$, $\delta \in (0,1)$ and assume that $M \geq N \geq 1$, $N/M \in [\delta, 1]$. Let $z(r) = z_c  + r e^{\i\phi}$. Then we have
\begin{equation}\label{eqG1S5}
\begin{split}
&\Re \left[\frac{d}{dr} G_{M,N}(z(r)) \right] \leq 0, \mbox{ provided that $\phi = 3\pi/4$ or $\phi = 5\pi/4$,}\\
&\Re \left[\frac{d}{dr} G_{M,N}(z(r)) \right] \geq 0, \mbox{ provided that $\phi = \pi/4$ or $\phi = -\pi/4$.}
\end{split}
\end{equation}
In addition, if $\delta_1  > 0$ is given then there is a constant $c > 0$ depending on $\delta_1, \delta, \theta$ such that
\begin{equation}\label{eqG2S5}
\begin{split}
\Re \left[G_{M,N}(z(r)) \right] \leq -c (M+N) r \log(1 + r) \mbox{ for any $r \geq \delta_1$ provided $\phi = 3\pi/4$ or $\phi = 5\pi/4$,}\\
\Re \left[G_{M,N}(z(r)) \right] \geq c (M+N) r \log(1 + r) \mbox{ for any $r \geq \delta_1$ provided $\phi = \pi/4$ or $\phi = -\pi/4$,}\\
\end{split}
\end{equation}
\end{lemma}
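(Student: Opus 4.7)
The plan is to use the general formula (\ref{GenForm}) for $\Re\bigl[\tfrac{d}{dr}G_{M,N}(z(r))\bigr]$ together with the crucial observation that $\cos(2\phi)=0$ for each of the four angles $\phi\in\{\pi/4,\,3\pi/4,\,5\pi/4,\,-\pi/4\}$. Writing $z(r)=z_c+re^{\i\phi}$ so that $x(r)=r\cos\phi$ and $y(r)=r\sin\phi$, the mixed term $x(r)x'(r)-y(r)y'(r)=r\cos(2\phi)$ in (\ref{GenForm}) vanishes identically, and the formula collapses to
\begin{equation*}
\Re\bigl[\tfrac{d}{dr}G_{M,N}(z(r))\bigr]=r^2\cos(\phi)\cdot S(r),
\end{equation*}
where
\begin{equation*}
S(r):=N\sum_{n=0}^\infty\frac{1}{(n+z_c)|n+z(r)|^2}+M\sum_{n=0}^\infty\frac{1}{(n+\theta-z_c)|n+\theta-z(r)|^2}>0.
\end{equation*}
Since $\cos(3\pi/4)=\cos(5\pi/4)=-1/\sqrt{2}<0$ and $\cos(\pi/4)=\cos(-\pi/4)=1/\sqrt{2}>0$, the monotonicity statement (\ref{eqG1S5}) is immediate.

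For the quantitative bound (\ref{eqG2S5}) I focus on $\phi=3\pi/4$; the remaining three cases are symmetric. The strategy is to establish a differential bound for $\tfrac{r^2}{\sqrt{2}}S(r)$ and then integrate. For $r\geq 1$, using $|n+z(r)|^2=(n+z_c-r/\sqrt{2})^2+r^2/2\leq 2[(n+z_c)^2+r^2]$ together with the integral comparison
\begin{equation*}
\sum_{n=0}^\infty\frac{1}{(n+z_c)((n+z_c)^2+r^2)}\geq \int_0^\infty\frac{dx}{(x+z_c)((x+z_c)^2+r^2)}=\frac{1}{2r^2}\log\!\left(1+\frac{r^2}{z_c^2}\right)\geq \frac{c_\delta\log(1+r)}{r^2},
\end{equation*}
and the analogous estimate for the second sum in $S(r)$, yields $\tfrac{r^2}{\sqrt{2}}S(r)\geq c(M+N)\log(1+r)$; this is essentially the computation in (\ref{NAW3})--(\ref{NAW4}) of the proof of Lemma \ref{HL1}. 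For $r\in[0,1]$, $S(r)$ is continuous and each summand is positive with $|n+z(r)|^2\leq 2[(n+z_c)^2+1]$, so $\tfrac{r^2}{\sqrt{2}}S(r)\geq c'(M+N)r^2$ on that range. Uniformity of all constants in $\alpha\in[\delta,1]$ is guaranteed by the observation in Section \ref{Section9.1} that $z_c$ lies in a fixed compact subinterval $[a,b]\subset(0,\theta)$ depending only on $\theta,\delta$.

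Since $G_{M,N}(z_c)=0$, integrating $-\Re\bigl[\tfrac{d}{ds}G_{M,N}(z(s))\bigr]=\tfrac{s^2}{\sqrt{2}}S(s)$ from $0$ to $r$ then gives, for $r\geq 1$,
\begin{equation*}
-\Re[G_{M,N}(z(r))]\geq c'(M+N)\int_0^1 s^2\,ds\;+\;c(M+N)\int_1^r\log(1+s)\,ds\;\geq\;\tilde c\,(M+N)\,r\log(1+r),
\end{equation*}
while for $r\in[\delta_1,1]$ the first integral alone provides a lower bound of order $(M+N)\delta_1^3$, which dominates $(M+N)r\log(1+r)\leq (M+N)\log 2$ on that range after adjusting $\tilde c$. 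The analogous arguments for $\phi\in\{\pi/4,-\pi/4\}$ differ only in the sign of $\cos\phi$, and the case $\phi=5\pi/4$ is identical to $\phi=3\pi/4$ because $|n+z(r)|^2$ depends on $\phi$ only through $\cos\phi$. The main technical obstacle is essentially the bookkeeping needed to ensure that every constant depends only on $\theta,\delta,\delta_1$ and not on $M$ or $N$ individually; this is handled by the uniform control of $z_c$ from Section \ref{Section9.1}.
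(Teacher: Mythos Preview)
Your proof is correct and follows essentially the same approach as the paper: both exploit that $\cos(2\phi)=0$ at the four angles so the mixed term in (\ref{GenForm}) drops out, giving the sign result immediately, and both obtain the quantitative bound by combining an integral-comparison estimate of the type in (\ref{NAW3}) for large $r$ with a crude lower bound on $S(r)$ for bounded $r$, then integrating. The only cosmetic differences are that the paper isolates the $n=0$ term to bound $S(r)$ on bounded intervals while you bound the full sum, and the paper splits at an unspecified $r_0$ rather than at $1$.
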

\begin{proof}
From (\ref{GenAngle}) we get whenever $\phi = \pi/4$ or $3\pi/4$ or $5\pi/4$ or $7\pi/4$ that
\begin{equation}\label{GenAnglev2}
\begin{split}
\Re \left[\frac{d}{dr} G_{M,N}(z(r)) \right] =  &\sum_{n = 0 }^\infty \frac{N r^2   \cos(\phi)}{(n + z_c)|n+z(r)|^2}+ \sum_{n = 0 }^\infty \frac{M r^2 \cos(\phi)}{(n + \theta - z_c)|n+\theta - z(r)|^2 },
\end{split}
\end{equation}
which automatically proves (\ref{eqG1S5}). 

Below $C$ stands for a positive constant, whose value depends on $\theta$ and $\delta$ alone, and whose value may change from line to line. By repeating the same arguments as in (\ref{NAW3}) we see that if $\phi = 3\pi/4$ or $5\pi/4$ we have
\begin{equation}\label{NAW3v2}
\begin{split}
&\Re \left[\frac{d}{dr} G_{M,N}(z(r)) \right] \leq -C (M+N) \log (r),\\
\end{split}
\end{equation}
where the last inequality holds for all $r \geq r_0 $, where $r_0$ depends on $\theta$ alone. Furthermore, from (\ref{GenAnglev2}) we have for any $r > 0$ that
\begin{equation}\label{NAW3v2}
\begin{split}
&\Re \left[\frac{d}{dr} G_{M,N}(z(r)) \right] \leq \frac{-N(\sqrt{2}/2) r^2}{z_c (z_c^2 + r^2 )} + \frac{-Mr^2(\sqrt{2}/2)}{(\theta - z_c)((\theta - z_c)^2 + r^2)} \leq -C(M+N)r^2.
\end{split}
\end{equation}

Let $R_0 \geq r_0$ be sufficiently large so that for any $r \geq R_0$ we have
$$\int_{r_0}^r \log x dx = (r\log r - r) - (r_0 \log r_0 - r_0) \geq (1/2) r\log r.$$
Then, using (\ref{eqG1S5}) we see that if $r \in [\delta_1, R_0]$ we have
$$\Re \left[G_{M,N}(z(r)) \right] = \int_0^{r} \Re \left[\frac{d}{dx} G_{M,N}(z(x)) \right]dx \leq \int_{0}^{r} -C(M+N)x^2dx \leq -(C/3)(M+N)\delta_1^3,$$
while if $r \geq R_0$ we have
$$\Re \left[G_{M,N}(z(r)) \right] = \int_0^{r} \Re \left[\frac{d}{dx} G_{M,N
}(z(x)) \right]dx \leq \int_{r_0}^{r} \Re \left[\frac{d}{dx} G_{M,N}(z(x)) \right]dx \leq $$
$$\leq \int_{r_0}^{r} -C(M+N) \log(x)dx = - (C/2)(M+N)r \log r .$$
The last two inequalities imply the first line in (\ref{eqG2S5}). The second one is derived analogously.
\end{proof}

%
\subsection{Proof of Lemma \ref{horGS3}}\label{Section9.5}

In this section we give the proof of  Lemma \ref{horGS3}, whose statement is recalled here for the reader's convenience. We follow the same notation as in Section \ref{Section3.2}.

\begin{lemma}\label{horG}Fix $\theta, A > 0$, $\delta \in (0,1)$ and assume that $M \geq N \geq 1$, $N/M \in [\delta, 1]$. 
There exist constants $M_0, C_0 > 0$ depending on $\delta, \theta, A$ such that if $M \geq M_0$, $x \in [-A, A]$ and $z(r) =  z_c  + r e^{i\phi}$ with $z_c$ as in Definition \ref{DefScaleS3}, $r \geq 0$ and $\phi \in \{\pi/4, 3\pi/4, 5\pi/4, 7\pi/4\}$ we have
\begin{equation}\label{horGEq}
\left| \Re [G_{M,N}(z(r) + xM^{-1/3})  - G_{M,N}(z(r))] \right| \leq C_0 M^{2/3} \cdot ( 1 + r).
\end{equation} 
\end{lemma}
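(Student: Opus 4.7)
The plan is to express the difference $G_{M,N}(z(r)+xM^{-1/3})-G_{M,N}(z(r))$ as a one-dimensional integral of $G_{M,N}'$ and then bound the integrand uniformly. By the fundamental theorem of calculus,
\begin{equation*}
G_{M,N}(z(r)+xM^{-1/3})-G_{M,N}(z(r)) = \int_0^{xM^{-1/3}} G_{M,N}'(z(r)+s)\,ds.
\end{equation*}
From (\ref{S5Gfun}) and the critical point equation (\ref{zcdef}), which may be rewritten as $W_{M,N}(z_c)=N\Psi(z_c)+M\Psi(\theta-z_c)$, we have
\begin{equation*}
G_{M,N}'(z(r)+s) = N\bigl[\Psi(z(r)+s)-\Psi(z_c)\bigr]+M\bigl[\Psi(\theta-z(r)-s)-\Psi(\theta-z_c)\bigr].
\end{equation*}
Hence the matter reduces to showing that for $\phi\in\{\pi/4,3\pi/4,5\pi/4,7\pi/4\}$, $r\geq 0$, and $|s|\leq AM^{-1/3}$ with $M$ large, both $|\Psi(z(r)+s)-\Psi(z_c)|$ and $|\Psi(\theta-z(r)-s)-\Psi(\theta-z_c)|$ are bounded by $C(1+r)$, with $C$ depending only on $\theta,\delta,A$; then the integral is at most $AM^{-1/3}\cdot (N+M)\cdot C(1+r)\leq C_0 M^{2/3}(1+r)$, and taking real parts only improves the inequality.

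The key step is therefore to prove the digamma difference bound. Using (\ref{digamma}) one has the telescoping identity
\begin{equation*}
\Psi(z(r)+s)-\Psi(z_c) = (re^{i\phi}+s)\sum_{n=0}^{\infty}\frac{1}{(n+z_c)(n+z_c+re^{i\phi}+s)},
\end{equation*}
and an analogous identity for the $\theta$-shifted pair. For $\phi\in\{\pi/4,3\pi/4,5\pi/4,7\pi/4\}$ we have $|\sin\phi|=\tfrac{1}{\sqrt2}$, which forces $|n+z_c+re^{i\phi}+s|\geq r/\sqrt 2$ uniformly in $n$, while for $n\geq 2r$ the bound $|n+z_c+re^{i\phi}+s|\geq c\,n$ holds since $\cos\phi\geq -\tfrac{1}{\sqrt2}$. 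Splitting the series at $n=\lfloor 2r\rfloor$, one gets
\begin{equation*}
\sum_{n=0}^{\infty}\frac{1}{(n+z_c)\,|n+z_c+re^{i\phi}+s|}\leq \frac{C\log(2+r)}{r}\qquad (r\geq 1),
\end{equation*}
and a trivial bound by a constant when $0\leq r\leq 1$ (where $\Psi$ is Lipschitz on a compact neighborhood of $z_c$ disjoint from its poles, using $M\geq M_0$ to ensure $|s|$ is small enough that $z_c+s\in(0,\theta)$, while for $r>0$ the points $z(r)+s,\theta-z(r)-s$ have nonzero imaginary part). Multiplying by $|re^{i\phi}+s|\leq r+|s|$ gives $|\Psi(z(r)+s)-\Psi(z_c)|\leq C\log(2+r)\leq C(1+r)$; the $\theta$-shifted estimate is identical after replacing $z_c$ by $\theta-z_c\in(0,\theta)$.

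The main obstacle is obtaining the uniform (in $r$ and in $N/M\in[\delta,1]$) estimate on the digamma difference; the subtlety is that at slope angles $\phi=3\pi/4,5\pi/4$ the real part of $n+z_c+re^{i\phi}+s$ can be close to zero for $n\approx r/\sqrt 2$, so the naive bound $|n+z_c+re^{i\phi}+s|\geq n$ is false and one must invoke $|\sin\phi|=1/\sqrt 2$ to obtain the $r/\sqrt 2$ lower bound in the dangerous range. Once this splitting is set up, the remaining estimates are routine comparisons with the harmonic sum.
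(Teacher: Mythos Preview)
Your argument is correct and follows essentially the same strategy as the paper: bound the horizontal derivative of $G_{M,N}$ by $O\bigl(M(1+r)\bigr)$ and integrate over $|s|\le AM^{-1/3}$. The paper works with the explicit formula (\ref{GenForm}) for $\Re\,\partial_w G_{M,N}$ and bounds the resulting sums by integral comparison, whereas you bound the full modulus $|G_{M,N}'|$ directly through the digamma telescoping identity and a split of the series at $n\approx 2r$; both routes hinge on the same key fact $|\sin\phi|=1/\sqrt2$ to control $|n+z(r)+s|$ in the dangerous range, and your organization even yields the slightly sharper $C\log(2+r)$ before weakening to $C(1+r)$.
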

\begin{proof} Let us denote $z(r,w) = z_c  + r e^{\i\phi} + w  = x(w) + \i y(w)$ for $w M^{1/3} \in [-A, A]$  . From equation (\ref{GenForm}) we have
\begin{equation}\label{GenForm2}
\begin{split}
&\Re \left[\frac{d}{dw} G_{M,N}(z(r,w)) \right] =  N \sum_{n = 0 }^\infty\frac{ (z_c + w)^2 + 2r \cos (\phi) (z_c+ w) +r^2 }{(n + z_{c})((n + z_c + w)^2 + 2r (n + z_c + w) \cos (\phi) +r^2 )}  +  \\
&M\sum_{n = 0}^{\infty} \frac{(z_c + w)^2 + 2r \cos (\phi) (z_c+ w) +r^2}{(n + \theta -  z_{c})((n +  \theta - z_c - w)^2 -2 r(n +  \theta - z_c - w) \cos (\phi) +r^2)} +  \\
& N \sum_{n = 0}^\infty \frac{z_c + w + r \cos (\phi) }{(n + z_c + w)^2 + 2r (n + z_c + w) \cos (\phi) +r^2} -  \\
& M \sum_{n = 0}^\infty \frac{z_c + w + r \cos (\phi)}{(n +  \theta - z_c - w)^2 -2 r(n +  \theta - z_c - w) \cos (\phi) +r^2}.
\end{split}
\end{equation}
Below $C$ will stand for a positive constant, which depends on $\theta , A, \delta$. We first note that for $M$ large enough
\begin{equation}\label{NAW1P1}
\begin{split}
&  \sum_{n = 0 }^\infty\frac{ 1}{(n + z_c + w)^2 + 2r (n + z_c + w) \cos (\phi) +r^2 }  \leq  \sum_{n = 0 }^\infty\frac{ C }{(n+z_c + w)^2   + r^2 } \leq  \\
&  \frac{ C }{ (z_c + w)^2 + r^2} +  \frac{ C }{ (z_c + w + 1)^2 + r^2} +   \int_1^{\infty} \frac{ C dx}{x^2 + r^2} \leq \frac{ C }{ 1 + r} .
\end{split}
\end{equation}
Analogously, one shows that
\begin{equation}\label{NAW2P2}
\begin{split}
& \sum_{n = 0}^\infty \frac{1}{(n +  \theta - z_c - w)^2 -2 r(n +  \theta - z_c - w) \cos (\phi) +r^2} \leq \frac{C}{1 + r}.
\end{split}
\end{equation}
Combining (\ref{GenForm2}) with (\ref{NAW1P1}) and (\ref{NAW2P2}) we conclude that for all large enough $M$
$$\left|\Re \left[\frac{d}{dw} G_{M,N}(z(r,w)) \right] \right| \leq C(1+r),$$
and then (\ref{horGEq}) follows by the mean value theorem.
\end{proof}

%
\section{Appendix B}\label{Section10} In this section we provide the proofs of various results we used in Sections \ref{Section2} and \ref{Section3}. 

We summarize several estimates about special functions that will be required in the proofs below. Note that if $\mbox{dist}(s, \mathbb{Z}) \geq c$ for some fixed constant $c> 0$ and $s = x+ \i y$ then there exists a constant $c' > 0$ such that
\begin{equation}\label{sinIneq}
\left| \Gamma(-s) \Gamma(1 + s) \right| = \left| \frac{\pi}{ \sin(\pi s)} \right| \leq c' e^{-\pi |y|}.
\end{equation}
It follows from \cite[(2), pp. 32]{Luke} that if $z \in \mathbb{C}$ and $|\arg(z)| \leq \pi - \epsilon$ for some $\epsilon \in (0, \pi)$ then 
\begin{equation}\label{AssGamma}
\Gamma(z) = e^{-z} z^{z - 1/2} (2\pi )^{1/2} \cdot \left( 1 + O(z^{-1}) \right),
\end{equation}
where the constant in the big $O$ notation depends on $\epsilon$ and we take the principal branch of the logarithm. Also by \cite[Theorem 1.6, Chapter 6]{Stein} there are positive constants $c_1, c_2$ such that 
\begin{equation}\label{growthGamma}
\left| \frac{1}{\Gamma(z)} \right|\leq c_1 e^{c_2 |z| \log |z|}.
\end{equation}

%
\subsection{Proof of Propositions \ref{PE1v2} and \ref{PE3v2}}\label{Section10.1}
In this section we give the proofs of Propositions \ref{PE1v2} and \ref{PE3v2}, whose statements are recalled here for the reader's convenience.

\begin{proposition}\label{PE1v3}
Fix $M, N \geq 1$, $\vec{a} = (a_1, \dots, a_N) \in \mathbb{R}^N$, $\vec{\alpha}= (\alpha_1, \dots, \alpha_M) \in \mathbb{R}^M$, $T \geq 0$, a compact set $K \subset \mathbb{C}$ and $v,u\in \mathbb{C}$ with $\Re(u) > 0$. Then there exist positive constants $L_0, C_0, c_0$ depending on all previous constants and $K$ such that if $\tau \in[0,T]$, $x,a \in K$, $w = a + z$, $\arg(z) \in [\pi/4, \pi/3] \cup [-\pi/3, -\pi/4]$, $|w| \geq L_0$
\begin{equation}\label{PE1v3e1}
\left|\frac{\pi }{\sin[(\pi (v-w))}  \frac{\prod_{m = 1}^M  \Gamma(x+ \alpha_m - w )}{\prod_{n = 1}^N\Gamma(w - a_n) }  u^{w-v} e^{\tau (w^2 - v^2)/2} \right| \leq C_0 e^{- c_0 |w| \log |w|}.
\end{equation}
\end{proposition}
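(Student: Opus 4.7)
The plan is to estimate the absolute value factor by factor and show that the ratio of Gamma functions supplies superexponential decay of order $e^{-c|w|\log|w|}$ which dominates at-most-linear-exponential growth coming from all other factors. The angular restriction $\arg(z)\in[\pi/4,\pi/3]\cup[-\pi/3,-\pi/4]$ is exactly what is needed to simultaneously (i) keep $\Re(w)\gtrsim|w|$, so that Stirling provides $-(M+N)\Re(w)\log|w|$ of decay, and (ii) keep $\Re(w^2)\lesssim|w|$, so the Gaussian factor $e^{\tau w^2/2}$ does not ruin the estimate.

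First I would deal with the prefactor $\pi/\sin(\pi(v-w))$. Since $|\Im(w)|\geq c_1|w|$ on our cones (here $v$ is fixed and $a\in K$ is compact), for $|w|\geq L_0$ the quantity $v-w$ lies at distance $\geq 1/2$ from $\mathbb{Z}$, and (\ref{sinIneq}) yields $|\pi/\sin(\pi(v-w))|\leq C_1 e^{-\pi|\Im(v-w)|}\leq C_1'\,e^{-c_2|w|}$, which is only a linear-exponential decay.

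Next I would apply Stirling (\ref{AssGamma}) to each Gamma factor. On our angle range, $\arg(w-a_n)\in[\pi/4-\epsilon,\pi/3+\epsilon]\cup[-\pi/3-\epsilon,-\pi/4+\epsilon]$ and $\arg(x+\alpha_m-w)=\arg(-w)+o(1)$ lies in $[-3\pi/4-\epsilon,-2\pi/3+\epsilon]\cup[2\pi/3-\epsilon,3\pi/4+\epsilon]$, both compactly contained in $\{|\arg|\leq\pi-\pi/4\}$, so (\ref{AssGamma}) applies with a uniform constant. Taking log-moduli and using $\log|w-a_n|=\log|w|+O(|w|^{-1})$, $\arg(x+\alpha_m-w)=\arg(w)-\pi\,\mathrm{sgn}(\Im w)+o(1)$ produces
\begin{equation*}
\log|\Gamma(w-a_n)|=\Re(w)\log|w|-\Im(w)\arg(w)-\Re(w)+O(\log|w|),
\end{equation*}
\begin{equation*}
\log|\Gamma(x+\alpha_m-w)|=-\Re(w)\log|w|+\Im(w)\arg(w)-\pi|\Im(w)|+\Re(w)+O(\log|w|),
\end{equation*}
where the crucial $-\pi|\Im(w)|$ term in the second line comes from the $\pi$-shift in the argument. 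Multiplying over $m=1,\dots,M$ and $n=1,\dots,N$, the log-modulus of the Gamma ratio is
\begin{equation*}
-(M+N)\Re(w)\log|w|+(M+N)\Im(w)\arg(w)-\pi M|\Im(w)|+(M+N)\Re(w)+O(\log|w|).
\end{equation*}

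Finally I would bound the remaining factors trivially: $|u^{w-v}|=e^{\Re((w-v)\log u)}\leq e^{C_3|w|}$, and $|e^{\tau(w^2-v^2)/2}|=e^{(\tau/2)\Re(w^2)+O(1)}$. Writing $w=a+z$ and using $\Re(z^2)=|z|^2\cos(2\arg z)\leq 0$ for $2\arg(z)\in[\pi/2,2\pi/3]\cup[-2\pi/3,-\pi/2]$, one gets $\Re(w^2)=\Re(z^2)+2\Re(az)+\Re(a^2)\leq C_4|w|$, so this factor is bounded by $e^{C_5 T|w|}$. Collecting everything, the log-modulus of the full expression on the left of (\ref{PE1v3e1}) is at most
\begin{equation*}
-(M+N)\Re(w)\log|w|+C_6(1+T)|w|+O(\log|w|).
\end{equation*}
Since $\arg(z)$ is bounded away from $\pm\pi/2$, $\cos(\arg z)\geq 1/2$, hence $\Re(w)\geq|z|/2-|a|\geq|w|/4$ for $|w|\geq L_0$ large; thus $-(M+N)\Re(w)\log|w|\leq -(M+N)|w|\log|w|/4$, which dominates the linear term $C_6(1+T)|w|$ once $|w|\geq L_0$ is chosen sufficiently large. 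This yields (\ref{PE1v3e1}) with any $c_0<(M+N)/4$ and $C_0$ absorbing the $O(\log|w|)$ remainder.

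The only real obstacle is bookkeeping: correctly tracking the $\pi$-shift between $\arg(-w)$ and $\arg(w)$ in the Stirling expansion of $\Gamma(x+\alpha_m-w)$ (so the $-\pi|\Im(w)|$ contribution is properly identified and then absorbed into the linear-in-$|w|$ error term), and verifying that the angular hypothesis is tight enough on both sides — keeping $\arg(z)$ away from $0$ (so $\Re(w)\gtrsim|w|$ and Stirling produces genuine $-|w|\log|w|$ decay) and away from $\pm\pi/2$ (so $\Re(w^2)$ remains at worst linear in $|w|$, uniformly in $\tau\in[0,T]$, allowing $T=0$). These two constraints together cut out precisely the prescribed cones.
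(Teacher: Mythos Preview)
Your proof is correct and takes a somewhat different route from the paper. The paper does not apply Stirling directly to each factor $\Gamma(w-a_n)$ and $\Gamma(x+\alpha_m-w)$; instead it introduces an auxiliary $\theta>0$, writes
\[
\frac{\Gamma(\theta-w)^M}{\Gamma(w)^N}=\exp\bigl(-G_{M,N}(w)+W_{M,N}(z_c)w+C_{M,N}\bigr),
\]
and invokes the prepackaged decay estimate Lemma~\ref{HL1} (which says $\Re G_{M,N}(re^{\i\phi})\geq Cr\log r$ on the relevant cone). Stirling then appears only in the ratio $\prod_m\Gamma(x+\alpha_m-w)\Gamma(\theta-w)^{-M}\cdot\Gamma(w)^N\prod_n\Gamma(w-a_n)^{-1}$, which is shown to be $|w|^{c_2}$. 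Your approach is more self-contained for this particular statement: the explicit Stirling computation makes visible the dominant term $-(M+N)\Re(w)\log|w|$ and exactly why the two angular bounds $|\arg z|\geq\pi/4$ and $|\arg z|\leq\pi/3$ are both needed. The paper's approach, by contrast, buys consistency with the $G_{M,N}$ machinery used throughout Sections~\ref{Section3} and~\ref{Section9}, so the proposition becomes a one-line consequence of Lemma~\ref{HL1}.

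One small slip to fix in your write-up: in the final paragraph you interchange the roles of the two angular constraints. It is $|\arg z|\leq\pi/3$ (bounded away from $\pm\pi/2$) that gives $\cos(\arg z)\geq 1/2$ and hence $\Re(w)\gtrsim|w|$; it is $|\arg z|\geq\pi/4$ (bounded away from $0$) that gives $\Re(z^2)\leq 0$ and also keeps $x+\alpha_m-w$ away from the negative real axis so that Stirling applies to those factors. Your body computations use these correctly; only the concluding commentary has them swapped.
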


\begin{proposition}\label{PE3v3}
Fix $M, N \geq 1$, $\vec{a} = (a_1, \dots, a_N) \in \mathbb{R}^N$, $\vec{\alpha}= (\alpha_1, \dots, \alpha_M) \in \mathbb{R}^M$, $\tau > 0$, a compact set $K \subset \mathbb{C}$ and $v,u\in \mathbb{C}$ with $\Re(u) > 0$. Then there exist positive constants $L_0, C_0, c_0$ depending on all of the previous constants such that if $w = a + z$ with $a \in K$, $\arg(z) \in [\pi/4, \pi/2] \cup [-\pi/2, -\pi/4]$, $|w| \geq L_0$ we have
\begin{equation}\label{PE3v3e1}
\left|\frac{\pi }{\sin(\pi (v-w))} \frac{\prod_{m = 1}^M  \Gamma( \alpha_m - w )}{\prod_{n = 1}^N\Gamma(w - a_n) } u^{w-v}e^{\tau (w^2 - v^2)/2}  \right| \leq C_0 e^{- c_0 |w| \log |w|}.
\end{equation}
\end{proposition}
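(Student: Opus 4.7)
\textbf{Proof plan for Proposition \ref{PE3v3}.} The plan is to take logarithms of absolute values, estimate each factor individually using Stirling's formula \eqref{AssGamma} and the sine bound \eqref{sinIneq}, and then combine everything. By the reflection $w\mapsto\bar w$ it suffices to treat $\phi:=\arg(z)\in[\pi/4,\pi/2]$; the case $\phi\in[-\pi/2,-\pi/4]$ is symmetric. For $|w|$ large we have $\Re(w)=|w|\cos\phi+O(1)$ and $\Im(w)=|w|\sin\phi+O(1)$.

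I will estimate each factor as follows. The sine factor is handled by \eqref{sinIneq}: since $\mathrm{dist}(v-w,\mathbb{Z})$ is bounded below once $|w|$ is large enough, we get $|\pi/\sin(\pi(v-w))|\le Ce^{-\pi|w|\sin\phi+O(1)}$. Trivially $|u^{w-v}|=e^{O(|w|)}$ and $|e^{-\tau v^2/2}|=O(1)$. For the denominator gammas, $w-a_n$ has argument tending to $\phi\in[\pi/4,\pi/2]$, bounded away from $\pm\pi$, so \eqref{AssGamma} yields $\log|\Gamma(w-a_n)|=\Re(w)\log|w|+O(|w|)$. For the numerator gammas, $\alpha_m-w$ has argument tending to $\phi-\pi\in[-3\pi/4,-\pi/2]$, also bounded away from $\pm\pi$, giving $\log|\Gamma(\alpha_m-w)|=-\Re(w)\log|w|+O(|w|)$. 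Finally $|e^{\tau w^2/2}|=e^{\tau|w|^2\cos(2\phi)/2+O(|w|)}$. Combining everything, for $|w|$ sufficiently large,
\[
\log\bigl|\text{LHS of \eqref{PE3v3e1}}\bigr|\ \le\ -(M+N)\,|w|\cos\phi\,\log|w|\ +\ \tfrac{\tau}{2}|w|^2\cos(2\phi)\ +\ C|w|
\]
for a constant $C$ depending on $\tau,v,u,\vec a,\vec\alpha,K,M,N$.

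The rest of the argument is a case split. Fix $\epsilon\in(0,\pi/8)$. If $\phi\in[\pi/4,\pi/2-\epsilon]$, then $\cos\phi\ge\sin\epsilon>0$ and $\cos(2\phi)\le 0$, so the gamma ratio dominates and
\[
\log\bigl|\text{LHS}\bigr|\ \le\ -(M+N)\sin\epsilon\cdot|w|\log|w|\ +\ C|w|\ \le\ -c_0|w|\log|w|
\]
for $|w|$ large. If instead $\phi\in[\pi/2-\epsilon,\pi/2]$, then $2\phi\in[\pi-2\epsilon,\pi]$ so $\cos(2\phi)\le-\cos(2\epsilon)<0$, and since $\tau>0$ we get $\tfrac{\tau}{2}|w|^2\cos(2\phi)\le-\tfrac{\tau\cos(2\epsilon)}{2}|w|^2$; this quadratic decay absorbs both the (possibly positive) contribution of the gamma ratio and the linear error $C|w|$, again yielding $\log|\text{LHS}|\le-c_0|w|\log|w|$ for large $|w|$. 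This gives \eqref{PE3v3e1} with suitable $L_0,C_0,c_0$.

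\textbf{Main obstacle.} The only delicate point is that neither decay mechanism works uniformly across the full angular range: when $\cos\phi$ is small (near $\pm\pi/2$) the Stirling cancellations leave $|\Gamma(\alpha_m-w)|/|\Gamma(w-a_n)|$ without super-exponential decay, while when $\phi=\pi/4$ the factor $e^{\tau w^2/2}$ has modulus $1+o(1)$ since $\cos(2\phi)=0$. The case split exploits the elementary but crucial fact that $\cos\phi$ and $-\cos(2\phi)$ cannot both be small on $[\pi/4,\pi/2]$, so the two mechanisms cover the interval between them. This complementarity is exactly why Proposition \ref{PE3v3} requires $\tau$ bounded away from $0$, unlike Proposition \ref{PE1v3} where the narrower range $\arg(z)\in[\pi/4,\pi/3]$ keeps $\cos\phi\ge 1/2$ and Stirling alone suffices.
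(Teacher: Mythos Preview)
Your proof is correct and follows essentially the same two-case strategy as the paper: in the angular range where $\cos\phi$ is bounded away from zero the gamma ratio provides the $|w|\log|w|$ decay, while near $\phi=\pi/2$ the factor $e^{\tau w^2/2}$ supplies quadratic decay since $\tau>0$. The only cosmetic differences are that the paper splits at $\pi/3$ rather than at a generic $\pi/2-\epsilon$, and in the first range it invokes Lemma~\ref{HL1} (the $G_{M,N}$ machinery) rather than applying Stirling \eqref{AssGamma} directly; your direct Stirling computation is slightly more self-contained. One minor remark: your parenthetical about the ``possibly positive'' gamma contribution in the second case is unnecessary, since $\cos\phi\ge 0$ on $[\pi/4,\pi/2]$ makes that term nonpositive throughout---but the argument is of course unharmed by this caution.
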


As the proofs of the above propositions are quite similar, we will combine them.
\begin{proof}

We consider the two cases $|\arg(z)| \in [\pi/4, \pi/3]$ and $|\arg(z)| \in [\pi/3, \pi/2]$ separately, with the second one only being relevant for Proposition \ref{PE3v3}. Below we fix an arbitrary $\theta >0$, say $\theta = 1$.

Assume first that $|\arg(z)| \in [\pi/4, \pi/3]$. Below $c_i$ denote positive constants. Observe that
$$ \frac{\Gamma(\theta - w)^M}{\Gamma(w)^N} = \exp\left(- G_{M,N}(w) + W_{M,N}(z_{c})w + C_{M,N} \right).$$
In particular, by Lemma \ref{HL1} we have for all $|w|$ sufficiently large that
\begin{equation}\label{U11}
\left|  \frac{\Gamma(\theta - w)^M}{\Gamma(w)^N}  \right| \leq \exp \left( - c_1|w| \log |w| \right).
\end{equation}
Furthermore by (\ref{AssGamma}) we have for all $x \in K$ and $|w|$ sufficiently large that
\begin{equation}\label{U12}
\left|  \frac{\prod_{m = 1}^M\Gamma(x + \alpha_m - w)}{\Gamma(\theta - w)^M} \cdot \frac{\Gamma(w)^N}{\prod_{n = 1}^N\Gamma( w - a_n)}  \right| \leq |w|^{c_2} .
\end{equation}
In addition, by (\ref{sinIneq}) we have for all $|w|$ sufficiently large that
\begin{equation}\label{U13}
\left|  \frac{\pi }{\sin(\pi (v-w))} \right| \leq  c_3.
\end{equation}
Finally, we have for all $|w|$ sufficiently large and $\tau \in [0,T]$ that
\begin{equation}\label{U14}
\left|u^{w-v} e^{\tau (w^2-v^2)/2} \right| \leq c_4 e^{c_5 |w|},
\end{equation}
where we used the fact that 
$$|e^{\tau z^2/2}| = \exp \left( \tau \Re[z^2]/2 \right) \leq 1,$$
since $\Re[z^2] \leq 0$ by our assumption that $|\arg(z)| \in [\pi/4, \pi/3]$. Combining (\ref{U11}), (\ref{U12}), (\ref{U13}) and (\ref{U14}) we conclude (\ref{PE1v3e1}) and so Proposition \ref{PE1v3} is proved. Also (\ref{U11}), (\ref{U13}) and (\ref{U14}) prove Proposition \ref{PE3v3} in the case $|\arg(z)| \in [\pi/4, \pi/3]$.\\

We next suppose that $|\arg(z)| \in [\pi/3, \pi/2]$ and finish the proof of Proposition \ref{PE3v3} in this case. Combining (\ref{sinIneq}) and (\ref{growthGamma}) we see that for all $|w|$ sufficiently large 
\begin{equation}\label{U15}
\left| \frac{\prod_{m = 1}^M  \Gamma( \alpha_m - w )}{\prod_{n = 1}^N\Gamma(w - a_n) } \right| \leq \exp \left( c_6|w| \log |w| \right).
\end{equation}
Also, since $\tau > 0$, we have
\begin{equation}\label{U16}
\left|u^{w-v} e^{v\tau (w-v)+ \tau (w-v)^2/2}\right| \leq c_7 e^{c_8 |w| - \tau|w|^2/4},
\end{equation}
where we used 
$$|e^{\tau z^2/2}| = \exp \left( \tau \Re[z^2]/2 \right) \leq \exp \left( - \tau |z|^2/4 \right) ,$$
which in turn relies on our assumption that $|\arg(z)| \in [\pi/3, \pi/2]$. Combining (\ref{U13}), (\ref{U15}) and (\ref{U16}) we conclude that for all $|w|$ sufficiently large we have
$$\left|\frac{\pi }{\sin(\pi (v-w))} \frac{\prod_{m = 1}^M  \Gamma( \alpha_m - w )}{\prod_{n = 1}^N\Gamma(w - a_n) } u^{w-v}e^{\tau (w^2 - v^2)/2}  \right|  \leq c_9 \exp( -c_{10} |w|^2),$$
which certainly implies (\ref{PE3v3e1}). This concludes the proof of Proposition \ref{PE3v3}.
\end{proof}

%
\subsection{Proof of Proposition \ref{PE2v2}}\label{Section10.2} In this section we give the proof of Proposition \ref{PE2v2}, whose statement is recalled here for the reader's convenience.

\begin{proposition}\label{PE2v3}
Fix  $M, N \geq 1$,  $T \geq 0$, $\tau \in [0,T]$, $\vec{a} = (a_1, \dots, a_N) \in \mathbb{R}^N$, $\vec{\alpha}= (\alpha_1, \dots, \alpha_M) \in \mathbb{R}_{>0}^M$ and $u \in \mathbb{C}$ with $\Re(u) > 0$. Put $\theta_0 = \min(\vec{\alpha}) - \max(\vec{a})$ and assume that $\theta_0 > 0$ and $\delta_0 \in (0, \min(1/4, \theta_0 /16))$. Suppose that $v, v' \in C_{a, \phi}$ as in Definition \ref{ContV} with $a \in [ \max(\vec{a}) + \delta_0,  \min(\vec{\alpha}) - 5\delta_0]$ and $\phi \in [3\pi/4, 5\pi/6]$. Finally, fix $b \in [a+ 2\delta_0, \min(\vec{\alpha}) - 3\delta_0]$ and denote by $D_v$ the contour $D_v(b, \pi/4, \delta_0)$ as in Definition \ref{ContE}. Then there exists a positive constant $ C_0$ depending on $\vec{a}, \vec{\alpha}, \delta_0, u, N, M, T$ (and not $\tau$) such that if $x \in \mathbb{C}$ with $d(x, [ 0, 1]) \leq \delta_0$ then we have
\begin{equation}\label{PE2e1v3}
  \left| \int_{D_v} \frac{\pi d\mu(w)}{\sin(\pi (v-w))}   \prod_{n = 1}^N \frac{\Gamma(v - a_n)}{\Gamma(w - a_n)} \prod_{m = 1}^M \frac{\Gamma(\alpha_m + x - w)}{\Gamma(\alpha_m + x- v)} \frac{u^{w-v} e^{\tau (w^2 - v^2)/2}}{w - v'} \right| \leq \frac{C_0}{1 + |v|^2}.
\end{equation}
\end{proposition}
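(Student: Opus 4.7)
Proof plan. Factor the integrand as $J(v) H(w) M(v,v',w)$ where
\begin{align*}
J(v) &= \frac{\prod_{n=1}^N \Gamma(v-a_n)}{\prod_{m=1}^M \Gamma(\alpha_m+x-v)}\, u^{-v} e^{-\tau v^2/2}, \\
H(w) &= \frac{\prod_{m=1}^M \Gamma(\alpha_m+x-w)}{\prod_{n=1}^N \Gamma(w-a_n)}\, u^{w} e^{\tau w^2/2}, \\
M(v,v',w) &= \frac{\pi}{\sin(\pi(v-w))\,(w-v')},
\end{align*}
so the quantity to bound is $|J(v)| \cdot \int_{D_v}|H(w) M(v,v',w)|\,|dw|$. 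The plan is to establish: (a) $|J(v)| \leq C \exp(-c|v|\log|v|)$ for large $|v|$, uniformly in $\tau \in [0,T]$ and $x$ with $d(x,[0,1]) \leq \delta_0$; and (b) $\int_{D_v}|H(w) M(v,v',w)|\,|dw|$ is at most polynomially large in $|v|$ with constants uniform in $v'$, $\tau$, $x$. Together these yield the target bound $C_0/(1+|v|^2)$ (in fact much faster decay), and the case of bounded $|v|$ is handled directly by compactness.

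For part (a), the central fact is that $v = a + re^{\pm i\phi}$ with $\phi \in [3\pi/4, 5\pi/6]$ forces $\Re(v) \leq a - r/2 \to -\infty$ linearly in $|v|$. Stirling's formula \eqref{AssGamma} then applies to every gamma factor in $J(v)$: for large $|v|$, $\arg(v-a_n) \in [3\pi/4, 5\pi/6]$ and $\arg(\alpha_m+x-v)$ lies near $[-\pi/4, -\pi/6]$, both bounded from $\pm\pi$ uniformly in the admissible $x$. The leading asymptotic of $\log|J(v)|$ is $(N+M)\Re(v)\log|v|$, which dominates the linear-in-$|v|$ contributions from $|u|^{-\Re(v)} \leq e^{|\log|u||\cdot|v|}$ and from $|e^{-\tau v^2/2}| \leq e^{CT|v|}$ (valid since $\Re(v^2)$ grows at worst linearly in $|v|$ on $C_{a,\phi}$, uniformly across $\tau \in [0,T]$). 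This yields the super-polynomial decay claimed in (a).

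For part (b), split $D_v = D_v^1 \cup D_v^2$ per Definition \ref{ContE}. On the tail $D_v^1 \subset C_{b,\pi/4}$, Stirling applied to $H(w)$ (arguments bounded from $\pm\pi$) gives $|H(w)| \leq C e^{-c|w|\log|w|}$ uniformly in $\tau, x$; furthermore $|w-v'| \geq b-a \geq 2\delta_0$ and $|\Im(v-w)| \geq \delta_0$ so Lemma \ref{SineBound} bounds $|\sin(\pi(v-w))|^{-1}$, making $|M|$ uniformly bounded and $\int_{D_v^1}|HM|\,|dw|$ finite. The detour $D_v^2$ consists of two long horizontal segments of length $\asymp |v|$ at heights $\Im(v)\pm \delta_0$ and one short vertical segment near $v+2\delta_0$; the sine factor is uniformly bounded (by $\pi/\sinh(\pi\delta_0)$ on horizontals and by $\pi/\delta_0$ on the vertical via Lemma \ref{SineBound}), $|w-v'|$ is bounded below by a $\delta_0$-dependent constant, and a direct Stirling estimate on $H(w)$ (using the reflection identity $\Gamma(z)\Gamma(1-z) = \pi/\sin(\pi z)$ on the portion where $\Re(w)$ is very negative, combined with \eqref{growthGamma}) yields $|H(w)| \leq |v|^{A}e^{C|v|}$ on $D_v^2$. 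The length of the horizontals contributes another factor of $|v|$, giving a polynomial-in-$|v|$ bound.

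The principal obstacle lies in carefully controlling Stirling's asymptotics on the long horizontal segments of $D_v^2$, where $w$ traverses a region with $\arg(w-a_n)$ and $\arg(\alpha_m+x-w)$ varying over nearly the entire allowed range (approaching $\pm\pi$ when $\Re(w)$ is very negative), forcing one to switch between direct Stirling and its reflection-formula counterpart. Uniformity in $\tau \in [0,T]$ — in particular at $\tau = 0$, where the $e^{-\tau v^2/2}$ factor provides no decay — rules out any shortcut via Gaussian-in-$v$ bounds; all decay in part (a) must come from the gamma-ratio structure alone. This is precisely why the angular restriction $\phi \geq 3\pi/4$ is essential: without $\Re(v)$ growing negatively at linear rate in $|v|$, the $(N+M)\Re(v)\log|v|$ Stirling exponent would fail to dominate the $|u|^{-\Re(v)}$ growth.
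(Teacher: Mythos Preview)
There is a genuine gap in part~(b). Your claim that $|H(w)|\le |v|^A e^{C|v|}$ on $D_v^2$ is false near the vertex $w\approx v+2\delta_0$. At such $w$, Stirling gives
\[
\log|H(w)|\;\approx\;(M+N)\,|\cos\phi|\,|v|\log|v|+O(|v|),
\]
which is the \emph{same} leading order (with opposite sign) as $\log|J(v)|$. Hence $|J(v)H(w)|$ is only $O\!\big(|v|^{-2(M+N)\delta_0}\big)$ near the short vertical segment of $D_v^2$ (the $|v|^{-2(M+N)\delta_0}$ comes from the finer $\Gamma(z)/\Gamma(z+2\delta_0)\sim z^{-2\delta_0}$ correction), and thus
\[
\int_{D_v^2}|F(w)|\,|dw|\;\gtrsim\;|v|^{-2(M+N)\delta_0}.
\]
For small $\delta_0$ this is far from $O\big((1+|v|^2)^{-1}\big)$; the exponent $2(M+N)\delta_0$ can be made arbitrarily small under the hypotheses. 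So no absolute-value bound on $D_v^2$ can yield the stated decay: the integral $\int_{D_v^2}F\,dw$ is small only due to cancellation.

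The paper captures this cancellation by a residue deformation. It pushes $D_v^2$ rightward to a contour $C_v$ lying near $\Re(w)\approx |\Im v|$ (where the $D_v^1$-type Stirling estimates apply and the integrand is $\exp(-c|v|\log|v|)$), collecting the simple poles of $\pi/\sin(\pi(v-w))$ at $w=v+1,\dots,v+k_v$. Each residue satisfies
\[
|R_j(v)|\;\le\;\frac{C\,|u|^j}{(|\Im v|-\delta_0)^{j(M+N)}},
\]
so the geometric sum is controlled by its first term $O(|\Im v|^{-(M+N)})\le O\big((1+|v|^2)^{-1}\big)$, using $M+N\ge 2$. This residue step is the missing ingredient; your $D_v^1$ analysis and the estimate~(a) on $J(v)$ are correct and match the paper's treatment of those pieces.
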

\begin{proof}
 In the arguments below, $c_i$ will denote positive constants that depend on the parameters in the statement of the proposition. Let us put $\theta = \min(\vec{\alpha})$. We notice that we can rewrite the integrand as
\begin{equation}
 F(w)= \frac{H(w)}{H(v)}\cdot \frac{\pi\exp\left[ G_{M,N}(v) - G_{M,N}(w) \right] }{\sin( \pi (v-w))} \frac{u^{w-v} e^{\tau (w^2-v^2)/2} }{w - v'},
\end{equation}
where $G_{M,N}$ is as in (\ref{S5Gfun}) and 
$$H(w) = \exp(- W_{M,N}(z_{c})w ) \cdot \prod_{n = 1}^N \frac{\Gamma(w)}{\Gamma(w - a_n)} \prod_{m = 1}^M \frac{\Gamma(\alpha_m + x - w)}{\Gamma(\theta - w)}.$$

We split the integral over the two pieces $D_v^1$ and $D_v^2$ (see Definition \ref{ContE}). If $w \in D_v^1$ and $|w|$ is sufficiently large we may apply Lemma \ref{HL1} to get
\begin{equation}\label{U21}
\left | \exp ( - G_{M,N}(w))\right| \leq \exp\left(- c_1 |w|\log|w| \right).
\end{equation}
 On the other hand, using (\ref{sinIneq}), we have for all $w \in D_v^1$ and $v,v' \in C_{a, \phi}$
\begin{equation}\label{U22}
\left | \frac{\pi}{\sin(\pi (v-w))} \right| \leq c_2 \mbox{ and } \left| \frac{1}{ w - v'}\right| \leq c_3.
\end{equation}

Setting $v = a + r_v \cos(\phi) + \i r_v \sin(\phi)$ and $w = b + r_w \cos(\pi/4) \pm \i r_w \sin(\pi/4)$, we get
$$\Re[w^2 - v^2] =(-\cos^2(\phi) + \sin^2(\phi))r_v^2 - 2a \cos(\phi) r_v  + b^2 - a^2 + 2b \cos(\pi/4) r_w,$$
where we used that $\cos(\pi/4) = \sin(\pi/4)$. In particular, by our assumption on $\phi$ we have $ (-\cos^2(\phi) + \sin^2(\phi)) \leq 0$ and we conclude that
\begin{equation}\label{U23}
\left|e^{\tau (w^2-v^2)/2} \right|\leq  c_4 \cdot e^{c_5(|w|+|v|)}.
\end{equation}
Also, from (\ref{AssGamma}) we have 
\begin{equation}\label{U24}
\left|\frac{H(w)}{H(v)} \right| \leq c_6 \cdot e^{c_7 (|w| + |v|)}.
\end{equation}

Combining (\ref{U21}), (\ref{U22}), (\ref{U23}) and (\ref{U24}) we conclude that 
\begin{equation}\label{U25}
|F(w)| \leq c_8 e^{ c_9(|w| +  |v|) -  c_{10}  (M+N) |w|\log|w|}  \cdot \exp \left( \Re \left[ G_{M,N}(v) \right] \right).
\end{equation}
From the above we conclude that 
\begin{equation}\label{KD5}
 \int_{D^1_v}| F(w) | |dw|  \leq c_{11}  e^{c_9 |v|} \cdot \exp \left( \Re \left[ G_{M,N}(v) \right] \right) \leq c_{12}\exp( - c_{13} |v| \log|v|),
\end{equation}
where the last inequality follows from Lemma \ref{HL1}.\\

We next focus our attention on $D_v^2$. Recall that $D_v^2$ consists of straight oriented segments that connect $z_-$ to $v + 2\delta_0 - i\delta_0$ to $v + 2\delta_0 + i\delta_0$ to $z_+$, where $z_-, z_+$ are the points on $C_{b,\pi/4} $ that have imaginary parts $\Im(v) - \delta_0$ and $\Im(v) + \delta_0$ respectively. Explicitly, we have $z_- = b + \Im(v) -\delta_0 + \i \Im(v) - i\delta_0$ and $ z_+ = b + \Im(v) + \delta_0 + \i \Im(v) + \i\delta_0$.

\begin{figure}[h]
\scalebox{0.6}{\includegraphics{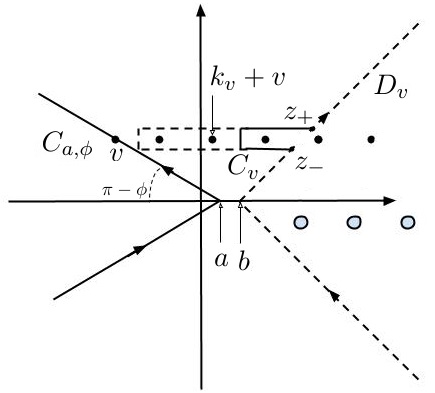}}
\captionsetup{width=\linewidth}
 \caption{The figure depicts the contour $C_{a, \phi}$ and $D_v = D_v(b,\pi/4, \delta_0)$ with $\min(\vec{\alpha}) > b > a > \max(\vec{a})$. The black dots denote the points $v, v+1, v+2, \dots$ and the grey ones denote $\alpha_m + x, \alpha_m + x + 1, \alpha_m +  x + 2, \dots$. The contour $C_v$ is the filled contour that connects $z_-$ and $z_+$. }
\label{S6_1}
\end{figure}
If $|v|$ is bounded, we observe that $F(w)$ remains bounded on $D_v^2$ as it is bounded away from all of its poles, so we only need to focus on large $|v|$. We assume that $|v|$ is sufficiently large so that $D_v^2$ is well separated from the real line and so $z_-, z_+$ lie in the same complex half-plane. For simplicity we will assume that we are in the upper complex half-plane (the other case is completely analogous).

We define a contour connecting $z_-$ and $z_+$, denoted by $C_{v}$ as follows, see also Figure \ref{S6_1}. It starts at $z_-$ and goes horizontally to $v + k_v + 1/2 - \i\delta_0$, then up to $v+k_v + 1/2+ \i\delta_0$ and then horizontally to $z_+$. Here $k_v$ denotes the largest integer such that $\Re[v + k_v ] < - 2a  + \Im(v)$. We assume that $|v|$ is sufficiently large so that $k_v \geq 1$. Notice that the length of $C_v$ does not exceed $3 + 2\delta_0 + 4|a| + 2|b|$. 

An application of the residue theorem gives
\begin{equation}\label{KDsth}
\frac{1}{2\pi \i}\int_{D_v^2} F(w) = \sum_{j = 1}^{k_v}R_j(v)  + \frac{1}{2\pi \i} \int_{C_v} F(w) dw,
\end{equation}
where the residue at $w = v + j$ is denoted by $R_j(v)$ and given by the formula
\begin{equation}\label{res1}
R_j(v) = -(-u)^{j}  \prod_{n = 1}^N \frac{\Gamma(v - a_n)}{\Gamma(v+ j - a_n)} \prod_{m = 1}^M \frac{\Gamma(\alpha_m + x - v- j)}{\Gamma(\alpha_m + x- v)} \frac{e^{\tau v j+ \tau j^2/2}}{v - v' + j}.
\end{equation}
From the functional equation for the gamma function $\Gamma(z+1)  = z \Gamma(z)$ we know that 
$$\left| \frac{\Gamma(v - a_n)}{\Gamma(v+j - a_n)} \right| = \prod_{k = 1}^j \left|\frac{1}{v - a_n + k - 1} \right| \leq \frac{1}{|\Im(v)|^j}\mbox{ and }$$
$$\left| \frac{ \Gamma(\alpha_m + x - v - j)}{\Gamma(\alpha_m + x - v)} \right| =  \prod_{k = 1}^j \left|\frac{1}{\alpha_m + x - v - k} \right|\leq \frac{1}{(|\Im(v)| - \delta_0)^j},$$
where in the last inequality we used that $\Im(x) \in [-\delta_0, \delta_0]$. 

Next we observe that if $v = a + r_v \cos(\phi) + \i r_v \sin(\phi)$ then
$$\Re[v j + j^2/2] = j (a + r_v \cos(\phi)) + j^2/2 \leq  j (a + r_v \cos(\phi) + (r_v \sin(\phi) - 2a - r_v \cos (\phi) ) /2) \leq 0,$$
where in the last inequality we used that $r_v \cos(\phi) + r_v \sin(\phi) \leq 0$ by our choice of $\phi$ and the first inequality used $1 \leq j \leq k_v$ and $\Re[v + k_v ] < - 2a  + \Im(v)$. The conclusion is that for all large enough $|v|$ we have
$$ \left| e^{\tau v j+ \tau j^2/2} \right| \leq 1$$
for all $j = 1, \dots, k_v$ and $\tau \geq 0$. Finally, if we assume that $|v|$ is sufficiently large we can ensure that $\frac{\max(1, |u|)}{(|\Im(v)|- \delta_0)} \leq \frac{1}{2}$. Combining all of  the above estimates we see that
\begin{equation}\label{KD4v2}
\sum_{j = 1}^{k_v } |R_j(v)|  \leq \sum_{j = 1}^{k_v } \frac{|u|^j}{(|\Im(v)| - \delta_0)^{j(M+N)}} \frac{1}{|v - v' + j|} \leq  \frac{c_{14}}{(|\Im(v)| - \delta_0)^{M+N}} \leq \frac{c_{15}}{1 + |v|^2},
\end{equation}
where in the second inequality we used that $v - v'$ is uniformly bounded away from $\mathbb{Z}$ and that the sum of the geometric series is controlled by its first term. The last inequality used that $M + N \geq 2$ and holds for all $|v|$ large enough.

Since the contour $C_v$ is (at least distance $\delta_0$) bounded away from $v + \mathbb{Z}$ the estimates in (\ref{U22}) hold. Provided that $|v|$ is sufficiently large and $w \in C_v$ the estimates in (\ref{U21}) and (\ref{U24}) also holds. We next wish to show that (\ref{U23}) also holds for $w \in C_v$.

Suppose that $w \in C_v$, and write it as $w = y + r_w \cos(\pi/4) + \i r_w \sin(\pi/4)$, where $y$ is a real number between $-2|a| - 2|b| - \delta_0 - 3/2$ and $2|a| + 2|b| + \delta_0 + 3/2$ and $r_w \in [\Im(v) - \delta_0, \Im(v) + \delta_0]$. As done before in  (\ref{U23}) we have
$$\Re[2 v(w-v) + (w-v)^2] =   (-\cos^2(\phi) + \sin^2(\phi))r_v^2 - 2a \cos(\phi) r_v  + y^2 - a^2 + 2y \cos(\pi/4) r_w.$$
 In particular, by our assumption on $\phi$ we have $ (-\cos^2(\phi) + \sin^2(\phi)) \leq 0$ and so (\ref{U23}) also holds.
Combining (\ref{U21}), (\ref{U22}), (\ref{U23}) and (\ref{U24}) we conclude that for all large $v$ and $w \in C_v$ one has the estimate as in (\ref{U25}) and since $C_v$ has length at most $ 3 + 2\delta_0 + 4|a| + 2|b|$, we conclude from Lemma \ref{HL1} that
\begin{equation}\label{remainsKD0}
\left|  \frac{1}{2\pi \i} \int_{C_v} F(w) dw\right| \leq c_{17}\exp( - c_{18} |v| \log|v|).
\end{equation}
The proposition now follows from combining (\ref{KD5}),  (\ref{KDsth}),(\ref{KD4v2}) and  (\ref{remainsKD0}).

\end{proof}

%
\subsection{Proof of Proposition \ref{PKBS3}}\label{Section10.3} In this section we give the proof of  Proposition \ref{PKBS3}, whose statement is recalled here for the reader's convenience. We follow the same notation as in Sections \ref{Section3.1} and \ref{Section3.2}.

\begin{proposition}\label{PKBS6}
Let $A \geq 0$ and $x\geq -A$ be given and let $M, N, u(x, M, N)$ be as in Definition \ref{DefScaleS3}. Then for any $\epsilon > 0$ there exist positive constants $M_0, c_0 ,C_0 > 0$ depending on $A,\epsilon$ and the parameters in  Definition \ref{DefScaleS3} such that if $M \geq M_0$, $v, v' \in C_{a,3\pi/4}$  we have
\begin{equation}\label{KernelBoundV0S6}
\left|K_u (v,v')\right| \leq C_0M^{1/3} \mbox{ and if $|v - a| \geq \epsilon$ we further have }\left|K_u (v,v')\right| \leq C_0 e^{-c_0 M \log (1 + |v|)}.
\end{equation}
If $\rr = \rl = 0$ then $M_0, c_0 ,C_0 > 0$ depend only on $\theta$, $\delta$, $A$ and $\epsilon$. 
\end{proposition}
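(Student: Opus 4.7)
The plan is to first deform the integration contour $D_v$ in the definition of $K_u(v,v')$ to the contour $D_M$ of Definition \ref{DefDM}, which follows $C_{a, 3\pi/4}$ at horizontal distance $O(M^{-1/3})$ near $z_c$ and departs at slope $\pm 1$ far from $z_c$. For $M$ sufficiently large no poles are crossed, since the poles of $\pi/\sin(\pi(v-w))$ at $v + \mathbb{Z}_{\geq 1}$ and those of $\Gamma(\alpha_m - w)$ at $\alpha_m + \mathbb{Z}_{\geq 0}$ all lie to the right of $D_M$; the decay at infinity required for Cauchy's theorem is supplied by Proposition \ref{PE1v2}. On $D_M$, I factor out the ``bulk'' gamma quotients by noting that, directly from the definitions of $G_{M,N}$, $W_{M,N}(z_c)$, and $u = e^{W_{M,N} - M^{1/3}\sigma_\alpha x}$,
$$\frac{\Gamma(v)^N \, \Gamma(\theta - w)^M}{\Gamma(w)^N \, \Gamma(\theta - v)^M} \cdot u^{w - v} \;=\; e^{G_{M,N}(v) - G_{M,N}(w)} \cdot e^{M^{1/3}\sigma_\alpha x (v - w)}.$$
The residual factor $P(v,w) = \prod_{n = 1}^{\rr} \frac{\Gamma(v - a_n)}{\Gamma(w - a_n)} \prod_{m = 1}^{\rl} \frac{\Gamma(\alpha_m - w)}{\Gamma(\alpha_m - v)}$ captures the finite perturbation from $\vec x, \vec y$ and equals $1$ when $\rr = \rl = 0$, so in the i.i.d.\ case the final constants depend only on $\theta, \delta, A, \epsilon$.

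Next I split $D_M = D_M^- \cup D_M^0 \cup D_M^+$ and estimate each piece. On the vertical piece $D_M^0$, whose length is $O(M^{-1/3})$, both $|v - w|$ and $|w - v'|$ are bounded below by $\rho \sigma_\alpha^{-1} M^{-1/3}$, so $|\pi/\sin(\pi(v - w))| = O(M^{1/3})$ by Lemma \ref{SineBound} and $|1/(w - v')| = O(M^{1/3})$. The factors $|e^{M[G_\alpha(v) - G_\alpha(w)]}|$, $|e^{M^{1/3}\sigma_\alpha x(v - w)}|$ and $|P(v, w)|$ are all $O(1)$ for bounded $|v|$ (the first by the cubic expansion in Lemma \ref{analGS3}, the second since $|v - w|$ is already $O(M^{-1/3})$ on $D_M^0$, and the third by compactness of the region of arguments). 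Integrating an $O(M^{2/3})$ integrand over length $O(M^{-1/3})$ produces the $O(M^{1/3})$ contribution which drives the uniform bound.

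On the slope pieces $D_M^\pm$, the sine and $1/(w - v')$ factors are uniformly bounded since $v - w$ stays bounded away from $\mathbb{Z}$ and $\Re(w - v') > 0$ is bounded below. Near $z_c$ (say $|w - z_c| \leq r_0$, the radius from Lemma \ref{analGS3}), that lemma gives cubic decay $|e^{-M G_\alpha(w)}| \leq e^{-c M |w - z_c|^3 \sigma_\alpha^3}$ which dominates both the at-most-linear exponent $|e^{M^{1/3}\sigma_\alpha x(v - w)}| \leq e^{A M^{1/3} |w - z_c| + C}$ (using $x \geq -A$) and any $e^{c M^{-1/3}|w|\log|w|}$-type growth from the $\rr + \rl$ gamma ratios in $P$, estimated via (\ref{AssGamma})--(\ref{growthGamma}). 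For $|w - z_c| \geq r_0$, Lemma \ref{ReleL1S3} supplies the stronger decay $e^{-c(M + N)|w - z_c| \log(1 + |w - z_c|)}$ which absorbs all remaining factors. Together these produce an $O(1)$ contribution from $D_M^\pm$, yielding the first bound $|K_u(v, v')| \leq C_0 M^{1/3}$.

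For the stronger bound when $|v - a| \geq \epsilon$, I factor out $|e^{G_{M,N}(v)}|$ and apply the same pair of lemmas to $v$: Lemma \ref{analGS3} gives $|e^{G_{M,N}(v)}| \leq e^{-c_\epsilon M}$ for $\epsilon \leq |v - a| \leq r_0$, and Lemma \ref{ReleL1S3} gives $|e^{G_{M,N}(v)}| \leq e^{-c(M+N)|v - z_c| \log(1 + |v - z_c|)}$ for $|v - a| \geq r_0$, yielding the required $e^{-c_0 M \log(1 + |v|)}$; the remaining integrand is handled exactly as above. The main obstacle is coordinating the decay of $\Re G_\alpha$ with the competing growth of $e^{M^{1/3}\sigma_\alpha x(v - w)}$ when $x < 0$ and with the poly-Stirling growth of $P(v, w)$, and handling the small shift $a - z_c = \mu \sigma_\alpha^{-1} M^{-1/3}$ between the ``centered'' estimates of the lemmas and the actual contour offset, for which Lemma \ref{horGS3} provides the required comparison.
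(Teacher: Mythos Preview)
Your deformation of $D_v$ to the fixed contour $D_M$ is the crux of the argument, and it fails precisely in the regime where you need the strong bound. You assert that the poles of $\pi/\sin(\pi(v-w))$ at $v + \mathbb{Z}_{\geq 1}$ all lie to the right of $D_M$; this is true when $|v-a|$ is small, but false once $|\Im(v)|$ is of order $1$ or larger. Indeed, for $v \in C_{a,3\pi/4}$ in the upper half-plane with $\Im(v) = y$, one has $\Re(v+n) = a - y + n$, while $D_M$ (which away from a window of size $O(M^{-1/3})$ coincides with $C_{b,\pi/4}$) sits at real part $b + y$ at height $y$. Hence every pole $v+n$ with $1 \leq n \lesssim 2y$ lies to the \emph{left} of $D_M$ and is crossed in your deformation. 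Your subsequent claim that ``$v-w$ stays bounded away from $\mathbb{Z}$'' on $D_M^\pm$ is therefore also unjustified: for each such $v$ there is a point $w \in D_M^+$ at the same height for which $v-w$ is arbitrarily close to a negative integer, and the sine factor blows up.

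This is exactly the difficulty the paper's proof is built around. For $|v-a| \leq r$ the paper does essentially what you do (deform to the straight contour $C_{b,\pi/4}$, which is legitimate there since $v+1$ stays to the right). For $|v-a| \geq r$, however, the paper keeps the $v$-dependent bump contour $D = D_v(b,\pi/4,d)$, splits it as $D^1 \cup D^2$, and then deforms only the bump $D^2$ to a new contour $C_v$ lying in $\{\Re z \geq z_c\}$. This deformation \emph{does} cross the poles $v+1,\dots,v+k_v$, and the resulting residues $R_j(v)$ are computed explicitly and bounded case by case (the paper's Step~4) via the functional equation $\Gamma(z+1)=z\Gamma(z)$, using that each factor contributes a $|\Im(v)|^{-1}$ and summing the geometric series. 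On $C_v$ itself the paper controls $|e^{-MG(w)}|$ not through Lemmas~\ref{analGS3} or \ref{ReleL1S3} (which concern $C_{z_c,\pi/4}$) but through Lemma~\ref{PosLemma}, which says $\Re G_{M,N}(z_c + x + \i y) \geq 0$ whenever $|y| \geq x > 0$; this lemma does not appear in your sketch and is the missing ingredient for the $w$-integral in the far regime. Without the residue analysis and Lemma~\ref{PosLemma}, the second inequality in \eqref{KernelBoundV0S6} cannot be obtained by your route.
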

\begin{proof}

Before we go to the main argument, we give a conceptual roadmap of the proof. For clarity we split the proof into several steps. In the first four steps we prove (\ref{KernelBoundV0S6}) when $v$ is bounded away from $a$ and in the last step we establish it when $v$ is close to $a$. In both cases we deform the contour $D_v$ in the definition of $K_u(v,v')$ to a suitable contour where we can estimate the integrand in the definition of $K_u(v,v')$ using the results from Sections \ref{Section9} and \ref{Section10}. In the case when $v$ is close to $a$ we can deform $D_v$ to a contour that is a horizontal translation of $C_{z_c, \pi/4}$ by $O(M^{-1/3})$ (recall that this contour was defined in Definition \ref{ContV}). Ideally, we would prefer to work with the contour $C_{z_c, \pi/4}$, because Lemma \ref{S6analGS3} and Lemma \ref{ReleL1S5} give us very good control of the absolute value of the integrand on this contour. A mild translation of $C_{z_c, \pi/4}$ is, however, necessary to ensure that the poles at $\alpha_m$ and $a_n$ are on the correct sides of the contour. The idea then is to combine our good estimates on $C_{z_c, \pi/4}$ with Lemma \ref{horG}, which ensures that slight horizontal movements away from $ C_{z_c, \pi/4}$ does not significantly damage our estimates. This idea is also present in the case when $v$ is bounded away from $a$. The new challenge that arises in this case is that we need to deform $D_v$ to a new contour, which avoids the poles at $v + \mathbb{Z}$ coming from the sine function in the definition of $K_u(v,v')$. The idea then is to work with a contour $D_v(b, \pi/4, d)$ as in in Definition \ref{ContE}. This contour consists of two parts -- $D^1$ and $D^2$. Estimating the absolute value of the integrand on $D^1$ is  similar to the case we discussed above and relies only on Lemma \ref{S6analGS3}, Lemma \ref{ReleL1S5} and Lemma \ref{horG}. The analysis of the integral over $D^2$ is more involved and requires further deforming this contour to the half-plane $\{ \Re(z) \geq z_c\}$. Once we are in the half-plane $\{ \Re(z) \geq z_c\}$ we can utilize Lemma \ref{PosLemma} and get the estimates we desire -- this is done in Step 3. Unfortunately, in the process of deforming $D^2$ we pick up a large number of poles, whose residues need to be controlled as well -- this is done in Step 4 and requires a fairly involved case-by-case analysis. We now turn to the proof.\\

{\bf \raggedleft Step 1.}  Let $r$ be as in Lemma \ref{S6analGS3} for the choice of $\delta$ and $\theta$ as in the proposition. We also set $r_\epsilon = \min(r/2,\epsilon,1/2)$,  and for $y > 0$ we write $B_y(z_c) := \{z \in \mathbb{C}: |z - z_c| < y\}$. We also define $d_\epsilon = \min(r_\epsilon/10,1/20, d_0)$, where $d_0 > 0$ is sufficiently small so that
\begin{equation}\label{smallD}
d_0 \leq \frac{\theta - z_c}{4}, \hspace{2mm} d_0 \leq \frac{z_c}{4} \mbox{, and } (1- s)\sqrt{(1-s)^2 + (\theta - z_c-s)^2} \geq 1 + s \mbox{ for $s \in [0, d_0]$}.
\end{equation}
Notice that all of the above constants can be chosen uniformly in $\theta, \epsilon$ and $\delta$ when $\rr = \rl = 0$.

We first suppose that $|v -a| \geq \epsilon$ and establish (\ref{KernelBoundV0S6}) in this and the next three steps. In the sequel we will write $G$ for $G_{\alpha}$, $\sigma$ for $\sigma_\alpha$, $d$ for $d_\epsilon$ and $r$ for $r_\epsilon$ to ease the notation. Then we have by Cauchy's theorem that if $M_0$ is sufficiently large and $M \geq M_0$ that
\begin{equation}\label{DefFS6}
\begin{split}
&K_u(v,v') = \frac{1}{2\pi \i} \int_{D}  F(w,v',v)dw, \hspace{2mm} \mbox{ with } F(w,w',v) = \prod_{n = 1}^{\rr} \frac{\Gamma(v - a_n)}{\Gamma(w - a_n)} \prod_{m = 1}^{\rl} \frac{\Gamma(\alpha_m - w)}{\Gamma(\alpha_m - v)} \times \\
&\frac{\pi  e^{M[G(v) - G(w)]}e^{M^{1/3} x \sigma (v-z_c)}e^{-M^{1/3} x \sigma (w-z_c)} }{\sin(\pi(v-w)) (w- v')},
\end{split}
\end{equation}
where $D$ stands for the contour $D_v(b, \pi/4, d)$ in Definition \ref{ContE} with $d$ as above, $b = a + \rho \sigma^{-1} M^{-1/3}$ and $\rho$ as in Definition \ref{DefScaleS3}. The decay estminates necessary to deform the contour $D_v$ in the definition of $K_u(v,v')$ to $D$ near infinity come from Proposition \ref{PE1v3} applied to $K = [0,\theta]$, $T =0$ and $\vec{a}, \vec{\alpha}, u$ as in the statement of the proposition. In order to ensure that the poles at $\theta, \theta + 1, \theta + 2, \dots$ stay on the right side of $D$ (and hence are not crossed in the process of deformation) we need to ensure that $a + \rho \sigma^{-1}M^{-1/3} = z_c + ( \mu + \rho) \sigma^{-1}M^{-1/3} < \theta$, which is possible by making $M_0$ sufficiently large depending on $\theta$ and $\delta$ alone. Also, by the definition of $\rho$, we have that if $\rl > 0$ then the poles at $\alpha_i$ for $i = 1, \dots, \rl$ are also to the right of $D$ for large enough $M$ in view of $|v-a| \geq r$ and $d \leq r/10$. We denote by $D^1$ and $D^2$ the portions of $D$ that are part of $C_{b, \pi/4}$ or not respectively, see Definition \ref{ContV}.

In the following steps $C,c$ stand for generic positive constants, depending on the parameters in the statement of the proposition, whose value may change from line to line. \\

{\bf \raggedleft Step 2.} In this and the next step we assume that $|v-a| \geq r$. Notice that if $\Re(z) \leq 0$ we have
\begin{equation}\label{ExpX}
|e^{M^{1/3} x \sigma z}| \leq 1 \mbox{ if $x \geq 0$ and so for $x \geq -A$ we have } |e^{M^{1/3} x \sigma z}|  \leq e^{AM^{1/3} \sigma |z|}.
\end{equation}
The latter equation together with Lemma \ref{ReleL1S5} and Lemma \ref{horG} imply for all large enough $M$
\begin{equation}\label{OP11}
\begin{split}
&| e^{MG(v)} e^{M^{1/3} \sigma (v-z_c) x } | \leq Ce^{-c M |v| \log (1 + |v|)} \mbox{ if $|v - a| \geq r$, and $v \in C_{a, 3\pi/4}$} \\
&| e^{-MG(w)} e^{-M^{1/3} \sigma (w-z_c) x } | \leq Ce^{-c M |w| \log (1 + |w|)} \mbox{ if $|w - b| \geq r/2$ and $w \in C_{b, \pi/4}$}. 
\end{split}
\end{equation}
To see why the latter is true, note that Lemma \ref{ReleL1S5} implies the above inequalities when $a, b$ are replaced with $z_c$ and then Lemma \ref{horG} allows us to replace $z_c$ with $a,b$ since such a change affects the right side of the inequalities by $Ce^{c M^{2/3} (1 + |v|)}$ for the first line and $Ce^{c M^{2/3} (1 + |w|)}$ for the second line. Of course, this effect is negligible compared to the exponential in $M$ decay.

In addition, using (\ref{sinIneq}) we have for all $w \in D^1$ that
\begin{equation}\label{OP12}
\left|\frac{1} {\sin(\pi(v-w)) (w- v')} \right| \leq CM^{1/3}.
\end{equation}
The extra $M^{1/3}$ is coming from the $|w-v'|$ in the denominator, which is lower bounded by $M^{-1/3}  \sigma^{-1} \rho$ as $v'$ is allowed to be close to $a$ unlike $v$. We also observe that 
\begin{equation}\label{OP12.1}
\begin{split}
&\left|\prod_{n = 1}^{\rr} \frac{\Gamma(v - a_n)}{\Gamma(w - a_n)} \prod_{m = 1}^{\rl} \frac{\Gamma(\alpha_m - w)}{\Gamma(\alpha_m - v)}\right| = \left|\prod_{n = 1}^{\rr} \frac{\Gamma(v - a_n + 1)}{\Gamma(w - a_n + 1)} \prod_{m = 1}^{\rl} \frac{\Gamma(\alpha_m - w + 1)}{\Gamma(\alpha_m - v + 1)}\right|  \times \\
& \left|\prod_{n = 1}^{\rr} \frac{w - a_n}{v - a_n} \prod_{m = 1}^{\rl} \frac{\alpha_m - v}{\alpha_m - w}\right| \leq C \exp \left( c |v| \log (1 + |v|) + c |w| \log(1 + |w|) + c \log M \right),
\end{split}
\end{equation}
where we used (\ref{AssGamma}) and (\ref{growthGamma}) to upper bound the gamma functions, and the extra $\log M$ term is coming from $\alpha_m - w$ in the denominator, whose absolute value is bounded away from $0$ by $ (1/2) (\min(\vec{y}) - \mu - \rho) M^{-1/3} \sigma^{-1}$ for all large enough $M$, see Definition \ref{DefScaleS3}.

Combining (\ref{OP11}), (\ref{OP12}) and (\ref{OP12.1}) we conclude that for all large enough $M$
\begin{equation}\label{OP13}
\left| \int_{D^1 \cap B^c_{r/2}(b)}  F(w,v',v)dw \right| \leq C  e^{-c M |v| \log (1 + |v|)}.
\end{equation}

Suppose next that $w \in  D^1 \cap B_{r/2}(b)$ (observe that this piece is separated from $D^2$ by our assumption on $v$). Using (\ref{ExpX}), Lemma \ref{S6analGS3} and Lemma \ref{horG} we have
$$| e^{-MG(w)} e^{-M^{1/3} \sigma (w-z_c) x } | \leq C e^{cM^{2/3}}e^{-M \sigma^3 (\sqrt{2}/2)^3|w-z_c|^3/6} e^{AM^{1/3} \sigma |w - z_c|}.$$
In deriving the above inequality we used that our $r$ is smaller than the one in Lemma \ref{S6analGS3} so that the lemma is applicable. Combining the last inequality with the first line in (\ref{OP11}), (\ref{OP12}), (\ref{OP12.1}) we conclude that for all large enough $M$
\begin{equation}\label{OP14}
\begin{split}
&\left| \int_{D^1 \cap B_{r/2}(b)} \hspace{-10mm} F(w,v',v)dw \right| \leq C  e^{-c M |v| \log (1 + |v|)}.
\end{split}
\end{equation}
 Equations (\ref{OP13}) and (\ref{OP14}) provide the desired estimates for the integral of $F(w,v',v)$ on $D^1$. We consider the integral over $D^2$ in the next step.\\

{\bf \raggedleft Step 3.} In this step we establish the following inequality
\begin{equation}\label{OP15}
\left| \int_{D^2 }  F(w,v',v)dw \right| \leq C  e^{-c M \log (1 + |v|)}.
\end{equation}
Combining (\ref{OP13}), (\ref{OP14}) and (\ref{OP15}) we conclude (\ref{KernelBoundV0S6}) in the case when $|v -a| \geq r$, which of course implies the statement when $|v-a| \geq \epsilon$ as $r \leq \epsilon$ by construction. 

To prove (\ref{OP15}) we define a new contour $C_v$ as follows, see also Figure \ref{S6_3}. Let $k_v \in \mathbb{Z}_{\geq 0}$ be the unique integer such that $\Re(v + k_v) \in [ z_c - d, z_c + d] $ if such an integer exists (the uniqueness follows from our assumption that $d \leq 1/20$); if no such integer exits we let $k_v$ be the largest integer such that $\Re(v + k_v) \leq z_c$. If $\Re(v + k_v) \in [ z_c - d, z_c + d] $ we let $C_v$ be the contour that horizontally connects the points $z_-$ to $z_c + 1/2 + \i \Im(z_-)$ to $z_c +1/2 + \i \Im(z_+)$ to $z_+$, where $z_{\pm}$ are the two points shared by $D^1$ and $D^2$, see the left part of Figure \ref{S6_3}. If $\Re(v + k_v) \not \in [ z_c - d, z_c + d] $ we let $C_v$ be the contour that connects $z_-$ to $z_c + \i \Im(z_-)$ to $z_c + \i \Im(z_+)$ to $z_+$, see the right part of Figure \ref{S6_3}. Observe that by construcion we have that if $z = z_c + (\mu + \rho)\sigma^{-1}M^{-1/3} + f + \i g \in C_v$ then $|g| \geq f \geq 0$, $d(z, v + \mathbb{Z}) \geq d$ and $d(z, C_{a, 3\pi/4}) \geq d/2$ for large enough $M$. Also the length of $C_v$ is at most $2 |\Im(v)| + 3d$ for large enough $M$.

\begin{figure}[h]
\centering
\scalebox{0.6}{\includegraphics{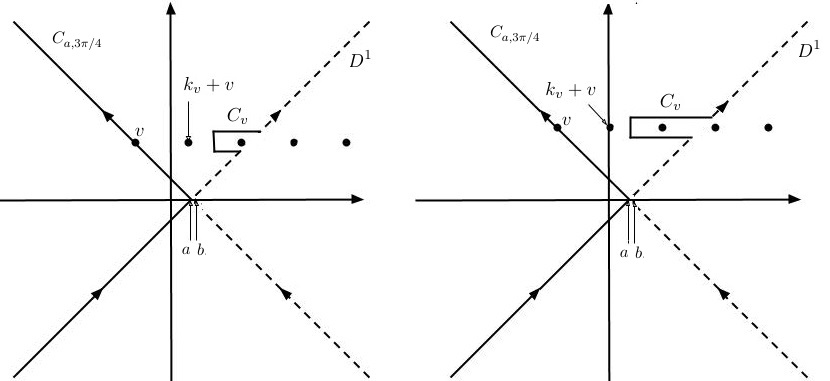}}
 \captionsetup{width=.9\linewidth}
\caption{The left part depicts $C_v$ when $Re(v + k_v) \in [ z_c - d, z_c + d] $ and the right part depicts it when $Re(v + k_v) \not \in [ z_c - d, z_c + d]. $ }
\label{S6_3}
\end{figure}

An application of the residue theorem, see \cite[Corollary 2.3, Chapter 3]{Stein}, gives
\begin{equation}\label{OP16}
\frac{1}{2\pi \i}\int_{D^2} F(w,v',v) = \sum_{j = 1}^{k_v }R_j(v)  + \frac{1}{2\pi \i} \int_{C_v} F(w,v',v) dw,
\end{equation}
where the residue at $w = v + j$ is denoted by $R_j(v)$ and given by the formula
\begin{equation*}
R_j(v) = -(-e)^{-x \sigma M^{1/3} \cdot j} \frac{\Gamma(v)^N}{\Gamma(v+j)^N} \frac{ \Gamma(\theta - v - j)^M}{\Gamma(\theta - v)^M} \frac{1}{v - v' + j} \prod_{n = 1}^{\rr} \frac{\Gamma(v - a_n)}{\Gamma(v + j - a_n)} \prod_{m = 1}^{\rl} \frac{\Gamma(\alpha_m - v - j)}{\Gamma(\alpha_m - v)}.
\end{equation*}
Observe that from (\ref{ExpX}), Lemma \ref{PosLemma} and Lemma \ref{horG} we have for $w \in C_v$ that
\begin{equation}\label{OP16.1}
\left| e^{-MG(w)} e^{-M^{1/3} \sigma (w-z_c) x}   \right| \leq Ce^{c M^{1/3}A|v| + cM^{2/3} |v| }.
\end{equation}
Let us elaborate on equation (\ref{OP16.1}) briefly. Equation (\ref{ExpX}) allows us to bound $|e^{-M^{1/3} \sigma (w-z_c) x}|$ by $e^{c M^{1/3}A |v|}$, which in turn is controlled by $e^{c M^{1/3}A |w|}$ as $|w|$ is upper and lower bounded by constant multiples of $|v|$ for $w \in C_v$. Furthermore, Lemma \ref{PosLemma} bounds $|e^{-MG(w)}|$ by $1$ when $|\Im [ w- z_c] \geq \Re [w - z_c ] \geq 0$. By construction of $C_v$ we know that $\Re [w - z_c ] \geq 0$ is satisfied for all points on $C_v$. The inequality $|\Im [ w- z_c]| \geq \Re [w - z_c ] $ can fail for some points on $C_v$ but only those that are at most distance $(\mu + \rho)\sigma^{-1}M^{-1/3} $ (if the latter is positive) from the points $z^{\pm}$. For these $w$ we know from Lemma \ref{horG} that $\Re[-MG(w)] =  \Re[-MG(w -(\mu + \rho)\sigma^{-1}M^{-1/3} )] + O(M^{2/3}( 1 + |w|)),$ and since by Lemma \ref{PosLemma} the first term is negative, we see that $\Re[-G(w)]  \leq cM^{2/3}|v|$ for all $w \in C_v$.

Since $C_v$ is bounded away from $v + \mathbb{Z}$, and $C_{a, 3\pi/4}$ we know that (\ref{OP12}) holds, and as it is bounded away from $\mathbb{R}$ we also know that (\ref{OP12.1}) holds. Combining the first line of (\ref{OP11}), with (\ref{OP12}), (\ref{OP12.1}), (\ref{OP16.1}) and the fact that the length of $C_v$ is at most $2 |\Im(v)| + 3d$ we conclude that for all large enough $M$
\begin{equation}\label{OP17}
\left| \int_{C_v}  F(w,v',v)dw \right| \leq C  e^{-c M |v| \log (1 + |v|)}.
\end{equation}

We now claim that 
\begin{equation}\label{OP18}
 \sum_{j = 1}^{k_v}|R_j(v)|  \leq C  e^{-c M \log (1 + |v|)}.
\end{equation}
If the latter is true, then (\ref{OP16}), (\ref{OP17}) and (\ref{OP18}) would imply (\ref{OP15}). We establish (\ref{OP18}) in the next step.\\

{\bf \raggedleft Step 4.} We prove (\ref{OP18}) by considering the cases when $|\Im(v)| \leq 1 - d$, $|\Im(v)| \in [1-d, 1+ d]$, $|\Im(v)| \in [ 1-d, 3]$ and $|\Im(v)| \geq 3$. The case when $|\Im(v)| \leq 1 - d$ is trivial since then $k_v = 0$ and the sum in (\ref{OP18}) is empty. The focus is on the remaining three cases.

From the functional equation for the gamma function $\Gamma(z+1)  = z \Gamma(z)$ we know that 
\begin{equation}\label{GammaRatGeom}
\begin{split}
&\left| \frac{\Gamma(v)}{\Gamma(v+j)} \right| = \prod_{k = 1}^j \left|\frac{1}{v + k - 1} \right| \mbox{, } \left| \frac{ \Gamma(\theta - v - j)}{\Gamma(\theta - v)} \right| =  \prod_{k = 1}^j \left|\frac{1}{\theta - v - k} \right| \mbox{ and } \\
& \left|  \prod_{n = 1}^{\rr} \frac{\Gamma(v - a_n)}{\Gamma(v + j - a_n)} \prod_{m = 1}^{\rl} \frac{\Gamma(\alpha_m - v - j)}{\Gamma(\alpha_m - v)} \right| = \prod_{n = 1}^{\rr} \prod_{k = 1}^j \left| \frac{1}{v + k - 1 - a_n}\right|  \prod_{n = 1}^{\rl} \prod_{k = 1}^j \left|\frac{1}{\alpha_m- v - k} \right|.
\end{split}
\end{equation}
If $|\Im(v)| \in [1-d, 1+ d]$ we have that $k_v = 1$ and so the left side of (\ref{OP18}) becomes
\begin{equation*}
\begin{split}
&|R_1(v)| =  \frac{|e^{-x\sigma M^{1/3}}|}{|v - v' + 1|} \frac{1}{|v|^N \cdot |\theta - v - 1|^M}\prod_{n = 1}^{\rr} \left| \frac{1}{v  - a_n}\right|  \prod_{n = 1}^{\rl} \left|\frac{1}{\alpha_m- v - 1} \right| \leq  \\
& \frac{C\exp(A \sigma M^{1/3})}{|\Im(v)|^N (\sqrt{(1-d)^2 + (\theta - a + |\Im(v)| - 1)^2} )^M} \leq \frac{C\exp(A \sigma M^{1/3})}{ ((1-d)\sqrt{(1-d)^2 + (\theta - a  - d)^2})^M} 
\end{split}
\end{equation*}
where we used that $|v| \geq 1-d $, $M \geq N$ and (\ref{ExpX}). By our choice of $d \leq d_0$ with $d_0$ as in (\ref{smallD}), we get for all large enough $M$
$$ |R_1(v)| \leq  \frac{\exp(CM^{1/3})}{(1+d/2)^M},$$
which implies (\ref{OP18}) in the case $|\Im(v)| \in [1-d, 1+ d]$ as long as $M$ is sufficiently large.

Next, suppose that $|\Im(v)| \in[1 + d, 3]$. Then from (\ref{GammaRatGeom}) we have
\begin{equation}\label{GammaRatIneq}
\begin{split}
&\left| \frac{\Gamma(v)}{\Gamma(v+j)} \right| \leq \frac{1}{|\Im(v)|^j}\mbox{ and }\left| \frac{ \Gamma(\theta - v - j)}{\Gamma(\theta - v)} \right| \leq \frac{1}{|\Im(v)|^j}\\ 
& \left|  \prod_{n = 1}^{\rr} \frac{\Gamma(v - a_n)}{\Gamma(v + j - a_n)} \prod_{m = 1}^{\rl} \frac{\Gamma(\alpha_m - v - j)}{\Gamma(\alpha_m - v)} \right| \leq C.
\end{split}
\end{equation}
The above inequalities imply that (observe $k_v \leq 3$)
\begin{equation}\label{OP19}
 \sum_{j = 1}^{k_v }|R_j(v)|  \leq \frac{\exp(CM^{1/3})}{(1 + d)^{N + M}},
\end{equation}
which implies (\ref{OP18}) in the case $|\Im(v)| \in [1+d, 3]$ as long as $M$ is sufficiently large.

Finally, we suppose that $|\Im(v)| \geq 3$. In this case (\ref{GammaRatIneq}) imply
\begin{equation}\label{OP20}
 \sum_{j = 1}^{k_v }|R_j(v)|  \leq \sum_{j = 1}^{k_v} \frac{C \cdot \exp({A \sigma M^{1/3} j})}{|\Im(v)|^{j(M+N)}} \leq C \sum_{j = 1}^{k_v}|\Im(v)|^{-jM} \leq C |\Im(v)|^{-M},
\end{equation}
where in the next to last inequality we used that $|\Im(v)|^{N} \geq \exp({A\sigma M^{1/3}})$ for all large enough $M$, and in the last inequality we bounded the sum by a geometric series. Equation (\ref{OP20}) implies (\ref{OP18}) in the case $|\Im(v)| \geq 3$ since
$$\log (1 + |v|) \leq \log (1 + 2 |\Im(v)|) \leq \log(3) + \log |\Im(v)|\leq 2 \log |\Im(v)|.$$

{\bf \raggedleft Step 5.} In this step we establish (\ref{KernelBoundV0S6}) when $|v - a| \leq r$. Observe that by Cauchy's theorem we may deform the $D_v$ in the definition of $K_u(v,v')$ to the contour $C_{b, \pi/4}$ without affecting the value of the integral. The latter is because $v + 1$ is strictly to the right of $C_{b, \pi/4}$ whenever $|v-z_c| \leq r$, by our choice of $r$. Thus we do not cross any poles in the process of the deformation. The decay estminates necessary to deform the contour $D_v$ in the definition of $K_u(v,v')$ to $C_{b, \pi/4}$ near infinity come from Proposition \ref{PE1v3} applied to $K = [0,\theta]$, $T =0$ and $ \vec{a}, \vec{\alpha}, u$ as in the statement of the proposition. 

Consequently, we have
$$K_u(v,v') = \frac{1}{2\pi \i} \int_{C_{b, \pi/4}}  F(w,v',v)dw,$$
where $F(w,v',v)$ is as in (\ref{DefFS6}). Next, by Lemma \ref{S6analGS3},  Lemma \ref{horG} and (\ref{ExpX}) we have
\begin{equation}\label{OP21}
| e^{MG(v)} e^{M^{1/3}\sigma x (v-z_c)} | \leq \exp( - (\sqrt{2}/2)|v-z_c|^3 \sigma^3 M/6 + CM^{2/3}) \leq e^{CM^{2/3}},
\end{equation}
and also by Lemma \ref{ReleL1S5}, Lemma \ref{horG} and (\ref{ExpX}) provided that $M$ is sufficiently large we have
\begin{equation}\label{OP22}
| e^{-MG(w)} e^{-M^{1/3} \sigma (w-z_c) } | \leq Ce^{-c M |w| \log (1 + |w|)} \mbox{ if $|w - b| \geq r/2$ and $w \in C_{b, \pi/4}$}. 
\end{equation}
One also observes that the inequalities in (\ref{OP12}) and (\ref{OP12.1}) are also satisfied if $|v - a| \leq r$ and $|w - b| \geq r/2$, $w \in C_{b, \pi/4}$, which together imply that for all large $M$
\begin{equation}\label{OP23}
 \left|\int_{C_{b, \pi/4} \cap B_{r/2}^c(b)}  F(w,v',v)dw \right| \leq e^{CM^{2/3}}  \int_{C_{b, \pi/4}  \cap B_{r/2}^c(b)} e^{-c M |w| \log (1 + |w|)}|dw|   \leq Ce^{-c M},
\end{equation}
where $|dw|$ denotes intergration with respect to arc-length. 

On the other hand, if $v \in B_r(a)$ and $w \in B_{r/2}(b)$ we can perform the change of variables $\tilde{v} = (v - z_c)M^{1/3} \sigma$, $\tilde{w} = (w - z_c) M^{1/3} \sigma$ and apply Lemma \ref{S6analGS3} and Lemma \ref{horG} to obtain
$$ \left| \int_{C_{b, \pi/4}  \cap B_{r/2}(b)}  \hspace{-5mm} F(w,v',v)dw \right| \leq C \int_{C_{\mu + \rho, \pi/4}} \left| \frac{1}{\tilde{w}M^{-1/3} \sigma^{-1} + z_c - v'} \right| e^{c \log(1 + |\tilde{v}|) + c \log (1 + |\tilde{w}|)} \times $$
$$ \frac{ \pi \cdot \exp\left({-(\sqrt{2}/2)^3|\tilde{v}|^3/12 - (\sqrt{2}/2)^3|\tilde{w}|^3/12 + |\tilde{w}| A+ |\tilde{v}|A }\right)}{M^{1/3}\sigma | \sin(\pi M^{-1/3} \sigma^{-1}(\tilde{v} - \tilde{w})) |} |dw|.$$
We remark that in deriving the last inequality we bounded the double product of gamma functions in the definition of $F(w,v',v)$ by $C e^{c \log(1 + |\tilde{v}|) + c \log (1 + |\tilde{w}|)}$. To see why such a bound holds we can use (\ref{OP12.1}), and note that the right side of the top line is $O(1)$ for $v \in B_r(a)$ and $w \in B_r(b)$ since $r \leq 1/2$ by assumption, while the rational functions on the second line can be controlled by $Ce^{c \log(1 + |\tilde{v}|) + c \log (1 + |\tilde{w}|)}$. We also mention that the bound on 
$$| \exp(M[G(z_c + \tilde{v} \sigma^{-1} M^{-1/3}) - G(z_c + \tilde{w} \sigma^{-1} M^{-1/3})])|$$
 via 
$$C \exp\left(-(\sqrt{2}/2)^3|\tilde{v}|^3/12 - (\sqrt{2}/2)^3|\tilde{w}|^3/12 \right)$$
 can be obtained using only Lemma \ref{S6analGS3}  in the case
$|\tilde{w} - b| \leq M^{1/3 - 1/10}$ and $|\tilde{v} - a| \leq M^{1/3 - 1/10}$. When $ M^{1/3 - 1/10} \leq |\tilde{w} - b| \leq (r/2) M^{1/3}$ and $ M^{1/3 - 1/10} \leq |\tilde{v} - a| \leq r M^{1/3}$ one needs to further invoke Lemma \ref{horG}. The way this estimate is established is very similar to what we earlier did in Step 4 of the proof of Theorem \ref{mainThmGUE} in Section \ref{Section3.3} so we omit the details.

From Lemma \ref{SineBound} we know that
$$\left| \frac{\pi M^{-1/3}\sigma^{-1} }{\sin(\pi M^{-1/3} \sigma^{-1}(\tilde{v} - \tilde{w})) } \right| \leq C,$$
and also we know by definition of our contours that
$$\left| \frac{1}{\tilde{w}M^{-1/3} \sigma^{-1} + z_c - v'} \right| \leq CM^{1/3}.$$
Combining the last few estimates we see that
\begin{equation}\label{OP24}
\begin{split}
&\left| \int_{C_{b, \pi/4}  \cap B_{r/2}(b)} F(w,v',v)dw \right| \leq CM^{1/3} \int_{C_{\mu + \rho, \pi/4}} e^{c \log(1 + |\tilde{v}|) + c \log (1 + |\tilde{w}|)} \cdot \\
&e^{-(\sqrt{2}/2)^3|\tilde{v}|^3/12 - (\sqrt{2}/2)^3|\tilde{w}|^3/12+ |\tilde{w}| A + |\tilde{v}| A} |d\tilde{w}| \leq CM^{1/3},
\end{split}
\end{equation}
 where the last inequality used the fact that $C_{\mu + \rho, \pi/4}$ does not depend on $M$ and the integral is finite by the cube in the exponential. Equations (\ref{OP23}) and (\ref{OP24}) imply (\ref{KernelBoundV0S6}) when $|v - z_c| \leq r$. This suffices for the proof.
\end{proof}

\bibliographystyle{alpha} 
\bibliography{PD}

\end{document}